\definecolor{darkblue}{rgb}{0,0,0.6}
\newtheorem{thm}{Theorem}[section]
\newtheorem{cor}[thm]{Corollary}
\newtheorem{lem}[thm]{Lemma}
\newtheorem{prop}[thm]{Proposition}
\newtheorem{assume}[thm]{Hypothesis}
\newtheorem{observe}[thm]{Observation}
\theoremstyle{definition}
\newtheorem{define}[thm]{Definition}
\newtheorem{warning}[thm]{Warning}
\newtheorem{const}[thm]{Construction}
\newtheorem{notate}[thm]{Notation}
\theoremstyle{remark}
\newtheorem{rem}[thm]{Remark}
\newtheorem{example}[thm]{Example}
\newtheorem{examples}[thm]{Examples}
\newcommand{\QQ}{\mathbb{Q}}
\newcommand{\ZZ}{\mathbb{Z}}
\renewcommand{\SS}{\mathbb{S}}
\newcommand{\EE}{\mathbb{E}}
\newcommand{\A}{\mathcal{A}}
\newcommand{\C}{\mathcal{C}}
\newcommand{\D}{\mathcal{D}}
\newcommand{\F}{\mathcal{F}}
\newcommand{\G}{\mathcal{G}}
\newcommand{\U}{\mathcal{U}}
\newcommand{\E}{\mathcal{E}}
\newcommand{\M}{\mathcal{M}}
\renewcommand{\L}{\mathcal{L}}
\renewcommand{\P}{\mathcal{P}}
\newcommand{\R}{\mathcal{R}}
\newcommand{\I}{\mathcal{I}}
\newcommand{\J}{\mathcal{J}}
\newcommand{\cS}{\mathcal{S}}
\newcommand{\T}{\mathcal{T}}
\newcommand{\K}{\mathcal{K}}
\newcommand{\rN}{\mathrm{N}}
\newcommand{\rL}{\mathrm{L}}
\newcommand{\rR}{\mathrm{R}}
\DeclareMathOperator{\Hom}{Hom}
\DeclareMathOperator{\Map}{Map}
\DeclareMathOperator{\Fun}{Fun}
\DeclareMathOperator{\Id}{Id}
\DeclareMathOperator{\Ho}{Ho}
\DeclareMathOperator{\op}{op}
\DeclareMathOperator{\precolim}{colim}
\DeclareMathOperator{\Ne}{N}
\DeclareMathOperator{\ad}{ad}
\DeclareMathOperator{\ev}{ev}
\DeclareMathOperator{\coev}{coev}
\DeclareMathOperator{\Set}{Set}
\DeclareMathOperator{\Cat}{Cat}
\DeclareMathOperator{\Mod}{Mod}
\DeclareMathOperator{\tr}{tr}
\DeclareMathOperator{\Add}{Add}
\DeclareMathOperator{\Fin}{\F in}
\DeclareMathOperator{\CAlg}{CAlg}
\DeclareMathOperator{\CMon}{CMon}
\DeclareMathOperator{\Mon}{Mon}
\DeclareMathOperator{\TrFm}{TrFm}
\DeclareMathOperator{\Cone}{Cone}
\DeclareMathOperator{\Span}{Span}
\DeclareMathOperator{\Sp}{Sp}
\DeclareMathOperator{\fin}{fin}
\DeclareMathOperator{\St}{St}
\DeclareMathOperator{\car}{car}
\DeclareMathOperator{\dl}{dl}
\DeclareMathOperator{\Bord}{Bord}
\DeclareMathOperator{\Tw}{Tw}
\DeclareMathOperator{\Kan}{Kan}
\def\alp{{\alpha}}
\def\del{{\delta}}
\def\eps{{\varepsilon}}
\def\sig{{\sigma}}
\def\vphi{{\varphi}}
\def\Gam{{\Gamma}}
\def\Del{{\Delta}}
\def\Sig{{\Sigma}}
\def\Lam{{\Lambda}}
\def\vphi{{\varphi}}
\def\lrar{\longrightarrow}
\def\llar{\longleftarrow}
\def\hrar{\hookrightarrow}
\def\x{\stackrel}
\def\ovl{\overline}
\def\bksl{\;\backslash\;}
\def\colim{\mathop{\precolim}}
\newcommand{\twocell}[5]{\ar@<#4ex>@{}[#1] \ar@<#4ex>@{=>}?(#3)+/dl #5cm/;?(#3)+/ur #5cm/^{#2}}
\renewcommand{\tocsection}[3]{%
  \indentlabel{\@ifnotempty{#2}{\bfseries\ignorespaces#1 #2\quad}}\bfseries#3} 
\renewcommand{\tocsubsection}[3]{%
  \indentlabel{\@ifnotempty{#2}{\hspace{1.6em}\ignorespaces#1 #2\quad}}#3}
\setlist[itemize]{leftmargin=*}
\setlist[enumerate]{leftmargin=*}
\title{Ambidexterity and the universality of finite spans}
\author{Yonatan Harpaz}
\date{}
\begin{document}
\maketitle

\begin{abstract}
Pursuing the notions of ambidexterity and higher semiadditivity as developed by Hopkins and Lurie, we prove that the span $\infty$-category of $m$-finite spaces is the free $m$-semiadditive $\infty$-category generated by a single object. Passing to presentable $\infty$-categories we obtain a description of the free presentable $m$-semiadditive $\infty$-category in terms of a new notion of $m$-commutative monoids, which can be described as spaces in which families of points parameterized by $m$-finite spaces can be coherently summed. Such an abstract summation procedure can be used to give a formal $\infty$-categorical definition of the finite path integral described by Freed, Hopkins, Lurie and Teleman in the context of $1$-dimensional topological field theories. 
\end{abstract}

\tableofcontents

\section{Introduction}

The notion of \textbf{ambidexterity}, as developed by Lurie and Hopkins in~\cite{ambi} in the $\infty$-categorical setting, is a duality phenomenon concerning diagrams $\vphi: K \lrar \C$ whose limit and colimit \textbf{coincide}. The simplest case where this can happen is when $K$ is empty. In this case a colimit of $K$ is simply an initial object of $\C$, and a limit of $K$ is a final object of $\C$. If $\C$ has both an initial object $\emptyset \in \C$ and a final object $\ast \in \C$ then there is an essentially unique map $\emptyset \lrar \ast$. Given that both $\emptyset$ and $\ast$ exist there is hence a canonical way to require that they coincide, namely, asserting that the unique map $\emptyset \lrar \ast$ is an equivalence. In this case we say that $\C$ is \textbf{pointed}. An object $0 \in \C$ which is both initial and final is called a \textbf{zero object}. 

Generalizing this property to cases where $K$ is non-empty involves an immediate difficulty. In general, even if $\vphi: K \lrar \C$ admits both a limit and a colimit, there is apriori no natural choice of a map relating the two. Informally speaking, choosing a map $\colim_{x \in K}\vphi(x) \lrar \lim_{x \in K}\vphi(x)$ is the same as choosing, compatibly for every two objects $x,y \in K$, a map $\vphi(x) \lrar \vphi(y)$ in $\C$. The diagram $\vphi$, on its part, provides such maps $\vphi(e): \vphi(x) \lrar \vphi(y)$ for every $e \in \Map_K(x,y)$. We thus have a whole space of maps $\vphi(e):\vphi(x) \lrar \vphi(y)$ at our disposal, but no a-priori way to choose a specific one naturally in both $x$ and $y$.

To see how this problem might be resolved assume for a moment that $\C$ is pointed, i.e., admits a zero object $0 \in \C$, and that $\Map_K(x,y)$ is either empty or contractible for every $x,y \in K$ (i.e.,\ $K$ is equivalent to a partially ordered set, or a poset). 
Then for every $X,Y \in \C$ there is a distinguished point in $\Map_\C(X,Y)$, namely the essentially unique map which factors as $f:X \lrar 0 \lrar Y$, where $0 \in \C$ is a zero object. We may call this map $X \lrar Y$ the \textbf{zero map}. We then obtain a choice of a map $\rN_{x,y}:\vphi(x) \lrar \vphi(y)$ which is natural in both $x$ and $y$: if $\Map_K(x,y)$ is contractible then we take $\rN_{x,y}$ to be $\vphi(e)$ for the essentially unique map $e: x \lrar y$, and if $\Map_K(x,y)$ is empty then we just take the zero map. It is then meaningful to ask whether the limits and colimit of a diagram $\vphi: K \lrar \C$ coincide: assuming both of them exist, we may ask whether the map $\rN_{\vphi}:\colim\vphi \lrar \lim\vphi$ we have just constructed is an equivalence.

For general posets and general pointed $\infty$-categories $\C$ the map $\rN_{\vphi}$ 
is rarely an equivalence. For example, if $K = [1]$ then $\rN_{\vphi}$ is simply the $0$-map. 
However, there is a class of posets for which this property turns out to yield something interesting: the class of \textbf{finite sets}, i.e., finite posets for which the order relation is the equality. In this case we may identify $\colim\vphi \simeq \coprod_{x \in K}\vphi(x)$ and $\lim\vphi \simeq \prod_{x \in K}\vphi(x)$. The map 
$$ \rN_\vphi:\coprod_{x \in K}\vphi(x) \lrar \prod_{x \in K}\vphi(x) $$
we constructed above is then given by the ``matrix'' of maps $[\rN_{x,y}]_{x,y \in K}$, where $\rN_{x,y}:\vphi(x) \lrar \vphi(y)$ is the identity if $x = y$ and the zero map if $x \neq y$. When a pointed $\infty$-category satisfies the property that $\rN_\vphi$ is an equivalence for every finite set $K$ and every diagram $\vphi:K \lrar \C$ we say that $\C$ is \textbf{semiadditive}. Examples of semiadditive $\infty$-categories include all abelian (discrete) categories and all stable $\infty$-categories. For more general examples, if $\C$ is any $\infty$-category with finite products then the $\infty$-category $\Mon_{\EE_\infty}(\C)$ of $\EE_\infty$-monoids in $\C$ is semiadditive.

In their paper~\cite{ambi}, Hopkins and Lurie observed that the passage from pointed $\infty$-categories to semiadditive ones is just a first step in a more general process. Suppose, for example, that $\C$ is a semiadditive $\infty$-category. Then for every $X,Y \in \C$, the mapping space $\Map_{\C}(X,Y)$ carries a natural structure of an $\EE_\infty$-monoid, where the sum of two maps $f,g: X \lrar Y$ is given by the composition
$$ \xymatrix{
X \ar[r]^-{\del} & X \times X \ar[rr]^-{f \times g} && Y \times Y \simeq Y \coprod Y \ar[r]^-{\eps} & Y \\
} .$$
Now suppose that $K$ is an $\infty$-category whose mapping spaces are equivalent to finite sets and that $\vphi: K \lrar \C$ is a diagram which admits both a limit and colimit. Then we may construct a natural map $\colim\vphi \lrar \lim\vphi$ by choosing, for every $x,y \in K$, the map 
\begin{equation}\label{e:map-1}
\rN_{x,y} = \sum_{e \in \Map_K(x,y)} \vphi(e):\vphi(x) \lrar \vphi(y) ,
\end{equation} 
where the sum is taken with respect to the natural $\EE_\infty$-monoid structure on $\Map_\C(X,Y)$. We may now ask if the induced map
\begin{equation}\label{e:map-2}
\rN_\vphi: \colim_{x \in K}\vphi(x) \lrar \lim_{x \in K}\vphi(x),
\end{equation} 
which is often called the \textbf{norm map}, is an equivalence. When~\eqref{e:map-2} is an equivalence for every finite \textbf{groupoid} $K$ we say that $\C$ is \textbf{$1$-semiadditive}. We note that when $\C$ is stable the cofiber of~\eqref{e:map-2} is also known as the associated \textbf{Tate object}, and hence $1$-semiadditivity in the stable context was often considered as a phenomenon of Tate vanishing.
This vanishing happens in many examples of interest, e.g., when $\C$ is a $\QQ$-linear $\infty$-category,
or when $\C$ is the $\infty$-category of $K(n)$-local spectra, where $K(n)$ is the $n$'th Morava $K$-theory at a prime $p$ (Hovey--Sadofsky--Greenlees \cite{HS96,GS96}). 
Hopkins and Lurie constructed an inductive approach for continuing this process, where at the $m$'th stage one considers $\infty$-groupoids whose homotopy groups are all finite and vanish above dimension $m$. In this paper we will refer to such $\infty$-groupoids as \textbf{$m$-finite spaces}. This yields the notion of \textbf{$m$-semiadditive $\infty$-category} for every $m \geq -1$. 
The main result of Hopkins--Lurie \cite{ambi} is that the $\infty$-category of $K(n)$-local spectra is not just $1$-semiadditive, but in fact $m$-semiadditive for every $m$. A similar result was recently established for the $\infty$-category of $T(n)$-local spectra by Carmeli--Schlank--Yanovski \cite{Tn}, generalizing a theorem of Khun \cite{Kuh04} on the $1$-semiadditivity of $T(n)$-local spectra, and using, among other things, results from the present paper. 
 
Our goal in this work is to form a link between the theory of higher semiadditivity as developed in~\cite{ambi} and the $\infty$-category of \textbf{spans} of $m$-finite spaces. To understand the role of this $\infty$-category, let us consider for a moment the central role played by the $\infty$-category $\Sp_{\fin}$ of \textbf{finite spectra} in the theory of \textbf{stable $\infty$-categories}. To begin, $\Sp_{\fin}$ can be described as the free stable $\infty$-category generated by a single object $\SS \in \Sp_{\fin}$, the sphere spectrum. Furthermore, one can use $\Sp_{\fin}$ in order to characterize stable $\infty$-categories inside the $\infty$-category $\Cat_{\fin}$ of all small $\infty$-categories with finite colimits (and right exact functors between them). Indeed, $\Cat_{\fin}$ carries a natural symmetric monoidal structure (see~\cite[\S 4.8.1]{higher-algebra}) whose unit is the the smallest full subcategory of spaces $\cS_{\fin} \subseteq \cS$ closed under finite colimits. One can then show that $\Sp_{\fin}$ is an \textbf{idempotent} object in $\Cat_{\fin}$ in the following sense: the suspension spectrum functor $\Sig^{\infty}_+:\cS_{\fin} \lrar \Sp_{\fin}$ induces an equivalence $\Sp_{\fin} \simeq \Sp_{\fin} \otimes \cS_{\fin} \x{\simeq}{\lrar} \Sp_{\fin} \otimes \Sp_{\fin}$. The fact that $\Sp_{\fin}$ is idempotent has a remarkable consequence: it endowed $\Sp_{\fin}$ with a canonical commutative algebra structure in $\Cat_{\fin}$ such that the forgetful functor $\Mod_{\Sp_{\fin}}(\Cat_{\fin}) \lrar \Cat_{\fin}$ is fully-faithful. From a conceptual point of view, this fact can be described as follows: given an $\infty$-category with finite colimits $\C$, the structure of being an $\Sp_{\fin}$-module is essentially unique once it exists, and can hence be considered as a \textbf{property}. One can then show that this property coincides with being stable. In other words, stable $\infty$-categories are exactly those $\C \in \Cat_{\fin}$ which admit an action of $\Sp_{\fin}$, in which case the action is essentially unique. 

This double aspect of stability, as either a property or a structure, is very useful. On one hand, in a higher categorical setting structures are often difficult to construct explicitly, while properties are typically easier to define and to check. On the other hand, having a higher categorical structure available is often a very powerful tool. An equivalence between a given property and the existence of a given structure allows one to enjoy both advantages simultaneously. Indeed, while the property of being stable is easy to define and often to establish, once we know that a given $\infty$-category is stable we can use the canonically defined $\Sp_{\fin}$-module structure at our disposal. For example, it implies that any stable $\infty$-category is canonically enriched in spectra, and in particular its  mapping spaces carry a canonical $\EE_\infty$-group structure.

In this paper we describe a completely analogous picture for the property of $m$-semiadditivity. Let $\K_m$ be the set of equivalence classes of $m$-finite Kan complexes. Let $\Cat_{\K_m}$ be the $\infty$-category of small $\infty$-categories which admit $\K_m$-indexed colimits and functors which preserve $\K_m$-indexed colimits between them. Then $\Cat_{\K_m}$ carries a natural symmetric monoidal structure whose unit is the $\infty$-category $\cS_m$ of $m$-finite spaces. Another object contained in $\Cat_{\K_m}$ is the $\infty$-category $\Span(\cS_m)$ whose objects are $m$-finite spaces and whose morphisms are given by spans (see~\ref{s:prelim} for a formal definition). Our main result can then be phrased as follows:
\begin{thm}
$\Span(\cS_m)$ is the free $m$-semiadditive $\infty$-category generated by a single object. More precisely, if $\D$ is an $m$-semiadditive $\infty$-category then evaluation at $\ast \in \Span(\cS_m)$ induces an equivalence
$$ \Fun_{\K_m}(\Span(\cS_m),\D) \x{\simeq}{\lrar} \D $$
where the left hand side denotes the $\infty$-category of functors which preserve $\K_m$-indexed colimits.
\end{thm}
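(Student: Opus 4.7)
The plan is to bootstrap from the standard universal property of $\cS_m$ as the free $\K_m$-cocomplete $\infty$-category on one generator, and then extend $\K_m$-cocontinuous functors across the canonical ``forward'' inclusion $\iota: \cS_m \lrar \Span(\cS_m)$ using the norm equivalences supplied by the $m$-semiadditivity of $\D$. Here $\iota$ sends a map $f:A\lrar B$ to the graph span $A = A \x{f}{\lrar} B$.

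First I would note that, since $\cS_m$ is the free $\infty$-category with $\K_m$-indexed colimits on $\ast$, evaluation at $\ast$ factors as
$$\Fun_{\K_m}(\Span(\cS_m), \D) \x{\iota^*}{\lrar} \Fun_{\K_m}(\cS_m, \D) \x{\simeq}{\lrar} \D,$$
where the second map is an equivalence without any $m$-semiadditive hypothesis. The theorem hence reduces to showing that $\iota^*$ is an equivalence whenever $\D$ is $m$-semiadditive.

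To construct a quasi-inverse I would, for each $X \in \D$, build a functor $F_X: \Span(\cS_m) \lrar \D$ sending $A$ to $A \otimes X := \colim_A X$ on objects, and sending a span $A \x{p}{\llar} B \x{q}{\lrar} C$ to the composite $q_* \circ p^!: A\otimes X \lrar B \otimes X \lrar C \otimes X$, where $p^!$ is the wrong-way map produced by the norm equivalence for $p$ in the $m$-semiadditive category $\D$, and $q_*$ is the induced map on colimits. On objects and on forward spans this recipe is forced by requiring $F_X(\ast) = X$ and $\K_m$-cocontinuity; on backward spans it is dictated by the norm-inverse prescription. The content of the theorem is that these choices assemble into a genuine $\infty$-functor and that $X \mapsto F_X$ is a two-sided inverse to evaluation at $\ast$.

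The main obstacle, and the bulk of the technical work, is coherence: turning this object-and-$1$-morphism recipe into an honest $\infty$-functor of $\infty$-categories. I would handle it by induction on $m$. The base case $m=-2$ is trivial, since $\Span(\cS_{-2}) \simeq \ast$ and both sides of the claimed equivalence reduce to $\D$. For the inductive step, every $m$-finite space is a $\K_{m-1}$-indexed colimit of points in $\cS_m$, and the norm equivalences at level $m$ are built from ambidexterity data of level $m-1$ already available by the inductive hypothesis. Combining this with the fact (which should be set up earlier in the paper) that $\Span(\cS_m)$ is itself an $m$-semiadditive object of $\Cat_{\K_m}$, and that $m$-semiadditivity passes through $\Fun_{\K_m}(-,\D)$, one can characterize $F_X$ abstractly as the essentially unique $\K_m$-cocontinuous extension along $\iota$ that intertwines the $m$-semiadditive structures on both sides. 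Once $F_X$ is constructed and natural in $X$, the identifications needed to show that $X \mapsto F_X$ and $G \mapsto G(\ast)$ are mutually inverse follow from the uniqueness statement in the universal property of $\cS_m$ together with the inductive hypothesis.
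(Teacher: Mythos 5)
Your opening reduction is fine: $\Fun_{\K_m}(\cS_m,\D)\simeq\D$ holds with no semiadditivity hypothesis, so the theorem is indeed equivalent to the statement that restriction along the wide inclusion $\iota:\cS_m\hrar\Span(\cS_m)$ is an equivalence when $\D$ is $m$-semiadditive. The genuine gap is in how you propose to prove that. You correctly identify coherence as the bulk of the work, but your resolution -- ``characterize $F_X$ abstractly as the essentially unique $\K_m$-cocontinuous extension along $\iota$ that intertwines the $m$-semiadditive structures on both sides'' -- assumes exactly what has to be proved. Since $\iota$ is essentially surjective and wide but not full, no general principle says that a functor out of $\Span(\cS_m)$ is determined by (let alone constructible from) its restriction to $\cS_m$; the existence and uniqueness of such extensions \emph{is} the assertion that $\iota^*$ is an equivalence. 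Moreover, before the theorem is available, $m$-semiadditivity is only a property (the $\Span(\cS_m)$-module structure that would make ``intertwining the structures'' a meaningful condition is a consequence of the theorem, cf.\ \S\ref{s:modules}), so the proposed abstract characterization has no independent content. The subsidiary claims you lean on are also shaky as stated: an $m$-finite space is a colimit of points indexed by \emph{itself}, not by a $\K_{m-1}$-indexed diagram (what is $(m-1)$-truncated is the diagonal, which is what lets level-$m$ norms be built from level-$(m-1)$ ambidexterity), and the $1$-categorical recipe $[\,A\llar B\rar C\,]\mapsto q_*\circ p^!$ supplies no candidate for the higher coherence data.

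What the actual proof supplies, and what is missing from your sketch, is a concrete extension mechanism along the non-fully-faithful inclusion. The paper replaces $\iota$ (in the crucial step, the inclusion $\cS^{m-1}_m\subseteq\cS^m_m$) by a correspondence $\M\lrar\Del^1$ and shows (Proposition~\ref{p:rkan}) that the desired extensions are precisely \emph{right Kan extensions} along it; the pointwise limits indexed by the relevant comma categories exist, and are computed, because a $\K_m$-colimit-preserving functor $\F:\cS^{m-1}_m\lrar\D$ out of an $(m-1)$-truncated span category automatically satisfies the limit condition of Lemma~\ref{l:export}, i.e.\ the dual maps $\F(\hat{i}_x):\F(X)\lrar\F(\ast)$ exhibit $\F(X)$ as $\lim_X\F(\ast)$. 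Establishing that condition is where $m$-semiadditivity of $\D$ enters, via the $\cS^{m-1}_m$-module structure on $\D$ furnished by the inductive hypothesis and the characterization of $m$-semiadditivity in Corollary~\ref{c:limit}. This forces a genuine double induction (Corollary~\ref{c:step-2} for the step in $m$, Proposition~\ref{p:step-1} for the step in the ambient truncation, both of which are needed even for $\cS^m_m$), rather than a single jump from $\cS_m$ to $\Span(\cS_m)$. Without an argument of this kind -- some device that produces the extension together with all its coherences and proves its uniqueness -- your proposal records the expected formulas but does not prove the theorem.
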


Furthermore, we will show in \S\ref{s:modules} that $\Span(\cS_m)$ is in fact an \textbf{idempotent} object of $\Cat_{\K_m}$. Consequently, $\Span(\cS_m)$ carries a canonical commutative algebra structure in $\Cat_{\K_m}$, and the forgetful functor $\Mod_{\Span(\cS_m)}(\Cat_{\K_m}) \lrar \Cat_{\K_m}$ is fully-faithful. The structure of a $\Span(\cS_m)$-module on a given $\infty$-category $\C \in \Cat_{\K_m}$ is hence essentially a property. This property is exactly the property of being $m$-semiadditive. 

The flexibility of switching the point of view between a property and a structure seems to be especially useful in the setting of $m$-semiadditivity. Indeed, while $m$-semiadditivity is a property (involving the coincidence of limits and colimits indexed by $m$-finite spaces) it is quite hard to define directly. The reason, as described above, is that in order to define the various norm maps which are required to induce the desired equivalences, one needs to use the fact that the $\infty$-category in question is already known to be $(m-1)$-semiadditive. Even then, describing these maps requires an elaborate inductive process (see~\cite[\S 4]{ambi}). On the other hand, having a canonical $\Span(\cS_{m-1})$-module structure on an $(m-1)$-semiadditive $\infty$-category leads to a direct and short definition of when an $(m-1)$-semiadditive $\infty$-category is $m$-semiadditive (see, e.g., Corollary~\ref{c:limit}).

The picture becomes even more transparent when one passes to the world of \textbf{presentable $\infty$-categories}. Let $\Pr^L$ denote the $\infty$-category of presentable $\infty$-categories and left functors between them. Then one has a natural symmetric monoidal functor $\P_{\K_m}:\Cat_{\K_m} \lrar \Pr^L$ which sends $\C \in \Cat_{\K_m}$ to the $\infty$-category $\P_{\K_m}(\C)$ of presheaves of spaces on $\C$ that take $\K_m$-indexed colimits in $\C$ to limits of spaces. Applying this functor to $\Span(\cS_m)$ one obtains a presentable $\infty$-category which is equivalent to a certain $\infty$-category of \textbf{higher commutative monoids}, and which we will investigate in~\S\ref{s:monoids}. Informally speaking, an $m$-commutative monoid can be described as a space $X$ endowed with the following type of structure: for any map $f: K \lrar X$ from an $m$-finite space $K$ to $X$, we have an associated point $\int_{K} f \in X$, which we can think of as the ``continuous sum'' of the family of points $\{f(x)\}_{x \in K}$. This association is of course required to satisfy various compatibility conditions. For $m=0$ we have that $f$ is indexed by a finite set and we obtain the structure of an $\EE_{\infty}$-monoid. When $m=-1$ this is just the structure of a pointed space. Now since the functor $\P_{\K_m}$ is monoidal the $\infty$-category $\Mon_m$ of $m$-commutative monoids is idempotent as a presentable $\infty$-category, and the property characterizing $\Mon_m$-modules in $\Pr^L$ is again $m$-semiadditivity. There is, however, an advantage for considering $\Mon_m$ in addition to $\Span(\cS_m)$. Note that given an $m$-semiadditive $\infty$-category $\C$, the canonical action of $\Span(\cS_m)$ described above is not \textbf{closed} in general, i.e., it doesn't endow $\C$ with an enrichment in $\Span(\cS_m)$. It does, however, endow $\C$ with an enrichment in $\Mon_m$. In particular, mapping spaces in $\C$ are $m$-commutative monoids, and so we have a canonically defined summation over families of maps indexed by $m$-finite spaces. This structure can be used in order to redefine the norm maps of~\cite{ambi} and hence to define when an $(m-1)$-semiadditive $\infty$-category is $m$-semiadditive (in a manner analogous to the cases of $m=-1,0,1$ described above). Indeed, for every $m$-finite space $K$, any diagram $\vphi: K \lrar \C$ and any $x,y \in K$ we obtain a natural map
\begin{equation}\label{e:map-m}
\rN_{x,y}:\int_{e \in \Map_K(x,y)} \vphi(e):\vphi(x) \lrar \vphi(y).
\end{equation} 
by using the $(m-1)$-commutative monoid structure of $\Map_{\C}(x,y)$ and the fact that the mapping spaces in $K$ are $(m-1)$-finite. The compatible collection of maps $\rN_{x,y}$ then induces a map 
\begin{equation}\label{e:map-m-2}
\rN_\vphi: \colim_{x \in K}\vphi(x) \lrar \lim_{x \in K}\vphi(x)
\end{equation} 
which coincide with the norm maps constructed in~\cite{ambi}. In particular, an $(m-1)$-semiadditive $\infty$-category $\C$ is $m$-semiadditive if and only if the maps $\rN_\vphi$ are equivalences for every $K \in \K_m$ and every $\vphi: K \lrar \C$.

In the final part of the paper we will explain a relation between the above results and $1$-dimensional topological field theories, specifically with respect to the \textbf{finite path integral} described in~\cite[\S 3]{tft}. In particular, our approach allows one to formally define this finite path integral whenever the target $\infty$-category is $m$-semiadditive. This requires a description of the free $m$-semiadditive $\infty$-category generated by an arbitrary $\infty$-category $\D$, which we establish in~\ref{s:decorated} using the formalism of \textbf{decorated spans}. The link with finite path integrals is then described in~\ref{s:tft}.

\section{Preliminaries}\label{s:prelim}

In this paper we work in the higher categorical setting of $\infty$-categories as set up in~\cite{higher-topos}. In particular, by an $\infty$-category we will always mean a simplicial set $\C$ which has the right lifting property with respect to inner horns. We will often refer to the vertices of $\C$ as objects and to edges in $\C$ as morphisms. In the same spirit, if $\I$ is an ordinary category then we will often depict maps $\rN(\I) \lrar \C$ to an $\infty$-category $\C$ in diagrammatic form, as would be the case if $\C$ was an ordinary category. By a \textbf{space} we will always mean a Kan simplicial set, which we will generally regard as an $\infty$-groupoid, i.e., a $\infty$-category in which every morphism is invertible. Given an $\infty$-category $\C$, we will denote by $\C^{\simeq}$, the maximal subgroupoid (i.e, maximal sub Kan complex) of $\C$.

\subsection{$\infty$-Categories of spans}\label{s:spans}
In this section we will recall the definition of the $\infty$-category of spans in a given $\infty$-category $\C$ with admits pullbacks. To obtain more flexibility it will be useful to consider a slightly more general case, following the approach of Barwick \cite{barwick}. Recall that a functor $\F: \C \lrar \D$ between $\infty$-categories is called
\textbf{faithful} if for every $X,Y \in \C$ the induced map $\F_{X,Y}:\Map_\C(X,Y) \lrar \Map_\D(\F(X),\F(Y))$ is $(-1)$-truncated (i.e., each homotopy fiber of $\F_{X,Y}$ is either empty or contractible). Equivalently, $\F$ is faithful if the induced map $\Ho(\C) \lrar \Ho(\D)$ on homotopy categories is faithful and the square
$$ \xymatrix{
\C \ar^{\F}[r]\ar[d] & \D \ar[d] \\
\Ho(\C) \ar^{\Ho(\F)}[r] & \Ho(\D) \\
}$$
is homotopy Cartesian. In this case we will also say that $\C$ is a \textbf{subcategory of $\D$} (and will often omit the explicit reference to $\F$ and use the abusive notation $\C \subseteq \D$). A subcategory $\C \subseteq \D$ is called \textbf{wide} if the induced map $\C^{\simeq} \lrar \D^{\simeq}$ is an equivalence of spaces. Given a wide subcategory $\C \subseteq \D$ and a morphism $f$ in $\D$ we will say that $f$ \textbf{belongs} to $\C$ if it is equivalent in the arrow category of $\D$ to a morphism in the image of $\C$.
\begin{define}\label{d:wald}
Let $\C$ be an $\infty$-category. A \textbf{weak coWaldhausen} structure on $\C$ is a wide subcategory $\C^{\dagger} \subseteq \C$ such that any diagram
$$
\xymatrix{
& X \ar^{g}[d] \\
Y \ar^{f}[r] & Z \\
}
$$
in which $g$ belongs to $\C^{\dagger}$ extends to a pullback square
$$
\xymatrix{
P \ar[r]\ar_{g'}[d] & X \ar^{g}[d] \\
Y \ar^{f}[r] & Z \\
}
$$
in which $g'$ belongs to $\C^{\dagger}$. In this case we will refer to the pair $(\C,\C^{\dagger})$ as a weak coWaldhausen $\infty$-category.
\end{define}

\begin{example}
For any $\infty$-category $\C$ the maximal subgroupoid $\C^{\simeq} \subseteq \C$ is a weak coWaldhausen structure on $\C$. If $\C$ admits pullbacks then $\C$ itself is a weak coWaldhausen structure as well. We may consider these examples as the minimal and maximal coWaldhausen structures respectively.
\end{example}

Given a weak coWaldhausen $\infty$-category $(\C,\C^{\dagger})$ we would like to define an associated $\infty$-category $\Span(\C,\C^{\dagger})$. Informally speaking, $\Span(\C,\C^{\dagger})$ is the $\infty$-category whose objects are the objects of $\C$ and whose morphisms are given by diagrams of the form
\begin{equation}\label{e:span-C}
\xymatrix{
& Z\ar_{p}[dl]\ar^{q}[dr] & \\
X && Y \\
}
\end{equation}
such that $p$ belongs to $\C^{\dagger}$. We will refer to such diagrams as \textbf{spans} in $(\C,\C^{\dagger})$. A composition of two spans can be described by forming the diagram
$$ \xymatrix{
&& P \ar^{t}[dr]\ar_{s}[dl] && \\
& Z \ar^{f}[dr]\ar_{g}[dl] && V \ar^{q}[dr]\ar_{p}[dl] &\\
X && Y && W\\
}$$
in which the central square is a pullback square, and the external span is the composition of the two bottom spans. Note that since $p$ and $g$ belong to $\C^{\dagger}$ Definition~\ref{d:wald} insures that this pullback exists and $g \circ s$ belongs to $\C^{\dagger}$. To define $\Span(\C,\C^{\dagger})$ formally, it is convenient to use the \textbf{twisted arrow category} $\Tw(\Del^n)$ of the $n$-simplex $\Del^n$. This $\infty$-category can be described explicitly as the nerve is the category whose objects are pairs $(i,j) \in [n] \times [n]$ with $i \leq j$ and such that $\Hom((i,j),(i',j'))$ is a singleton if $i' \leq i \leq j \leq j'$ and empty otherwise. Given a weak coWaldhausen $\infty$-category $(\C,\C^{\dagger})$ we will say that a map $f: \Tw(\Del^n)^{\op} \lrar \C$ is \textbf{Cartesian} if for every $i' \leq i \leq j \leq j' \in [n]$ the square
$$ \xymatrix{
f(i',j') \ar[r]\ar[d] & f(i',j) \ar[d] \\
f(i,j') \ar[r] & f(i,j) \\
}$$
is Cartesian and its vertical maps belong to $\C^{\dagger}$. 

\begin{define}[cf.\cite{barwick}]
Let $(\C,\C^{\dagger})$ be a weak coWaldhausen $\infty$-category. The \textbf{span $\infty$-category} $\Span(\C,\C^{\dagger})$ is the simplicial set whose set of $n$-simplices is the set of Cartesian maps $f: \Tw(\Del^n)^{\op} \lrar \C$. 
\end{define}

By~\cite[\S 3.4-3.8]{barwick} the simplicial set $\Span(\C,\C^{\dagger})$ is always an $\infty$-category. We refer the reader to loc.\ cit.\ for a more detailed discussion of this construction and its properties.

\begin{rem}\label{r:map-span}
Let $(\C,\C^{\dagger})$ be a weak coWaldhausen $\infty$-category. Unwinding the definitions we see that the objects of $\Span(\C,\C^{\dagger})$ are the objects of $\C$ and the morphisms are given by spans of the form~\eqref{e:span-C} such that $p$ belongs to $\C^{\dagger}$. Furthermore, a homotopy from the span $X \llar Z \lrar Y$ to the span $X \llar Z' \lrar Y$ is given by an equivalence $\eta: Z \lrar Z'$ over $X \times Y$. Elaborating on this argument one can identify the mapping space from $X$ to $Y$ in $\C$ with the full subgroupoid of $(\C_{/X \times Y})^{\simeq}$ spanned by objects of the form~\eqref{e:span-C} such that $p$ belongs to $\C^{\dagger}$.
\end{rem}

\begin{rem}\label{r:subcategory}
It follows from Remark~\ref{r:les} that if $(\C^{\dagger})' \subseteq \C^{\dagger} \subseteq \C$ are two weak coWaldhausen structures on $\C$ then the associated functor $\Span(\C,(\C^{\dagger})') \lrar \Span(\C,\C^{\dagger})$ is a subcategory inclusion. In particular, $\C \simeq \Span(\C,\C^{\simeq})$ can be considered as a subcategory of $\Span(\C,\C^{\dagger})$ for any coWaldausen structure $\C^{\dagger}$. In the cases considered in this paper this subcategory will often be wide, see, e.g., Corollary~\ref{c:wide}.
\end{rem}

\subsection{Spans of finite spaces}


\begin{define}
Let $X$ be a space. For $n \geq 0$ we say that $X$ is \textbf{$n$-truncated} if $\pi_i(X,x) = 0$ for every $i > n$ and every $x \in X$. We will say that $X$ is $(-1)$-truncated if it is either empty of contractible and that $X$ is $(-2)$-truncated if it is contractible. We will say that a map $f: X \lrar Y$ is \textbf{$n$-truncated} if the homotopy fiber of $f$ over every point of $Y$ is $n$-truncated.
\end{define}

\begin{define}\label{d:finite}
Let $X$ be a space. For $n \geq -2$ we will say that $X$ is \textbf{$n$-finite} if it is $n$-truncated and all its homotopy groups/sets are finite. We will say that $X$ is \textbf{$\pi$-finite} if it is $n$-finite for some $n$.
\end{define}

The collection of weak equivalence types of $n$-finite Kan complexes is a set. We will denote by $\K_n$ be a complete set of representatives of equivalence types of $n$-finite Kan complexes. 

\begin{warning}
The notion of a finite space should not be confused with the notion of a space equivalent to a simplicial set with finitely many non-degenerate simplices. 
\end{warning}

Let $\Set_{\Del}$ denote the category of simplicial sets and let $\Kan \subseteq \Set_{\Del}$ denote the full subcategory spanned by Kan simplicial sets. Then $\Kan$ is a fibrant simplicial category and its coherent nerve $\cS := \rN(\Kan)$ is a model for the $\infty$-category of spaces. We let $\cS_n \subseteq \cS$ denote the full subcategory spanned by $n$-finite spaces. The condition of being $n$-finite is closed under pullbacks, which means that the $\infty$-category $\cS_n$ has pullbacks and those are computed in $\cS$. 
For $-2 \leq m \leq n$ we may consider the wide subcategory $\cS_{n,m} \subseteq \cS_n$ containing all objects and whose mapping spaces are spanned by the $m$-truncated maps. 
Then $(\cS_n,\cS_{m,n})$ is a weak coWaldhausen $\infty$-category, and we will denote by 
$$ \cS^m_n := \Span(\cS_n,\cS_{n,m}) $$ 
the associated span $\infty$-category (see~\S\ref{s:spans}). 
We note that $\cS_{n,-2}$ is just the maximal subgroupoid of $\cS_n$ and hence $\cS^{-2}_n \simeq \cS_n$. By~\cite[Theorem 1.3(iv)]{span}) the Cartesian monoidal product on $\cS_n$ induces a symmetric monoidal structure on $\cS^m_n$, which is given on the level of objects by $(X,Y) \mapsto X \times Y$, and on the level of morphisms by taking levelwise Cartesian products of spans. We remark that this monoidal structure on $\cS^m_n$ is \textbf{not} the Cartesian one.

\subsection{Colimits in $\infty$-categories of spans}
In this section we will prove some basic results concerning $\K_n$-indexed colimits in $\cS^m_n$. We begin with the following basic observation:
\begin{lem}\label{l:les}
The $\infty$-category $\cS_n$ admits $\K_n$-indexed colimits which are preserved and detected by the inclusion $\cS_n \subseteq \cS$.
\end{lem}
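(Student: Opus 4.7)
The plan is to reduce everything to the following single claim: for any $n$-finite space $K$ and any diagram $\vphi: K \lrar \cS_n$, the colimit $\colim_K \vphi$ computed in $\cS$ is itself $n$-finite. Once this is known, the full faithfulness of $\cS_n \subseteq \cS$ yields all three assertions simultaneously. The same cocone witnesses the colimit in $\cS_n$ (existence), the inclusion sends it to the original colimit cocone in $\cS$ (preservation), and a cocone in $\cS_n$ is a colimit iff its image in $\cS$ is one, since the universal property in a full subcategory is literally the restriction of the universal property in the ambient category (detection).

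To prove the claim, I would present $\vphi: K \lrar \cS$ via unstraightening as a left fibration $p: E \lrar K$ with fiber $\vphi(k)$ over $k \in K$. Since $K$ is an $\infty$-groupoid (i.e.\ a Kan complex), $p$ is automatically a Kan fibration, and the colimit $\colim_K \vphi$ is modelled by the total space $E$.

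It then suffices to check that $E$ is $n$-finite using the long exact sequence of the fibration $\vphi(k_0) \lrar E \lrar K$ at a chosen basepoint $k_0$ in each component of $E$. For $i > n$ each of $\pi_i(\vphi(k_0))$, $\pi_i(K)$, and $\pi_{i+1}(K)$ vanishes by $n$-truncatedness, forcing $\pi_i(E) = 0$ by exactness and showing that $E$ is $n$-truncated. For $0 < i \leq n$ the group $\pi_i(E)$ is sandwiched between finite groups in the long exact sequence, hence is finite. Finally, restricting over each component of $K$ exhibits $\pi_0(E)$ as a disjoint union, indexed by the finite set $\pi_0(K)$, of quotients of the finite sets $\pi_0(\vphi(k))$ by $\pi_1(K,k)$-actions, and so $\pi_0(E)$ is finite as well.

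I expect no serious obstacle; the only mild subtlety is the non-abelian low end of the long exact sequence, where one must know that finiteness propagates under extensions and quotients of finite pointed sets and groups, which it does. The two substantive inputs are the identification of $\colim_K \vphi$ with the total space of the unstraightening of $\vphi$ over an $\infty$-groupoid, and the classical long exact sequence of a Kan fibration.
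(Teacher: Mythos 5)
Your proposal is correct and follows essentially the same route as the paper: reduce via full faithfulness to showing $\cS_n \subseteq \cS$ is closed under $\K_n$-indexed colimits, model the colimit by the total space of the left fibration classified by $\vphi$ (a Kan fibration since the base is a Kan complex), and conclude $n$-finiteness from the long exact sequence of the fibration. Your spelled-out treatment of the low-degree/$\pi_0$ end simply fills in what the paper leaves as a "direct consequence" of that long exact sequence.
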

\begin{proof}
Since the inclusion $\cS_n \subseteq \cS$ is fully-faithful it detects colimits, i.e., every cone diagram in $\cS_n$ which is a colimit diagram in $\cS$ is already a colimit diagram in $\cS_n$. Since $\cS$ admits all small colimits it will suffice to show that $\cS_n \subseteq \cS$ is closed under $\K_n$-indexed colimits. More explicitly, we need to show that if $X$ is an $n$-finite Kan complex and $\vphi: X \lrar \cS$ is an $X$-indexed diagram of spaces such that $\vphi(x)$ is $n$-finite for every $x \in X$ then the colimit of $\vphi$ is also $n$-finite. For this it is convenient to use the fact that colimits in spaces can be modeled by the total space of the left fibration $p:E_\vphi \lrar X$ classified by $\vphi$ (see~\cite[Corollary 3.3.4.6]{higher-topos}). Since $X$ is a Kan complex $E_\vphi$ is also a Kan complex and $p$ is a Kan fibration. We thus need to show that the total space of a Kan fibration with an $n$-finite base and $n$-finite fibers is also $n$-finite. But this is now a direct consequence of the long exact sequence of homotopy groups associated to a Kan fibration.
\end{proof}

\begin{rem}\label{r:les}
Lemma~\ref{l:les} implies that we can model the colimit of a diagram $\vphi: X \lrar \cS_n$ by the total space of the Kan fibration $E_\vphi \lrar X$ classified by $\vphi$. More precisely, for such a $\vphi$ the total space $E_\vphi$ is $n$-finite 
and the collection of fiber inclusions $\{\vphi(x)\simeq(E_\vphi)_x \lrar E_\vphi\}_{x \in X}$ exhibits $E_\vphi$ as the colimit of $\vphi$ in $\cS_n$.
\end{rem}

Given a space $X$ and a point $x \in X$ we will denote by $i_x: \ast \lrar X$ the map which sends the point to $x$.

\begin{lem}\label{l:colimits}
Let $\D$ be an $\infty$-category which admits $\K_n$-indexed colimits and let $\F: \cS_{n} \lrar \D$ be a functor. Then $\F$ preserves $\K_n$-indexed colimits if and only if for every $X \in \cS_n$ the collection $\{\F(i_x):\F(\ast) \lrar \F(X)\}_{x \in X}$ exhibits $\F(X)$ as the colimit of the constant $X$-indexed diagram with value $\F(\ast)$.
\end{lem}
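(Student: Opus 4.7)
The plan is as follows. The ``only if'' direction falls out immediately by applying Remark~\ref{r:les} to the constant $X$-indexed diagram with value $\ast \in \cS_n$: its classifying left fibration is the identity $\Id_X: X \lrar X$, so Remark~\ref{r:les} identifies $X$ itself with $\colim_{x \in X}\ast$ in $\cS_n$ via the cone $\{i_x: \ast \lrar X\}_{x \in X}$. I then just apply the colimit-preserving functor $\F$.

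For the ``if'' direction, given a diagram $\vphi: X \lrar \cS_n$ with $X \in \K_n$, I let $p: E_\vphi \lrar X$ denote the classifying Kan fibration. By Remark~\ref{r:les}, the total space $E_\vphi$ is $n$-finite and the fiber inclusions $\iota_x: \vphi(x) \simeq p^{-1}(x) \hookrightarrow E_\vphi$ exhibit $E_\vphi$ as a colimit of $\vphi$ in $\cS_n$. My goal is then to show that the cone $\{\F(\iota_x)\}_{x \in X}$ exhibits $\F(E_\vphi)$ as $\colim_{x \in X}\F(\vphi(x))$ in $\D$.

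To do this I will use the hypothesis twice. First, applied to each fiber $\vphi(x)$, it says that $\F(\vphi(x))$ is the colimit of the constant $\vphi(x)$-indexed diagram at $\F(\ast)$; equivalently, $\F \circ \vphi$ is the left Kan extension along $p$ of the constant $E_\vphi$-indexed diagram $\consta_{\F(\ast)}: E_\vphi \lrar \D$. Second, applied to $E_\vphi$ itself, the hypothesis identifies $\F(E_\vphi)$ with $\colim_{E_\vphi}\consta_{\F(\ast)}$. Combining these via the transitivity of colimits along $p$, I obtain
$$ \colim_{x \in X}\F(\vphi(x)) \simeq \colim_{E_\vphi}\consta_{\F(\ast)} \simeq \F(E_\vphi). $$

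The main obstacle is verifying that this chain of equivalences really is induced by the canonical cone $\{\F(\iota_x)\}_{x \in X}$, rather than by some other cone with the same target. This amounts to unwinding the universal properties: the colimit cone exhibiting $\F(E_\vphi) \simeq \colim_{E_\vphi}\consta_{\F(\ast)}$ is indexed by points $z \in E_\vphi$ and its components are $\F(i_z): \F(\ast) \lrar \F(E_\vphi)$; restricting this cone to a fiber $p^{-1}(x)$ yields, by the hypothesis applied to $\vphi(x)$, the colimit cone for $\F(\vphi(x))$, with assembly map $\F(\iota_x): \F(\vphi(x)) \lrar \F(E_\vphi)$ because $i_z = \iota_x \circ i_z$ whenever $z$ lies in the fiber over $x$. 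The total $X$-indexed cone assembled from these restricted fiberwise colimits is therefore precisely $\{\F(\iota_x)\}$, which is the desired compatibility.
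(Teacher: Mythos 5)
Your proposal is correct and follows essentially the same route as the paper's proof: both model the colimit by the total space $E_\vphi$ via Remark~\ref{r:les}, apply the hypothesis fiberwise to exhibit $\F\circ\vphi$ as a left Kan extension along $p$ of the constant diagram with value $\F(\ast)$, apply it to $E_\vphi$ itself to identify the total colimit, and conclude by pasting. The cone-compatibility issue you flag at the end is exactly what the paper handles by packaging the argument as a lax commutative triangle over $E_\vphi \to Y \to \ast$ and invoking the pasting lemma for left Kan extensions.
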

\begin{proof}
The ``only if'' direction is due to the fact that the collection of maps $i_x:\ast \lrar X$ exhibits $X$ as the colimit in $\cS_n$ of the constant $X$-indexed diagram with value $\ast$ (see Remark~\ref{r:les}). Now suppose that for every $X \in \cS_n$ the collection $\{\F(i_x)\}_{x \in X}$ exhibits $\F(X)$ as the colimit of the constant $X$-indexed diagram with value $\F(\ast)$. Let $Y \in \K_n$ be an $n$-finite space and let $\vphi: Y \lrar \cS_n$ be a $Y$-indexed diagram in $\cS_n$. Let $p:E_{\vphi} \lrar Y$ be the Kan fibration classified by $\vphi$, so that, in light of Remark~\ref{r:les}, we have that $E_\vphi$ is $n$-finite and the collection of fiber inclusions $\{(E_\vphi)_y \simeq \vphi(y) \lrar E_\vphi\}_{y \in Y}$ exhibits $E_\vphi$ as the colimit of $\vphi$ in $\cS_n$. To finish the proof we need to show that the collection of maps $\{\F(\vphi(y)) \lrar \F(E_\vphi)\}_{y \in Y}$ exhibits $\F(E_\vphi)$ as the colimit of the diagram $\psi := \F \circ \vphi: Y \lrar \D$.
Consider the \textbf{lax} commutative diagram
\begin{equation}\label{e:lax} 
\xymatrix{
E_\vphi \ar^{p}[r]\ar_{\F(\ast)}[dr] & Y \ar[r]\ar_-{\psi}[d] & \ast \ar^{\F(E_\vphi)}[dl] \\
& \D \twocell{ur}{}{0.2}{3}{0.15} \twocell{ur}{}{0.5}{1}{0.15} & \\
}
\end{equation}
where the left diagonal functor is the constant functor with value $\F(\ast)$, and the right diagonal functor sends the point to $\F(E_\vphi)$. By our assumption for every $y \in Y$ the collection of maps $\{\F(i_{z}):\F(\ast) \lrar \F(\vphi(y))\}_{z \in \vphi(y)}$ exhibits $\psi(y) = \F(\vphi(y))$ as the colimit in $\D$ of the constant $\vphi(y)$-indexed diagram with value $\F(\ast)$. Identifying $\vphi(y)$ with the homotopy fiber of $p: E_\vphi \lrar Y$ over $y$ we may conclude that left lax triangle in~\eqref{e:lax} exhibits $\psi:Y \lrar \D$ as a left Kan extension along $p: E_\vphi \lrar Y$ of the constant diagram $E_\vphi \lrar \D$ with value $\F(\ast)$. Similarly, our assumption implies that the collection of maps $\{\F(i_z):\F(\ast) \lrar \F(E_{\vphi})\}_{z \in E_\vphi}$ exhibits $\F(E_\vphi)$ as the colimit in $\D$ of the constant $E_\vphi$-indexed diagram with value $\F(\ast)$, and so the external lax triangle is a left Kan extension diagram as well. It then follows the right lax triangle is a left Kan extension triangle by the pasting lemma for left Kan extensions. But this exactly means that the collection of maps $\{\psi(y) \lrar \F(E_\vphi)\}_{y \in Y}$ exhibits $\F(E_\vphi)$ as the colimit in $\D$ of the diagram $\psi: Y \lrar \D$, as desired.
\end{proof}

\begin{prop}\label{p:adjoint}
For every $-2 \leq m \leq n$ the subcategory inclusion $\cS_n \hrar \cS^m_n$ preserves $\K_n$-indexed colimits.
\end{prop}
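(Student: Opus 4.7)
The strategy is to directly verify that for any $\K_n$-indexed diagram $\vphi: Y \to \cS_n \hookrightarrow \cS^m_n$, the colimit of $\vphi$ in $\cS_n$ -- namely the total space $E_\vphi$ of the classifying Kan fibration, equipped with the cocone of fiber inclusions (cf.\ Remark~\ref{r:les}) -- remains a colimit in $\cS^m_n$. This will follow from an explicit mapping space formula.

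For $X \in \cS_n$ and $Z \in \cS^m_n$ I claim that
\[ \Map_{\cS^m_n}(X, Z) \simeq \Map_{\cS}(X, ((\cS_m)_{/Z})^{\simeq}), \]
where $(\cS_m)_{/Z} := \cS_{/Z} \times_\cS \cS_m$ denotes the overcategory of $m$-finite spaces over $Z$. By Remark~\ref{r:map-span}, the left-hand side is the groupoid of spans $X \leftarrow W \to Z$ with $W \in \cS_n$ and the left leg $p: W \to X$ an $m$-truncated map. Straightening the total classifying map $(p,q): W \to X \times Z$ along the first factor produces a functor $X \to \cS_{/Z}$, $x \mapsto (W_x, q|_{W_x})$; the fibers $W_x$ are simultaneously $m$-truncated (as $p$ is) and $n$-finite (by the long exact sequence argument of Lemma~\ref{l:les}, using that both $X$ and $W$ are $n$-finite), hence $m$-finite, so the functor lands in $(\cS_m)_{/Z}$. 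Conversely, any such functor unstraightens to a span of the required form.

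Now for any $\K_n$-indexed $\vphi: Y \to \cS_n$, since $E_\vphi$ is the colimit of $\vphi$ computed in the $\infty$-category of spaces (Lemma~\ref{l:les}), for any $Z \in \cS^m_n$ we obtain
\[ \Map_{\cS^m_n}(E_\vphi, Z) \simeq \Map_{\cS}(E_\vphi, ((\cS_m)_{/Z})^{\simeq}) \simeq \lim_{y \in Y} \Map_{\cS}(\vphi(y), ((\cS_m)_{/Z})^{\simeq}) \simeq \lim_{y \in Y} \Map_{\cS^m_n}(\vphi(y), Z). \]
Tracing through, this composite equivalence coincides with the restriction map along the fiber-inclusion cocone $\{\vphi(y) \to E_\vphi\}_{y \in Y}$ (viewed in $\cS^m_n$): indeed, for a span $E_\vphi \leftarrow W \to Z$, composition with the fiber inclusion of $y$ pulls $W$ back along $\vphi(y) \hookrightarrow E_\vphi$, which matches the restriction under straightening. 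Hence $E_\vphi$ satisfies the universal property of $\colim_Y \vphi$ in $\cS^m_n$, proving the proposition.

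The principal subtlety lies in establishing the mapping space formula, specifically matching the $n$-finiteness constraint on the total space $W$ of the span with the $m$-finiteness of the fibers $W_x$; this crucially uses the $n$-finiteness of $X$ via the long exact sequence of a Kan fibration to avoid the intuition-trap of confusing ``$W$ is $m$-truncated'' with the stronger ``all fibers are $m$-truncated''. The rest is an essentially formal application of the universal property of colimits of spaces together with the straightening--unstraightening equivalence.
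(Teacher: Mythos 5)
Your argument is correct, and its core is the same as the paper's: both rest on identifying $\Map_{\cS^m_n}(X,Z)$ with $\Map_{\cS}\bigl(X,((\cS_m)_{/Z})^{\simeq}\bigr)$ by straightening the left leg of a span along $X$, together with the finiteness bookkeeping in both directions (fibers of an $m$-truncated map of $n$-finite spaces are $m$-finite; conversely, an $X$-family of $m$-finite spaces has $n$-finite total space since $X$ is $n$-finite). The difference is organizational. The paper first establishes a recognition criterion (Lemma~\ref{l:colimits}), which reduces the proposition to showing that the point inclusions $i_x:\ast \lrar X$ exhibit $X$ as the colimit in $\cS^m_n$ of the constant diagram with value $\ast$; the straightening identification is then only needed with target $\Map_{\cS^m_n}(\ast,Y)\simeq((\cS_m)_{/Y})^{\simeq}$, and the only compatibility to check is that composition with $i_x$ in the span category is pullback along $\{x\}\times Y$. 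You instead verify the universal property for an arbitrary $\K_n$-indexed diagram directly, using the total space $E_\vphi$ and Lemma~\ref{l:les}; this bypasses Lemma~\ref{l:colimits}, but obliges you to check that your chain of equivalences is the one induced by the fiber-inclusion cocone, i.e.\ naturality of the straightening identification under composition with honest maps (compatibility of straightening with base change). You state this check rather tersely, but it is the same kind of verification the paper carries out for the $i_x$, so your proof is complete; what the paper's factorization through Lemma~\ref{l:colimits} buys is mainly reusability of that criterion elsewhere (e.g.\ Corollary~\ref{c:n-colimits}, Proposition~\ref{p:step-1}), while your route keeps the argument self-contained at the cost of the extra naturality bookkeeping.
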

\begin{proof}
By Lemma~\ref{l:colimits} it will suffice to show that for every $X \in \cS^m_n$, the collection of morphisms $\{i_x:\ast \lrar X\}_{x \in X}$ exhibit $X$ as the colimit of the constant diagram $\{\ast\}_{x \in X}$ in $\cS^m_n$. Equivalently, we need to show that given any test object $Y \in \cS^m_n$, the map 
$$ \Map_{\cS^m_n}(X,Y) \lrar \Map_{\cS}(X,\Map_{\cS^m_n}(\ast,Y)) $$ 
determined by the collection of restriction maps $i_x \circ (-): \Map_{\cS^m_n}(X,Y) \lrar \Map_{\cS^m_n}(\ast,Y)$ is an equivalence of spaces. Let $p_X: X \times Y \lrar X$ denote the projection on the first coordinate. By Remark~\ref{r:map-span} we may identify $\Map_{\cS^m_n}(X,Y)$ with the full subgroupoid of $((\cS_n)_{/X \times Y})^{\simeq}$ spanned by those objects $Z \lrar X \times Y$ such that the composite map $Z \lrar X \times Y \x{p_X}{\lrar} X$ is $m$-truncated. Under this equivalence, the restriction map $i_x \circ (-)$ is induced by the pullback functor $i_{\{x\} \times Y}^*: (\cS_n)_{/X \times Y} \lrar (\cS_n)_{/\{x\} \times Y}$.
Now by the straightening-unstraightening equivalence the collection of pullback functors $i_{\{x\} \times \{y\}}^*: \cS_{/X \times Y} \lrar \cS$ induces an equivalence of $\infty$-categories
$$ \St:\cS_{/X \times Y} \x{\simeq}{\lrar} \Fun(X \times Y,\cS) .$$
Using straightening-unstraightening again and the equivalence $\Fun(X \times Y,\cS) \simeq \Fun(X,\Fun(Y,\cS))$ we may conclude that the collection of pullback functors 
$i_{\{x\} \times Y}^*: \cS_{/X \times Y} \lrar \cS_{/\{x\} \times Y}$ induces an equivalence of $\infty$-categories
$$ \St_X:\cS_{/X \times Y} \x{\simeq}{\lrar} \Fun(X, \cS_{/Y}) .$$
and hence an equivalence on the corresponding maximal subgroupoids
$$ \St^{\simeq}_X: (\cS_{/X \times Y})^{\simeq} \x{\simeq}{\lrar} \Fun(X, \cS_{/Y})^{\simeq} \simeq \Map(X, (\cS_{/Y})^{\simeq}).$$
Given an object $Z \lrar X \times Y$ in $(\cS_{/X \times Y})^{\simeq}$, the condition that the composite map $Z \lrar X \times Y \x{p_X}{\lrar} X$ is an $m$-truncated map is equivalent to the condition that the essential image of $\St^{\simeq}_X(Z): X \lrar (\cS_{/Y})^{\simeq}$ is contained in $((\cS_m)_{/Y})^{\simeq}$. Furthermore, since $X$ is $n$-truncated this condition automatically implies that $Z$ is $n$-truncated. Identifying $((\cS_m)_{/Y})^{\simeq}$ with $\Map_{\cS^m_n}(\ast,Y)$ we may then conclude that the collection of pullback functors $i_{\{x\} \times Y}^*: (\cS_n)_{/X \times Y} \lrar (\cS_n)_{/\{x\} \times Y}$ induces an equivalence of $\infty$-groupoids
$$ \Map_{\cS^m_n}(X,Y) \x{\simeq}{\lrar} \Map(X, \Map_{\cS^m_n}(\ast,Y)), $$
as desired. 
\end{proof}

\begin{lem}\label{l:yoneda}
Let $-2 \leq m \leq n$ be integers. Then any equivalence in $\cS^m_n$ is homotopic to the image of an equivalence in $\cS_n \subseteq \cS^m_n$.
\end{lem}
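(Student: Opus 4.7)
The plan is to show that any equivalence $\sigma : X \to Y$ in $\cS^m_n$ admits a representative span $X \xleftarrow{p} Z \xrightarrow{q} Y$ in which both $p$ and $q$ are already equivalences in $\cS_n$. Once this is established, choosing a homotopy inverse $p^{-1} : X \to Z$ yields an equivalence in $((\cS_n)_{/X \times Y})^{\simeq}$ between $(Z, (p,q))$ and $(X, (\Id_X, q \circ p^{-1}))$, so by Remark \ref{r:map-span} the span $\sigma$ is homotopic in $\cS^m_n$ to $X \xleftarrow{\Id} X \xrightarrow{q p^{-1}} Y$, which is the image of the equivalence $q \circ p^{-1} : X \to Y$ from $\cS_n$ under the inclusion $\cS_n \hookrightarrow \cS^m_n$.

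To show that both legs must be equivalences, I would first pass to the ambient $\infty$-category $\Span(\cS_n)$. By Remark \ref{r:subcategory}, $\cS^m_n$ is a wide subcategory of $\Span(\cS_n)$, so $\sigma$ remains an equivalence there, with some inverse $\tau$ representable by a span $Y \xleftarrow{p'} Z' \xrightarrow{q'} X$. Computing $\tau \circ \sigma$ as $X \xleftarrow{p\pi_1} Z \times_Y Z' \xrightarrow{q'\pi_2} X$ and unpacking $\tau \circ \sigma \simeq \Id_X$ via Remark \ref{r:map-span} produces an equivalence $\phi : X \xrightarrow{\simeq} Z \times_Y Z'$ in $\cS_n$ with $p \circ \pi_1 \circ \phi \simeq \Id_X$ and $q' \circ \pi_2 \circ \phi \simeq \Id_X$; in particular $p \pi_1 \simeq \phi^{-1}$ is an equivalence. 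Symmetrically, $\sigma \circ \tau \simeq \Id_Y$ produces an analogous equivalence $\psi : Y \xrightarrow{\simeq} Z' \times_X Z$. The cleanest way to conclude that $p$ itself is an equivalence in $\cS_n$ is via the factorization $\sigma = q_* \circ p^*$ in $\Span(\cS_n)$ and the adjunction $p_* \dashv p^*$: the above identities force $p^*$ to be invertible, and by uniqueness of adjoints $(p^*)^{-1}$ must coincide with $p_*$, so the relation $p_* \circ p^* \simeq \Id_X$—computed as the span $X \xleftarrow{p} Z \xrightarrow{p} X$—forces $p$ to have a section in $\cS_n$ that is itself an equivalence, hence $p$ is an equivalence. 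The same argument applied to $q_*$ shows $q$ is an equivalence.

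The main obstacle is the step of deducing from $\sigma = q_* p^*$ being an equivalence that each factor is separately an equivalence. This is the standard ``equivalences in a span category come from equivalences in the underlying category'' principle, but it must be carried out with appropriate care for the $\infty$-categorical coherences of the adjunction. An alternative, more concrete route avoids adjunctions entirely: use the equivalences $\phi$ and $\psi$ to show directly that every fiber of $p$ is contractible. Here one observes that the fiber of $p\pi_1: Z \times_Y Z' \to X$ at $x$ is $\mathrm{fib}_x(p) \times_Y Z'$, which is contractible since $p\pi_1$ is an equivalence, and that the projection $\mathrm{fib}_x(p) \times_Y Z' \to \mathrm{fib}_x(p)$ is a Kan fibration whose fibers are of the form $\mathrm{fib}_{y}(p')$; the analogous equivalence $\psi$ then shows these latter fibers are themselves contractible, whence $\mathrm{fib}_x(p) \simeq \ast$ by the long exact sequence (as in the proof of Lemma~\ref{l:les}).
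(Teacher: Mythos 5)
Your setup is fine: reducing to the claim that the left leg $p$ of an equivalence span is an equivalence, and extracting from $\tau\circ\sig\simeq\Id_X$ and $\sig\circ\tau\simeq\Id_Y$ (via Remark~\ref{r:map-span}) that the composite legs $p\pi_1: Z\times_Y Z'\lrar X$ and $p'\pi_1': Z'\times_X Z\lrar Y$ are equivalences, matches what one must do. The problem is that neither of your two routes actually closes the key step. In route A, the ``adjunction $p_*\dashv p^*$'' does not exist in $\Span(\cS_n)$ as the paper constructs it: this is an $(\infty,1)$-category with no non-invertible $2$-morphisms, so one cannot speak of adjoint $1$-morphisms there (that would require the $(\infty,2)$-categorical span construction, which is not available in the paper). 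Moreover, even granting such an ambient $2$-category, the identities you derived only show that $p^*$ admits a \emph{left} inverse (since $\pi_1^*p^*=(p\pi_1)^*$ is invertible); ``has a left adjoint and a left inverse'' does not imply invertible, so the assertion that ``the above identities force $p^*$ to be invertible'' is exactly the content of the lemma and is not derived.

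Route B has the same circularity in concrete form: the equivalence $\psi$ coming from $\sig\circ\tau\simeq\Id_Y$ shows that $\mathrm{fib}_y(p')\times_X Z$ is contractible for every $y$, \emph{not} that $\mathrm{fib}_y(p')$ is contractible -- deducing the latter from the former is precisely the same problem you started with for $p$, with the roles of the two spans exchanged. (One can in fact push this fiberwise analysis through -- e.g.\ first using nonemptiness of all fibers of $p$ and $p'$ and then an inductive comparison of homotopy groups via the long exact sequence -- but that is a genuinely different and longer argument than the one you wrote.) The paper avoids this entirely by a linearization trick: it sends a span $X\x{p}{\llar}Z\x{q}{\lrar}Y$ to the functor $p_!q^*:\Fun(Y,\cS)\lrar\Fun(X,\cS)$, which respects composition by Beck--Chevalley; an invertible span then yields an equivalence of $\infty$-categories, which must preserve terminal objects, and since $q^*$ does too, $p_!(\ast)$ is terminal in $\Fun(X,\cS)$. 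By the pointwise formula $p_!(\ast)(x)\simeq Z_x$, all fibers of $p$ are contractible, so $p$ is an equivalence, and the span is then homotopic to the image of $q\circ r$ for $r$ a homotopy inverse of $p$ -- note that only the left leg needs to be handled; there is no need to show separately that $q$ is an equivalence.
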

\begin{proof}
Given a span
\begin{equation}\label{e:corr}
\xymatrix{
& Z\ar_{p}[dl]\ar^{q}[dr] & \\
X && Y \\
}
\end{equation}
we may associate to it the functor $p_!q^*: \Fun(Y,\cS) \lrar \Fun(X,\cS)$, where $q^*: \Fun(Y,\cS) \lrar \Fun(Z,\cS)$ is the restriction functor and $p_!:\Fun(Z,\cS) \lrar \Fun(X,\cS)$ is given by left Kan extension. The Beck-Chevalley condition (see~\cite[Proposition 4.3.3]{ambi}) implies that this association respects composition of spans up to homotopy. It follows that if~\eqref{e:corr} is an equivalence in $\cS^m_n$ then the induced functor $p_!q^*: \Fun(Y,\cS) \lrar \Fun(X,\cS)$ is an equivalence of $\infty$-categories. In particular, $p_!q^*$ preserves terminal objects. Since the restriction functor $q^*$ also preserves terminal object it now follows that $p_!(\ast):X \lrar \cS$ is the terminal functor. On the other hand, by the pointwise construction of left Kan extensions $p_!(\ast)$ is also the functor which associates to $x \in X$ the homotopy fiber $Z_x$ of $p$ over $x$. It then follows that the homotopy fibers of $p$ are all contractible, and so $p$ is an equivalence. Let $r: X \lrar Z$ be a homotopy inverse of $p$. Then~\eqref{e:corr} is homotopic to the image in $\Map_{\cS^m_n}(X,Y)$ of the arrow $q\circ r\in \Map_{\cS_n}(X,Y)$, as desired.

\end{proof}

\begin{cor}\label{c:wide}
The subcategory inclusion $\cS_n \subseteq \cS_n^m$ is wide.
\end{cor}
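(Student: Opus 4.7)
My plan is to verify directly the definition of wideness: that the inclusion $\F: \cS_n \hrar \cS^m_n$ induces an equivalence of Kan complexes $\cS_n^{\simeq} \lrar (\cS^m_n)^{\simeq}$ on maximal subgroupoids. Two preliminary facts do most of the scaffolding. First, by construction $\cS_n$ and $\cS^m_n$ have exactly the same set of objects, so $\F$ is the identity on $0$-simplices, and in particular the induced map is surjective on $\pi_0$ of the maximal subgroupoids. Second, by Remark~\ref{r:subcategory} the inclusion is faithful, so for every $X, Y \in \cS_n$ the induced map $\Map_{\cS_n}(X,Y) \lrar \Map_{\cS^m_n}(X,Y)$ is $(-1)$-truncated.

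I would then conclude by an application of Whitehead's theorem. On $\pi_0$, surjectivity is free from the first observation above, while injectivity follows from Lemma~\ref{l:yoneda}: if $X$ and $Y$ lie in the same component of $(\cS^m_n)^{\simeq}$, the connecting equivalence in $\cS^m_n$ is homotopic to the image of an equivalence in $\cS_n$, so $X$ and $Y$ already lie in the same component of $\cS_n^{\simeq}$. For higher homotopy at a basepoint $X$, I would identify $\Omega_X (\cS_n^{\simeq})$ with the subspace $\mathrm{Aut}_{\cS_n}(X) \subseteq \Map_{\cS_n}(X,X)$ of autoequivalences, and likewise for $\cS^m_n$. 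The restricted map $\mathrm{Aut}_{\cS_n}(X) \lrar \mathrm{Aut}_{\cS^m_n}(X)$ is then $(-1)$-truncated (being a restriction of a $(-1)$-truncated map), and by Lemma~\ref{l:yoneda} it is surjective on $\pi_0$. A $(-1)$-truncated $\pi_0$-surjection of spaces is an equivalence, so this yields equivalences on all $\pi_k$ with $k \geq 1$.

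The substantive content is already packaged into Lemma~\ref{l:yoneda}; the corollary itself is a formal unpacking of the definition of a wide subcategory, combined with the faithfulness of the inclusion and the elementary criterion for a map of Kan complexes to be an equivalence. I do not anticipate any real obstacle in carrying this out.
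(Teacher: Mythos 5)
Your proof is correct and follows essentially the same route as the paper, which deduces the corollary directly from Lemma~\ref{l:yoneda} together with the faithfulness of the inclusion noted in Remark~\ref{r:subcategory}. Your Whitehead-style bookkeeping with automorphism spaces is just an explicit unpacking of that same deduction, so there is nothing substantively different to flag.
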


\begin{cor}\label{c:cool}
Let $X$ be a space. Then any $X$-indexed diagram in $\cS^m_n$ comes from an $X$-indexed diagram in $\cS_n$.
\end{cor}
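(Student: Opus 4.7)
The plan is to reduce the statement immediately to Corollary~\ref{c:wide}, which is the essential content. Recall that a ``space'' here means a Kan complex, i.e., an $\infty$-groupoid, so every morphism in $X$ is an equivalence. Since functors preserve equivalences, any diagram $\vphi: X \lrar \cS^m_n$ factors (uniquely up to contractible choice) through the maximal subgroupoid $(\cS^m_n)^{\simeq} \subseteq \cS^m_n$.

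By Corollary~\ref{c:wide}, the subcategory inclusion $\cS_n \subseteq \cS^m_n$ is wide, which by definition means that the induced map $(\cS_n)^{\simeq} \lrar (\cS^m_n)^{\simeq}$ is an equivalence of $\infty$-groupoids. I would then choose a homotopy inverse $(\cS^m_n)^{\simeq} \x{\simeq}{\lrar} (\cS_n)^{\simeq}$ and compose with $\vphi$ to obtain a diagram $\wtl{\vphi}: X \lrar (\cS_n)^{\simeq} \subseteq \cS_n$ whose image in $\cS^m_n$ is equivalent to $\vphi$.

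The proof is essentially a one-liner, so there is no genuine obstacle; the only subtlety is recognizing that the factorization through the maximal subgroupoid is automatic because the source $X$ is an $\infty$-groupoid, after which the wideness statement of Corollary~\ref{c:wide} does all the real work.
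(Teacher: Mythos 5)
Your proof is correct and is exactly the argument the paper intends: since $X$ is an $\infty$-groupoid the diagram factors through $(\cS^m_n)^{\simeq}$, and Corollary~\ref{c:wide} (wideness of $\cS_n \subseteq \cS^m_n$, itself a consequence of Lemma~\ref{l:yoneda}) identifies this with $(\cS_n)^{\simeq}$, so the diagram lifts to $\cS_n$ up to equivalence. No gaps.
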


\begin{cor}\label{c:n-colimits}
For every $-2 \leq m \leq n$ the $\infty$-category $\cS^m_n$ admits $\K_n$-indexed colimits. Furthermore, if $\F: \cS^m_n \lrar \D$ is any functor then $\F$ preserves $\K_n$-indexed colimits if and only if the composed functor $\cS_n \hrar \cS^m_n \lrar \D$ preserves $\K_n$-indexed colimits.
\end{cor}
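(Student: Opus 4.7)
The plan is to reduce all $\K_n$-indexed colimits in $\cS^m_n$ to $\K_n$-indexed colimits in $\cS_n$, using the three preparatory results already in hand: Lemma~\ref{l:les} gives existence of $\K_n$-indexed colimits in $\cS_n$, Proposition~\ref{p:adjoint} shows the inclusion $\iota \colon \cS_n \hrar \cS^m_n$ preserves them, and Corollary~\ref{c:cool} shows that every diagram indexed by a space in $\cS^m_n$ lifts along $\iota$.

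First I would establish existence. Let $K \in \K_n$ and let $\vphi \colon K \lrar \cS^m_n$ be a diagram. By Corollary~\ref{c:cool} we can write $\vphi \simeq \iota \circ \wtl\vphi$ for some $\wtl\vphi \colon K \lrar \cS_n$. By Lemma~\ref{l:les} the diagram $\wtl\vphi$ admits a colimit $C \in \cS_n$, and by Proposition~\ref{p:adjoint} the induced cocone $\iota(\wtl\vphi) \Rightarrow \iota(C)$ exhibits $\iota(C)$ as a colimit of $\vphi$ in $\cS^m_n$. This proves that $\cS^m_n$ admits $\K_n$-indexed colimits, and simultaneously shows that such colimits are always computed by lifting to $\cS_n$, taking the colimit there, and applying $\iota$.

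Now for the equivalence of conditions. The \emph{only if} direction is immediate: if $\F \colon \cS^m_n \lrar \D$ preserves $\K_n$-indexed colimits, then so does $\F \circ \iota$, since $\iota$ preserves them by Proposition~\ref{p:adjoint}. Conversely, suppose $\F \circ \iota$ preserves $\K_n$-indexed colimits, and let $\vphi \colon K \lrar \cS^m_n$ be a $K$-indexed diagram with $K \in \K_n$. Writing $\vphi = \iota \circ \wtl\vphi$ as above and letting $C = \colim_{\cS_n} \wtl\vphi$, the preceding paragraph identifies the colimit cocone for $\vphi$ in $\cS^m_n$ with the image under $\iota$ of the colimit cocone for $\wtl\vphi$ in $\cS_n$. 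Applying $\F$ thus gives the cocone $\F \circ \iota \circ \wtl\vphi \Rightarrow (\F\circ\iota)(C)$, which is a colimit cocone in $\D$ by the preservation assumption on $\F \circ \iota$. Hence $\F$ preserves the colimit of $\vphi$, completing the proof.

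There is no real obstacle once Corollaries~\ref{c:wide} and \ref{c:cool} are available; the content of the statement is precisely that diagrams and their colimits in $\cS^m_n$ are controlled by those in the subcategory $\cS_n$.
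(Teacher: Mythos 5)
Your argument is correct and is exactly the paper's proof, which simply combines Corollary~\ref{c:cool} with Proposition~\ref{p:adjoint} (together with Lemma~\ref{l:les} for existence in $\cS_n$): lift the diagram to $\cS_n$, compute the colimit there, and push it forward along the colimit-preserving inclusion. Your write-up just spells out the details that the paper leaves implicit.
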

\begin{proof}
Combine Corollary~\ref{c:cool} and Proposition~\ref{p:adjoint}. 
\end{proof}

\begin{cor}\label{c:n-tensor}
The symmetric monoidal product $\cS^{m}_{n} \times \cS^m_n \lrar \cS^m_n$ preserves $\K_n$-indexed colimits in each variable separately.
\end{cor}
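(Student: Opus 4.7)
The plan is as follows. By symmetry of the monoidal product, it suffices to show that for every fixed $Y \in \cS^m_n$, the endofunctor $L_Y := (-) \times Y : \cS^m_n \lrar \cS^m_n$ preserves $\K_n$-indexed colimits. I will apply Corollary~\ref{c:n-colimits} to reduce this to checking that the composition $\cS_n \hrar \cS^m_n \lrar \cS^m_n$ (with the second arrow being $L_Y$) preserves $\K_n$-indexed colimits.

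The key observation is that this composition factors through the subcategory inclusion $\cS_n \hrar \cS^m_n$. Indeed, a morphism $f: X \lrar X'$ in $\cS_n$ is sent by the inclusion to the span with identity left leg and right leg $f$; since the monoidal product on $\cS^m_n$ is defined on morphisms by levelwise Cartesian products of spans, tensoring such a span with the identity span on $Y$ yields again a span with identity left leg, namely the image under the inclusion of the morphism $f \times \Id_Y : X \times Y \lrar X' \times Y$ in $\cS_n$. Thus the functor in question factors as $\cS_n \lrar \cS_n \hrar \cS^m_n$, the first arrow being Cartesian product with $Y$ in $\cS_n$. For the second factor, Proposition~\ref{p:adjoint} already guarantees preservation of $\K_n$-indexed colimits. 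For Cartesian product with $Y$ on $\cS_n$, Lemma~\ref{l:les} tells us that $\K_n$-indexed colimits in $\cS_n$ are preserved and detected by $\cS_n \hrar \cS$, so it suffices to observe that in $\cS$ the Cartesian product preserves all small colimits in each variable separately; this holds because $\cS$ is Cartesian closed.

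I do not anticipate a serious obstacle here: the essential preparatory work has been carried out in Lemma~\ref{l:les}, Proposition~\ref{p:adjoint} and Corollary~\ref{c:n-colimits}. The only point requiring care is the verification that $L_Y$ restricted to $\cS_n$ lands in the image of $\cS_n$, which is immediate from the explicit description of the monoidal product on morphisms.
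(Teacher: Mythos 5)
Your proof is correct and takes essentially the same route as the paper: reduce to the Cartesian product on $\cS_n$ using the compatibility of the inclusion $\cS_n \hookrightarrow \cS^m_n$ with the monoidal structure, prove the $\cS_n$-level statement via Lemma~\ref{l:les} and the fact that products in $\cS$ preserve colimits, and transport back with Proposition~\ref{p:adjoint} — your appeal to Corollary~\ref{c:n-colimits} just packages the paper's use of Corollary~\ref{c:cool} and Proposition~\ref{p:adjoint}. The only point stated loosely is your identification of the restricted functor with $(-)\times Y$ ``on morphisms''; strictly this is the assertion that the inclusion intertwines the Cartesian product with the span monoidal product, which is part of the cited construction and is used in exactly the same implicit way in the paper's commutative diagram.
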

\begin{proof}
We have a commutative diagram
$$ \xymatrix{
\cS \times \cS \ar[d] & \cS_n \times \cS_n \ar[l]\ar[r]\ar[d] & \cS^m_n \times \cS^m_n \ar[d] \\
\cS & \cS_n \ar[r]\ar[l] & \cS^m_n \\
}$$
Where the left and middle vertical maps are the respective Cartesian product and right vertical map is the one induced by the Cartesian product on span $\infty$-categories. Since Cartesian products in $\cS$ preserve colimits in each variable separately and the inclusion $\cS_n \hrar \cS$ preserves products and $\K_m$-indexed colimits (Lemma~\ref{l:les}) we get that Cartesian products in $\cS_n$ preserve $\K_n$-indexed colimits in each variable separately. Since the functor $\cS_n \lrar \cS^m_n$ is essentially surjective the desired result now follows from Corollary~\ref{c:cool} and Proposition~\ref{p:adjoint}. 
\end{proof}

We will denote by $\Cat_{\K_n}$ the $\infty$-category of small $\infty$-categories which admit $\K_n$-indexed colimits and functors which preserve $\K_n$-indexed colimits between them. If $\C,\D$ are $\infty$-categories which admit $\K_n$-indexed colimits then we denote by $\Fun_{\K_n}(\C,\D) \subseteq \Fun(\C,\D)$ the full subcategory spanned by those functors which preserve $\K_n$-indexed colimits. Recall that by~\cite[corollary 4.8.4.1]{higher-algebra} we may endow this $\infty$-category with a symmetric monoidal structure $\Cat^{\otimes}_{\K_n} \lrar \Ne(\Fin_\ast)$ such that for $\C,\D \in \Cat_{\K_n}$ their tensor product $\C \otimes_{\K_n} \D$ admits a map $\C \times \D \lrar \C \otimes_{\K_n} \D$ from the Cartesian product and is characterized by the following universal property: for every $\infty$-category $\E \in \Cat_{\K_n}$ the restriction
$$ \Fun_{\K_n}(\C \otimes_{\K_n} \D,\E) \lrar \Fun(\C \times \D,\E) $$
is fully-faithful and its essential image is spanned by those functors $\C \times \D \lrar \E$ which preserve $\K_n$-indexed colimits in each variable separately. In particular, we may identify \textbf{commutative algebra objects} in $\Cat_{\K_m}$ with symmetric monoidal $\infty$-categories which admit $\K_m$-indexed colimits and such that the monoidal product preserves $\K_m$-indexed colimits in each variable separately.

Corollary~\ref{c:n-colimits} and Corollary~\ref{c:n-tensor} now imply the following:
\begin{cor}\label{c:algebra}
The $\infty$-category $\cS^m_n$ together with its symmetric monoidal structure determines a commutative algebra object in $\Cat^{\otimes}_{\K_m}$
\end{cor}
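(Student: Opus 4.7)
The plan is to invoke directly the universal property of the symmetric monoidal structure on $\Cat_{\K_n}$ recalled just before the statement. Indeed, by \cite[Corollary 4.8.4.1]{higher-algebra}, a commutative algebra object in $\Cat_{\K_n}$ is the same datum as a symmetric monoidal $\infty$-category $(\C,\otimes)$ such that $\C$ admits $\K_n$-indexed colimits and such that $\otimes:\C\times\C\to\C$ preserves $\K_n$-indexed colimits in each variable separately. So it suffices to verify these two properties for $\cS^m_n$ equipped with the symmetric monoidal structure induced by the Cartesian product on $\cS_n$ via \cite[Theorem 1.3(iv)]{span}.

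The first property is exactly the content of Corollary~\ref{c:n-colimits}, which asserts that $\cS^m_n$ admits $\K_n$-indexed colimits. The second property is exactly Corollary~\ref{c:n-tensor}, which asserts that the induced symmetric monoidal product $\cS^m_n\times\cS^m_n\to\cS^m_n$ preserves $\K_n$-indexed colimits in each variable separately. Combining the two yields the required commutative algebra structure on $\cS^m_n$ in $\Cat^{\otimes}_{\K_n}$ (and hence, a fortiori, in $\Cat^{\otimes}_{\K_m}$ for $m\leq n$, since every $m$-finite space is $n$-finite so $\K_m$-colimits are a special case of $\K_n$-colimits).

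Since both ingredients are already established, there is no genuine obstacle here; the proof is simply a repackaging. The only subtle point worth flagging is that the symmetric monoidal structure in question is \emph{not} Cartesian on $\cS^m_n$ (as noted after the definition of $\cS^m_n$) — so one should remember to invoke \cite[Theorem 1.3(iv)]{span} to produce the symmetric monoidal structure, rather than trying to exhibit it as a Cartesian structure on $\cS^m_n$. Once this is kept straight, both verifications are immediate from the previous two corollaries.
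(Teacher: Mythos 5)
Your proposal is correct and coincides with the paper's own (one-line) argument, which likewise combines Corollary~\ref{c:n-colimits} and Corollary~\ref{c:n-tensor} with the identification, recalled just beforehand via \cite[Corollary 4.8.4.1]{higher-algebra}, of commutative algebra objects in $\Cat^{\otimes}_{\K_n}$ with symmetric monoidal $\infty$-categories admitting $\K_n$-indexed colimits whose tensor product preserves them in each variable. Your extra remark that the statement's $\K_m$ follows a fortiori from the $\K_n$ case (since every $m$-finite space is $n$-finite) is a sensible way to handle the subscript in the statement, which the paper leaves implicit.
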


\section{Ambidexterity and duality}\label{s:ambi}

\begin{define}[see{~\cite[Definition 4.4.2]{ambi}}]
Let $\D$ be an $\infty$-category and $-2 \leq m$ an integer. Following~\cite{ambi}, we shall say that $\D$ is \textbf{$m$-semiadditive} if $\D$ admits $\K_m$-indexed colimits and every $m$-finite space is $\D$-ambidextrous in the sense of~\cite[Definition 4.3.4]{ambi}.
\end{define}

Informally speaking, $m$-semiadditive $\infty$-categories are $\infty$-categories in which $\K_m$-indexed colimits and limits coincide. The reason we do not recall~\cite[Definition 4.3.4]{ambi} in full is that it requires a somewhat elaborate inductive process in order to define the maps which induce the desired equivalence. That said, if $\D$ is an $\infty$-category with $\K_m$-indexed colimits which admits a compatible action of $\cS^{m-1}_m$, then we will see below that the condition that $\D$ is $m$-semiadditive can be expressed rather succinctly (see Proposition~\ref{p:non-deg} and Corollary~\ref{c:limit}). On the other hand, the main result of this paper (Theorem~\ref{t:main} below) implies in particular that any $(m-1)$-semiadditive $\infty$-category which admits $\K_m$-indexed colimits acquires a canonical action of $\cS^{m-1}_m$, and so this approach can be considered as an alternative way to define higher semiadditivity.

\begin{examples}\
\begin{enumerate}
\item
An $\infty$-category $\D$ is $(-1)$-semiadditive if and only if it is \textbf{pointed}, i.e., contains an object which is both initial and final.
\item
Every stable $\infty$-category is $0$-semiadditive.
\item
Let $\D$ be an $\infty$-category which admits finite products. Then the $\infty$-category of $\EE_\infty$-monoid objects in $\D$ is $0$-semiadditive (see~\S\ref{s:monoids}).
\item
For any prime $p$ and integer $n \geq 0$, the $\infty$-category of $K(n)$-local spectra is $m$-semiadditive for any $m$ (where $K(n)$ denotes the Morava $K$-theory spectrum at height $n$). This is the main result of~\cite{ambi}.
\item
For any prime $p$ and integer $n \geq 0$, the $\infty$-category of $T(n)$-local spectra is $m$-semiadditive for any $m$ (where $T(n)$ denotes the telescope of a finite $p$-local type $n$ spectrum). This is the main result of~\cite{Tn}.
\item
For every $-2 \leq m \leq n$ the $\infty$-category $\cS^m_n$ is $m$-semiadditive (see Corollary~\ref{c:origin} below). 
\item
The $\infty$-category $\Cat_{\K_m}$ of small $\infty$-categories which admit $\K_m$-indexed colimits is $m$-semiadditive (see Proposition~\ref{p:cat_Km}
 below).
\item
If $\D$ is $m$-semiadditive then $\D^{\op}$ is $m$-semiadditive.
\end{enumerate}
\end{examples}

\begin{notate}
For a space $X$ and an $\infty$-category $\D$, we will typically refer to functors $X \lrar \D$ as $\D$-valued \textbf{local systems} on $X$. We will denote by $\D^X := \Fun(X,\D)$ the $\infty$-category of $\D$-valued local system on $X$. Given a map $f: X \lrar Y$ of spaces we have a restriction functor $f^*: \D^Y \lrar \D^X$. If $\D$ admits $\K_m$-indexed colimits and the homotopy fibers of $f$ are $m$-finite 
then $f^*$ admits a left adjoint $f_!: \D^X \lrar \D^Y$ given by left Kan extension. If in addition $\D$ is $m$-semiadditive then $f_!$ is also \textbf{right adjoint} to $f^*$. We will say that a natural transformation $u: \Id \Rightarrow f_!f^*$ \textbf{exhibits $f$ as $\D$-ambidextrous} if it is a unit of an adjunction $f^* \dashv f_!$.
\end{notate}

In this section we fix an integer $m \geq -1$ and consider the situation where $\D$ is an $\infty$-category satisfying the following properties:
\begin{assume}\label{a:fix}\
\begin{enumerate}
\item
$\D$ admits $\K_m$-indexed colimits.
\item
$\D$ is $(m-1)$-semiadditive.
\item
$\D$ admits a structure of a $\cS^{m-1}_{m}$-module in $\Cat_{\K_m}$. In other words, there is an action of the monoidal $\infty$-category $\cS^{m-1}_m$ on $\D$ such that the action map $\cS^{m-1}_m \times \D \lrar \D$ preserves $\K_m$-indexed colimits in each variable separately. Following~\cite{ambi}, we will denote the functor $X \otimes (-)$ also by $[X]: \D \lrar \D$.
\end{enumerate}
\end{assume}

Our first goal in this section is to show that if $\D$ satisfies Hypothesis~\ref{a:fix}, and if $f: X \lrar Y$ is an $(m-1)$-truncated map of $m$-finite spaces, then a unit transformations $u: \Id \Rightarrow f_!f^*$ exhibiting $f$ as $\D$-ambidextrous can be written in terms of the $\cS^{m-1}_m$ action on $\D$ (see Lemma~\ref{l:lem-2}). We will use this description in order to give an explicit criteria characterization those $\D$ satisfying~\ref{a:fix} which are also $m$-semiadditive (see Proposition~\ref{p:non-deg} and Corollary~\ref{c:limit}).

Let $X \in \cS^{m-1}_{m}$ be an object. Recall that for a point $x \in X$ we denote by $i_x: \ast \lrar X$ the map in $\cS_m \subseteq \cS^{m-1}_{m}$ which sends $\ast$ to the point $x$. By Proposition~\ref{p:adjoint} and Lemma~\ref{l:colimits}, for any object $D \in \D$ the collection of induced maps
$$ (i_x)_*: [\ast](D) \lrar [X](D) $$
exhibits $[X](D)$ as the colimit in $\D$ of the constant $X$-indexed diagram with value $[\ast](D) = D$. Letting $D$ vary we obtain that the maps $[i_x]:[\ast] \Rightarrow [X]$ exhibit $[X]$ as the colimit in $\Fun(\D,\D)$ of the constant diagram $\{\Id\}_{x \in X}$. In particular, we may identify the functor $[X]$ with the composed functor $\D \x{p^*}{\lrar} \D^X \x{p_!}{\lrar} \D$, where $p: X \lrar \ast$ is the terminal map.

\begin{lem}\label{l:basic}
Let $\D$ be as in Hypothesis~\ref{a:fix}, let $X$ be an $m$-finite space and let $p: X \lrar \ast$ be the terminal map in $\cS_m$ (which we naturally consider as a map in $\cS^{m-1}_m$). Then the natural transformation 
$$ p_!p^* \simeq [X] \x{[p]}{\Rightarrow}  [\ast] \simeq \Id $$
is a counit exhibiting $p_!: \D^X \lrar \D$ as left adjoint to $p^*:\D \lrar \D^X$. 
\end{lem}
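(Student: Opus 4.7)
The plan is to verify that the natural transformation $[p]: [X] \Rightarrow [\ast] \simeq \Id_\D$ satisfies the universal property characterizing the counit of the adjunction $p_! \dashv p^*$. This adjunction exists because the homotopy fibers of $p$ are $m$-finite and $\D$ admits $\K_m$-indexed colimits by Hypothesis~\ref{a:fix}(1), so that $p^*$ acquires a left adjoint $p_!$ computed by pointwise colimits.

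First I would recall from the discussion preceding the lemma that the cocone $\{[i_x]: [\ast] \Rightarrow [X]\}_{x \in X}$ exhibits $[X]$ as the colimit in $\Fun(\D,\D)$ of the constant $X$-indexed diagram with value $\Id_\D$; under the identification $[X] \simeq p_! p^*$, each $[i_x]$ is thereby identified with the component at $x$ of the universal cocone exhibiting $p_! p^* \simeq \colim_{x \in X} \Id_\D$. Consequently, to show that $[p]: p_! p^* \Rightarrow \Id_\D$ is the counit, it suffices to verify that for every $x \in X$ the composite $[p] \circ [i_x]: \Id_\D \Rightarrow [X] \Rightarrow \Id_\D$ is homotopic to the identity natural transformation, since the counit is characterized, under the colimit universal property, as the map corresponding to the constant family $\{\Id_D\}_{x \in X, D \in \D}$.

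Now $p \circ i_x: \ast \lrar X \lrar \ast$ is the identity of $\ast$ in $\cS_m$ and hence in $\cS^{m-1}_m$, so by functoriality of the $\cS^{m-1}_m$-action one has $[p] \circ [i_x] \simeq [p \circ i_x] \simeq [\Id_\ast] \simeq \Id_{[\ast]} \simeq \Id_{\Id_\D}$, which is exactly what is required. The main subtlety, which would require some care in a full write-up, is the coherent identification of the module-theoretic cocone $\{[i_x]\}_{x \in X}$ with the universal cocone for $p_! p^* \simeq \colim_X \Id_\D$ at the level of natural transformations of endofunctors of $\D$; this is precisely what is established in the discussion preceding the lemma, by combining Proposition~\ref{p:adjoint} and Lemma~\ref{l:colimits} applied pointwise to each $D \in \D$ and then promoted to a statement in $\Fun(\D,\D)$.
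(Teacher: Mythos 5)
Your identification of $[X]$ with $p_!p^*$ and the computation $[p]\circ[i_x]\simeq[p\circ i_x]\simeq\Id$ are fine, but the reduction in your second paragraph has a genuine gap. Under the universal property of $[X]\simeq\colim_{x\in X}\Id$, a transformation $T\colon p_!p^*\Rightarrow\Id$ corresponds not to a bare $X$-indexed collection of maps $\Id\Rightarrow\Id$ but to a point of $\Map_{\cS}\bigl(X,\Map_{\Fun(\D,\D)}(\Id,\Id)\bigr)$, i.e.\ to a \emph{coherent} family. The counit does correspond to the constant family, but knowing that each composite $[p]\circ[i_x]$ is individually homotopic to the identity does not show that the family $x\mapsto[p]\circ[i_x]$ is homotopic to the constant family: $X$ may be, say, $BG$ for a finite group $G$, and a family landing pointwise in the identity component of $\Map_{\Fun(\D,\D)}(\Id,\Id)$ can still be essentially nontrivial. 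So, as you justify it, the step ``pointwise identity implies $[p]$ is the counit'' does not go through.

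There are two ways to close the gap. The paper's proof sidesteps the coherence issue entirely: since the adjunction $p_!\dashv p^*$ is already known to exist, a transformation $T\colon p_!p^*\Rightarrow\Id$ is \emph{a} counit if and only if its adjoint $T^{\ad}\colon p^*\Rightarrow p^*$ is an equivalence, and $T^{\ad}$ is the transformation whose components are the $T\circ[i_x]$; hence it suffices that each $[p]\circ[i_x]$ be an equivalence, a pointwise condition needing no compatibility between the homotopies (the paper also treats $X=\emptyset$ separately, where $[X]$ is initial in $\Fun(\D,\D)$). Alternatively, you can repair your own route by producing the homotopy of families coherently: the family $x\mapsto p\circ i_x$ is the composite $X\simeq\Map_{\cS_m}(\ast,X)\lrar\Map_{\cS_m}(\ast,\ast)\lrar\Map_{\cS^{m-1}_m}(\ast,\ast)$, and since $\Map_{\cS_m}(\ast,\ast)$ is contractible this family is coherently constant at $\Id_\ast$; applying the action functor then identifies $x\mapsto[p]\circ[i_x]$ coherently with the constant family. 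Either fix yields the lemma; without one of them the argument as written is incomplete.
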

\begin{proof}
If $X$ is empty then $[X]$ is initial in $\Fun(\D,\D)$, and since such a counit exists it must be homotopic to $[p]$. We may hence suppose that $X$ is not empty. Since $p_!$ is left adjoint to $p^*$ the desired claim is equivalent to the natural transformation $[p]^{\ad}:p^* \Rightarrow p^*$ adjoint to $[p]:p_!p^* \Rightarrow \Id$ being an equivalence. 
We note that to specifying a natural transformation $T:p^* \Rightarrow p^*$ is the same as giving an $X$-indexed family of natural transformations $\{T_x: \Id \Rightarrow \Id\}_{x \in X}$ from the identify functor $\Id \in \Fun(\D,\D)$ to itself. Furthermore, since the maps $[i_x]:[\ast] \Rightarrow [X]$ exhibit $[X]$ as the colimit in $\Fun(\D,\D)$ of the constant diagram $\{\Id\}_{x \in X}$ it follows that if $T: [X] \simeq p_!p^* \Rightarrow \Id$ is a natural transformation then the 
adjoint natural transformation $T^{\ad}: p^* \Rightarrow p^*$ is given by the family $\{T \circ [i_x]:\Id \Rightarrow \Id\}_{x \in X}$.
Since $p \circ i_x: \ast \lrar \ast$ is an equivalence in $\cS^{m-1}_m$ it follows that $[p] \circ [i_x]: \Id \rightarrow \Id$ is a natural equivalence for every $x \in X$ and so the natural transformation $[p]^{\ad}: p^* \Rightarrow p^*$ is an equivalence, as desired.
\end{proof}

\begin{define}\label{d:dual}
Given a map $f: X \lrar Y$ in $\cS_{m-1}$ let us denote by $\hat{f}: Y \lrar X$ the morphism in $\cS^{m-1}_m$ determined by the span
$$ \xymatrix{
& X \ar[dr]\ar_{f}[dl] & \\
Y && X \\
}.$$
We will refer to $\hat{f}$ as the \textbf{dual span} of $f$.
\end{define}


\begin{lem}\label{l:lem-1}
Let $\D$ be as in Hypothesis~\ref{a:fix}, let $X$ be an $(m-1)$-finite space and let $p: X \lrar \ast$ be the terminal map in $\cS_{m-1}$. Then the natural transformation
$$ \Id  \simeq [\ast] \x{[\hat{p}]}{\Rightarrow} [X] \simeq p_!p^* $$ 
is a unit exhibiting $p_!$ as \textbf{right adjoint} to $p^*$, where $\hat{p}: \ast \lrar X$ is the span dual to $p$ (see Definition~\ref{d:dual}). In other words, it exhibits $p:X \lrar \ast$ as $\D$-ambidextrous.
\end{lem}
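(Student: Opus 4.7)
My strategy is to invoke Hypothesis~\ref{a:fix}(2) to obtain the existence of an ambidexterity unit for $p$, and then identify this unit explicitly with $[\hat{p}]$ by exploiting the self-duality of $X$ in $\cS^{m-1}_m$.

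By Hypothesis~\ref{a:fix}(2), the $(m-1)$-finite space $X$ is $\D$-ambidextrous, so there exists a natural transformation $\eta\colon \Id \Rightarrow p_! p^*$ exhibiting $p$ as $\D$-ambidextrous, and the space of such units is contractible. It therefore suffices to exhibit $[\hat{p}]$ as one such unit. To do this I would first verify that $X \in \cS^{m-1}_m$ is self-dual, with coevaluation $\coev\colon \ast \to X \times X$ given by the span $\ast \xleftarrow{p} X \xrightarrow{\Del} X \times X$ and evaluation $\ev\colon X \times X \to \ast$ by the span $X \times X \xleftarrow{\Del} X \xrightarrow{p} \ast$. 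Both left legs $p$ and $\Del$ are $(m-1)$-truncated since $X$ is $(m-1)$-finite, so these are valid morphisms in $\cS^{m-1}_m$; the zig-zag identities reduce to direct pullback calculations in $\cS_m$ (the relevant composite of spans produces $\Id_X$ back). Since the $\cS^{m-1}_m$-module structure on $\D$ from Hypothesis~\ref{a:fix}(3) is induced by a symmetric monoidal functor, it transports this duality to $\Fun(\D,\D)$, so the endofunctor $[X] = p_! p^*$ becomes self-adjoint with unit $[\coev]$ and counit $[\ev]$.

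The key span-level computation is that $\hat{p} = \pi_1 \circ \coev$ in $\cS^{m-1}_m$, where $\pi_1\colon X \times X \to X$ is the forward first projection: composing the spans one forms the pullback of $X \xrightarrow{\Del} X \times X \xleftarrow{\Id} X \times X$, which is $X$, and the composite span simplifies to $\ast \xleftarrow{p} X \xrightarrow{\Id} X = \hat{p}$. Since $\pi_1 = \Id_X \times p$ monoidally, we have $[\pi_1] = [X] \circ [p]$ as a natural transformation $[X][X] \Rightarrow [X]$, where $[p]$ is precisely the counit of $p_! \dashv p^*$ identified in Lemma~\ref{l:basic}. Hence
\[ [\hat{p}] = [\pi_1] \circ [\coev] = ([X] \circ [p]) \circ [\coev] \colon \Id \Rightarrow [X][X] \Rightarrow [X] = p_! p^*, \]
exhibiting $[\hat{p}]$ as the mate of the self-duality unit against the counit of $p_!\dashv p^*$. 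It is then a direct 2-categorical calculation, combining the self-duality zig-zags for $X$ with the triangle identities for $p_! \dashv p^*$, that this mate is a unit for $p^*\dashv p_!$, with counit obtained analogously from $[\ev]$ and the unit of $p_!\dashv p^*$.

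\textbf{Main obstacle.} The chief technical hurdle is the final 2-categorical triangle identity check: one must juggle the coherence data of the self-duality of $X$ in $\cS^{m-1}_m$ together with those of the adjunction $p_!\dashv p^*$ inside $\Fun(\D,\D)$, and verify that the candidate counit $\epsilon\colon p^* p_! \Rightarrow \Id_{\D^X}$---which lives in $\Fun(\D^X,\D^X)$, outside the direct image of the $\cS^{m-1}_m$-action on $\D$---is properly constructed via the pointwise $\cS^{m-1}_m$-action on $\D^X$ and the linearity of $p^*$ and $p_!$.
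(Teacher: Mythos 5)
Your preparatory steps agree with ingredients of the paper's own proof: the self-duality of $X$ in $\cS^{m-1}_m$ (valid here because $X$ is $(m-1)$-finite, cf.\ Remark~\ref{r:self-dual}), its transport along the monoidal action functor to a self-adjunction $([\coev],[\ev])$ on $[X]$, the span computation $\hat{p} = \pi_1\circ\coev$, and the identification $[p]\simeq\phi_X$ from Lemma~\ref{l:basic} are all correct. But the concluding step -- that ``a direct 2-categorical calculation, combining the self-duality zig-zags for $X$ with the triangle identities for $p_!\dashv p^*$'' shows the mate $([X]\phi_X)\circ[\coev]$ is a unit for $p^*\dashv p_!$ -- is a genuine gap, not a routine verification. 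None of the data you allow yourself ($[\ev]$, $[\coev]$, the counit $\phi_X$ and the unit of $p_!\dashv p^*$) has target $\Id_{\D^X}$, so no candidate counit $p^*p_!\Rightarrow\Id_{\D^X}$ can be assembled from them by whiskering and composition; and if the statement were a formal consequence of those zig-zag/triangle identities it would hold without the $(m-1)$-semiadditivity of $\D$, which is exactly where the content of the lemma sits. Your opening observation that an ambidexterity unit exists and is essentially unique is never actually exploited: you never compare $[\hat{p}]$ with that abstract unit. Moreover, the repair you sketch in your ``main obstacle'' paragraph -- building the counit via the pointwise $\cS^{m-1}_m$-action on $\D^X$ -- is essentially Construction~\ref{c:counit} and Lemma~\ref{l:lem-2} of the paper, which are proved \emph{later} and whose proofs rest on the present lemma; pursued naively this is circular, and avoiding the circularity amounts to redoing the Hopkins--Lurie inductive construction.

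The paper closes exactly this gap differently: it lets the $(m-1)$-semiadditivity hypothesis enter through \cite[Proposition 5.1.8]{ambi}, which says that the trace form built from an abstractly existing ambidexterity counit $v_X$ and from $\phi_X$ exhibits $[X]$ as self-dual in $\Fun(\D,\D)$. It then compares duals of maps $\Id\Rightarrow[X]$: the triangle identity for a compatible pair $(u_X,v_X)$ shows the dual of the abstract unit $u_X$ is $\phi_X$, while monoidality of the action and the self-duality of $X$ in $\cS^{m-1}_m$ show the dual of $[\hat{p}]$ is $[p]$, which is $\phi_X$ by Lemma~\ref{l:basic}; hence $[\hat{p}]\simeq u_X$ and is therefore a unit. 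In effect you have proved the second half of this comparison (that $[\hat{p}]$ is the dual of $[p]$ for the transported span duality), but you are missing the first half and the bridge between the two duality data, i.e.\ the invocation of \cite[Proposition 5.1.8]{ambi} (or an equivalent substitute). Without that input, establishing that your mate is an ambidexterity unit is not a calculation -- it is the lemma itself.
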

\begin{proof}
Since $X$ is $\D$-ambidextrous there exists a counit $v_X: p^*p_! \Rightarrow \Id$ exhibiting $p_!$ as right adjoint to $p^*$. As in Hopkins--Lurie \cite[Notation 5.1.7]{ambi} let us define the \textbf{trace form} $\TrFm_X:[X]\circ [X] \Rightarrow \Id$ by the composition
$$ \xymatrix{ 
(p_!p^*)(p_!p^*) \simeq p_!(p^*p_!)p^* \ar@{=>}[r]^-{p_!v_Xp^*} & p_!p^* \ar@{=>}[r]^-{\phi_X} & \Id \\
} $$
where $\phi_X$ is a counit exhibiting $p_!$ as left adjoint to $p^*$. Since $\D$ is assumed to be $(m-1)$-semiadditive,~\cite[Proposition 5.1.8]{ambi} implies that the trace form exhibits $[X]$ as self dual in $\Fun(\D,\D)$. Let $u_X: \Id \Rightarrow p_!p^*$ be a unit which is compatible with $v_X$. It will then be enough to show that $[\hat{p}]$ is equivalent to $u_X$ in the arrow category of $\Fun(\D,\D)$. Since $[X]$ is self dual it will suffice to compare the natural transformations $[X] \Rightarrow \Id$ which are dual to $u_X$ and $[\hat{p}]$ respectively. In the case of $u_X$ we observe that
$$ \xymatrix{ 
p_!p^* \ar@{=>}[r]^-{(p_!p^*)u_X} & (p_!p^*)(p_!p^*) \simeq p_!(p^*p_!)p^* \ar@{=>}[r]^-{p_!v_Xp^*} & p_!p^* \\
} $$
is homotopic to the identity in light of the compatibility of $u_X$ and $v_X$. It hence follows that the map $[X] \Rightarrow \Id$ dual to $u_X$ is the counit
$$ \phi_X: p_!p^* \Rightarrow \Id .$$
On the other hand, the action functor $\cS^{m-1}_{m} \lrar \Fun(\D,\D)$ is monoidal and sends $X$ to $[X]$. Since $X$ is $(m-1)$-finite it is self-dual in $\cS^{m-1}_m$ (cf.\ Remark~\ref{r:self-dual} below). It follows that the dual of $[\hat{p}]$ is the image of the dual of $\hat{p}$ in $\cS^{m-1}_{m}$, which is given 
by the image in $\cS^{m-1}_m$ of the terminal map $p: X \lrar \ast$ of $\cS_m$. It will hence suffice to show that $[p]$ is equivalent to $\phi_X$ in the arrow category of $\Fun(\D,\D)$. But this now follows from Lemma~\ref{l:basic}.
\end{proof}


\begin{define}\label{d:st}
Given an $(m-1)$-truncated map $f:X \lrar Y$ of $m$-finite spaces let us denote by $\St_f: Y \lrar \cS$ the diagram obtained by applying to $f$ the straightening construction (see~\cite[\S 2.1]{higher-topos}). Informally, $\St_f: Y \lrar \cS$ sends $y \in Y$ to the homotopy fiber $X_y$ of $f$ over $y$. Since $f$ is $(m-1)$-truncated every homotopy fiber $X_y$ is $(m-1)$-finite and we may consequently consider $\St_f$ as a functor $Y \lrar \cS_{m-1}$. Using the inclusions $\cS_{m-1} \subseteq \cS_m \subseteq \cS^{m-1}_m$ (see Corollary~\ref{c:wide}) we may further consider $\St_f$ as a functor $Y \lrar \cS^{m-1}_m$, i.e., as a $\cS^{m-1}_m$-valued local system. To avoid confusion we will use the notation $\ovl{\St}_f: Y \lrar \cS^{m-1}_m$ to denote the straightening of $f$ when considered as taking values in $\cS^{m-1}_m$.
\end{define}

\begin{const}\label{c:st}
Let $f: X \lrar Y$ be an $(m-1)$-truncated map of $m$-finite spaces and let $\ovl{\St}_f: Y \lrar \cS^{m-1}_m$ be its straightening as in Definition~\ref{d:st}. The action of $\cS^{m-1}_m$ on $\D$ induces a pointwise action of $(\cS^{m-1}_{m})^Y$ on $\D^Y$. In particular, the action of $\ovl{\St}_f \in (\cS^{m-1}_{m})^Y$ determines a functor
$$ [\ovl{\St}_f]: \D^Y \lrar \D^Y, $$
given informally on a local system $\L \in \D^Y$ by the formula 
$$ [\ovl{\St}_f](\L)(y) = \ovl{\St}_f(y)(\L(y)) = [X_y](\L(y)) .$$
\end{const}


\begin{lem}\label{l:left-kan}
Let $\D$ be as in Hypothesis~\ref{a:fix} and let $f: X \lrar Y$ be an $(m-1)$-truncated map of $m$-finite spaces. Then there is a natural equivalence
$$ [\ovl{\St}_f] \simeq f_!f^* $$
of functors $\D^Y \lrar \D^Y$
\end{lem}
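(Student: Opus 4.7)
The strategy is a pointwise comparison followed by assembly into a natural equivalence. Fix $\L\in\D^Y$ and $y\in Y$, and let $X_y$ denote the homotopy fiber of $f$ over $y$; since $f$ is $(m-1)$-truncated, $X_y$ is $(m-1)$-finite, and in particular lies in $\K_m$. The pointwise formula for the left Kan extension yields
$$(f_!f^*\L)(y)\;\simeq\;\colim_{x\in X_y}(f^*\L)(x)\;\simeq\;\colim_{x\in X_y}\L(y),$$
where the last equivalence uses that $f^*\L=\L\circ f$ is canonically constant on the fiber $X_y$ with value $\L(y)$.

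On the other hand, by Proposition~\ref{p:adjoint} together with Lemma~\ref{l:colimits}, the object $X_y\in\cS^{m-1}_m$ is the colimit of the constant $X_y$-indexed diagram with value $\ast$. Hypothesis~\ref{a:fix}(3) says the action $\cS^{m-1}_m\times\D\lrar\D$ preserves $\K_m$-indexed colimits in each variable, so evaluating at $\L(y)$ gives
$$[X_y](\L(y))\;\simeq\;\colim_{x\in X_y}[\ast](\L(y))\;\simeq\;\colim_{x\in X_y}\L(y),$$
which matches the expression for $(f_!f^*\L)(y)$ obtained above.

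To upgrade this to a natural equivalence of endofunctors of $\D^Y$, I would construct an explicit comparison natural transformation using the unit maps $\L(y)\to[X_y](\L(y))$ of Lemma~\ref{l:lem-1} (which exhibit $[X_y](-)\simeq (p_y)_!(p_y)^*$ for $p_y:X_y\lrar\ast$), assembled coherently over $y\in Y$. A clean way to formalize this is to note that the action extends pointwise to a $(\cS^{m-1}_m)^Y$-action on $\D^Y$, and that $\ovl{\St}_f\in(\cS^{m-1}_m)^Y$ unstraightens precisely to the map $f$ itself; from this presentation the natural transformation $[\ovl{\St}_f]\Rightarrow f_!f^*$ is tautological, and the pointwise calculation above shows it is an equivalence.

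\textbf{Main obstacle.} The substantive difficulty is coherence in the $y$-variable: one must exhibit $\ovl{\St}_f$ as a colimit of constant-$\ast$ diagrams naturally over $Y$ and verify that applying the pointwise action converts this presentation into the pointwise formula for $f_!f^*$ simultaneously for all $y$. In effect, the content of the lemma is the compatibility between the straightening--unstraightening equivalence and the module-theoretic description of the action of $\cS^{m-1}_m$ on $\D$; once this coherence is in place, the pointwise check is essentially automatic.
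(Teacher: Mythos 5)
Your pointwise computation is correct and agrees with the verification step in the paper's proof: for $\L\in\D^Y$ and $y\in Y$ both $(f_!f^*\L)(y)$ and $[\ovl{\St}_f](\L)(y)=[X_y](\L(y))$ are colimits of the constant $X_y$-indexed diagram on $\L(y)$, by the pointwise formula for left Kan extensions on one side and Proposition~\ref{p:adjoint} together with the colimit-compatibility of the action on the other. But the proposal does not actually produce the comparison map, and this is where the real content lies. The assertion that, once one views the action as a pointwise $(\cS^{m-1}_m)^Y$-action on $\D^Y$ and recalls that $\ovl{\St}_f$ unstraightens to $f$, the transformation $[\ovl{\St}_f]\Rightarrow f_!f^*$ is ``tautological'' is unjustified: nothing in that presentation hands you a morphism between the two functors, and you yourself flag the coherence over $Y$ as the main obstacle without resolving it. The paper's proof supplies exactly this missing construction: it forms the self-base-change $g\colon X\times_Y X\to X$ with its diagonal section $\del\colon X\to X\times_Y X$, straightens $\del$ \emph{over $X$} to get $\ast\Rightarrow\ovl{\St}_g$, identifies $\ovl{\St}_g\simeq f^*\ovl{\St}_f$ by compatibility of straightening with base change, and thus obtains (after precomposing with $f^*$) a natural transformation $\del_*\colon f^*\Rightarrow f^*\circ[\ovl{\St}_f]$ of functors $\D^Y\to\D^X$; the pointwise colimit check then shows each component $\del_\L$ exhibits $[\ovl{\St}_f]\L$ as the left Kan extension $f_!f^*\L$, which yields the equivalence via the adjunction $f_!\dashv f^*$. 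The key idea --- that the coherence problem is solved by assembling the cone maps over $X$ (via the diagonal of $X\times_Y X$) rather than over $Y$ --- is absent from your plan.

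A secondary point: the maps you propose to assemble, namely the units $\L(y)\to[X_y](\L(y))$ of Lemma~\ref{l:lem-1}, are the wrong maps for this lemma. Those are the ambidexterity (wrong-way) units $[\hat{p}_y]$, which assemble into the norm-type transformation $\Id\Rightarrow[\ovl{\St}_f]\simeq f_!f^*$ of~\eqref{e:f_hat}, used afterwards in Lemma~\ref{l:lem-2}; the equivalence $[\ovl{\St}_f]\simeq f_!f^*$ itself is built from the colimit-cone maps $[i_x](\L(y))\colon\L(y)\to[X_y](\L(y))$ for $x\in X_y$, and requires no ambidexterity input beyond Hypothesis~\ref{a:fix}(3). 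So even at the pointwise level your proposed comparison maps point at a different (non-invertible, in general) transformation than the one the lemma asserts to be an equivalence.
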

\begin{proof}
Consider the base change $g: X \times_Y X \lrar X$ of $f$ along itself. Then the diagonal map $\del:X \lrar X \times_Y X$ determines a section of $g$, which we can consider as a map of spaces over $X$. Applying the straightening construction as in Definition~\ref{d:st} over the base $X$ we obtain a natural transformation
$$ \ovl{\St}_{\del}: \ovl{\St}_{\Id} \simeq \ast \Rightarrow \ovl{\St}_g .$$
from the straightening of the identity $\Id:X \lrar X$ (which is the constant diagram with value $\ast$), to the straightening of $g: X \times_Y X \lrar X$. The latter, in turn, is naturally equivalent to the restriction along $f: X \lrar Y$ of $\ovl{\St}_f: Y \lrar \cS^{n-1}_n$, by the compatibility of unstraightening with base change (see~\cite{higher-topos}). Applying Construction~\ref{c:st} we obtain a natural transformation
$$ [\ovl{\St}_{\del}]:\Id \Rightarrow [\ovl{\St}_g] \simeq [f^*\ovl{\St}_f] $$
of functors from $\D^X$ to $\D^X$. Pre-composing with the restriction functor $f^*: \D^Y \lrar \D^X$ we now obtain a natural transformation
$$ \del_\ast: f^* \Rightarrow [f^*\ovl{\St}_f] \circ f^* \simeq f^* \circ [\ovl{\St}_f] $$
of functors $\D^Y \lrar \D^X$. 
For a local system $\L \in \D^Y$, the component at $\L$ of $\del_*$ is a map
$$ \del_{\L}:f^*(\L) \Rightarrow f^*([\ovl{\St}_f]\L) .$$
of $\D$-valued local systems on $X$. To finish the proof it will now suffice to show that $\del_{\L}$ exhibits $[\ovl{\St}_f]\L$ as the left Kan extension of $f^*(\L): X \lrar \D$ along $f$. Indeed, by the pointwise formula for the left Kan extension we need to check that for every $y  \in Y$ the diagram 
$$ X_y^{\triangleright} \lrar \D $$
which sends the cone point to $[X_y]\L(y)$ and sends the point $x \in X_y \subseteq X_y^{\triangleright}$ to $\L(y)$ (equipped with the map to $[X_y]\L(y)$ determined by $x \in X_y$) is a colimit diagram in $\D$. But this follows directly from our assumption that the action of $\cS^{m-1}_m$ on $\D$ is compatible with $\K_{m-1}$-indexed colimits, since the collection of maps $i_x: \ast \lrar X_y$ for $x \in X_y$ exhibit $X_y$ as the colimit in $\cS^{m-1}_m$ of the constant $X_y$-indexed diagram with value $\ast$ (by Proposition~\ref{p:adjoint}). 
\end{proof}

Let us now fix an $(m-1)$-truncated map $f: X \lrar Y$ between $m$-finite spaces. Consider the commutative diagram of spaces
$$ \xymatrix{
& X \ar[dr]\ar_{f}[dl] & \\
Y \ar[dr] && X \ar^{f}[dl] \\
& Y & \\
}$$
as a span in $(\cS_{m})_{/Y}$. Applying the straightening construction 
over $Y$ and using the assumption that $f$ is $(m-1)$-truncated we obtain a span of the form
$$ \xymatrix{
& \St_f \ar[dr]\ar[dl] & \\
\St_{\Id_Y} && \St_f \\
}$$
in the $\infty$-category $(\cS_{m-1})^Y$, which we can consider as a map
$$ \ovl{\St}_{\Id_Y} \lrar \ovl{\St}_{f} $$
of $\cS^{m-1}_m$-valued local systems on $Y$.
%
Applying Construction~\ref{c:st} and using Lemma~\ref{l:left-kan} we then obtain a natural transformation
\begin{equation}\label{e:f_hat}
[\hat{f}]_Y: \Id \simeq [\ovl{\St}_{\Id_Y}]  \Rightarrow [\ovl{\St}_f] \simeq f_!f^* 
\end{equation}
of functors $\D^Y \lrar \D^Y$. 

\begin{lem}\label{l:effect}
Let $\D$ be as in Hypothesis~\ref{a:fix}. Then for every $(m-1)$-truncated map $f: X \lrar Y$ of $m$-finite spaces the natural transformation $[\hat{f}]:[Y] \Rightarrow [X]$ associated to the span $\hat{f}: Y \x{f}{\llar} X \x{\Id}{\lrar} X$ of Definition~\ref{d:dual}
is homotopic to
$$ \xymatrix{
[Y] \simeq q_!q^* \ar@{=>}[rr]^-{q_![\hat{f}]_Yq^*} && q_!f_!f^*q^* \simeq [X] \\
}$$
where $q: Y \lrar \ast$ is the terminal map and $[\hat{f}]_Y$ is the natural transformation~\eqref{e:f_hat}. 
\end{lem}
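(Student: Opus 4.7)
The plan is to reduce the identification $[\hat f] \simeq q_![\hat f]_Yq^*$ to two steps: a projection formula comparing the pointwise $(\cS^{m-1}_m)^Y$-action on $\D^Y$ with the $\cS^{m-1}_m$-action on $\D$, and an explicit computation of a $q_!$-colimit in $\cS^{m-1}_m$.

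First, I would establish the following \emph{projection formula}: for every morphism $\alpha: \phi \Rightarrow \psi$ in $(\cS^{m-1}_m)^Y$ one has
$$ q_! \circ [\alpha]_Y \circ q^* \simeq [q_!\alpha] $$
as natural transformations $[q_!\phi] \Rightarrow [q_!\psi]$ of functors $\D \lrar \D$. At the level of objects this is immediate from part (3) of Hypothesis~\ref{a:fix}: the local system $[\phi]_Y(q^*D)$ sends $y$ to $[\phi(y)]D$, so taking $q_!$ gives $\colim_y [\phi(y)]D$; since the action preserves $\K_m$-indexed colimits in the first variable, this identifies with $[\colim_y\phi(y)]D = [q_!\phi]D$. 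Promoting this equivalence via naturality in $\phi$ to an equivalence of functors $(\cS^{m-1}_m)^Y \to \Fun(\D,\D)$ yields the claimed equality on morphisms.

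Second, I would apply the projection formula to the specific morphism $\alpha: \ovl{\St}_{\Id_Y} \Rightarrow \ovl{\St}_f$ used in equation~\eqref{e:f_hat} to define $[\hat f]_Y$, obtaining $q_![\hat f]_Yq^* \simeq [q_!\alpha]$. It then remains to identify $q_!\alpha$ with $\hat f$ as a morphism $Y \to X$ in $\cS^{m-1}_m$. By Proposition~\ref{p:adjoint}, the colimits $q_!\ovl{\St}_{\Id_Y} \simeq Y$ and $q_!\ovl{\St}_f \simeq X$ may be computed in $\cS_m$. The morphism $\alpha$ itself arises from the span
$$ \ovl{\St}_{\Id_Y} \xleftarrow{\eps} \ovl{\St}_f \xrightarrow{\Id} \ovl{\St}_f $$
in $(\cS_m)^Y$, which unstraightens over $Y$ to the diagram of spaces over $Y$ given by $Y \xleftarrow{f} X \xrightarrow{\Id} X$, with structure maps $\Id_Y,f,f$ to $Y$. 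The functor $q_!$ corresponds under this unstraightening to forgetting the basing over $Y$, producing the span $Y \xleftarrow{f} X \xrightarrow{\Id} X$ in $\cS_m$, i.e., precisely $\hat f$. Combining with the first step gives $[\hat f] \simeq [q_!\alpha] \simeq q_![\hat f]_Y q^*$.

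The main obstacle I anticipate is the verification in the second step that applying $q_!$ to a morphism in $(\cS^{m-1}_m)^Y$ coming from a parameterized span in $(\cS_m)^Y$ really is computed by the total span in $\cS_m$. This requires tracking how the span construction of~\S\ref{s:spans} interacts with straightening/unstraightening and with the colimit-preservation of Proposition~\ref{p:adjoint}, together with the description of mapping spaces in $\cS^{m-1}_m$ via Remark~\ref{r:map-span}; once this is in place, the two-step outline yields the lemma.
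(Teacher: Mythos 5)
Your proposal is correct and follows essentially the same route as the paper: reduce via the colimit-preservation of the action map $\cS^{m-1}_m \lrar \Fun(\D,\D)$ (your projection formula), then identify the colimit over $Y$ of the parameterized span $\ovl{\St}_{\Id_Y} \llar \ovl{\St}_f \lrar \ovl{\St}_f$ with $\hat{f}$ via unstraightening. The obstacle you flag is exactly the point the paper settles by identifying $(\cS^{m-1}_m)^Y$ with $\Span((\cS_m)_{/Y},(\cS_m)^{\dagger}_{/Y})$, under which the colimit functor $(\cS^{m-1}_m)^Y \lrar \cS^{m-1}_m$ (using Proposition~\ref{p:adjoint}) becomes the functor on span categories induced by the forgetful functor $(\cS_m)_{/Y} \lrar \cS_m$, so that the total span is indeed the colimit.
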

\begin{proof}
Since $q_!$ is given by taking the colimit along $Y$ and the action map $\cS^{m-1}_{m} \lrar \Fun(\D,\D)$ preserves $\K_m$-indexed colimits it will suffice to show that the span $\hat{f} = [Y \x{f}{\llar} X \x{\Id}{\lrar} X]$ is the colimit in $\cS^{m-1}_m$ of $\St_{\Id} \llar \St_f \lrar \St_f$, considered as a $Y$-indexed family of morphisms in $\cS^{m-1}_m$. Now since $Y$ is $m$-finite the $\infty$-category $(\cS_m)^Y$ of $\cS_m$-valued local systems on $Y$ is equivalent by the straightnening-unstraightening construction to the slice $\infty$-category $(\cS_m)_{/Y}$, while the $\infty$-category $(\cS^{m-1}_m)^Y$ of $\cS^{m-1}_m$-valued local systems on $Y$ is equivalent in the same manner to the generalized span $\infty$-category of $(\cS_m)_{/Y}$ with respect to the weak coWaldhausen structure $(\cS_m)^{\dagger}_{/Y} \subseteq (\cS_m)_{/Y}$ consisting of the $(m-1)$-truncated maps in $(\cS_m)_{/Y}$. In addition, since $\K_m$-indexed colimits in $\cS^{m-1}_m$ are the same as the corresponding colimits in $\cS_m$ (Proposition~\ref{p:adjoint}), the colimit functor $(\cS^{m-1}_m)^Y \lrar \cS^{m-1}_m$ corresponds to the functor 
$$ \Span((\cS_m)_{/Y}, (\cS_m)^{\dagger}_{/Y}) \lrar \Span(\cS_m,\cS_{m,m-1}) $$
induced by the forgetful functor $(\cS_m)_{/Y} \lrar \cS_m$ (which respects the weak coWaldhausen structures on both sides). The desired claim is then simply a consequence of the fact that, since unstraightening is inverse to straightening, the span $\St_{\Id} \llar \St_f \lrar \St_f$ unstraightens to $Y \x{f}{\llar} X \x{\Id}{\lrar} X$. 
\end{proof}

\begin{lem}\label{l:lem-2}
Let $\D$ be as in Hypothesis~\ref{a:fix}. Then for every $(m-1)$-truncated map $f: X \lrar Y$ between $m$-finite spaces the natural transformation $[\hat{f}]_{Y}: \Id \Rightarrow f_!f^*$ of~\eqref{e:f_hat} 
exhibits $f_!$ as right adjoint to $f^*$. In other words, it exhibits $f$ as $\D$-ambidextrous.
\end{lem}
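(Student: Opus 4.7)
The plan is to reduce the statement to a fiberwise application of Lemma~\ref{l:lem-1}. Since $f$ is $(m-1)$-truncated with $m$-finite source and target, each fiber $X_y := f^{-1}(y)$ is an $(m-1)$-finite space, so Lemma~\ref{l:lem-1} applies to the terminal map $p_y: X_y \lrar \ast$ and exhibits it as $\D$-ambidextrous with unit $[\hat{p}_y]: \Id \Rightarrow (p_y)_!(p_y)^*$. The key point is that the global natural transformation $[\hat{f}]_Y$ should restrict at each $y \in Y$ to $[\hat{p}_y]$, and that fiberwise unithood over a space $Y$ assembles into global unithood.

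Concretely, to verify that $[\hat{f}]_Y$ is a unit it suffices to show that the induced map
\[
\Map_{\D^X}(f^*\L, \M) \lrar \Map_{\D^Y}(\L, f_! \M),\qquad \phi \mapsto f_!(\phi) \circ [\hat{f}]_Y(\L),
\]
is an equivalence for every $\L \in \D^Y$ and $\M \in \D^X$. Since $Y$ (and hence $X$) is an $\infty$-groupoid, these mapping spaces decompose as limits over points, and using base change $i_y^* f_! \simeq (p_y)_! j_y^*$ for the pullback square with corners $X_y, X, \ast, Y$ (with $j_y: X_y \hrar X$), both sides rewrite as $\lim_{y \in Y}$ of the natural maps
\[
\lim_{x \in X_y}\Map_\D(\L(y), \M(x)) \lrar \Map_\D\bigl(\L(y), \colim_{x \in X_y}\M(x)\bigr).
\]
It is therefore enough to show that each such fiberwise map is the one induced by $[\hat{p}_y]$, whence Lemma~\ref{l:lem-1} implies that it is an equivalence, and taking the limit over $y$ concludes.

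The essential identification I need is thus that $i_y^*([\hat{f}]_Y)$ coincides with $[\hat{p}_y] \circ i_y^*$ under the equivalence $i_y^* f_! f^* \simeq (p_y)_!(p_y)^* i_y^*$. This falls out by unwinding~\ref{c:st} together with the construction~\eqref{e:f_hat}: the functor $[\ovl{\St}_f]$ acts pointwise on $\D^Y$, so $i_y^* \circ [\ovl{\St}_f] \simeq [\ovl{\St}_f(y)] \circ i_y^* = [X_y] \circ i_y^*$, and under this identification the natural transformation $[\hat{f}]_Y$ arises from the map of $\cS^{m-1}_m$-valued local systems $\ovl{\St}_{\Id_Y} \Rightarrow \ovl{\St}_f$, whose value at $y$ is by construction the dual span $\hat{p}_y: \ast \llar X_y \lrar X_y$. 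The main technical obstacle lies in making this pointwise compatibility fully coherent: one has to verify that the base-change equivalence used in Lemma~\ref{l:left-kan} (which underlies $[\ovl{\St}_f] \simeq f_! f^*$), the Beck--Chevalley equivalence $i_y^* f_! \simeq (p_y)_! j_y^*$, and the pointwise formula for the $\cS^{m-1}_m$-action on $\D^Y$ all match on the nose. Once these bookkeeping items are in place, the argument is a parameterized version of Lemma~\ref{l:lem-1}.
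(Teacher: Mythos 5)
Your proposal follows essentially the same route as the paper's proof: reduce to the fibers of $f$ over the points $y \in Y$ (the paper does this by checking on skyscraper local systems $(i_y)_!D$ via~\cite[Lemma 4.3.8]{ambi}, which amounts to the same thing as your limit decomposition of mapping spaces over $Y$), identify $i_y^*[\hat{f}]_Y$ with $[\hat{f}_y]i_y^*$ using the Beck--Chevalley equivalences and the compatibility of straightening with base change, and conclude by Lemma~\ref{l:lem-1}. The coherence ``bookkeeping'' you defer at the end is precisely the content of the paper's argument (the diagrams involving the Beck--Chevalley maps $\tau_y$, $\sigma_y$ and the units $u_y$, $u_{X_y}$), so your outline is correct and matches the paper's proof.
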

\begin{proof}
Let $\L_X: X \lrar \D$ and $\L_Y: Y \lrar \D$ be two local systems. We need to show that the composite map
$$ 
\xymatrix{
\alp: \Map_X(f^*\L_Y,\L_X) \ar[r] & \Map_Y(f_!f^*\L_Y,f_!\L_X) \ar[r]^-{(-) \circ [\hat{f}]_Y} & \Map_Y(\L_Y,f_!\L_X) 
}
$$
is an equivalence. By~\cite[Lemma 4.3.8]{ambi} it is enough prove this for objects of the form $\L_Y =(i_y)_!D$ where $i_y: \{y\} \lrar Y$ is the inclusion of some point $y \in Y$ and $D$ is an object of $\D$. Since $(i_y)_!$ is left adjoint to the restriction functor $i_y^*:\D^Y \lrar \D$ this is the same as showing that for every $y \in Y$ the composed natural transformation
\begin{equation}\label{e:natural}
\xymatrix{
\Id \ar@{=>}^-{u_y}[r] & i_y^*(i_y)_! \ar@{=>}^-{i_y^*[\hat{f}]_Y(i_y)_!}[rr] && i_y^*f_!f^*(i_y)_! \\
}
\end{equation}
%
exhibits $i_y^*f_!$ as right adjoint to $f^*(i_y)_!$, where $u_y$ is the unit of the adjunction $(i_y)_! \dashv i_y^*$. Now by the definition of the functor $[\ovl{\St}_f]$ (see Construction~\ref{c:st}) we have a natural equivalence 
\begin{equation}\label{e:Xy}
i_y^*[\ovl{\St}_f] \simeq [X_y]i_y^* 
\end{equation}
of functors $\D^Y \lrar \D$. Identifying $[\ovl{\St}_f]$ with $f_!f^*$ via Lemma~\ref{l:left-kan} we see that this is just an incarnation of the fact that left Kan extensions are determined pointwise. The latter fact is best phrased via the Beck-Chevalley transformation $\tau_y: (f_y)_!i_{X_y}^* \Longrightarrow i_y^*f_!$ associated to the Cartesian square
\begin{equation}\label{e:BC}
\xymatrix{
X_y \ar^{i_{X_y}}[r]\ar_{f_y}[d] & X \ar^{f}[d] \\
\{y\} \ar_{i_y}[r] & Y \\
}
\end{equation}
Indeed, by~\cite[Proposition 4.3.3]{ambi} the transformation $\tau_y$ is an equivalence, asserting, in effect, that the value of the left Kan extension $f_!\L$ at a given point $y$ is the colimit of $\L$ restricted to the homotopy fiber $X_y$. In this formalism the equivalence~\eqref{e:Xy} is obtained by composing the equivalences
$$ \xymatrix{
i_y^*f_!f^* \ar@{=>}[r]^-{\tau_y^{-1}f^*}_-{\simeq} & (f_y)_!i_{X_y}^*f^* \ar@{=>}[r]^-{\simeq} & (f_y)_!f_y^*i_y^* .\\
}$$
Now by the compatibility of the straightening-unstraightening equivalence with base change we see that under the equivalence~\eqref{e:Xy} the natural transformation $i_y^*[\hat{f}]_Y: i_y^* \Rightarrow i_y^*[\St_f]$ identifies with the natural transformation $[\hat{f}_y]i_y^*: i_y^* \Rightarrow [X_y]i_y^*$. In other words, we have a commuting square of the form
$$ \xymatrix{
i_y^*f_!f^*  & (f_y)_!i_{X_y}^*f^* \ar@{=>}[l]_-{\tau_yf^*} \\
i_y^* \ar@{=>}[u]^-{i_y^*[\hat{f}]_Y}\ar@{=>}[r]_-{[\hat{f}_y]i_y^*} & (f_y)_!f_y^*i_y^* \ar@{=>}[u]_-{\simeq} \\
}$$
in the $\infty$-category $\Fun(\D^Y,\D)$. Precomposing with the functor $(i_y)_!$ we obtain the top square in the diagram
$$ \xymatrix{
i_y^*f_!f^*(i_y)_!  && (f_y)_!i_{X_y}^*f^*(i_y)_! \ar@{=>}[ll]_-{\tau_yf^*(i_y)_!}^-{\simeq} \\
i_y^*(i_y)_! \ar@{=>}[u]^-{i_y^*[\hat{f}]_Y(i_y)_!}\ar@{=>}[rr]^-{[\hat{f}_y]i_y^*(i_y)_!} && (f_y)_!f_y^*i_y^*(i_y)_! \ar@{=>}[u]_-{\simeq} \\
\Id \ar@{=>}[u]^{u_y}\ar@{=>}[rr]^-{[\hat{f}_y]} && (f_y)_!(f_y)^*\ar@{=>}[u]_{(f_y)_!f_y^*u_y} \\
}
$$
in the $\infty$-category $\Fun(\D,\D)$, where $u_y: \Id \Rightarrow i_y^*(i_y)_!$ is the unit of the adjunction $(i_y)_! \dashv i_y^*$.
In particular, we can identify the composed transformation
\begin{equation}\label{e:natural-2}
\xymatrix{
\Id \ar@{=>}^-{u_y}[r] & i_y^*(i_y)_! \ar@{=>}^-{i_y^*[\hat{f}]_Y(i_y)_!}[rr] && i_y^*f_!f^*(i_y)_! \\
}
\end{equation}
with the composed transformation
\begin{equation}\label{e:natural-3}
\xymatrix{
\Id \ar@{=>}[r]^-{[\hat{f_y}]} & (f_y)_!f_y^* \ar@{=>}^-{(f_y)_!f_y^*u_y}[rr] && (f_y)_!f_y^*i_y^*(i_y)_! \ar@{=>}[r]^-{\simeq} & (f_y)_!i_{X_y}^*f^*(i_y)_! \ar@{=>}[rr]^-{\tau_yf^* (i_y)_!}_-{\simeq} && i_y^*f_!f^*(i_y)_! \\
}
\end{equation}
Now the transpose of the square~\eqref{e:BC} also has a Beck-Chevalley transformation $\sig_y: (i_{X_y})_!f_y^*  \Longrightarrow f^*(i_y)_!$, which (see~\cite[Remark 4.1.2]{ambi}) is given by the composition of transformations
$$
\xymatrix{
(i_{X_y})_!f_y^* \ar@{=>}[rr]^-{(i_{X_y})_!f_y^*u_y} && (i_{X_y})_!f_y^*i_y^*(i_y)_! \ar@{=>}[r]^-{\simeq} &  (i_{X_y})_!i_{X_y}^*f^*(i_y)_! \ar@{=>}[rr]^-{v_{X_y}f^*(i_y)_!} && f^*(i_y)_! \\
} 
$$
Applying~\cite[Proposition 4.3.3]{ambi} again we get that $\sig_y$ is an equivalence. Let $u_{X_y}: \Id \Longrightarrow i_{X_y}^*(i_{X_y})_!$ be a unit transformation compatible with the counit $v_{X_y}$ above. Then the compatibility of $u_{X_y}$ and $v_{X_y}$ implies that~\eqref{e:natural-3} (and hence~\eqref{e:natural-2}) is homotopic to the composed transformation
\begin{equation}\label{e:natural-5}
\xymatrix{
\Id \ar@{=>}[r]^-{[\hat{f_y}]} & (f_y)_!f_y^* \ar@{=>}[rr]^-{(f_y)_!u_{X_y}f_y^*} && (f_y)_!i_{X_y}^*(i_{X_y})_!f_y^*  \ar@{=>}[rr]^-{\tau_y\sig_y}_-{\simeq} &&  i_y^*f_!f^*(i_y)_! \\
}
\end{equation}
Comparing now~\eqref{e:natural-2} and~\eqref{e:natural-5} we have reduced to showing that
$$
\xymatrix{
\Id \ar@{=>}[r]^-{[\hat{f_y}]} & (f_y)_!f_y^* \ar@{=>}[rr]^-{(f_y)_!u_{X_y}f_y^*} && (f_y)_!i_{X_y}^*(i_{X_y})_!f_y^*  \\
}
$$
exhibits $(f_y)_!i_{X_y}^*$ as right adjoint to $(i_{X_y})_!f_y^*$. But this just follows from the fact that $u_{X_y}$ is the unit of $(i_{X_y})_! \dashv i_{X_y}^*$ by construction and $[\hat{f_y}]$ exhibits $(f_y)_!$ as right adjoint to $(f_y)^*$ by Lemma~\ref{l:lem-1}.
\end{proof}

\begin{const}\label{c:counit}
Let $X \in \cS_m$ be an $m$-finite space. We will denote by $\del: X \lrar X \times X$ the associated \textbf{diagonal map}. Let $\pi_1,\pi_2: X \times X \lrar X$ be the two projections and let $\sig_X: (\pi_1)_!\pi_2^* \Longrightarrow q^*q_!$ be the Beck-Chevalley transformation associated to the Cartesian square
\begin{equation}\label{e:BC-2}
\xymatrix{
X \times X \ar^{\pi_1}[r]\ar_{\pi_2}[d] & X \ar^{q}[d] \\
X \ar_{q}[r] & \ast \\
}
\end{equation}
By~\cite[Proposition 4.3.3]{ambi} the transformation $\sig_X$ is an equivalence. Let $[\hat{\del}]_{X \times X}: \Id \lrar (\del)_!\del^*$ be the natural transformation~\eqref{e:f_hat} for the map $\del$. We will then denote by 
$$ \xymatrix{
v_q: q^*q_! \ar@{=>}[r]^{\sig_X^{-1}}_{\simeq} & (\pi_1)_!\pi_2^* \ar@{=>}[rr]^-{[\hat{\del}]_{X \times X}} && (\pi_2)_!(\del)_!\del^*(\pi_1)^* \simeq \Id \circ \Id \simeq \Id \\
} $$
the composed natural transformation.
\end{const}

\begin{observe}\label{o:ambi}
Let $\D$ be as in Hypothesis~\ref{a:fix}. Let $X \in \cS_m$ be an $m$-finite spaces and $q: X \lrar \ast$ the terminal map. Then $X$ is $\D$-ambidextrous in the sense of~\cite[Definition 4.3.4]{ambi} if and only if the natural transformation $v_{q}: q^*q_! \Rightarrow \Id$ of Construction~\ref{c:counit} is a counit exhibiting $q^*$ as left adjoint to $q_!$.
\end{observe}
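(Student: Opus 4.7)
My plan is to reduce both directions to the inductive definition of $\D$-ambidexterity in \cite[Definition 4.3.4]{ambi}. The "if" direction is essentially formal: if $v_q$ is a counit exhibiting the adjunction $q^* \dashv q_!$, then $q_!$ is simultaneously a left and a right adjoint of $q^*$, which is weak $\D$-ambidexterity; the specific construction of $v_q$ from the ambidexterity of the diagonal supplies the extra coherence required to upgrade this to full ambidexterity in the sense of Hopkins--Lurie.

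The "only if" direction is the substantive content, and the strategy is to identify $v_q$ with the canonical candidate counit produced by the inductive machinery of~\cite[\S 4.1--4.3]{ambi}. Since $\del: X \lrar X \times X$ is $(m-1)$-truncated and $\D$ is $(m-1)$-semiadditive, Lemma~\ref{l:lem-2} provides a specific unit $u_\del := [\hat{\del}]_{X \times X}: \Id \Rightarrow \del_!\del^*$ exhibiting $\del$ as $\D$-ambidextrous. The Hopkins--Lurie recipe then assembles a candidate counit $\mu_q: q^*q_! \Rightarrow \Id$ in two steps: first invert the Beck--Chevalley equivalence $\sig_X$ of~\eqref{e:BC-2} to trade $q^*q_!$ for $(\pi_1)_!\pi_2^*$; then apply $u_\del$ along the diagonal, exploiting the identifications $\pi_i \circ \del \simeq \Id_X$ to collapse the result back to $\Id$. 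By definition, $X$ is $\D$-ambidextrous precisely when $\mu_q$ is an actual counit. Comparing with Construction~\ref{c:counit}, plugging the unit $u_\del = [\hat{\del}]_{X \times X}$ into this recipe reproduces $v_q$, so $v_q \simeq \mu_q$, and the observation follows.

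The main obstacle is carrying out this identification rigorously, because the construction of the ambidexterity data in~\cite{ambi} is phrased through an inductive tower of coherence data rather than a single closed-form transformation. The required verification is, however, essentially bookkeeping: one traces through~\cite[\S 4.1--4.3]{ambi} to match the Beck--Chevalley transformations in both presentations, and then appeals to the essential uniqueness of adjoints to show that different choices of unit for the $\D$-ambidexterity of $\del$ produce homotopic candidate counits. Once this matching is completed, the equivalence $v_q \simeq \mu_q$ is immediate and the statement of the observation follows from the definition.
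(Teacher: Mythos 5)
Your proposal takes essentially the same route as the paper: one identifies $v_q$ with the candidate counit $v^{(m)}_q$ of \cite[Construction 4.1.8]{ambi} by observing that both are obtained from $\sig_X^{-1}$ followed by a transformation $\Id \Rightarrow \del_!\del^*$ exhibiting $\del_!$ as right adjoint to $\del^*$, and that the two such transformations ($[\hat{\del}]_{X \times X}$, via Lemma~\ref{l:lem-2}, and $\mu^{(m-1)}_{\del}$) agree by uniqueness of units. Note only that this single identification already yields both implications at once, so your separate ``if''-direction argument (weak ambidexterity plus ``extra coherence'') is not needed.
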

\begin{proof}
Since $\D$ is assumed to be $(m-1)$-semiadditive we have that $X$ is automatically weakly ambidextrous in the sense of~\cite{ambi}. We hence just need to identify the natural transformation $v_q$ with the natural transformation $v^{(m)}_q$ appearing in~\cite[Construction 4.1.8]{ambi}. Comparing the respective definitions we see that it will suffice to show that the natural transformation $[\hat{\del}]_{X \times X}: \Id \lrar (\del)_!\del^*$ is homotopic to the natural transformation $\mu^{(m-1)}_{\delta}:\Id \lrar (\del)_!\del^*$ appearing in~\cite[Construction 4.1.8]{ambi}. But this now follows from the uniqueness of units since both natural transformations exhibit $\del_!$ as right adjoint to $\del^*$ (Lemma~\ref{l:lem-2}).
\end{proof}


\begin{define}\label{d:diag}
Let $X$ be an $m$-finite space. We will denote by $\tr_X: X \times X \lrar \ast$ the morphism in $\cS^{m-1}_m$ given by the span
$$ \xymatrix{
& X\ar_{\del}[dl]\ar[dr] & \\
X \times X && \ast \\
}$$
where $\del: X \lrar X \times X$, as above, is the diagonal map.
\end{define}

\begin{rem}\label{r:self-dual}
Let $X \in \cS^{m-1}_m$ and $\tr_X: X \times X \lrar \ast$ be as in Definition~\ref{d:diag}. If we consider $X$ as an object of the larger (symmetric monoidal) $\infty$-category $\cS^m_m$, then $\tr_X$ exhibits $X$ as self-dual. To see this, observe that in $\cS^m_m$ the dual span $\hat{\tr}_X: \ast \llar X \x{\del}{\lrar} X \times X$ exists as well, and the pair $\tr_X$ and $\hat{\tr}_X$ form a compatible pair of evaluation and coevaluation maps exhibiting $X$ as self-dual. More precisely, the compositions
$$ \xymatrix{
X \ar[rr]^-{X \times \hat{\tr}_X} && X \times X \times X \ar[rr]^-{\tr_X \times X} && X \\
}
$$
and
$$ \xymatrix{
X \ar[rr]^-{\hat{\tr}_X \times X} && X \times X \times X \ar[rr]^-{X \times \tr_X} && X \\
}
$$
are both homotopic to the identity.
\end{rem}

Having an action of $\cS^{m-1}_m$ on $\D$ means in particular that for every $X \in \cS_m$ we are equipped with an equivalence $m_X:[X \times Y] \simeq [X] \circ [Y]$. Since the action of $\cS^{m-1}_m$ is compatible with $\K_m$-indexed colimits we have a canonical equivalence $[X] \simeq \colim_{x \in X} \Id$ and the map $m_X$ is completely determined by $m_\ast: \Id \simeq \Id \circ \Id$ via the canonical ``Fubini map'' 
$$ \colim_{(x,y) \in X \times Y} \Id \x{\simeq}{\Rightarrow} \colim_{x \in X}\colim_{y \in Y} \Id .$$ 
The latter can be described in terms of the Beck-Chevalley transformation $\sig_{X,Y}: (\pi_X)_!\pi_Y^* \Longrightarrow q_X^*(q_Y)_!$ associated to the Cartesian square
\begin{equation}\label{e:BC-3}
\xymatrix{
X \times Y \ar^{\pi_Y}[r]\ar_{\pi_X}[d] & Y \ar^{q_Y}[d] \\
X \ar_{q_X}[r] & \ast \\
}
\end{equation}
The Fubini map, and hence $m_{X,Y}$, can then be the identified with the map 
\begin{equation}\label{e:mx} 
\xymatrix{
m_{X,Y}:[X \times Y] = Q_!Q^* = (q_X)_!(\pi_X)_!\pi_Y^*q_Y^* \ar@{=>}[rr]^-{(q_X)_!\sig_{X,Y}q_Y^*} && (q_X)_!q_X^*(q_Y)_!q_Y^* = [X] \circ [Y] \\
},
\end{equation}
where $Q: X \times Y \lrar \ast$ is the terminal map.

\begin{prop}\label{p:non-deg}
Let $\D$ be as in Hypothesis~\ref{a:fix}. Then $\D$ is $m$-semiadditive if and only if the natural transformation
\begin{equation}\label{e:tr} 
\xymatrix{
[X] \circ [X] \ar@{=>}[r]^-{m_{X,X}^{-1}}_{\simeq} & [X \times X] \ar@{=>}[r]^-{[\tr_X]} & \Id \\
}
\end{equation}
exhibits the functor $[X]: \D \lrar \D$ as self-adjoint.
\end{prop}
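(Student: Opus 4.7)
The plan is to reduce the proposition to Observation~\ref{o:ambi} via an explicit identification of natural transformations. By definition, $\D$ is $m$-semiadditive iff every $m$-finite space $X$ is $\D$-ambidextrous, which by Observation~\ref{o:ambi} is equivalent to the natural transformation $v_q: q^*q_! \Rightarrow \Id_{\D^X}$ of Construction~\ref{c:counit} being a counit of $q^* \dashv q_!$ (where $q: X \lrar \ast$ denotes the terminal map). The proposition will then follow once I show that this condition on $v_q$ is equivalent to $[\tr_X] \circ m_{X,X}^{-1}$ exhibiting $[X] \dashv [X]$.

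The main computational step will be to identify $[\tr_X] \circ m_{X,X}^{-1}$ with the composite
$$ [X] \circ [X] = q_! q^* q_! q^* \x{q_! v_q q^*}{\Longrightarrow} q_! q^* = [X] \x{\phi}{\Longrightarrow} \Id, $$
where $\phi: q_!q^* \Rightarrow \Id$ is the counit of $q_! \dashv q^*$ from Lemma~\ref{l:basic}. To do so I will inspect each factor separately. Inverting formula~\eqref{e:mx} yields $m_{X,X}^{-1} = q_!\sig_{X,X}^{-1}q^*$. Factoring $\tr_X = q \circ \hat\del$ in $\cS^{m-1}_m$, I write $[\tr_X] = [q] \circ [\hat\del]$ and apply Lemma~\ref{l:effect} to express $[\hat\del]$ as $Q_![\hat\del]_{X\times X}Q^*$ composed with the canonical equivalence $Q_!\del_!\del^*Q^* \simeq q_!q^*$ (coming from $Q \circ \del = q$), while Lemma~\ref{l:basic} identifies $[q]$ with $\phi$. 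Expanding $Q_! = q_!(\pi_1)_!$ and $Q^* = \pi_2^*q^*$ and splicing the components, the middle piece $(\pi_1)_![\hat\del]_{X\times X}\pi_2^*$ matches the middle piece of $v_q$ from Construction~\ref{c:counit}, yielding the claimed identification.

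With this identification in hand, I must show that $v_q$ is a counit of $q^* \dashv q_!$ iff $\phi \circ q_!v_qq^*$ is a counit of $[X] \dashv [X]$. The forward direction is immediate composition of adjunctions: chaining $q_! \dashv q^*$ with $q^* \dashv q_!$ produces $[X] \dashv [X]$ with counit precisely $\phi \circ q_!v_qq^*$. For the reverse, I transpose the counit condition for $\phi \circ q_!v_qq^*$ across $q_! \dashv q^*$; using naturality of the unit $u: \Id \Rightarrow q^*q_!$ together with the triangle identity $q^*\phi \circ uq^* = \Id_{q^*}$, a direct calculation shows that the resulting adjunction isomorphism sends $g \in \Map_\D(M, q_!q^*A)$ to $(v_q)_{q^*A} \circ q^*g \in \Map_{\D^X}(q^*M, q^*A)$, establishing the counit property of $v_q$ on objects of the form $B = q^*A$. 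The principal obstacle will be to upgrade this to arbitrary $B \in \D^X$; my plan is to combine the naturality of $v_q$ applied to the units $u_B: B \Rightarrow q^*q_!B$ with uniqueness of adjoint counits, so that $v_q$ must coincide with the abstract counit of $q^* \dashv q_!$ whose existence is forced once $[X]$ is known to be self-adjoint.
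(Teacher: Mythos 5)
Your identification of~\eqref{e:tr} with $[q]\circ q_!v_qq^*$ (via Lemma~\ref{l:effect}, formula~\eqref{e:mx} and Lemma~\ref{l:basic}) is exactly the paper's first step, and your forward direction (composing the adjunctions $q_!\dashv q^*$ and $q^*\dashv q_!$, the latter with counit $v_q$ supplied by Observation~\ref{o:ambi}) is fine. The problem is the reverse direction. What you actually extract from self-adjointness of $[X]$ is that for all $M,A\in\D$ the map $\Map_{\D}(M,q_!q^*A)\lrar\Map_{\D^X}(q^*M,q^*A)$, $g\mapsto (v_q)_{q^*A}\circ q^*g$, is an equivalence; that is the counit property of $v_q$ only at \emph{constant} local systems $\L=q^*A$. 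Your proposed upgrade to arbitrary $\L\in\D^X$ is circular: you invoke ``the abstract counit of $q^*\dashv q_!$ whose existence is forced once $[X]$ is known to be self-adjoint,'' but self-adjointness of the composite endofunctor $q_!q^*$ does not produce any adjunction $q^*\dashv q_!$ --- that adjunction is precisely what ambidexterity of $X$ asserts and what you are trying to prove, so there is no counit to compare $v_q$ with via uniqueness. Nor can the gap be closed by naturality plus triangle identities alone: in the variable $\L$ the functors $\Map_{\D}(M,q_!\L)$ and $\Map_{\D^X}(q^*M,\L)$ are covariant, so you cannot reduce to a generating family such as the $(i_x)_!D$ (the reduction of that type, as in~\cite[Lemma 4.3.8]{ambi}, is only available in the contravariant variable), and constant local systems do not generate $\D^X$ in a way these functors respect.

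This missing implication --- nondegeneracy of the trace-form-type transformation implies that $v_q$ is a genuine counit, i.e.\ that $X$ is $\D$-ambidextrous --- is exactly the nontrivial input the paper imports from Hopkins--Lurie: after identifying~\eqref{e:tr} with the trace form of~\cite[Notation 5.1.7]{ambi} (using Lemma~\ref{l:basic} and the identification of $v_q$ with $v_q^{(m)}$ from the proof of Observation~\ref{o:ambi}), the paper concludes by~\cite[Proposition 5.1.8]{ambi}. If you want a self-contained argument you would have to reprove that result, e.g.\ by a retract/matrix-type argument extending the statement from constant local systems to general ones, which is genuinely more than the formal manipulations sketched in your last step. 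As written, the ``if'' direction of your proof is incomplete.
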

\begin{proof}
By Lemma~\ref{l:effect} and~\eqref{e:mx} we may identify the composed natural transformation~\eqref{e:tr} with the composed natural transformation
$$ \xymatrix{
[X] \circ [X] \simeq q_!q^*q_!q^* \ar@{=>}[rr]^-{q_!v_qq^*} && q_!q^* \simeq [X] \ar@{=>}[r]^-{[q]} & [\ast] = \Id \\
} $$
where $v_q$ is defined as in Construction~\ref{c:counit}. Identifying $v_q$ with the natural transformation $v^{(m)}_q$ of~\cite[Construction 4.1.8]{ambi} as in the proof of Observation~\ref{o:ambi} and using 
Lemma~\ref{l:basic} to identify $[q]: q_!q^* \Rightarrow \Id$ as a counit exhibiting $q_!$ as left adjoint ot $q^*$ 
we may conclude that the natural transformation~\eqref{e:tr} is homotopic to the trace form associated to the map $q:X \lrar \ast$ by Hopkins--Lurie \cite[Notation 5.1.7]{ambi}. By~\cite[Proposition 5.1.8]{ambi} we the get that~\eqref{e:tr} exhibits $[X]$ as self-adjoint if and only if the natural transformation $v_q: q^*q_! \lrar \Id$ is a counit of an adjunction, and so if and only if $X$ is $\D$-ambidextrous (Observation~\ref{o:ambi}).
\end{proof}


\begin{cor}\label{c:limit}
Let $\D$ be as in Hypothesis~\ref{a:fix}. Then $\D$ is $m$-semiadditive if and only if the collection of natural transformations $[\hat{i}_x]: [X] \Rightarrow [\ast]$ exhibits $[X]$ as the limit, in $\Fun(\D,\D)$, of the constant diagram $X$-indexed diagram with value $[\ast]$ (here the morphism $\hat{i}_x: X \lrar \ast$ in $\cS^{m-1}_m$ is as in Definition~\ref{d:dual}).
\end{cor}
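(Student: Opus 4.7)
The plan is to derive this from Proposition~\ref{p:non-deg} by translating the self-adjointness of $[X]$ into a limit universal property. Writing $\eps = [\tr_X] \circ m_{X,X}^{-1}: [X]\circ [X] \Rightarrow \Id$ for the natural transformation~\eqref{e:tr}, Proposition~\ref{p:non-deg} says that $\D$ is $m$-semiadditive exactly when $\eps$ exhibits $[X]$ as self-adjoint, i.e., serves as the counit of an adjunction $[X] \dashv [X]$ in $\Fun(\D,\D)$.

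The first and most delicate step of the translation is the identity, for each $x \in X$:
\[
[\hat{i}_x] \;=\; \eps \circ \bigl([i_x] \ast \Id_{[X]}\bigr) \colon [X] \Longrightarrow \Id,
\]
where $[i_x] \ast \Id_{[X]}: [X] \simeq [\ast]\circ [X] \Rightarrow [X]\circ [X]$ is the whiskering. This rests on a span-level factorization $\hat{i}_x \simeq \tr_X \circ (i_x \times \Id_X)$ in $\cS^{m-1}_m$: the pullback of $\del: X \to X \times X$ along $i_x \times \Id_X: X \to X \times X$ is contractible, so the composite span from $X$ to $\ast$ is precisely $X \stackrel{i_x}{\leftarrow} \ast \to \ast$, which is $\hat{i}_x$ by definition. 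Applying the monoidal action functor $\cS^{m-1}_m \to \Fun(\D,\D)$ and unwinding the coherence $m_{X,X}: [X \times X] \simeq [X] \circ [X]$ then yields the identity.

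The second step uses that the maps $\{[i_x]: \Id \Rightarrow [X]\}_{x \in X}$ exhibit $[X]$ as the colimit of the constant $X$-diagram with value $\Id$ in $\Fun(\D,\D)$ (by Lemma~\ref{l:colimits} and Proposition~\ref{p:adjoint}, together with the fact that the action of $\cS^{m-1}_m$ preserves $\K_m$-colimits). Consequently, for every $F \in \Fun(\D,\D)$ we have $[X]\circ F \simeq \colim_{x \in X} F$ and therefore
\[
\Map([X]\circ F, \Id) \;\simeq\; \lim_{x \in X}\Map(F,\Id),
\]
where the $x$-th projection on the right is precomposition with the colimit inclusion, which under this identification equals the whiskering $[i_x] \ast \Id_F$.

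Putting the two steps together, the counit $\eps$ exhibits $[X] \dashv [X]$ if and only if, for every $F$, the map $\Map(F,[X]) \to \Map([X]\circ F, \Id)$ given by $f \mapsto \eps \circ [X] f$ is an equivalence. Under our identifications, its $x$-th component is precisely postcomposition with $[\hat{i}_x]$. Hence $\eps$ exhibits $[X]$ as self-adjoint if and only if $\{[\hat{i}_x]\}_{x \in X}$ exhibits $[X]$ as the limit of the constant $X$-diagram with value $[\ast] = \Id$, and combining with Proposition~\ref{p:non-deg} gives the corollary. The main obstacle is the span-composition bookkeeping in the first step together with the compatibility between whiskering and colimit inclusions needed to match up the $x$-th projection maps.
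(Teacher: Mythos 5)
Your proposal is correct and follows essentially the same route as the paper: reduce to Proposition~\ref{p:non-deg}, use the span identity $\hat{i}_x \simeq \tr_X \circ (i_x \times \Id_X)$ (the paper writes it with $\Id_X \times i_x$, which is the same by symmetry of $\tr_X$), and use that the $[i_x]$ exhibit $[X]$ as the colimit of the constant diagram, so that maps $[X]\circ\G \Rightarrow \Id$ correspond to $X$-families of maps $\G \Rightarrow \Id$, matching the self-adjointness criterion with the limit criterion componentwise. Your explicit pullback verification of the span factorization and the interchange-law bookkeeping are just details the paper leaves implicit.
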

\begin{proof}
By Proposition~\ref{p:non-deg} it will suffice to show that the collection of natural transformations $[\hat{i}_x]: [X] \Rightarrow [\ast]$ exhibits $[X]$ as the limit in $\Fun(\D,\D)$ of the constant $X$-indexed diagram with value $[\ast]$ if and only if 
$[\tr_X] \circ m_{X,X}^{-1}:[X] \circ [X] \Rightarrow \Id$ 
exhibits $[X]$ as self-adjoint. Let $\G: \D \lrar \D$ be any other functor and let $\alp_\G$ be the composed map
$$ \xymatrix{
\alp_\G:\Map(\G,[X]) \ar[r] & \Map([X] \circ \G,[X] \circ [X]) \ar[rrr]^-{[\tr_X] \circ m_{X,X}^{-1} \circ (-)} &&& \Map([X] \circ \G,\Id)\\
} .$$
Recall that the collection of natural transformations $[i_x]: [\ast] \Rightarrow [X]$ exhibits $[X]$ as the colimit in $\Fun(\D,\D)$ of the constant $X$-indexed diagram with value $[\ast]$. 
Since colimits in functor categories are computed objectwise it follows that the natural transformations $[i_x] \circ \G: [\ast] \circ \G \Rightarrow [X] \circ \G$ exhibit $[X] \circ \G$ as the colimit in $\Fun(\D,\D)$ of the constant $X$-indexed diagram with value $[\ast] \circ \G \simeq \G$. We may hence identify a map $[X] \circ \G \Rightarrow \Id$ with a collection of natural transformations $\T_x:\G \Rightarrow \Id$ indexed by $x \in X$. Since the map $\hat{i}_x: X \lrar \ast$ is equivalent to the composition $X = X \times \ast \x{\Id \times i_x}{\lrar} X \times X \x{\tr_X}{\lrar} \ast$ we see that the map $\alp_\G$ associates to a natural transformation $\T:\G \lrar [X]$ the collection of natural transformations $[\hat{i}_x] \circ \T:\G \lrar [\ast] \simeq \Id$. It hence follows that the collection of natural transformations $[\hat{i}_x]: [X] \Rightarrow [\ast]$ exhibits $[X]$ as the limit in $\Fun(\D,\D)$ of the constant diagram $X$-indexed diagram with value $[\ast]$ if and only if $\alp_\G$ is an equivalence for every $\G$, i.e., if and only if $[\tr_X] \circ m_{X,X}^{-1}:[X] \circ [X] \Rightarrow \Id$ exhibits $[X]$ as self-adjoint.

\end{proof}

\begin{cor}\label{c:amusing}
Let $\D$ be an $\infty$-category which is tensored over $\cS^m_m$, such that the action functor $\cS^m_m \times \D \lrar \D$ preserves $\K_m$-indexed colimits separately in each variable. Then $\D$ is $m$-semiadditive.
\end{cor}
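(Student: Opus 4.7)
The plan is to argue by induction on $m$, with the vacuous base case $m = -2$ (every $\infty$-category is trivially $(-2)$-semiadditive, since every $(-2)$-finite space is contractible) and an inductive step that appeals to Proposition~\ref{p:non-deg} via the self-duality of $m$-finite spaces in $\cS^m_m$ recorded in Remark~\ref{r:self-dual}.

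For the inductive step, fix $m \geq -1$ and assume the corollary for $m-1$. By Remark~\ref{r:subcategory}, the natural maps $\cS^{m-1}_{m-1} \hookrightarrow \cS^m_m$ and $\cS^{m-1}_m \hookrightarrow \cS^m_m$ are symmetric monoidal subcategory inclusions, and they are compatible with $\K_m$-indexed colimits since these are computed in $\cS_m$ (Proposition~\ref{p:adjoint} and Corollary~\ref{c:n-colimits}). Restricting the $\cS^m_m$-action on $\D$ along the first inclusion produces a $\cS^{m-1}_{m-1}$-tensoring of $\D$ whose action preserves $\K_{m-1}$-indexed colimits in each variable; by the inductive hypothesis, $\D$ is then $(m-1)$-semiadditive. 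Restricting along the second inclusion produces a $\cS^{m-1}_m$-tensoring of $\D$ compatible with $\K_m$-indexed colimits. Hence all three clauses of Hypothesis~\ref{a:fix} hold.

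To conclude via Proposition~\ref{p:non-deg}, I must show that for each $X \in \cS_m$ the natural transformation $[\tr_X] \circ m_{X,X}^{-1}: [X] \circ [X] \Rightarrow \Id$ exhibits $[X]: \D \lrar \D$ as self-adjoint. By Remark~\ref{r:self-dual}, $X$ is self-dual in the symmetric monoidal $\infty$-category $\cS^m_m$, with evaluation $\tr_X$ and coevaluation $\hat{\tr}_X$. Since the action functor $\cS^m_m \lrar \Fun(\D,\D)$ is symmetric monoidal, with monoidal comparison $m_{X,Y}: [X \times Y] \x{\simeq}{\Rightarrow} [X] \circ [Y]$, it transports this self-duality to a self-duality of $[X]$ in the composition monoidal structure on $\Fun(\D, \D)$, with evaluation $[\tr_X] \circ m_{X,X}^{-1}$ and coevaluation $m_{X,X} \circ [\hat{\tr}_X]$. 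But self-duality in $(\Fun(\D, \D), \circ)$ is synonymous with self-adjointness of the underlying functor, so $[X]$ is self-adjoint with the required counit, completing the proof.

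The one point that will require care is verifying that the monoidal comparison carried by the $\cS^m_m$-action is indeed the specific natural equivalence $m_{X,X}$ defined in~\eqref{e:mx}: this amounts to a coherence check that follows from the Beck-Chevalley description of $m_{X,X}$, together with the fact that for any action of $\cS^m_m$ in $\Cat_{\K_m}$ the monoidal structure constraint is uniquely determined by colimit preservation once its value on the generator $\ast \in \cS^m_m$ is fixed.
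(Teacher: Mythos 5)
Your proposal is correct and follows essentially the same route as the paper: an induction on the semiadditivity degree starting from the vacuous $(-2)$ case, with the inductive step verifying Hypothesis~\ref{a:fix} by restricting the $\cS^m_m$-action and then invoking Proposition~\ref{p:non-deg} together with the self-duality of $X$ in $\cS^{m'}_{m'}$ (Remark~\ref{r:self-dual}) transported along the monoidal action functor. The only cosmetic differences are that the paper runs the induction internally on $m'$ for the fixed $\D$ rather than on the statement of the corollary, and it leaves implicit the coherence point about the comparison map $m_{X,X}$ that you rightly flag (note also that your appeal to Remark~\ref{r:subcategory} for the inclusion $\cS^{m-1}_{m-1}\hookrightarrow\cS^m_m$ is not quite the right citation, though the needed fact is readily available from Proposition~\ref{p:adjoint}).
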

\begin{proof}
Let us prove that $\D$ is $m'$-semiadditive for every $-2 \leq m' \leq m$ by induction on $m'$. Since every $\infty$-category is $(-2)$-semiadditive we may start our induction at $m'=-2$. Now suppose that $\D$ is $m'$-semiadditive for some $-2 \leq m' \leq m$. As above let us denote by $[X]: \D \lrar \D$ the action of $X \in \cS^{m'-1}_{m'}$. By Proposition~\ref{p:non-deg} it will suffice to show that the morphism~\eqref{e:tr} 
exhibits the functor $[X]$ as self-adjoint. But this follows from the fact that the action of $\cS^{m'-1}_{m'}$ extends to an action of $\cS^{m'}_{m'}$, and the morphism $\tr_X: X \times X \lrar \ast$ exhibits $X$ as self-dual in the monoidal $\infty$-category $\cS^{m'}_{m'}$ (see Remark~\ref{r:self-dual}). 
\end{proof}

\begin{cor}\label{c:origin}
For every $-2 \leq m \leq n$ the $\infty$-category $\cS^m_n$ is $m$-semiadditive.
\end{cor}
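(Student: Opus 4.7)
The plan is to deduce the corollary from Corollary~\ref{c:amusing} applied to $\D = \cS^m_n$. The task is therefore to exhibit $\cS^m_n$ as tensored over $\cS^m_m$ in such a way that the action preserves $\K_m$-indexed colimits separately in each variable; Corollary~\ref{c:amusing} will then immediately give the desired $m$-semiadditivity.

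First I would construct the module structure. The symmetric monoidal structure on $\cS^m_n$ from~\S\ref{s:prelim} restricts to a symmetric monoidal structure on $\cS^m_m$ (since $m$-finite spaces are closed under Cartesian product, and $(m-1)$-truncated maps are closed under pullbacks and products), and the inclusion $j: \cS^m_m \hookrightarrow \cS^m_n$ is visibly symmetric monoidal. Restricting scalars along $j$ endows $\cS^m_n$ with the structure of a module over $\cS^m_m$, whose action functor is the Cartesian product $\otimes: \cS^m_m \times \cS^m_n \lrar \cS^m_n$.

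Next I would verify the colimit compatibility. In the second variable, fixing $X \in \cS^m_m \subseteq \cS^m_n$, the functor $X \otimes (-): \cS^m_n \lrar \cS^m_n$ preserves $\K_n$-indexed, and hence in particular $\K_m$-indexed, colimits by Corollary~\ref{c:n-tensor}. In the first variable, fixing $Y \in \cS^m_n$, the functor $(-) \otimes Y: \cS^m_m \lrar \cS^m_n$ factors as $j$ followed by $(-) \otimes Y: \cS^m_n \lrar \cS^m_n$; the second factor preserves $\K_m$-indexed colimits by the same argument, and for $j$ itself I would apply Corollary~\ref{c:n-colimits} (with parameter $m$), which reduces the question to checking that the composite $\cS_m \hookrightarrow \cS^m_m \x{j}{\hookrightarrow} \cS^m_n$ preserves $\K_m$-indexed colimits. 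This composite coincides with $\cS_m \hookrightarrow \cS_n \hookrightarrow \cS^m_n$, whose two factors preserve $\K_m$-indexed colimits by Lemma~\ref{l:les} and Proposition~\ref{p:adjoint}, respectively.

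With the hypotheses of Corollary~\ref{c:amusing} thus verified, that corollary yields the $m$-semiadditivity of $\cS^m_n$. The argument contains no serious obstacle: all of the substantive content has been packaged into Corollary~\ref{c:amusing} (and ultimately into Proposition~\ref{p:non-deg} and the self-duality of $m$-finite spaces recorded in Remark~\ref{r:self-dual}), and what remains is just bookkeeping of monoidal structures and colimit preservation between the various categories $\cS_k$ and $\cS^m_k$, which is immediate from the preliminaries of~\S\ref{s:prelim}.
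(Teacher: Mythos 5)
Your argument is correct and is essentially the paper's own proof: the paper deduces Corollary~\ref{c:origin} by combining Corollary~\ref{c:algebra} (which packages exactly the content of Corollaries~\ref{c:n-colimits} and~\ref{c:n-tensor}) with Corollary~\ref{c:amusing}, the $\cS^m_m$-action on $\cS^m_n$ being obtained, as in your write-up, by restricting the monoidal product along the symmetric monoidal inclusion $\cS^m_m \hookrightarrow \cS^m_n$. Your explicit verification that this inclusion and the action preserve $\K_m$-indexed colimits in each variable is precisely the bookkeeping the paper leaves implicit in citing Corollary~\ref{c:algebra}.
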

\begin{proof}
Combine Corollary~\ref{c:algebra} and Corollary~\ref{c:amusing}.
\end{proof}

\section{The universal property of finite spans}\label{s:proof}

In this section we will prove our main result, establishing a universal property for the $\infty$-categories $\cS^m_n$ in terms of $m$-semiadditivity.
\begin{thm}\label{t:main}
Let $-2 \leq m \leq n$ be integers and let $\D$ be an $m$-semiadditive $\infty$-category which admits $\K_n$-indexed colimits. Then evaluation at $\ast \in \cS^m_n$ induces an equivalence of $\infty$-categories
$$ \Fun_{\K_n}(\cS^m_n,\D) \x{\simeq}{\lrar} \D .$$
In other words, the $\infty$-category $\cS^m_n$ is the free $m$-semiadditive $\infty$-category which admits $\K_n$-indexed colimits, generated by $\ast \in \cS^m_n$.
\end{thm}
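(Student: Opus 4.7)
The plan is to proceed by induction on $m \geq -2$. The base case is $m=-2$: here $\cS_{n,-2}$ is the maximal subgroupoid of $\cS_n$, so $\cS^{-2}_n = \cS_n$, and the assertion specializes to the standard universal property of $\cS_n$ as the free $\infty$-category with $\K_n$-indexed colimits generated by $\ast$ (a consequence of the theory of presheaves in \cite{higher-topos}).

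For the inductive step, suppose the result for $m-1$ and let $\D$ be an $m$-semiadditive $\infty$-category with $\K_n$-indexed colimits; in particular $\D$ is $(m-1)$-semiadditive. By Proposition~\ref{p:adjoint} and Corollary~\ref{c:n-colimits} the inclusion $j: \cS^{m-1}_n \hookrightarrow \cS^m_n$ preserves $\K_n$-indexed colimits, and evaluation at $\ast$ fits into a commutative triangle
$$
\xymatrix{
\Fun_{\K_n}(\cS^m_n, \D) \ar[rr]^-{j^*}\ar[dr]_-{\ev_\ast} && \Fun_{\K_n}(\cS^{m-1}_n, \D) \ar[dl]^-{\ev_\ast}_-{\simeq} \\
& \D &
}
$$
in which the right-hand edge is an equivalence by the inductive hypothesis. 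It therefore suffices to show that $j^*$ is an equivalence.

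To construct an inverse, I would start with a $\K_n$-colimit-preserving functor $F_{m-1}: \cS^{m-1}_n \to \D$ and extend it over the new morphisms in $\cS^m_n$, namely the dual spans $\hat{f}: Y \to X$ associated via Definition~\ref{d:dual} to $m$-truncated maps $f: X \to Y$ in $\cS_n$; every morphism of $\cS^m_n$ factors as such a wrong-way map followed by a forward map of $\cS_n$, so this suffices. The needed ambidexterity data come from Section~\ref{s:ambi}: applying the inductive hypothesis to $\D$ itself produces a $\K_n$-colimit-compatible $\cS^{m-1}_n$-module structure on $\D$, which restricts to an $\cS^{m-1}_m$-action placing $\D$ in the situation of Hypothesis~\ref{a:fix}. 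Lemma~\ref{l:lem-2} then realizes $[\hat{f}]_Y$ as the unit of an ambidextrous adjunction $f^* \dashv f_!$ in $\D$, and $m$-semiadditivity (combined with Corollary~\ref{c:limit}) guarantees this holds for every $m$-truncated $f$ between $n$-finite spaces. This forces $F_m(\hat{f})$ to be the uniquely determined right adjoint of $F_{m-1}(f)$.

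The main obstacle will be coherence: turning this pointwise prescription into an honest functor $F_m: \cS^m_n \to \D$ and verifying it is the unique $\K_n$-colimit-preserving extension. The key ingredient is the Beck--Chevalley formula of \cite[Proposition 4.3.3]{ambi} applied inside $\D$: for every pullback square of $m$-truncated maps the forward/backward composites intertwine exactly as the span composition law in $\cS^m_n$ demands. This upgrades the assignment to a functor, and the fully-faithfulness of $j^*$ follows from the same uniqueness-of-adjoints principle --- any natural transformation between two such extensions is determined by its restriction to $\cS^{m-1}_n$ and is automatically compatible with the new wrong-way maps.
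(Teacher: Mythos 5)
Your overall architecture (induct on $m$, reduce to showing that restriction along the wide inclusion $\cS^{m-1}_n \subseteq \cS^m_n$ is an equivalence, and use the $\cS^{m-1}_m$-action on $\D$ obtained from the inductive hypothesis together with the ambidexterity results of \S\ref{s:ambi} to identify what an extension must do on dual spans) is the same as the paper's. But there is a genuine gap at exactly the point you call ``the main obstacle'': nothing in your sketch actually produces the extension $F_m$ as a functor of $\infty$-categories, nor proves that $j^*$ is fully faithful. Prescribing $F_m$ on the wrong-way morphisms $\hat{f}$ and checking via Beck--Chevalley that composition is respected up to homotopy only supplies data at the level of the homotopy ($2$-categorical) structure; a functor out of a span $\infty$-category such as $\cS^m_n$ requires coherence data in all simplicial degrees, and there is no general principle by which Beck--Chevalley squares ``upgrade the assignment to a functor''. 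The same objection applies to your fully-faithfulness claim: one must show that restriction induces equivalences on mapping spaces of the functor categories, not that a transformation is ``determined'' morphism-by-morphism. This coherence problem is precisely what the paper's proof is engineered to avoid: it identifies extensions of a given $\F: \cS^{m-1}_m \lrar \D$ to $\cS^m_m$ with right Kan extensions along the correspondence $\M \lrar \Del^1$ built from the marked mapping cone of the non-full wide inclusion $\cS^{m-1}_m \subseteq \cS^m_m$ (Proposition~\ref{p:rkan}, Corollary~\ref{c:step-2}), so existence and uniqueness come from~\cite[Proposition 4.3.2.15]{higher-topos} rather than from a hand-made construction; the input making this work is Lemma~\ref{l:export}, i.e.\ the identification $\F(X) \simeq \lim_X \F(\ast)$.

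A second, related omission is that you run the extension step at the level of $\cS^{m-1}_n \subseteq \cS^m_n$ for general $n$, whereas the paper first contracts $n$ down to $m$ (Proposition~\ref{p:step-1} and Corollaries~\ref{c:step-1},~\ref{c:step-1a}, a left Kan extension along the fully faithful inclusion $\cS^m_{n-1} \subseteq \cS^m_n$) and only then performs the $m$-step at $n=m$. This ordering is not cosmetic: the pointwise limits controlling the extension at an object $X$ are indexed by the underlying space of $X$, and they exist in $\D$ (and agree with $\F(X)$) only because $X$ is $m$-finite and $\D$ is $m$-semiadditive. For $n$-finite $X$ the identification $\F(X) \simeq \lim_X \F(\ast)$ fails in general, since $\D$ is only assumed to admit $\K_m$-indexed limits, so the analogues of Lemma~\ref{l:export} and Proposition~\ref{p:rkan} break at level $n$. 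Thus, even granting your pointwise prescription, the single induction on $m$ as organized does not go through; you need either the paper's two-variable reduction followed by the Kan-extension argument, or some genuinely different device to handle the coherence.
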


Our strategy is essentially a double induction on $n$ and $m$. For this it will be useful to employ the following terminology: 
\begin{define}\label{d:good}
Let $\D$ be an $\infty$-category and let $-2 \leq m \leq n$ be integers. We will say that $\D$ is \textbf{$(n,m)$-good} if the following conditions are satisfied:
\begin{enumerate}
\item
$\D$ is $m$-semiadditive and admits $\K_n$-indexed colimits.
\item
Evaluation at $\ast$ induces an equivalence of $\infty$-categories
$$ \Fun_{\K_n}(\cS^{m}_{n},\D) \x{\simeq}{\lrar} \D .$$
\end{enumerate}
\end{define}
In other words, $\D$ is $(n,m)$-good if Theorem~\ref{t:main} holds for $m,n$ and $\D$. We may hence phrase the induction step on $n$ as follows: given an $(n-1,m)$-good $\infty$-category $\D$ which admits $\K_{n}$-indexed colimits, show that $\D$ is $(n,m)$-good. To establish this claim we will need to understand how to extend functors from $\cS^m_{n-1}$ to $\cS^m_n$ when $m \leq n-1$. Note that if $f: Z \lrar X$ is an $m$-truncated map and $X$ is $(n-1)$-truncated then $Z$ is $(n-1)$-truncated as well, and so the inclusion $\cS^m_{n-1} \hrar \cS^m_n$ is fully-faithful. The core argument for the induction step on $n$ is the following:
\begin{prop}\label{p:step-1}
Let $-2 \leq m < n$ be integers. Let $\D$ be an $m$-semiadditive $\infty$-category which admits $\K_n$-indexed colimits and let $\F: \cS^{m}_{n-1} \lrar \D$ be a functor which preserves $\K_{n-1}$-indexed colimits. Let $\iota: \cS^{m}_{n-1} \hrar \cS^{m}_{n}$ be the fully-faithful inclusion. Then the following assertion hold:
\begin{enumerate}
\item
$\F$ admits a left Kan extension 
$$ \xymatrix{
\cS^{m}_{n-1}  \ar^-{\F}[r]\ar_{\iota}[d] & \D \\
\cS^{m}_{n} \ar@{-->}_{\ovl{\F}}[ur] & \\
}$$
\item
An arbitrary extension $\ovl{\F}: \cS^{m}_{n} \lrar \D$ of $\F$ is a left Kan extension if and only if $\ovl{\F}$ preserves $\K_n$-indexed colimits.
\end{enumerate}
\end{prop}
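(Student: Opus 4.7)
The plan is to exploit the fact that every $X \in \cS^m_n$ is exhibited as a colimit in $\cS^m_n$ of the constant $X$-indexed diagram with value $\ast$ (Proposition~\ref{p:adjoint} together with Lemma~\ref{l:colimits}). Since $\ast \in \cS^m_{n-1}$, this identifies $\cS^m_n$ as being generated under $\K_n$-indexed colimits by the single object $\ast$ of $\cS^m_{n-1}$. Combined with the hypothesis that $\D$ admits $\K_n$-indexed colimits, this strongly constrains what any $\K_n$-colimit-preserving extension $\ovl{\F}$ of $\F$ can look like and makes plausible the existence of the pointwise left Kan extension.

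For (1), I would work with the pointwise formula for the left Kan extension. For each $X \in \cS^m_n$, let $\C_X := \cS^m_{n-1} \times_{\cS^m_n}(\cS^m_n)_{/X}$ be the relevant comma $\infty$-category. Since $\F$ preserves $\K_{n-1}$-indexed colimits and any $Z \in \cS^m_{n-1}$ satisfies $Z \simeq \colim_{z \in Z}\ast$, one gets $\F(Z) \simeq \colim_{z \in Z}\F(\ast)$. A Fubini-style rearrangement, telescoping these iterated colimits, should identify $\colim_{(Z,\phi) \in \C_X} \F(Z)$ with $\colim_{x \in X}\F(\ast)$, which is a $\K_n$-indexed colimit and therefore exists in $\D$ by assumption.

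For (2), the direction ``left Kan extension implies $\K_n$-colimit-preserving'' follows from Corollary~\ref{c:n-colimits}: it suffices to check preservation of $\K_n$-colimits on the subcategory $\cS_n \subseteq \cS^m_n$, which by Lemma~\ref{l:colimits} reduces to verifying $\ovl{\F}(X) \simeq \colim_{x \in X}\ovl{\F}(\ast)$ for each $X \in \cS_n$. This is immediate from the formula of (1) combined with $\ovl{\F}(\ast) \simeq \F(\ast)$, which holds because $\iota$ is fully faithful. For the converse, given any $\K_n$-colimit-preserving extension $\ovl{\F}$, Proposition~\ref{p:adjoint} forces $\ovl{\F}(X) \simeq \colim_{x \in X}\F(\ast)$; uniqueness of left Kan extensions then identifies $\ovl{\F}$ with the extension constructed in (1).

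The main technical obstacle is the Fubini rearrangement underpinning (1). Since morphisms in $\cS^m_n$ are spans, the ``total space'' $\{(Z,\phi,z) : z \in Z\}$ over $\C_X$ does not admit a direct map to $X$: evaluating a span $Z \llar W \lrar X$ at $z \in Z$ gives only a map $W_z \lrar X$ from the $m$-finite fiber, not a point of $X$. To resolve this I would iterate the decomposition, using that $W$ itself admits a presentation as a $\K_n$-colimit of copies of $\ast$ (Proposition~\ref{p:adjoint}), then reorganize the resulting iterated colimit using the $m$-semiadditivity of $\D$ to collapse it to the single $\K_n$-indexed colimit $\colim_{x \in X}\F(\ast)$.
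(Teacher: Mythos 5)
There is a genuine gap at the heart of your argument for (1), and you have in fact put your finger on it yourself without resolving it. The pointwise formula requires computing $\colim_{\C_X}\F_X$ where $\C_X = \cS^m_{n-1}\times_{\cS^m_n}(\cS^m_n)_{/X}$, and the obstruction is exactly that morphisms in $\C_X$ are spans, so the assignment $(Z,\phi)\mapsto Z$ is not a functor to spaces and the fiberwise decomposition $\F(Z)\simeq\colim_{z\in Z}\F(\ast)$ is not natural in span morphisms; the naive unstraightening/Fubini rearrangement therefore does not apply. Your proposed fix --- ``iterate the decomposition and reorganize the resulting iterated colimit using the $m$-semiadditivity of $\D$ to collapse it'' --- is a restatement of the goal rather than an argument: no mechanism is given by which semiadditivity of the \emph{target} produces the required identification of the colimit over $\C_X$ with $\colim_{x\in X}\F(\ast)$. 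The missing idea is a statement purely about the indexing category, independent of $\D$: the functor $\rho:\J_X\lrar\C_X$ from the comma category $\J_X=\cS_{n-1}\times_{\cS_n}(\cS_n)_{/X}$ of \emph{honest} maps (equivalently, the full subcategory of spans whose wrong-way leg is an equivalence) is cofinal. The paper proves this by showing that for a fixed span $(X',Z,f,g)\in\C_X$ the functor $\J_X\lrar\cS$, $(X'',h)\mapsto\Map_{\C_X}((X',Z,f,g),\rho(X'',h))$, is corepresented by the apex $f:Z\lrar X$ viewed as an object of $\J_X$, so the relevant undercategories have initial objects and are weakly contractible. After this reduction, $\F_X|_{\J_X}$ is a left Kan extension of its restriction to the space of points of $X$ (using Proposition~\ref{p:adjoint}, Lemma~\ref{l:colimits} and the fact that $\J_X\lrar\cS_{n-1}$ is a right fibration), and only then does the colimit reduce to the $\K_n$-indexed colimit $\colim_{x\in X}\F(\ast)$. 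That this step needs no hypothesis on $\D$ beyond existence of $\K_n$-indexed colimits is confirmed by Corollary~\ref{c:step-1}, which holds for arbitrary such $\D$; a proof whose crucial collapse leans on $m$-semiadditivity of $\D$ is therefore not just vague but aimed at the wrong place.

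A secondary, more repairable looseness is in your treatment of (2). For the converse you argue that colimit preservation forces $\ovl{\F}(X)\simeq\colim_{x\in X}\F(\ast)$ and then invoke ``uniqueness of left Kan extensions,'' but an objectwise agreement of values does not by itself show that the given extension satisfies the Kan-extension criterion; one must check that the \emph{canonical} cone (the maps $\ovl{\F}(i_x)$ over the comma category, or equivalently over its cofinal subcategory of points) is a colimit cone. This is exactly how the paper phrases both directions of (2), via Lemma~\ref{l:colimits} and Corollary~\ref{c:n-colimits}, and your sketch should be reformulated in those terms once (1) is in place.
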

\begin{proof}
For $Y \in \cS^{m}_{n}$ let us denote by
$$ \I_Y := \cS^{m}_{n-1} \times_{\cS^{m}_{n}} (\cS^{m}_{n})_{/Y} $$
the associated comma $\infty$-category. To prove (1), it will suffice by~\cite[Lemma 4.3.2.13]{higher-topos} to show that the composed map 
$$ \F_{Y}:\I_{Y} \lrar \cS^{m}_{n-1} \lrar \D $$ 
can be extended to a colimit diagram in $\D$ for every $Y\in \cS^{m}_{n}$. Now an object of $\I_{Y}$ corresponds to an object $X \in \cS^{m}_{n-1}$ together with a morphism $X \lrar Y$ in $\cS^{m}_{n}$, i.e., a span of the form
\begin{equation}\label{e:object}
\xymatrix{
& Z \ar^{f}[dr]\ar_{g}[dl] & \\
X && Y \\
}
\end{equation}
where $g$ is $m$-truncated (and hence $Z$ is $(n-1)$-finite). Since $\cS^m_{n-1} \hrar \cS^m_n$ is fully-faithful the mapping space from $(X,Z,f,g)$ to $(X',Z',f',g')$ in $\I_Y$ can be identified with the homotopy fiber of the map $\Map_{\cS^m_{n}}(X,X') \lrar \Map_{\cS^m_{n}}(X,Y)$ over $(Z,f,g) \in \Map_{\cS^m_n}(X,Y)$. Now let 
$$ \J_Y := \cS_{n-1} \times_{\cS_{n}} (\cS_{n})_{/Y} $$
be the analogue comma $\infty$-category for the inclusion $\cS_{n-1} \hrar \cS_n$. Then the inclusions $\cS_{n-1} \hrar \cS^m_{n-1}$ and $\cS_n \hrar \cS^m_n$ induce a functor $\rho:\J_Y \lrar \I_Y$, and it is not hard to check that $\rho$ is in fact fully-faithful, and its essential image consists of those objects as in~\eqref{e:object} for which $g$ is an equivalence. We now claim that $\rho$ is also \textbf{cofinal}. To prove this, we need to show that for every object $(X,Z,f,g) \in \I_Y$ as in~\eqref{e:object}, the comma $\infty$-category $(\J_Y)_{(X,Z,f,g)/} := \J_Y \times_{\I_Y} (\I_Y)_{(X,Z,f,g)/}$ is weakly contractible. Given an object $h: X' \lrar Y$ in $\J_Y$, we may identify the mapping space from $(X,Z,f,g)$ to $\rho(X',h)$ in $\I_Y$ with the homotopy fiber of the map 
\begin{equation}\label{e:fiber}
h_*:\Map_{\cS^m_{n}}(X,X') \lrar \Map_{\cS^m_{n}}(X,Y)
\end{equation} 
over the span $(Z,f,g) \in \Map_{\cS^m_{n}}(X,Y)$. Now clearly any span of $n$-finite spaces from $X$ to $X'$ whose composition with $h: X' \lrar Y$ belongs to $\cS^m_n$ already itself belongs to $\cS^m_n$. We may hence identify the homotopy fiber of~\eqref{e:fiber} with the homotopy fiber of the map
\begin{equation}\label{e:fiber-2}
h_*:((\cS_n)_{/X \times X'})^{\simeq} \lrar ((\cS_n)_{/X \times Y})^{\simeq}
\end{equation} 
over the object $(f,g):Z \lrar X \times Y$. Finally, using the general equivalence $\C_{/A \times B} \simeq \C_{/A} \times_{\C} \C_{/B}$ we may identify the homotopy fiber of~\eqref{e:fiber-2} with the homotopy fiber of the map
\begin{equation}\label{e:fiber-3}
((\cS_n)_{/X'})^{\simeq} \lrar ((\cS_n)_{/Y})^{\simeq}
\end{equation} 
over $f: Z \lrar Y$. We may then conclude that the functor from $\J_Y$ to spaces given by $(X',h) \mapsto \Map_{\I_Y}((X,Z,f,g),\rho(X',h))$ is corepresented by $f: Z \lrar Y$ (considered as an object of $\J_Y$). This implies that the comma $\infty$-category $(\J_Y)_{(X,Z,f,g)/}$ 
has an initial object and is hence weakly contractible. Since this is true for any $(X,Z,f,g) \in \I_Y$ it follows that $\rho$ is cofinal, as desired.

It will now suffice to show that each of the diagrams 
$$ \F_{Y}|_{\J_{Y}}:\J_{Y} \lrar \D $$ 
can be extended to a colimit diagram. 
Let $\J'_{Y} = \J_Y \times_{\cS_{n-1}} \{\ast\} \subseteq \J_{Y}$ be the full subcategory spanned by objects of the form $\ast \x{h}{\lrar} Y$. Then $\J'_{Y}$ is an $\infty$-groupoid which is equivalent to the underlying space of $Y$, and the composed functor $\J'_{Y} \lrar \J_{Y} \lrar \D$ is constant with value $\F(\ast) \in \D$. Since we assumed that $\F: \cS^m_{n-1} \lrar \D$ preserves $\K_m$-indexed colimits it follows from Proposition~\ref{p:adjoint} that the restriction $\F|_{\cS_{n-1}}: \cS_{n-1} \lrar \D$ preserves $\K_{n-1}$-indexed colimits and hence by Lemma~\ref{l:colimits} the functor $\F|_{\cS_{n-1}}$ is a left Kan extension of its restriction to the object $\ast \in \cS_{n-1}$. Now since the projection $\J_Y \lrar \cS_{n-1}$ is a right fibration (classified by the functor $X \mapsto \Map_{\cS_n}(X,Y)$) it induces an equivalence $(\J_Y)_{/(X,h)} \lrar (\cS_{n-1})_{/X}$ for every $(X,h) \in \J_Y$. We may then conclude that $\F|_{\J_Y}$ is a left Kan extension of $\F|_{\J_Y'}$. Since $\D$ admits $\K_n$-indexed colimits (and $\J'_Y$ is an $n$-finite Kan complex) the diagram $\F_{Y}|_{\J'_{Y}}$ admits a colimit. It then follows that the diagram $\F_{Y}|_{J_Y}: \J_{Y} \lrar \D$ admits a colimit, as desired. 
 
To prove (2), note that by the above considerations an arbitrary functor $\ovl{\F}: \cS^m_n \lrar \D$ is a left Kan extension of $\F$ if and only if the extension $(\J'_{Y})^{\triangleright} \lrar \D$ determined by $\ovl{\F}$ is a colimit diagram. By construction, this means that $\ovl{\F}$ is a left Kan extension if and only if for every $Y \in \cS^m_n$ the collection of maps $\{\ovl{\F}(i_y):\ovl{\F}(\ast) \lrar \ovl{\F}(Y)\}_{y \in Y}$ exhibits $\ovl{\F}(Y)$ as the colimit of the constant $Y$-indexed diagram with value $\ovl{\F}(\ast)$. It then follows from Lemma~\ref{l:colimits} that $\ovl{\F}$ is a left Kan extension of $\F$ if and only if $\ovl{\F}$ preserves $\K_n$-indexed colimits.

\end{proof}

\begin{cor}\label{c:step-1}
Let $-2 \leq m < n$ be integers and let $\D$ be an $\infty$-category which admits $\K_n$-indexed colimits. Then the restriction map
$$ \Fun_{\K_n}(\cS^{m}_{n},\D) \lrar \Fun_{\K_{n-1}}(\cS^{m}_{n-1},\D) $$
is an equivalence of $\infty$-categories.
\end{cor}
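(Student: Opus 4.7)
The plan is to derive this corollary directly from Proposition~\ref{p:step-1} via the standard machinery of left Kan extensions (\cite[Proposition 4.3.2.15]{higher-topos}). Intuitively, left Kan extension along $\iota: \cS^m_{n-1} \hrar \cS^m_n$ will furnish an inverse to the restriction functor $\iota^*$.

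First I would check that the restriction $\iota^*$ indeed sends $\Fun_{\K_n}(\cS^m_n, \D)$ into $\Fun_{\K_{n-1}}(\cS^m_{n-1}, \D)$; this amounts to showing that $\iota$ itself preserves $\K_{n-1}$-indexed colimits. By Corollary~\ref{c:n-colimits} applied with $n-1$ in place of $n$, it is enough to verify the same for the composite $\cS_{n-1} \hrar \cS^m_{n-1} \x{\iota}{\hrar} \cS^m_n$. This composite agrees with $\cS_{n-1} \hrar \cS_n \hrar \cS^m_n$, in which the first inclusion preserves $\K_{n-1}$-indexed colimits by Lemma~\ref{l:les} and the second preserves all $\K_n$-indexed colimits (in particular the $\K_{n-1}$-indexed ones) by Proposition~\ref{p:adjoint}.

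By \cite[Proposition 4.3.2.15]{higher-topos}, restriction along $\iota$ determines an equivalence between the full subcategory of $\Fun(\cS^m_n, \D)$ spanned by those functors which are left Kan extensions of their restrictions to $\cS^m_{n-1}$, and the full subcategory of $\Fun(\cS^m_{n-1}, \D)$ spanned by those functors which admit such extensions. Proposition~\ref{p:step-1}(1) ensures that every object of $\Fun_{\K_{n-1}}(\cS^m_{n-1}, \D)$ admits a left Kan extension along $\iota$. Proposition~\ref{p:step-1}(2), combined with the previous paragraph, yields the following matching: for $\F \in \Fun_{\K_{n-1}}(\cS^m_{n-1},\D)$, the left Kan extension of $\F$ lies in $\Fun_{\K_n}(\cS^m_n, \D)$; conversely, any $\ovl{\F} \in \Fun_{\K_n}(\cS^m_n, \D)$ restricts to an object of $\Fun_{\K_{n-1}}(\cS^m_{n-1},\D)$ and is itself a left Kan extension of that restriction. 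Passing to these full subcategories in the above equivalence gives the desired equivalence
$$ \Fun_{\K_n}(\cS^m_n, \D) \x{\simeq}{\lrar} \Fun_{\K_{n-1}}(\cS^m_{n-1}, \D). $$
All the substantive work is already packaged in Proposition~\ref{p:step-1}, so no further obstacle arises.
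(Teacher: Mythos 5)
Your argument is correct and is essentially the paper's own proof: the paper likewise deduces the corollary directly from Proposition~\ref{p:step-1} together with \cite[Proposition 4.3.2.15]{higher-topos}, and your extra verification that $\iota$ (hence $\iota^*$) preserves $\K_{n-1}$-indexed colimits just makes explicit a step the paper leaves implicit. No gap.
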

\begin{proof}
This a direct consequence of Proposition~\ref{p:step-1} in light of \cite[Proposition 4.3.2.15]{higher-topos}.
\end{proof}

\begin{cor}\label{c:step-1a}
Let $-2 \leq m \leq n \leq n'$ be integers and let $\D$ be an $m$-semiadditive $\infty$-category which admits $\K_{n'}$-indexed colimits. Then $\D$ is $(n',m)$-good if and only if $\D$ is $(n,m)$-good.
\end{cor}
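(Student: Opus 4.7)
The plan is to reduce this corollary to an iteration of Corollary~\ref{c:step-1}. Since the condition of $m$-semiadditivity is part of the hypothesis on $\D$ and the existence of $\K_{n'}$-indexed colimits implies the existence of $\K_n$-indexed colimits (as $\K_n \subseteq \K_{n'}$), the only content in $(n,m)$-goodness that distinguishes the two conditions is the statement that evaluation at $\ast$ induces an equivalence $\Fun_{\K_n}(\cS^m_n,\D) \x{\simeq}{\lrar} \D$. So we are reduced to comparing evaluation at $\ast$ on the two functor categories $\Fun_{\K_{n'}}(\cS^m_{n'},\D)$ and $\Fun_{\K_n}(\cS^m_n,\D)$.

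The key observation is that for each $n < k \leq n'$ we have $m \leq n < k$, so Corollary~\ref{c:step-1} applies and gives an equivalence of $\infty$-categories
$$ \Fun_{\K_k}(\cS^m_k,\D) \x{\simeq}{\lrar} \Fun_{\K_{k-1}}(\cS^m_{k-1},\D) $$
induced by restriction along the fully-faithful inclusion $\cS^m_{k-1} \hrar \cS^m_k$. Composing these equivalences from $k = n'$ down to $k = n+1$ yields an equivalence
$$ \Fun_{\K_{n'}}(\cS^m_{n'},\D) \x{\simeq}{\lrar} \Fun_{\K_n}(\cS^m_n,\D). $$
Since each inclusion $\cS^m_{k-1} \hrar \cS^m_k$ sends $\ast$ to $\ast$, this equivalence is compatible with the evaluation-at-$\ast$ functors landing in $\D$. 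We thus obtain a commutative triangle in which the horizontal arrow is an equivalence and both diagonal arrows are the evaluation maps, so one of them is an equivalence if and only if the other is. This gives the desired equivalence between $(n',m)$-goodness and $(n,m)$-goodness. The only mild point to watch is that the iteration terminates correctly when $n = n'$ (in which case the statement is tautological) and that the condition $m < k$ needed to invoke Corollary~\ref{c:step-1} holds at every step, which is guaranteed by $m \leq n$; no actual obstacle arises.
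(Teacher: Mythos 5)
Your proof is correct and is exactly the argument the paper intends (the paper states Corollary~\ref{c:step-1a} without proof, as an immediate iteration of Corollary~\ref{c:step-1}): you iterate the restriction equivalences $\Fun_{\K_k}(\cS^m_k,\D) \x{\simeq}{\lrar} \Fun_{\K_{k-1}}(\cS^m_{k-1},\D)$ for $n < k \leq n'$, note their compatibility with evaluation at $\ast$, and observe that the colimit and semiadditivity hypotheses in the definition of goodness are automatic from $\K_{n'} \supseteq \K_n$. Nothing further is needed.
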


We shall now proceed to perform the induction step on $m$. We begin with the following lemma.

\begin{lem}\label{l:export}
Let $\D$ be an $(m,m-1)$-good $\infty$-category and let $\F: \cS^{m-1}_m \lrar \D$ be a functor which preserves $\K_m$-indexed colimits. If $\D$ is $m$-semiadditive then the collection of maps $\F(\hat{i}_x): \F(X) \lrar \F(\ast)$ for $x \in X$ exhibits $\F(X)$ as the limit in $\D$ of the constant $X$-indexed diagram with value $\F(\ast)$.
\end{lem}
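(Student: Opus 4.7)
The plan is to endow $\D$ with a $\cS^{m-1}_m$-module structure in $\Cat_{\K_m}$ and then invoke Corollary~\ref{c:limit}. By Corollary~\ref{c:algebra}, $\cS^{m-1}_m$ is a commutative algebra in $\Cat_{\K_m}$, so the $\infty$-category $\Fun_{\K_m}(\cS^{m-1}_m,\D)$ carries a canonical $\cS^{m-1}_m$-module structure given by the action $(X,\G) \mapsto \G(X \otimes -)$; the fact that $X \otimes -$ preserves $\K_m$-indexed colimits (Corollary~\ref{c:n-tensor}) ensures that this bifunctor is compatible with $\K_m$-colimits in each variable. Since $\D$ is $(m,m-1)$-good, evaluation at $\ast$ gives an equivalence $\ev_\ast: \Fun_{\K_m}(\cS^{m-1}_m,\D) \x{\simeq}{\lrar} \D$ in $\Cat_{\K_m}$; denoting its inverse by $D \mapsto \F_D$, I transport the module structure along this equivalence to obtain an action of $\cS^{m-1}_m$ on $\D$ in $\Cat_{\K_m}$, with $[X](D) := \F_D(X)$.

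With this action, $\D$ satisfies Hypothesis~\ref{a:fix}: it admits $\K_m$-indexed colimits, is $(m-1)$-semiadditive (being $m$-semiadditive), and carries a $\cS^{m-1}_m$-module structure compatible with $\K_m$-colimits. Because $\D$ is $m$-semiadditive, Corollary~\ref{c:limit} now applies: for every $X \in \cS_m$, the natural transformations $[\hat{i}_x]: [X] \Rightarrow [\ast] = \Id_\D$ exhibit $[X]$ as the limit in $\Fun(\D,\D)$ of the constant $X$-indexed diagram with value $\Id_\D$.

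To finish, I evaluate this at $\F(\ast) \in \D$. Since limits in $\Fun(\D,\D)$ are computed pointwise, the maps $[\hat{i}_x]_{\F(\ast)}: [X](\F(\ast)) \lrar \F(\ast)$ exhibit $[X](\F(\ast))$ as the limit in $\D$ of the constant $X$-indexed diagram with value $\F(\ast)$. The uniqueness part of $(m,m-1)$-goodness forces $\F \simeq \F_{\F(\ast)}$, so $[X](\F(\ast)) = \F_{\F(\ast)}(X) \simeq \F(X)$ and $[\hat{i}_x]_{\F(\ast)}$ identifies with $\F(\hat{i}_x)$, yielding the desired conclusion.

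The main subtle point I expect is the first step, namely verifying that the transported bifunctor is genuinely a $\cS^{m-1}_m$-module structure in $\Cat_{\K_m}$ (with all associativity and unit coherences), rather than merely a bifunctor preserving $\K_m$-colimits in each variable. This should be formal: $\ev_\ast$ is an equivalence in $\Cat_{\K_m}$, so module structures transport along with all coherences, and the transported structure is the unique one making $\ev_\ast$ into an equivalence of $\cS^{m-1}_m$-modules.
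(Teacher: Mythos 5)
Your proposal is correct and follows essentially the same route as the paper: transport the precomposition action of $\cS^{m-1}_m$ on $\Fun_{\K_m}(\cS^{m-1}_m,\D)$ along the equivalence $\ev_\ast$ furnished by $(m,m-1)$-goodness, verify Hypothesis~\ref{a:fix}, apply Corollary~\ref{c:limit}, and evaluate the resulting limit diagram in $\Fun(\D,\D)$ at $\F(\ast)$, identifying $\F$ with $\F_{\F(\ast)}$. The only cosmetic difference is that the paper states the identification $[X](\F(\ast))\simeq\F(X)$ as holding ``by construction'' where you spell it out via the uniqueness clause of the equivalence, which is a fine (and slightly more explicit) way to phrase the same step.
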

\begin{proof}
Using the symmetric monoidal structure of $\cS^{m-1}_{m}$ we may consider $\cS^{m-1}_{m}$ as tensored over itself. Since the monoidal structure preserves $\K_m$-indexed colimits separately in each variable (see Corollary~\ref{c:n-tensor}), and since $\D$ is $(m,m-1)$-good, we may endow $\Fun_{\K_m}(\cS^{m-1}_{m},\D) \simeq \D$ with an action of $\cS^{m-1}_m$ via precomposition. Then $\D$ is tensored over $\cS^{m-1}_{m}$ and the action map $\cS^{m-1}_{m} \times \D \lrar \D$ preserves $\K_m$-indexed colimits separately in each variable. As above let us denote by $[X]: \D \lrar \D$ the action of $X \in \cS^{m-1}_m$. 

By Corollary~\ref{c:limit} the collection of natural transformations $[\hat{i}_x]: [X] \Rightarrow [\ast]$ exhibits $[X]$ as the limit, in $\Fun(\D,\D)$, of the constant $X$-indexed diagram with value $[\ast]$. 
Evaluating at $\F(\ast)$ we may conclude that the collection of maps $[\hat{i}_x](\F(\ast)):  [X](\F(\ast)) \lrar \F(\ast)$ exhibits $[X](\F(\ast))$ as the limit in $\D$ of the constant $X$-diagram with value $\F(\ast)$. By construction we may identify $[X](\F(\ast))$ with $\F(X)$ and $[\hat{i}_x](\F(\ast))$ with $\F(\hat{i}_x)$ and so the desired result follows.
\end{proof}

We next proceed to the establish the inductive step. As in the proof of \ref{p:step-1} we will use a Kan extension argument (though this time it will be a right Kan extension). Since the subcategory inclusion $\cS^{m-1}_m \subseteq \cS^m_m$ is not fully-faithful this requires a slightly more elaborate setup for which it will be convenient to make use of the language of \textbf{marked simplicial sets}, as developed by Lurie \cite{higher-topos}. Given an $m \geq -1$ let 
$$ \Cone_m = \cS^{m}_{m} \coprod_{\cS^{m-1}_{m}\times\Del^{\{0\}}}\left[\cS^{m-1}_{m}\times(\Del^1)^{\sharp}\right] $$ 
be the right marked mapping cone of the inclusion $\iota: \cS^{m-1}_{m} \hrar \cS^{m}_{m}$. Let
$$ \Cone_m \hrar \M^{\natural} \x{r}{\lrar} \Del^1 $$
be a factorization of the projection $\Cone_m \lrar (\Del^1)^{\sharp}$ into a trivial cofibration follows by a fibration in the Cartesian model structure over $(\Del^1)^{\sharp}$. In particular, $r: \M \lrar \Del^1$ is a Cartesian fibration and the marked edges of $\M^{\natural}$ are exactly the $r$-Cartesian edges. Let $\iota_0: \cS^{m}_{m} \hrar \M \times_{\Del^1} \Del^{\{0\}} \subseteq \M$ and $\iota_1: \cS^{m-1}_{m} \hrar \M \times_{\Del^1} \Del^{\{1\}} \subseteq \M$ be the corresponding inclusions. Then $\iota_0$ and $\iota_1$ exhibit $r:\M \lrar \Del^1$ as a correspondence from $\cS^{m}_{m}$ to $\cS^{m-1}_{m}$ which is the one associated to the functor $\iota:\cS^{m-1}_{m} \hrar \cS^{m}_{m}$. 


\begin{prop}\label{p:rkan}
Let $\D$ be an $\infty$-category which admits $\K_m$-indexed limits and let $\F: \cS^{m-1}_{m} \lrar \D$ be a functor which satisfies the following property: for every $X \in \cS^{m-1}_m$ the collection of maps $\F(\hat{i}_x): \F(X) \lrar \F(\ast)$ exhibits $\F(X)$ as the limit in $\D$ of the constant $X$-indexed diagram with value $\F(\ast)$. Then the following assertion hold:
\begin{enumerate}
\item
There exists a right Kan extension
\begin{equation}\label{e:rkan}
\xymatrix{
\cS^{m-1}_{m}  \ar^-{\F}[r]\ar_{\iota_1}[d] & \D \\
\M \ar@{-->}^{\ovl{\F}}[ur] & \\
}
\end{equation}
\item
An extension $\ovl{\F}: \M \lrar \D$ as above is a right Kan extension if and only if $\ovl{\F}$ maps $r$-Cartesian edges to equivalences in $\D$.
\end{enumerate}
\end{prop}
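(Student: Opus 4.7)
The plan is to apply the dual of~\cite[Lemma 4.3.2.13]{higher-topos}, which reduces both parts to a pointwise statement: existence of the right Kan extension requires that for each $X \in \M$ the restriction of $\F$ to the comma $\infty$-category $\C_X := \M_1 \times_\M \M_{X/}$ admits a limit in $\D$, and an extension of $\F$ is a right Kan extension iff it realizes each of these comma limits. When $X \in \M_1 = \cS^{m-1}_m$, the identity is initial in $\C_X$, giving the limit $\F(X)$ trivially.

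When $X \in \M_0 = \cS^m_m$, I exploit the Cartesian fibration structure of $r: \M \to \Del^1$, whose $r$-Cartesian edges are the canonical morphisms $\iota(Y) \to Y$ for $Y \in \cS^{m-1}_m$: every morphism $X \to Y$ in $\M$ factors uniquely through its Cartesian edge, yielding a natural equivalence
$$ \C_X \simeq \iota \downarrow X := \cS^{m-1}_m \times_{\cS^m_m} (\cS^m_m)_{X/}, $$
with objects the pairs $(Y, f: X \to Y)$ where $Y \in \cS^{m-1}_m$ and $f$ is a morphism in $\cS^m_m$ (represented by a span $X \leftarrow Z \to Y$). The core computation is then the identification $\lim_{(Y,f) \in \iota\downarrow X} \F(Y) \simeq \lim_{x \in X} \F(\ast)$, the latter existing by the $\K_m$-indexed limits assumption on $\D$. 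The strategy is to substitute the hypothesis $\F(Y) \simeq \lim_{y \in Y} \F(\ast)$ into the left-hand side, interchange limits, and reduce to showing that the total space of the left fibration over $\iota\downarrow X$ classified by $(Y,f) \mapsto Y$ is equivalent to $X$ in $\cS$; this is carried out by an unstraightening argument using the mapping space description $\Map_{\cS^m_m}(X, Y) \simeq \Map(X, (\cS_m)_{/Y}^{\simeq})$ from Proposition~\ref{p:adjoint}, which turns triples $(Y, f, y \in Y)$ over $X$ into an $X$-indexed diagram of contractible $\infty$-categories. This identification of the total space is the main technical obstacle.

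Part (2) then follows formally from the pointwise criterion. The right Kan extension evaluates at $\iota(Y) \in \M_0$ to $\lim_{y \in Y} \F(\ast) \simeq \F(Y)$ via the hypothesis, and under this identification the map induced by the Cartesian edge $\iota(Y) \to Y$ becomes an equivalence; hence right Kan extensions send Cartesian edges to equivalences. Conversely, if an extension $\ovl{\F}$ sends every Cartesian edge to an equivalence, then the unit transformation $\ovl{\F} \Rightarrow \iota_{1*}\F$ from the restriction-right Kan extension adjunction is tautologically an equivalence on $\M_1$, while on each object $X \in \M_0$ it becomes an equivalence by combining the Cartesian-edge assumption (which identifies $\ovl{\F}(X)$ with $\F(X)$) with the comma-limit identification of (1) (which identifies $(\iota_{1*}\F)(X)$ with $\F(X)$ in a manner compatible with the cone over $\C_X$). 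Hence $\ovl{\F}$ is naturally equivalent to the right Kan extension.
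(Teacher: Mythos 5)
Your overall skeleton agrees with the paper: both reduce to the pointwise criterion of~\cite[Lemma 4.3.2.13]{higher-topos}, identify the relevant comma $\infty$-category for $X \in \M_0$ with $\cS^{m-1}_m \times_{\cS^m_m}(\cS^m_m)_{X/}$, and aim to show its limit is $\lim_X \F(\ast)$. The gap is in your ``core computation''. The proposed left fibration over $\iota\downarrow X$ ``classified by $(Y,f)\mapsto Y$'' does not exist: a morphism $(Y,f)\lrar(Y',f')$ in this comma category is a span $Y \llar W \lrar Y'$ under $X$ (with $(m-1)$-truncated backward leg), and such a span induces no map of spaces $Y \lrar Y'$, nor $Y' \lrar Y$; so $(Y,f)\mapsto Y$ is neither covariantly nor contravariantly functorial, and the substitute-and-interchange step is not available as stated. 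Relatedly, the hypothesis on $\F$ gives $\F(Y)\simeq\lim_{y\in Y}\F(\ast)$ objectwise, naturally only with respect to the wrong-way morphisms $\hat{h}$ (i.e.\ with respect to $\cS_{m,m-1}^{\op}\subseteq\cS^{m-1}_m$), not with respect to arbitrary spans; producing the naturality over all of $\iota\downarrow X$ that your interchange requires is essentially the statement being proven. The paper resolves exactly this by first passing to the subcategory $\J_X \simeq \cS_{m,m-1}^{\op}\times_{\cS_m^{\op}}(\cS_m^{\op})_{X/}$ of spans whose forward leg is an equivalence, proving by a corepresentability argument that this inclusion is coinitial, and only then using that $\J_X \lrar \cS_{m,m-1}^{\op}$ is a left fibration to see that $\F|_{\J_X}$ is a right Kan extension of the constant diagram on the $\infty$-groupoid $\J'_X\simeq X$. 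This coinitiality reduction is absent from your proposal, and without it the claim that your ``total space'' is weakly equivalent to $X$ has no support (it is not even well-posed, since the fibration it refers to is undefined).

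Part (2) is also underargued in the converse direction. Knowing that $\ovl{\F}(X)\simeq\F(Y)$ via the Cartesian edge and that $\lim_{\C_X}\F\simeq\F(Y)$ abstractly does not show that the unit map $\ovl{\F}(X)\lrar(\iota_{1*}\F)(X)$ is an equivalence: one must know that the limit-cone property of the cone determined by $\ovl{\F}$ can be detected on the edges to the Cartesian-edge objects, i.e.\ that evaluation of the limit cone at those objects is an equivalence. The paper obtains this by showing that $\F_X$ is right Kan extended from the subcategory $\J''_X$ of spans whose backward leg is $(m-1)$-truncated, in which the objects corresponding to $r$-Cartesian edges are initial; the limit condition then reduces precisely to sending those edges to equivalences. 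Your phrase ``in a manner compatible with the cone over $\C_X$'' presupposes exactly this point, which is where the content lies; if part (1) were established along the paper's lines (with the coinitial $\J_X$ and the further reduction to $\J''_X$), part (2) would indeed follow, but your version of (1) does not supply it.
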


\begin{rem}
An extension $\ovl{\F}$ as in~\eqref{e:rkan} is equivalent to the data of a functor $\F': \cS^m_m \lrar \D$ together with a natural transformation $\tau: \F'\circ \iota \Rightarrow \F$ (where $\iota: \cS^{m-1}_m \lrar \cS^m_m$ is the inclusion as above), and $\ovl{\F}$ is a right Kan extension if and only if $\tau$ exhibits $\F'$ as a right Kan extension of $\F$ along $\iota$. We should hence morally consider Proposition~\ref{p:rkan} as pertaining to right Kan extensions of $\F$ along $\iota: \cS^{m-1}_m \lrar \cS^m_m$. In particular, we could have worked directly with $\cS^m_m$ instead of $\M$ at the expense of replacing~\eqref{e:rkan} with a diagram which commutes up to a prescribed natural transformation. We also note that this issue did not arise in Proposition~\ref{p:step-1} since, unlike $\iota$, the map $\cS^{m}_{n-1} \lrar \cS^m_n$ appearing in Proposition~\ref{p:step-1} is fully-faithful (cf.~\cite[\S 4.3.2,4.3.3]{higher-topos}).
\end{rem}

\begin{proof}[Proof of proposition~\ref{p:rkan}]
For an object $X \in \cS^{m}_{m}$ let us set 
$$ \I_{X} = \M_{\iota_0(X)/} \times_{\M} \cS^{m-1}_{m} .$$
To prove (1), it will suffice by~\cite[Lemma 4.3.2.13]{higher-topos} to show that the composed map 
$$ \F_{X}:\I_{X} \lrar \cS^{m-1}_{m} \lrar \D $$ 
can be extended to a limit diagram in $\D$ for every $X\in \cS^{m}_{m}$. Now an object of $\I_{X}$ corresponds to an object $Y \in \cS^{m-1}_{m}$ and a morphism $\iota_0(X) \lrar \iota_1(Y)$ in $\M$, or, equivalently, a morphism $X \lrar \iota(Y)$ in $\cS^{m}_{m}$, i.e., a span
\begin{equation}\label{e:obj}
\xymatrix{
& Z \ar^{f}[dr]\ar_{g}[dl] & \\
X && Y \\
}
\end{equation}
of $m$-finite spaces. 

Recall that we have denoted by $\cS_{m,m-1} \subseteq \cS_m$ the subcategory of $\cS_m$ consisting of all objects and $(m-1)$-truncated maps between them. Then we have a commutative square
\begin{equation}\label{e:op}
\xymatrix{
\cS^{\op}_{m,m-1} \ar[r]\ar[d] & \cS_m^{\op} \ar[d] \\
\cS^{m-1}_m \ar[r] & \cS^m_m \\
}
\end{equation}
where the vertical functors map are the identity on objects and send an $(m-1)$-truncated map $f: X \lrar Y$ to the span $Y \x{f}{\llar} X \lrar X$. Let $\J_X = \cS_{m,m-1}^{\op} \times_{\cS_m^{\op}} (\cS_m^{\op})_{X/}$ be the associated comma $\infty$-category. We will write the objects of $\J_X$ as maps $X \x{g}{\llar} Y$ of $m$-finite spaces, or simply as pairs $(Y,g)$. We note that morphisms from $X \x{g}{\llar} Y$ to $X \x{g'}{\llar} Y'$ are commutative triangles of the form
$$ \xymatrix{
Y \ar^{g}[dr] && Y' \ar_{g'}[dl]\ar_{h}[ll] \\
& X & \\
}$$
such that $h$ is $(m-1)$-truncated. The square~\eqref{e:op} induces a fully-faithful fuctor $\rho:\J_{X} \hrar \I_{X}$ whose essential image consists of those objects as in~\eqref{e:obj} for which $f$ is an equivalence. We now claim that $\rho$ is coinitial. 

To prove this, we need to show that for every object $(Y,Z,f,g) \in \I_Y$ as in~\eqref{e:obj}, the comma $\infty$-category $(\J_X)_{/(Y,Z,f,g)} := \J_X \times_{\I_X} (\I_X)_{/(Y,Z,f,g)}$ is weakly contractible. Given an object $(Y',h)$ in $\J_X$, we may identify the mapping space from $\rho(Y',h)$ to $(Y,Z,f,g)$ in $\I_X$ with the homotopy fiber of the map 
\begin{equation}\label{e:fib}
h_*:\Map_{\cS^{m-1}_m}(Y',Y) \lrar \Map_{\cS^m_m}(X,Y)
\end{equation} 
over the span $(Z,f,g) \in \Map_{\cS^m_m}(X,Y)$. As in the proof Proposition~\ref{p:step-1} we may identify these mapping spaces as $\Map_{\cS^{m-1}_m}(Y',Y) \simeq ((\cS^{\op}_{m,m-1})_{Y'/})^{\simeq} \times_{\cS_m} ((\cS_{m})_{/Y})^{\simeq}$ and $\Map_{\cS^m_m}(X,Y) \simeq ((\cS^{\op}_{m})_{X/})^{\simeq} \times_{\cS_m} ((\cS_{m})_{/Y})^{\simeq}$ we may identify the homotopy fiber of~\eqref{e:fib} with the homotopy fiber of the map
\begin{equation}\label{e:fib-2}
h_*:((\cS^{\op}_{m,m-1})_{Y'/})^{\simeq} \lrar ((\cS^{\op}_{m})_{X/})^{\simeq}
\end{equation} 
over the object $X \x{g}{\llar} Z$. We may then conclude that the functor from $\J_X$ to spaces given by $(Y',h) \mapsto \Map_{\I_Y}(\rho(Y',h),(Y,Z,f,g))$ is represented in $\J_X$ by the object $X \x{g}{\llar} Z$. This implies that the comma $\infty$-category $(\J_X)_{/(Y,Z,f,g)}$
has an terminal object and is hence weakly contractible. Since this is true for any $(Y,Z,f,g) \in \I_X$ it follows that $\rho$ is coinitial, as desired.
It will hence suffice to show that each of the diagrams 
$$ \F_{X}|_{\J_{X}}:\J_{X} \lrar \D $$ 
can be extended to a limit diagram. 

Let $\J'_{X} = \J_X \times_{\cS^{\op}_{m,m-1}} \{\ast\} \subseteq \J_{X}$ be the full subcategory spanned by objects of the form $X \x{g}{\llar} \ast$. Then $\J'_{X}$ is an $\infty$-groupoid which is equivalent to the underlying space of $X$, and the composed functor $\J'_{X} \lrar \J_{X} \lrar \D$ is constant with value $\F(\ast) \in \D$. By our assumption on $\F$ it follows that the restricted functor $\F|_{\cS^{\op}_{m,m-1}}: \cS_{m,m-1}^{\op} \lrar \D$ is a right Kan extension $\F|_{\{\ast\}}$. Now since the projection $\J_X \lrar \cS^{\op}_{m,m-1}$ is a left fibration it induces an equivalence $(\J_X)_{(Y,h)/} \lrar (\cS^{\op}_{m,m-1})_{Y/}$ for every $(Y,h) \in \J_X$. We may then conclude that $\F|_{\J_X}$ is a right Kan extension of $\F|_{\J_X'}$. Since $\D$ is $m$-semiadditive it admits $\K_m$-indexed limits and hence the diagram $\F_{X}|_{\J'_{X}}$ admits a limit. It follows that the diagram $\F_{X}|_{\J_{X}}: \J_{X} \lrar \D$ admits a limit, as desired.

Let us now prove (2). Let $\ovl{\F}: \M \lrar \D$ be a map extending $\F$. Then for every $X \in \cS^{m}_{m}$ the functor $\ovl{\F}$ determines a diagram
$$ \ovl{\F}_{X}: \J_{X}^{\triangleleft} \lrar \D $$
extending $\F_{X}|_{\J_{X}}$. By the considerations above $\ovl{\F}$ is a right Kan extension of $\F$ if and only if each $\ovl{\F}_X$ is a limit diagram. Let $\J_{X}'' \subseteq \J_{X}$ denote the full subcategory spanned by those objects $X \x{g}{\llar} Y$ such that $g$ is $(m-1)$-truncated. By the above arguments the functor $\F_{X}|_{\J''_{X}}$ is a right Kan extension of $\F_{X}|_{\J'_{X}}$, and so by~\cite[Proposition 4.3.2.8]{higher-topos} we have that $\F_{X}$ is a right Kan extension of $\F_{X}|_{\J''_{X}}$. It follows that $\ovl{\F}_{X}$ is a limit diagram if and only  if $\ovl{\F}_{X}|_{(\J''_{X})^{\triangleleft}}$ is a limit diagram. Let $\diamond \in (\J''_{X})^{\triangleleft}$ be the cone point. We now observe that the $\infty$-category $\J''_{X}$ has initial objects, namely every object of the form $X \x{g}{\llar} X'$ such that $g$ is an equivalence. It follows that $\ovl{\F}_{X}|_{(\J''_{X})^{\triangleleft}}$ is a limit diagram if and only if $\ovl{\F}_{X}$ sends every edge connecting $\diamond$ with an initial object of $\J''_{X}$ to an equivalence in $\D$. To finish the proof it suffices to observe that these edges are exactly the edges which map to $r$-Cartesian edges by the natural map $(\J''_{X})^{\triangleleft} \lrar \M$, and that all $r$-Cartesian edges are obtained in this way.

\end{proof}

\begin{cor}\label{c:step-2}
Let $\D$ be an $m$-semiadditive $\infty$-category. Then the restriction map
$$ \Fun_{\K_m}(\cS^{m}_{m},\D) \lrar \Fun_{\K_m}(\cS^{m-1}_{m},\D) $$
is an equivalence of $\infty$-categories.
\end{cor}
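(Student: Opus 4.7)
The plan is to invert the restriction functor via a right Kan extension along $\iota : \cS^{m-1}_m \hookrightarrow \cS^m_m$. Since $\iota$ is not fully-faithful, this extension is performed along the correspondence $r : \M \to \Del^1$ constructed just above Proposition~\ref{p:rkan}, rather than directly along $\iota$. A key input is the inductive hypothesis of the overall proof of Theorem~\ref{t:main}: because $\D$ is $m$-semiadditive it is in particular $(m-1)$-semiadditive, so by that inductive hypothesis combined with Corollary~\ref{c:step-1a} we may assume $\D$ is $(m,m-1)$-good.

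Given $\F \in \Fun_{\K_m}(\cS^{m-1}_m, \D)$, the first step is to verify the hypothesis of Proposition~\ref{p:rkan}: that the collection $\{\F(\hat{i}_x)\}_{x \in X}$ exhibits $\F(X)$ as the limit in $\D$ of the constant $X$-indexed diagram with value $\F(\ast)$ for every $X \in \cS^{m-1}_m$. This is precisely Lemma~\ref{l:export}, which applies thanks to $(m,m-1)$-goodness together with $m$-semiadditivity. Part (1) of Proposition~\ref{p:rkan} then produces a right Kan extension $\ovl{\F} : \M \to \D$ of $\F$ along $\iota_1$, and part (2) ensures that $\ovl{\F}$ sends every $r$-Cartesian edge to an equivalence. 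Setting $\F' := \ovl{\F} \circ \iota_0 : \cS^m_m \to \D$ yields a functor equipped with a natural equivalence $\F' \circ \iota \simeq \F$.

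Next one verifies that $\F' \in \Fun_{\K_m}(\cS^m_m, \D)$. Via the inclusion $\cS_m \subseteq \cS^{m-1}_m$ (Corollary~\ref{c:wide}), the restriction $\F'|_{\cS_m}$ is naturally equivalent to $\F|_{\cS_m}$, which preserves $\K_m$-indexed colimits by Proposition~\ref{p:adjoint}. Corollary~\ref{c:n-colimits} then upgrades this to the statement that $\F'$ preserves all $\K_m$-indexed colimits in $\cS^m_m$.

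To promote this objectwise inverse to an equivalence of $\infty$-categories, the plan is to apply \cite[Proposition 4.3.2.15]{higher-topos} to the Cartesian fibration $r : \M \to \Del^1$: the full subcategory of $\Fun(\M, \D)$ spanned by functors whose restriction along $\iota_1$ preserves $\K_m$-indexed colimits and which send $r$-Cartesian edges to equivalences is simultaneously equivalent, via evaluation at the two endpoints, to $\Fun_{\K_m}(\cS^{m-1}_m, \D)$ (by Proposition~\ref{p:rkan} combined with Lemma~\ref{l:export}) and to $\Fun_{\K_m}(\cS^m_m, \D)$ (by the colimit-preservation argument above). The main obstacle is securing the hypothesis of Lemma~\ref{l:export}, which is exactly where the inductive structure of the proof of Theorem~\ref{t:main} is essential; once $(m,m-1)$-goodness is in hand, the conclusion reduces to a formal consequence of the Kan extension machinery and the correspondence setup preceding Proposition~\ref{p:rkan}.
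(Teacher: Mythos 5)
Your proposal is correct and follows essentially the same route as the paper's own proof: the correspondence $\M$, Proposition~\ref{p:rkan} combined with \cite[Proposition 4.3.2.15]{higher-topos}, and Corollary~\ref{c:n-colimits} to transfer $\K_m$-colimit preservation between the two ends of the correspondence. The only difference is expository: you make explicit the verification of the hypothesis of Proposition~\ref{p:rkan} via Lemma~\ref{l:export} and the $(m,m-1)$-goodness supplied by the inductive structure of Theorem~\ref{t:main}, a step the paper's written proof leaves implicit.
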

\begin{proof}
Let $r: \M^{\natural} \lrar \Del^1$ be as above and consider the marked simplicial set $\D^{\natural} = (\D,M)$ where $M$ is the collection of edges which are equivalences in $\D$. For two marked simplicial sets $(X,M),(Y,N)$ let $\Fun^{\flat}((X,M),(Y,N)) \subseteq \Fun(X,Y)$ be the full sub-simplcial set spanned by those functors $X \lrar Y$ which send $M$ to $N$. 
We will denote by $\Fun^{\flat}_{\K_m}(\M^{\natural},\D^{\natural}) \subseteq \Fun^{\flat}(\M^{\natural},\D^{\natural})$ and by $\Fun^{\flat}_{\K_m}(\Cone_m,\D^{\natural}) \subseteq \Fun^{\flat}(\Cone_m,\D^{\natural})$ the respective full subcategories spanned by those makred functors whose restriction to $\cS^{m-1}_{m}$ preserves $\K_m$-indexed colimits. 
Now consider the commutative diagram of functors categories and restriction maps
\begin{equation}\label{e:res}
\xymatrix{
& \Fun^{\flat}_{\K_m}(\M^{\natural},\D^{\natural}) \ar[dr]\ar[dl] & \\
\Fun^{\flat}_{\K_m}(\Cone_m,\D^{\natural}) \ar^{\iota_1^*}[rr] && \Fun_{\K_m}(\cS^{m-1}_{m},\D) \\
}
\end{equation}
Since the inclusion of marked simplicial sets $\Cone_m \lrar \M^{\natural}$ is a trivial cofibration in the Cartesian model structure over $(\Del^1)^{\sharp}$ it follows that the left diagonal map is a trivial Kan fibration. On the other hand by Proposition~\ref{p:rkan} and~\cite[Proposition 4.3.2.15]{higher-topos} the right diagonal map is a trivial Kan fibration. We may hence deduce that $\iota_1^*$ is an equivalence of $\infty$-categories.

Since the inclusion $\cS^{m}_{m} \hrar \Cone_m$ is a pushout along the inclusion $\cS^{m-1}_{m} \times \Del^{\{0\}} \hrar \cS^{m-1}_{m} \times (\Del^1)^{\sharp}$ (which is itself a trivial cofibration in the \textbf{coCartesian} model structure over $\Del^0$) it follows that the map $i_0^*:\Fun^{\flat}(\Cone_m,\D^{\natural}) \lrar \Fun(\cS^{m}_{m},\D)$ is a trivial Kan fibation and that the composed functor 
$$ \xymatrix{
\Fun^{\flat}(\Cone_m,\D^{\natural})  \ar[r]^-{i_0^*}_-{\simeq} &  \Fun(\cS^{m}_{m},\D) \ar[r]^-{\iota^*} & \Fun(\cS^{m-1}_{m},\D) \\
}$$
is homotopic to $i_1^*:\Fun^{\flat}(\Cone_m,\D^{\natural}) \x{\simeq}{\lrar} \Fun(\cS^{m-1}_{m},\D)$. We may consequently conclude that $i_0^*$ induces an equivalence between $\Fun^{\flat}_{\K_m}(\Cone_m,\D^{\natural}) \subseteq \Fun^{\flat}(\Cone_m,\D^{\natural})$ and the full subcategory of $\Fun(\cS^m_m,\D)$ spanned by those functors whose restriction to $\cS^{m-1}_m$ preserves $\K_m$-indexed colimits. By Corollary~\ref{c:n-colimits} these are exactly the functors $\cS^m_m \lrar \D$ which preserves $\K_m$-indexed colimits. We may finally conclude that 
$$ \iota^*: \Fun_{\K_m}(\cS^m_m,\D) \lrar \Fun_{\K_m}(\cS^{m-1}_m,\D)$$ 
is an equivalence of $\infty$-categories, as desired.
\end{proof}

\begin{cor}\label{c:step-2a}
Let $-1 \leq m \leq n$ integer and let $\D$ be an $m$-semiadditive $\infty$-category which admits $\K_n$-indexed colimits. If $\D$ is $(n,m-1)$-good then $\D$ is $(n,m)$-good.
\end{cor}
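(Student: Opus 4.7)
The plan is to chain together the two main inductive tools already established, namely Corollary~\ref{c:step-1a} (which moves the subscript $n$) and Corollary~\ref{c:step-2} (which raises the superscript from $m-1$ to $m$), with the base-point compatibility being essentially automatic.

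First I would use Corollary~\ref{c:step-1a} to bring the subscript all the way down to $m$. Since $\D$ is $m$-semiadditive it is also $(m-1)$-semiadditive, and it admits $\K_n$-indexed colimits (in particular $\K_m$-indexed colimits), so Corollary~\ref{c:step-1a} applies with superscript $m-1$ and the pair of subscripts $m\le n$: the hypothesis that $\D$ is $(n,m-1)$-good is equivalent to $\D$ being $(m,m-1)$-good. In other words, evaluation at $\ast$ already induces an equivalence
$$\Fun_{\K_m}(\cS^{m-1}_m,\D)\x{\simeq}{\lrar}\D.$$

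Next, I would apply Corollary~\ref{c:step-2} to lift the superscript. That result asserts that restriction along the (not fully faithful) inclusion $\iota\colon\cS^{m-1}_m\hrar\cS^m_m$ is an equivalence $\Fun_{\K_m}(\cS^m_m,\D)\x{\simeq}{\lrar}\Fun_{\K_m}(\cS^{m-1}_m,\D)$. Since $\iota$ sends $\ast$ to $\ast$, composing with evaluation at $\ast$ in $\cS^{m-1}_m$ recovers evaluation at $\ast$ in $\cS^m_m$; composed with the equivalence of the previous step this yields an equivalence $\Fun_{\K_m}(\cS^m_m,\D)\x{\simeq}{\lrar}\D$ given by evaluation at $\ast$. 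Hence $\D$ is $(m,m)$-good.

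Finally, I would apply Corollary~\ref{c:step-1a} once more, this time with superscript $m$ and subscripts $m\le n$, to transfer $(m,m)$-goodness back up to $(n,m)$-goodness, again with evaluation at $\ast$ as the equivalence (since the reduction from $n$ to $m$ in Corollary~\ref{c:step-1} is induced by restriction along the base-point-preserving fully faithful inclusion $\cS^m_{n-1}\hrar\cS^m_n$). This gives the desired conclusion.

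There is no genuine obstacle in this argument beyond careful bookkeeping: the two real pieces of content, the left-Kan-extension argument of Proposition~\ref{p:step-1}/Corollary~\ref{c:step-1} and the right-Kan-extension argument of Proposition~\ref{p:rkan}/Corollary~\ref{c:step-2}, are already in place. The only thing one must check at each stage is that the intermediate equivalences are realized by restriction along morphisms of span $\infty$-categories that fix $\ast$, so that the final equivalence is indeed evaluation at $\ast$; this is immediate since all the inclusions $\cS^m_{n-1}\hrar\cS^m_n$ and $\cS^{m-1}_m\hrar\cS^m_m$ under consideration send $\ast$ to $\ast$.
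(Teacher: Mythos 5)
Your argument is correct and is essentially the paper's own proof: use Corollary~\ref{c:step-1a} to reduce both goodness statements from subscript $n$ to subscript $m$, then apply Corollary~\ref{c:step-2} (whose restriction functor is compatible with evaluation at $\ast$) to pass from $(m,m-1)$-good to $(m,m)$-good, and transfer back up with Corollary~\ref{c:step-1a}. Your extra bookkeeping that the intermediate restriction functors preserve the base point is a harmless elaboration of what the paper leaves implicit.
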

\begin{proof}
By Corollary~\ref{c:step-1a} we know that $\D$ is $(n,m)$-good if and only if $\D$ is $(m,m)$-good, and that $\D$ is $(n,m-1)$-good if and only if $\D$ is $(m,m-1)$-good. The desired result now follows directly from Corollary~\ref{c:step-2a}.
\end{proof}

\begin{proof}[Proof of Theorem~\ref{t:main}]
We want to prove that if $\D$ is an $m$-semiadditive $\infty$-category which admits $\K_n$-indexed colimits then $\D$ is $(n,m)$-good. Let us consider the set $\A = \{(a,b) \in \ZZ \times \ZZ | a \leq b\}$ as partially ordered saying that $(a,b) \leq (c,d)$ if $a \leq c$ and $b \leq d$. We now note that for every $(-2,-2) \leq (n',m') \leq (n,m)$ in $\A$, the $\infty$-category $\D$ is $m'$-semiadditive and admits $\K_{n'}$-indexed colimits. Furthermore, $\D$ is tautologically $(-2,-2)$-good. It follows that there exists a pair $(-2,-2) \leq (n',m') \leq (n,m)$ for which $\D$ is $(n',m')$-good and which is maximal with respect to this property. If $n' < n$ then Corollary~\ref{c:step-1a} implies that $\D$ is $(n'+1,m')$-good, contradicting the maximality of $(n',m')$. On the other hand, if $n'=n$ and $m' < m$ then $m' < n'$. By Corollary~\ref{c:step-2a} we may conclude that $\D$ is $(n',m'+1)$-good, a contradiction again. It follows that $(n',m') = (n,m)$ and hence $\D$ is $(n,m)$-good, as desired.
\end{proof}

\section{Applications}

\subsection{$m$-semiadditive $\infty$-categories as modules over spans}\label{s:modules}


By Corollary~\ref{c:amusing}, every $\infty$-category with $\K_m$-indexed colimits which carries a compatible action of $\cS^m_m$ is $m$-semiadditive. On the other hand, our main theorem~\ref{t:main} implies that every $m$-semiadditive $\infty$-category $\D$ carries an action of $\cS^m_m$ (by identifying $\D$, for example, with $\Fun_{\K_m}(\cS^m_m,\D)$). This suggests that the theory of $m$-semiadditive $\infty$-categories is strongly related to that of $\cS^m_m$-modules in $\Cat_{\K_m}$. In this section we will make this idea more precise by proving a suitable equivalence of $\infty$-categories. This equivalence is strongly related to the fact that $\cS^m_m$ is an \textbf{idempotent object} of $\Cat_{\K_m}$, a corollary we will deduce below.

Let $\Add_m \subseteq \Cat_{\K_m}$ denote the full subcategory spanned by $m$-semiadditive $\infty$-categories. The discussion above implies that the essential image of the forgetful functor
$$ \U:\Mod_{\cS^m_m}(\Cat_{\K_m}) \lrar \Cat_{\K_m} $$
is exactly $\Add_m$. 
We note that $\U$ admits a left adjoint $\F: \Cat_{\K_m} \lrar \Mod_{\cS^m_m}(\Cat_{\K_m})$ given by $\F(\D) = \cS^m_m \otimes_{\K_m} \D$. 

\begin{lem}\label{l:key-1}
Let $\C$ be an $\infty$-category. Then the unit map
$$ u_\C: \C \lrar \cS^m_m \otimes_{\K_m} \C $$
associated to $\F \dashv \U$ is an equivalence on $\C$ if and only if $\C$ is $m$-semiadditive. 
\end{lem}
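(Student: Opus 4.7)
The plan is a Yoneda argument relying on Corollary~\ref{c:amusing} and Theorem~\ref{t:main}. For the forward direction, suppose $u_\C$ is an equivalence. Since $\cS^m_m \otimes_{\K_m} \C = \F(\C)$ is by construction the free $\cS^m_m$-module on $\C$, it naturally carries an action of the monoidal $\infty$-category $\cS^m_m$ preserving $\K_m$-indexed colimits separately in each variable. Transporting this structure along $u_\C$ endows $\C$ with a compatible $\cS^m_m$-action, and Corollary~\ref{c:amusing} then forces $\C$ to be $m$-semiadditive.

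For the backward direction, assume $\C$ is $m$-semiadditive. First, I would observe that $\cS^m_m \otimes_{\K_m} \C$ is again $m$-semiadditive, as it is a genuine $\cS^m_m$-module in $\Cat_{\K_m}$ and so Corollary~\ref{c:amusing} applies. Consequently both the source and target of $u_\C$ lie in the full subcategory $\Add_m \subseteq \Cat_{\K_m}$, and to show $u_\C$ is an equivalence it suffices by Yoneda in $\Add_m$ to verify that for every $\D \in \Add_m$ the precomposition map
$$ u_\C^* : \Fun_{\K_m}(\cS^m_m \otimes_{\K_m} \C, \D) \lrar \Fun_{\K_m}(\C, \D) $$
is an equivalence of $\infty$-categories. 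Using the closed symmetric monoidal structure on $\Cat_{\K_m}$ (with internal hom $\Fun_{\K_m}(-,-)$) one has a currying equivalence
$$ \Fun_{\K_m}(\cS^m_m \otimes_{\K_m} \C, \D) \simeq \Fun_{\K_m}(\C, \Fun_{\K_m}(\cS^m_m, \D)), $$
and Theorem~\ref{t:main} (applied to the $m$-semiadditive $\D$) identifies $\Fun_{\K_m}(\cS^m_m, \D) \simeq \D$ via evaluation at $\ast$. Composing gives a natural equivalence $\Fun_{\K_m}(\cS^m_m \otimes_{\K_m} \C, \D) \simeq \Fun_{\K_m}(\C, \D)$.

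The main obstacle is identifying this composite equivalence with $u_\C^*$ itself, which amounts to a naturality check. Informally, $u_\C$ is the ``inclusion at $\ast$'' coming from the unit of the free-forgetful adjunction $\F \dashv \U$, so under currying the map $u_\C^*$ corresponds to evaluating a bifunctor at the first coordinate $\ast \in \cS^m_m$, and under the evaluation-at-$\ast$ equivalence supplied by Theorem~\ref{t:main} this becomes the identity on $\Fun_{\K_m}(\C, \D)$. Once this compatibility between the currying isomorphism, the main theorem equivalence, and the definition of the unit $u_\C$ is secured, the Yoneda argument in $\Add_m$ concludes the proof.
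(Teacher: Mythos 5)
Your proposal is correct and follows essentially the same route as the paper: the forward direction via Corollary~\ref{c:amusing}, and the backward direction by observing that both $\C$ and $\cS^m_m \otimes_{\K_m} \C$ are $m$-semiadditive and then testing $u_\C^*$ against every $\D \in \Add_m$ using the tensor-hom adjunction of $\Cat_{\K_m}$ together with Theorem~\ref{t:main}. The only (harmless) difference is the order of currying: you identify $\Fun_{\K_m}(\cS^m_m \otimes_{\K_m}\C,\D)$ with $\Fun_{\K_m}(\C,\Fun_{\K_m}(\cS^m_m,\D))$ and apply Theorem~\ref{t:main} to $\D$ itself, whereas the paper curries $\cS^m_m$ to the outside and applies the theorem to $\Fun_{\K_m}(\C,\D)$, which then requires the extra observation (via Corollary~\ref{c:amusing} and an $\cS^m_m$-action coming from Theorem~\ref{t:main}) that $\Fun_{\K_m}(\C,\D)$ is $m$-semiadditive --- a step your variant bypasses, at the cost of the naturality check identifying the composite equivalence with $u_\C^*$, which you correctly flag.
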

\begin{proof}
If $u_\C$ is an equivalence then $\C$ carries an $\cS^m_m$-module structure and is hence $m$-semiadditive by Corollary~\ref{c:amusing}. Now assume that $\C$ is $m$-semiadditive. By Corollary~\ref{c:amusing} again $\cS^m_m \otimes_{\K_m} \C$ is $m$-semiadditive and hence it will suffice to show that for every $m$-semiadditive $\infty$-category $\D$ the induced map
$$ u^*_{\C}: \Fun_{\K_m}(\cS^m_m \otimes_{\K_m} \C,\D) \lrar \Fun_{\K_m}(\C,\D) $$
is an equivalence. Identifying the functor $\infty$-category $\Fun_{\K_m}(\cS^m_m \otimes_{\K_m} \C,\D)$ with $\Fun_{\K_m}(\cS^m_m, \Fun_{\K_m}(\C,\D))$ and $u_\C^*$ with evaluation at $\ast \in \cS^m_m$ it will suffice, in light of Theorem~\ref{t:main}, to show that $\Fun_{\K_m}(\C,\D)$ is $m$-semiadditive. But this follows from Corollary~\ref{c:amusing} since $\Fun_{\K_m}(\C,\D)$ carries an action of $\cS^m_m$ (given by pre-composing the action on $\C$, whose existence is insured by Theorem~\ref{t:main}).
\end{proof}

\begin{lem}\label{l:key-2}
Let $\C$ be an $\cS^m_m$-module. Then the counit map 
$$ v_\C:\cS^m_m \otimes_{\K_m} \U(\C) \lrar \C $$ 
is an equivalence of $\cS^m_m$-modules. In particular, $\U:\Mod_{\cS^m_m}(\Cat_{\K_m}) \lrar \Cat_{\K_m}$ is fully-faithful.
\end{lem}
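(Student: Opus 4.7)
The plan is to derive this from Lemma~\ref{l:key-1} together with the standard triangle identities of the adjunction $\F \dashv \U$ and the conservativity of $\U$.

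First I would observe that the underlying $\infty$-category $\U(\C)$ carries an $\cS^m_m$-module structure, hence by Corollary~\ref{c:amusing} is $m$-semiadditive. Applying Lemma~\ref{l:key-1} to $\U(\C)$, the unit map
$$ u_{\U(\C)}: \U(\C) \lrar \cS^m_m \otimes_{\K_m} \U(\C) = \U\F\U(\C) $$
is an equivalence in $\Cat_{\K_m}$.

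Next I would invoke one of the triangle identities of the adjunction $\F \dashv \U$, which gives a homotopy $\U(v_\C) \circ u_{\U(\C)} \simeq \Id_{\U(\C)}$. Since $u_{\U(\C)}$ is an equivalence by the previous step, this forces $\U(v_\C)$ to be an equivalence in $\Cat_{\K_m}$ as well. The forgetful functor $\U: \Mod_{\cS^m_m}(\Cat_{\K_m}) \lrar \Cat_{\K_m}$ is conservative (a general feature of forgetful functors from modules over an algebra in a symmetric monoidal $\infty$-category), so it follows that $v_\C$ itself is an equivalence of $\cS^m_m$-modules, establishing the main claim.

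For the ``in particular'' clause, the fact that $\U$ is fully-faithful is a formal consequence: in an adjunction $\F \dashv \U$, the right adjoint is fully-faithful if and only if the counit is an equivalence, which we have just established. Concretely, for $\C, \C' \in \Mod_{\cS^m_m}(\Cat_{\K_m})$ one has a natural equivalence
$$ \Map_{\Mod_{\cS^m_m}}(\C,\C') \simeq \Map_{\Mod_{\cS^m_m}}(\F\U(\C),\C') \simeq \Map_{\Cat_{\K_m}}(\U(\C),\U(\C')), $$
where the first equivalence uses that $v_\C$ is an equivalence and the second is the $\F \dashv \U$ adjunction. No step here looks like a serious obstacle; the only point requiring minor care is the conservativity of the forgetful functor out of modules, which is a standard fact about $\infty$-operadic module categories.
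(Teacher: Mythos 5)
Your argument is correct and matches the paper's proof essentially step for step: reduce to $\U(v_\C)$ via conservativity of $\U$, use the triangle identity $\U(v_\C)\circ u_{\U(\C)} \simeq \Id$, and deduce that $u_{\U(\C)}$ is an equivalence from Lemma~\ref{l:key-1} together with Corollary~\ref{c:amusing}. The fully-faithfulness clause is handled the same way in both, as a formal consequence of the counit being an equivalence.
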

\begin{proof}
Since $\U$ is conservative it will suffice to show that $\U(v_\C)$ is an equivalence of $\infty$-categories. 
Since the composition
$$ \U(\C) \x{u_{\U(\C)}}{\lrar} \cS^m_m \otimes_{\K_m} \U(\C) \x{\U(v_\C)}{\lrar} \U(\C) $$
is homotopic to the identity we are reduced to showing that the functor
$$ u_{\U(\C)}: \U(\C) \lrar \cS^m_m \otimes_{\K_m} \U(\C)$$
is an equivalence. But this now follows from Lemma~\ref{l:key-1} since $\U(\C)$ is $m$-semiadditive in virtue of Corollary~\ref{c:amusing}. 
\end{proof}

\begin{cor}\label{c:equiv}
The forgetful functor induces an equivalence of $\infty$-categories $\Mod_{\cS^m_m}(\Cat_{\K_m}) \simeq \Add_m$.
\end{cor}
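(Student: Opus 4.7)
The plan is to assemble the equivalence directly from Lemma~\ref{l:key-1} and Lemma~\ref{l:key-2}, which together already do all the real work. The strategy has two ingredients: a full-faithfulness statement and an identification of the essential image.

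First, I would record that $\U$ is fully-faithful. This is precisely the content of Lemma~\ref{l:key-2}: the counit $v_\C : \cS^m_m \otimes_{\K_m} \U(\C) \lrar \C$ is an equivalence of $\cS^m_m$-modules for every $\C \in \Mod_{\cS^m_m}(\Cat_{\K_m})$, which is a standard criterion guaranteeing that the right adjoint $\U$ of $\F \dashv \U$ is fully-faithful.

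Next, I would identify the essential image of $\U$ with $\Add_m \subseteq \Cat_{\K_m}$. One inclusion is immediate from Corollary~\ref{c:amusing}: any $\cS^m_m$-module in $\Cat_{\K_m}$ is (as an underlying object) $m$-semiadditive, so $\U$ factors through $\Add_m$. For the reverse inclusion, let $\D \in \Add_m$; I would show that $\D$ lies in the essential image by exhibiting it as $\U(\F(\D)) = \U(\cS^m_m \otimes_{\K_m} \D)$ via the unit map $u_\D : \D \lrar \cS^m_m \otimes_{\K_m} \D$, which is an equivalence by Lemma~\ref{l:key-1} precisely because $\D$ is $m$-semiadditive.

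Combining these, the functor $\U : \Mod_{\cS^m_m}(\Cat_{\K_m}) \lrar \Cat_{\K_m}$ is fully-faithful with essential image exactly $\Add_m$, and hence induces an equivalence onto $\Add_m$. There is no real obstacle here since both hard inputs (that $\cS^m_m$-modules are $m$-semiadditive and that $m$-semiadditive categories are invariant under tensoring with $\cS^m_m$) are the content of Lemma~\ref{l:key-1} and Lemma~\ref{l:key-2}; the only thing to be careful about is bookkeeping the adjunction $\F \dashv \U$ correctly, particularly that the triangle identity is what lets one deduce that $u_\C$ being an equivalence implies $\C$ is in the essential image of $\U$.
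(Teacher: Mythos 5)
Your proposal is correct and follows essentially the same route as the paper: the paper likewise deduces the corollary from Lemma~\ref{l:key-2} (full-faithfulness of $\U$ via the counit) together with the identification of the essential image as $\Add_m$, using Corollary~\ref{c:amusing} for one inclusion and the fact that an $m$-semiadditive $\D$ acquires an $\cS^m_m$-module structure (equivalently, via Lemma~\ref{l:key-1}, that $u_\D$ is an equivalence) for the other. Your bookkeeping of the adjunction $\F \dashv \U$ is accurate, so there is nothing to correct.
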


\begin{cor}\label{c:adjoints}
The inclusion $\Add_m \hrar \Cat_{\K_m}$ admits both a left adjoint, given by $\D \mapsto \cS^m_m \otimes_{\K_m}\D$, and a right adjoint, given by $\D \mapsto \Fun_{\K_m}(\cS^m_m,\D)$.
\end{cor}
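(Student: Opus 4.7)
My plan is to derive both adjoints by transporting the standard adjunctions for modules over an algebra. The key ingredients are the equivalence $\Mod_{\cS^m_m}(\Cat_{\K_m}) \simeq \Add_m$ of Corollary~\ref{c:equiv}, under which the inclusion $\Add_m \hookrightarrow \Cat_{\K_m}$ corresponds to the forgetful functor $\U$, and the closed symmetric monoidal structure on $\Cat_{\K_m}$ with internal hom $\Fun_{\K_m}(-,-)$.

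For the left adjoint, almost no work is needed: the paper already noted before Lemma~\ref{l:key-1} that $\U$ admits $\cS^m_m \otimes_{\K_m}(-)$ as its left adjoint, and transporting this adjunction across Corollary~\ref{c:equiv} yields the left adjoint to the inclusion $\Add_m \hookrightarrow \Cat_{\K_m}$.

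For the right adjoint, I would first verify that $\Fun_{\K_m}(\cS^m_m,\D)$ lies in $\Add_m$ for every $\D \in \Cat_{\K_m}$. It inherits $\K_m$-indexed colimits from $\D$ pointwise, and carries a canonical action of $\cS^m_m$ by pre-composition through the commutative algebra structure of $\cS^m_m$ from Corollary~\ref{c:algebra}. Since colimits in the functor $\infty$-category are computed objectwise, this action preserves $\K_m$-indexed colimits separately in each variable, so Corollary~\ref{c:amusing} yields $m$-semiadditivity. I would then establish the adjunction via the tensor-hom calculation
$$ \Fun_{\K_m}\bigl(\E,\,\Fun_{\K_m}(\cS^m_m,\D)\bigr) \simeq \Fun_{\K_m}(\E \otimes_{\K_m} \cS^m_m,\D) $$
for $\E \in \Add_m$ and $\D \in \Cat_{\K_m}$. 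Since $\E$ is $m$-semiadditive, Lemma~\ref{l:key-1} (combined with symmetry of $\otimes_{\K_m}$) provides an equivalence $\E \otimes_{\K_m} \cS^m_m \simeq \E$, so the right hand side is canonically $\Fun_{\K_m}(\E,\D)$, and passing to maximal subgroupoids yields the adjunction on mapping spaces.

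The main point requiring care is naturality and the identification of the counit: one must check that the evaluation $\Fun_{\K_m}(\cS^m_m,\D) \to \D$ at $\ast \in \cS^m_m$ really is the counit of the adjunction produced by the calculation above, which amounts to tracing how the unit map of Lemma~\ref{l:key-1} interacts with the tensor-hom equivalence. A useful cross-check is the symmetric computation $\Fun_{\K_m}(\cS^m_m \otimes_{\K_m}\D,\E) \simeq \Fun_{\K_m}(\D,\Fun_{\K_m}(\cS^m_m,\E)) \simeq \Fun_{\K_m}(\D,\E)$ for $\E \in \Add_m$, where the last step invokes Theorem~\ref{t:main} in the form $\Fun_{\K_m}(\cS^m_m,\E) \simeq \E$ via evaluation at $\ast$; this furnishes an independent confirmation of the formula for the left adjoint, and fits the two adjoints into a coherent symmetric picture.
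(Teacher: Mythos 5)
Your proposal is correct and follows the route the paper leaves implicit: the paper states this corollary without proof, as an immediate consequence of the adjunction $\F \dashv \U$ transported along Corollary~\ref{c:equiv} (for the left adjoint) and of the closed symmetric monoidal structure of $\Cat_{\K_m}$ together with Lemma~\ref{l:key-1} and Corollary~\ref{c:amusing} (for the right adjoint). Your tensor--hom calculation and the semiadditivity check for $\Fun_{\K_m}(\cS^m_m,\D)$ via the precomposition action are exactly the intended argument.
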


\begin{cor}
$\cS^m_m$ is an \textbf{idempotent} algebra object in $\Cat_{\K_m}$. In particular, the monoidal product map $\cS^m_m \otimes_{\K_m} \cS^m_m \x{\simeq}{\lrar} \cS^m_m$ is an equivalence.
\end{cor}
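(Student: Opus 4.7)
The plan is to reduce the claim to Lemma~\ref{l:key-1} applied to $\C = \cS^m_m$ itself. Recall that a commutative algebra $A$ in a symmetric monoidal $\infty$-category is idempotent precisely when the unit $\eta: \mathbf{1} \to A$ induces an equivalence $A \simeq \mathbf{1} \otimes A \to A \otimes A$; equivalently, the multiplication $\mu: A \otimes A \to A$ is an equivalence, equivalently, the forgetful functor $\Mod_A \to \C$ is fully faithful. The "in particular" clause in the statement is therefore the second formulation, while the general idempotence assertion can be read off from either of the equivalent ones.

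First I would observe that $\cS^m_m$ is itself $m$-semiadditive (Corollary~\ref{c:origin}), so it lies in $\Add_m$. Under the identification $\cS_m \otimes_{\K_m} \cS^m_m \simeq \cS^m_m$ (using that $\cS_m$ is the unit of $\Cat^{\otimes}_{\K_m}$), the unit map $u_{\cS^m_m}:\cS^m_m \to \cS^m_m \otimes_{\K_m} \cS^m_m$ of the adjunction $\F \dashv \U$ coincides with $\eta \otimes_{\K_m} \id_{\cS^m_m}$, where $\eta: \cS_m \to \cS^m_m$ is the algebra unit. Applying Lemma~\ref{l:key-1} to $\C = \cS^m_m$ gives that $u_{\cS^m_m}$ is an equivalence.

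Now the multiplication $\mu:\cS^m_m \otimes_{\K_m} \cS^m_m \to \cS^m_m$ is a retraction of $u_{\cS^m_m}$ by the unit axiom $\mu \circ u_{\cS^m_m} \simeq \id$. Since $u_{\cS^m_m}$ is an equivalence, so is $\mu$, which establishes the "in particular" clause. Idempotence of $\cS^m_m$ as a commutative algebra then follows from any of the standard equivalent characterizations; if one prefers the module-theoretic formulation, it is already recorded in Corollary~\ref{c:equiv}, which says exactly that the forgetful functor $\Mod_{\cS^m_m}(\Cat_{\K_m}) \to \Cat_{\K_m}$ is fully faithful.

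There is no real obstacle beyond bookkeeping: the genuine mathematical content (that $\cS^m_m$-modules are automatically $m$-semiadditive and that $m$-semiadditivity forces the unit map to be an equivalence) has already been packaged into Corollary~\ref{c:amusing} and Lemma~\ref{l:key-1}. The only mild subtlety is to confirm that the unit map of the free-forgetful adjunction is indeed the expected $\eta \otimes \id$, which follows formally from the construction of $\F = \cS^m_m \otimes_{\K_m} (-)$ as tensoring with the algebra.
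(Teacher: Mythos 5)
Your proof is correct and follows essentially the same route the paper intends: the corollary is left as an immediate consequence of Lemma~\ref{l:key-1} (applied to $\C=\cS^m_m$, which is $m$-semiadditive by Corollary~\ref{c:origin}) together with Lemma~\ref{l:key-2}/Corollary~\ref{c:equiv}, and your identification of the unit of $\F\dashv\U$ with $\eta\otimes_{\K_m}\Id$ plus the retraction argument for $\mu$ is exactly the intended bookkeeping.
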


Since $\cS^m_m$ is an idempotent algebra object of $\Cat_{\K_m}$ the functor $L: \Cat_{\K_m} \lrar \Cat_{\K_m}$ given by $\C \mapsto \cS^m_m \otimes_{\K_m} \C$ is a \textbf{localization functor} (see~\cite[Proposition 4.8.2.4]{higher-algebra}), and the $L$-local objects are those $\infty$-categories $\C$ such that the map $\C \lrar \cS^m_m \otimes_{\K_m} \C$ (induced by the unit $\cS_m \hrar \cS^m_m$) is an equivalence, which are exactly the $m$-semiadditive $\infty$-categories by Lemma~\ref{l:key-1}. In other words, the $\infty$-category $\Add_m$ is a localization of $\Cat_{\K_m}$ with localization functor $\cS^m_m \otimes_{\K_m} (-)$.

We shall now discuss tensor products of $m$-semiadditive $\infty$-categories.
\begin{prop}\label{p:monoidal}
There exists a symmetric monoidal structure $\Add_m^{\otimes} \lrar \Ne(\Fin_\ast)$ on $\Add_m$ such that the functor $\F:\Cat_{\K_m} \lrar \Add_m$ given by $\F(\D) = \cS^m_m \otimes_{\K_m} \D$ extends to a symmetric monoidal functor $\F^{\otimes}: \Cat_{\K_m}^{\otimes} \lrar \Add^{\otimes}_m$. In particular, $\cS^m_m$ is the unit of $\Add_m^{\otimes}$. 
\end{prop}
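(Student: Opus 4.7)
The plan is to leverage the fact, just established above, that $\cS^m_m$ is an idempotent commutative algebra object in $\Cat_{\K_m}^{\otimes}$. By the general theory of idempotent algebras in symmetric monoidal $\infty$-categories (see \cite[Proposition 4.8.2.10]{higher-algebra}), whenever $A$ is an idempotent commutative algebra in a symmetric monoidal $\infty$-category $\C^{\otimes}$, the $\infty$-category $\Mod_A(\C)$ inherits a canonical symmetric monoidal structure $\Mod_A(\C)^{\otimes}$ whose unit is $A$ itself, and the free functor $X \mapsto A \otimes X$ refines to a symmetric monoidal functor $\C^{\otimes} \to \Mod_A(\C)^{\otimes}$. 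Applying this general principle to $A = \cS^m_m$ inside $\C^{\otimes} = \Cat_{\K_m}^{\otimes}$ produces a symmetric monoidal structure on $\Mod_{\cS^m_m}(\Cat_{\K_m})$ together with a symmetric monoidal free functor from $\Cat_{\K_m}^{\otimes}$.

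To transfer this to $\Add_m$, I would now invoke the equivalence $\Mod_{\cS^m_m}(\Cat_{\K_m}) \simeq \Add_m$ of Corollary~\ref{c:equiv} and transport the symmetric monoidal structure along it, yielding the desired $\Add_m^{\otimes} \to \Ne(\Fin_\ast)$. Composing with the symmetric monoidal free functor then gives $\F^{\otimes}:\Cat_{\K_m}^{\otimes} \to \Add_m^{\otimes}$, which on objects is $\D \mapsto \cS^m_m \otimes_{\K_m}\D$. The unit statement is automatic: the unit of $\Mod_A(\C)^{\otimes}$ is $A$ viewed as a module over itself, and under the equivalence of Corollary~\ref{c:equiv} this corresponds to $\cS^m_m \in \Add_m$; equivalently, the unit of $\Add_m^{\otimes}$ must be the image of the unit $\cS_m$ of $\Cat_{\K_m}^{\otimes}$ under the symmetric monoidal functor $\F$, namely $\F(\cS_m) = \cS^m_m \otimes_{\K_m}\cS_m \simeq \cS^m_m$.

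An equivalent and perhaps more self-contained route is to use the fact that $\Add_m \hookrightarrow \Cat_{\K_m}$ is a reflective subcategory with reflector $L = \cS^m_m \otimes_{\K_m} (-)$ (as recorded just before the proposition), and to verify that $L$ is compatible with the symmetric monoidal structure in the sense of \cite[Proposition 2.2.1.9]{higher-algebra}. This amounts to checking that the class of $L$-equivalences in $\Cat_{\K_m}$ is stable under tensoring with an arbitrary object, which follows immediately from the monoidality of $\otimes_{\K_m}$ together with the idempotence of $\cS^m_m$: if $\cS^m_m \otimes_{\K_m} f$ is an equivalence then so is $\cS^m_m \otimes_{\K_m}\C \otimes_{\K_m}f$ for any $\C \in \Cat_{\K_m}$. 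The cited proposition then directly supplies both the symmetric monoidal structure on $\Add_m$ and the symmetric monoidal refinement of $L = \F$.

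The only real subtlety is the transport of symmetric monoidal structures along the equivalence of Corollary~\ref{c:equiv} in the first approach, but this is a formal consequence of the invariance of symmetric monoidal $\infty$-categories under equivalence; the second approach bypasses this issue altogether and is probably the cleanest to carry out in full detail.
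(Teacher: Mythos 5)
Your proposal is correct and matches the paper's argument, which simply cites Lurie's general result on idempotent algebras and smashing localizations (\cite[Proposition 4.8.2.7]{higher-algebra}) — exactly the machinery you invoke in both of your routes. The only cosmetic remark is that in your second route the stability of $L$-equivalences under tensoring already follows from functoriality and symmetry of $\otimes_{\K_m}$, without needing idempotence.
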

\begin{proof}
This is a particular case of~\cite[Proposition 4.8.2.7]{higher-algebra}.
\end{proof}

\begin{cor}
$\cS^m_m$ carries a canonical commutative algebra structure making it the initial object of $\CAlg(\Add_m)$.
\end{cor}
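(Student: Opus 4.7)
The statement is essentially a formal consequence of Proposition~\ref{p:monoidal}, so the plan is short. First I would recall that by Proposition~\ref{p:monoidal}, $\Add_m$ carries a symmetric monoidal structure $\Add_m^{\otimes} \to \Ne(\Fin_\ast)$, and the proposition explicitly identifies its unit object as $\cS^m_m$ (this being the image under the symmetric monoidal functor $\F^{\otimes}$ of the unit $\cS_m$ of $\Cat_{\K_m}^{\otimes}$, combined with Lemma~\ref{l:key-1} which identifies $\F(\cS_m) = \cS^m_m \otimes_{\K_m} \cS_m$ with $\cS^m_m$).

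Next I would invoke the general fact from Lurie's Higher Algebra (see \cite[\S 3.2.1]{higher-algebra}, in particular the corollary stating that the unit object of a symmetric monoidal $\infty$-category is canonically a commutative algebra and is the initial object of the associated $\infty$-category of commutative algebras). Applied to $\Add_m^{\otimes}$, this immediately endows $\cS^m_m$ with a canonical commutative algebra structure and shows that for every $A \in \CAlg(\Add_m)$ the space of maps $\cS^m_m \to A$ in $\CAlg(\Add_m)$ is contractible, i.e.\ $\cS^m_m$ is initial.

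There is no real obstacle here, as the whole point is that we have already done the work: Proposition~\ref{p:monoidal} is what required actual content (it relies on the idempotency of $\cS^m_m$ in $\Cat_{\K_m}$ and on~\cite[Proposition 4.8.2.7]{higher-algebra}). Once that symmetric monoidal structure is in hand, the present statement is just the formal observation that the unit of any symmetric monoidal $\infty$-category is initial among its commutative algebras.
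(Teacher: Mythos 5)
Your proposal is correct and matches the paper's intent: the corollary is stated without proof precisely because, as you say, it follows immediately from Proposition~\ref{p:monoidal} (which identifies $\cS^m_m$ as the unit of $\Add_m^{\otimes}$) together with the general fact that the unit of a symmetric monoidal $\infty$-category is canonically the initial object of its $\infty$-category of commutative algebras. Your identification of the unit via $\F^{\otimes}$ and Lemma~\ref{l:key-1}, and the appeal to~\cite[\S 3.2.1]{higher-algebra}, is exactly the intended formal argument.
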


Let us refer to commutative algebra objects in $\Cat_{\K_m}$ as \textbf{$\K_m$-symmetric monoidal $\infty$-categories}. These can identified with ordinary symmetric monoidal $\infty$-categories such that the underlying $\infty$-category admits $\K_m$-indexed colimits and the monoidal product preserves $\K_m$-indexed colimits it each variable separately.
\begin{prop}
The inclusion $\Add_m^{\otimes} \hrar \Cat^{\otimes}_{\K_m}$ is symmetric monoidal. Furthermore the induced map
$$ \R:\CAlg(\Add_m) \lrar \CAlg(\Cat_{\K_m}) $$
is fully-faithful and its essential image is spanned by those $\K_m$-symmetric monoidal $\infty$-categories whose underlying $\infty$-category is $m$-semiadditive.
\end{prop}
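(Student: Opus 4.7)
The plan is to leverage the idempotency of $\cS^m_m$ in $\Cat_{\K_m}^{\otimes}$ established in the preceding corollaries, together with the symmetric monoidal localization $\F^{\otimes}$ of Proposition~\ref{p:monoidal}. For the first assertion, the essential point is that $\Add_m \subseteq \Cat_{\K_m}$ is closed under the monoidal product: if $\C, \D \in \Add_m$, then by Corollary~\ref{c:equiv} both carry $\cS^m_m$-module structures, so their tensor $\C \otimes_{\K_m} \D$ inherits one via the action on either factor, and by Corollary~\ref{c:amusing} is therefore $m$-semiadditive. Since $\F^{\otimes}$ presents $\Add_m^{\otimes}$ as the smashing localization of $\Cat_{\K_m}^{\otimes}$ at the idempotent algebra $\cS^m_m$, the tensor product computed in $\Add_m^{\otimes}$ coincides on $m$-semiadditive objects with that in $\Cat_{\K_m}^{\otimes}$. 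This identification, together with the natural unit map $\cS_m \to \cS^m_m$, assembles into the desired map of $\infty$-operads $\Add_m^{\otimes} \hookrightarrow \Cat_{\K_m}^{\otimes}$.

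For the statement about $\R$, the symmetric monoidal functor $\F^{\otimes}$ induces $\F_{\CAlg}: \CAlg(\Cat_{\K_m}) \to \CAlg(\Add_m)$, which is left adjoint to $\R$. To show $\R$ is fully-faithful, it suffices to verify that the counit $\F_{\CAlg} \circ \R \Rightarrow \Id$ is an equivalence. Since the forgetful functor $\CAlg(\Add_m) \to \Add_m$ is conservative, this reduces to checking that the counit of $\F \dashv \U$ is an equivalence on objects of $\Add_m$, which is exactly Lemma~\ref{l:key-2}. For the essential image, an object $B \in \CAlg(\Cat_{\K_m})$ lies in the image of $\R$ if and only if the unit $B \to \R\F_{\CAlg}(B)$ is an equivalence; passing through the conservative forgetful functor $\CAlg(\Cat_{\K_m}) \to \Cat_{\K_m}$ reduces this to the unit $B \to \cS^m_m \otimes_{\K_m} B$ of $\F \dashv \U$, which by Lemma~\ref{l:key-1} is an equivalence precisely when the underlying $\infty$-category of $B$ is $m$-semiadditive.

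The main anticipated obstacle is the $\infty$-operadic bookkeeping needed to transfer the adjunction $\F \dashv \U$ from $\Cat_{\K_m}$ to the level of commutative algebras, and in particular to verify that the unit and counit at the $\CAlg$-level agree under the forgetful functors with those of the underlying adjunction. This is not conceptually deep but benefits from invoking the general machinery on compatibility of $\CAlg$ with symmetric monoidal adjunctions (as in Lurie's \emph{Higher Algebra}, around Sections 4.8.2 and 7.3), which packages the smashing nature of the localization into a direct statement about algebras and bypasses an explicit manipulation of the underlying $\infty$-operads.
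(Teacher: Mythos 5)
Your proposal is correct and follows essentially the same route as the paper: both establish that the lax symmetric monoidal inclusion of local objects is actually symmetric monoidal because the localization is smashing (you phrase closure of $\Add_m$ under $\otimes_{\K_m}$ via the induced $\cS^m_m$-module structure and Corollary~\ref{c:amusing}, the paper cites idempotency directly — the same point), and both then identify the unit and counit of the induced adjunction on $\CAlg$ with those of $\F \dashv \U$ on underlying objects, concluding full-faithfulness from Lemma~\ref{l:key-2} and the description of the essential image from Lemma~\ref{l:key-1}. The operadic bookkeeping you flag is handled in the paper at the same level of detail, by appeal to the corresponding statements in Higher Algebra.
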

\begin{proof}
Recall that the symmetric monoidal structure on $\Add_m$ was inherited from $\Cat_{\K_m}$ by identifying $\Add_m$ with the localization of $\Cat_{\K_m}$ associated to the localization functor $\L(\C) := \cS^m_m \otimes_{\K_m} \C$. In this case the inclusion $\Add_m \subseteq \Cat_{\K_m}$ of local objects is always lax symmetric monoidal structure (\cite[Proposition 2.2.1.9(3)]{higher-algebra}) and is given informally by the formula $(\C,\D) \mapsto \L(\C \otimes_{\K_m} \D)$. To show that in this case the inclusion is actually symmetric monoidal we need to show that if $\C,\D$ are local then $\C \otimes_{\K_m} \D$ is local as well. But this is a direct consequence of the fact that our localization functor is obtained by tensoring with an idempotent object. 



To show the second part of the claim, we note that the symmetric monoidal left adjoint $\F^{\otimes}: \Cat^{\otimes}_{\K_m} \lrar \Add^{\otimes}_m$ of Proposition~\ref{p:monoidal} induces a left adjoint $\F':\CAlg(\Cat_{\K_m}) \lrar \CAlg(\Add_m)$ to $\R$ whose value on the underlying objects is given by $\F'$. In particular, the counit of $\F'\dashv\R$ is given by the counit of $\F \dashv \U$ on the underlying $\infty$-categories and is hence an equivalence by Lemma~\ref{l:key-2}. We then get that $\R$ is fully-faithful. Let $\E \subseteq \CAlg(\Cat_{\K_m})$ denote the full subcategory spanned by those $\K_m$-symmetric monoidal $\infty$-categories whose underlying $\infty$-category is $m$-semiadditive, so that the image of $\R$ is contained in $\E$. To finish the proof we need to show that every object in $\E$ is in the image of $\R$. For this, it will suffice to show the unit of $\F'\dashv \R$ is an equivalence on objects whose underlying $\infty$-category is $m$-semiadditive. But this now follows from Lemma~\ref{l:key-1} since the unit of $\F'\dashv\R$ is given by the unit of $\F \dashv \U$ on underlying $\infty$-categories.

\end{proof}

We hence obtain yet another universal characterization of $\cS^m_m$:
\begin{cor}
The $\K_m$-symmetric monoidal $\infty$-category $\cS^m_m$ is \textbf{initial} among those $\K_m$-symmetric monoidal $\infty$-categories whose underlying $\infty$-category is $m$-semiadditive.
\end{cor}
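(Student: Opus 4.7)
The plan is to observe that this corollary is essentially immediate from the two preceding results, combining a universal property in $\CAlg(\Add_m)$ with a fully-faithful embedding $\R \colon \CAlg(\Add_m) \hookrightarrow \CAlg(\Cat_{\K_m})$ whose essential image has been identified with exactly the desired full subcategory.

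First, I would recall that the previous corollary establishes that $\cS^m_m$, with its canonical commutative algebra structure, is the initial object of $\CAlg(\Add_m)$. This is where the content lies, and it rests on Proposition~\ref{p:monoidal} identifying $\cS^m_m$ as the unit of the symmetric monoidal structure on $\Add_m$.

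Second, I would invoke the preceding proposition, which asserts that the functor $\R \colon \CAlg(\Add_m) \lrar \CAlg(\Cat_{\K_m})$ is fully-faithful with essential image spanned precisely by those $\K_m$-symmetric monoidal $\infty$-categories whose underlying $\infty$-category is $m$-semiadditive. Let $\E \subseteq \CAlg(\Cat_{\K_m})$ denote this full subcategory. Then $\R$ factors as an equivalence $\CAlg(\Add_m) \x{\simeq}{\lrar} \E$.

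Finally, I would conclude by the general principle that an equivalence of $\infty$-categories preserves and reflects initial objects. Since $\cS^m_m$ is initial in $\CAlg(\Add_m)$, its image under $\R$ is initial in $\E$, which is precisely the claimed statement. There is no real obstacle here; the entire substance of the assertion has been packaged into the preceding proposition (which in turn rests on Lemmas~\ref{l:key-1} and~\ref{l:key-2}, ultimately on the idempotence of $\cS^m_m$), and this corollary is simply the re-interpretation of initiality across the fully-faithful inclusion $\R$.
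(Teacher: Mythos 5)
Your proposal is correct and matches the paper's intended argument: the corollary is stated without proof precisely because it follows by combining the preceding corollary (that $\cS^m_m$, as the unit of $\Add_m^{\otimes}$, is initial in $\CAlg(\Add_m)$) with the preceding proposition (that $\R$ is fully-faithful with essential image the $\K_m$-symmetric monoidal $\infty$-categories whose underlying $\infty$-category is $m$-semiadditive), and transporting initiality along the resulting equivalence. Nothing is missing.
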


\subsection{Higher commutative monoids}\label{s:monoids}
In \S\ref{s:modules} we discussed the inclusion of $\Add_m$ inside the $\infty$-category of $\Cat_{\K_m}$ of $\infty$-categories admitting $\K_m$-indexed colimits. But there is also a dual story, when one embeds $\Add_m$ inside the $\infty$-category $\Cat^{\K_m}$ consisting of those $\infty$-categories which admit $\K_m$-indexed \textbf{limits}. Indeed, the symmetry here is complete: the operation $\D \mapsto \D^{\op}$ which sends an $\infty$-category to its opposite induces an equivalence $\Cat^{\K_m} \simeq \Cat_{\K_m}$ which maps $\Add_m$ to itself. We may hence apply any of the constructions of the previous section to $\infty$-categories with $\K_m$-indexed limits by ``conjugating'' it with the operation $\D \mapsto \D^{\op}$. From an abstract point of view this seems to yield no additional interest. However, for one of the procedures above applying this conjugation yields an interesting relation with the theory of commutative monoids, which is worthwhile spelling out.

By Corollary~\ref{c:adjoints}, if $\D$ is an $\infty$-category which admits $\K_m$-indexed colimits, then the restriction functor $r:\Fun_{\K_m}(\cS^m_m,\D) \lrar \D$ exhibits $\Fun_{\K_m}(\cS^m_m,\D)$ as the universal $m$-semiadditive $\infty$-category carrying a $\K_m$-colimit preserving functor to $\D$. In other words, any $\K_m$-colimit preserving functor from any other $m$-semiadditive $\infty$-category $\C$ factors essentially uniquely through $r$. 

Now suppose that $\D$ admits $\K_m$-indexed \textbf{limits}. Then $\D^{\op}$ admits $\K_m$-indexed colimits and $\Fun_{\K_m}(\cS^m_m,\D^{\op})$ is the universal $m$-semiadditive $\infty$-category admitting a $\K_m$-colimit preserving functor to $\D^{\op}$. We now note that 
$$ (\Fun_{\K_m}(\cS^m_m,\D^{\op}))^{\op} \simeq \Fun^{\K_m}((\cS^m_m)^{\op},\D) \simeq \Fun^{\K_m}(\cS^m_m,\D), $$ 
where $\Fun^{\K_m}(-,-) \subseteq \Fun(-,-)$ denotes the full subcategory spanned by $\K_m$-limit preserving functors. It the follows that $\Fun^{\K_m}(\cS^m_m,\D)$ is the universal $m$-semiadditive $\infty$-category admitting a $\K_m$-limit preserving functor to $\D$. Our next goal is to relate the $\infty$-category $\Fun^{\K_m}(\cS^m_m,\D)$ with the theory of commutative monoid objects in $\D$.

\begin{define}
Let $m \geq -1$ be an integer and let $\D$ be an $\infty$-category admitting $\K_m$-indexed limits. An \textbf{$m$-commutative monoid} in $\D$ is a functor $\F: \cS^{m-1}_m \lrar \D$ with the following property: for every $X \in \cS^{m-1}_m$ the collection of maps $\F(\hat{i}_x): \F(X) \lrar \F(\ast)$ exhibits $\F(X)$ as the limit in $\D$ of the constant $X$-indexed diagram with value $\F(\ast)$. We will denote by $\CMon_m(\D) \subseteq \Fun^{\K_m}(\cS^{m-1}_m,\D)$ the full subcategory spanned by the $m$-commutative monoids.
\end{define}

\begin{example}
If $m=-1$ then $\cS^{m-1}_m = \cS^{-2}_{-1} = \cS_{-1}$ is the $\infty$-category of $(-1)$-finite spaces and ordinary maps between them. In particular, we may identify $\cS_{-1}$ with the category consisting of two objects $\emptyset,\ast$ and a unique non-identity morphism $\emptyset \lrar \ast$. An $\infty$-category $\D$ admits $\K_{-1}$-indexed limits if and only if it admits a final object. A functor $\cS_{-1} \lrar \D$ is completely determined by the associated morphism $\F(\emptyset) \lrar \F(\ast)$ in $\D$. By definition such a functor $\F$ is a $(-1)$-commutative monoid if and only if $\F(\emptyset)$ is a terminal object of $\D$. We may hence identify $\CMon_{-1}(\D)$ with the full subcategory of the arrow category of $\D$ spanned by those arrows $A \lrar B$ for which $A$ is a final object. In particular, if we fix a particular final object $\star \in \D$ then we may form an equivalence $\CMon_{-1}(\D) \simeq \D_{\star/}$. In other words, we may identify $\CMon_{-1}(\D)$ with the $\infty$-category of \textbf{pointed objects} $\D$.
\end{example}

\begin{example}
If $m=0$ then we may identify $\cS^{m-1}_m = \cS^{-1}_0$ with the category whose objects are finite sets, and such that a morphism from a finite set $A$ to a finite set $B$ is a pair $(C,f)$ where $C$ is a subset of $A$ and $f: C \lrar B$ is a map. In particular, $\cS^{-1}_0$ is equivalent to the nerve of a discrete category. By sending a finite set $A$ to the pointed set $A_+ = A \coprod \{\ast\}$ and sending a map $(C,f)$ to the map $f': A_+ \lrar B_+$ which restricts to $f$ on $C$ and sends $A \bksl C$ to the base point of $B_+$ we obtain an equivalence $\cS^{-1}_0 \simeq \Fin_\ast$, where $\Fin_\ast$ is the category of finite pointed sets. To say that an $\infty$-category $\D$ has $\K_0$-indexed limits is to say that $\D$ admits finite products. Unwinding the definitions we see that a functor $\cS^{-1}_0 \lrar \D$ is a $0$-commutative monoid object if and only if the corresponding functor $\Fin_\ast \lrar \D$ is a commutative monoid object in the sense of~\cite[Definition 2.4.2.1]{higher-algebra}, also known as an \textbf{$\EE_\infty$-monoid}. When $\D$ is the $\infty$-category of spaces this notion of commutative monoids was first developed by Segal under the name \textbf{special $\Gam$-spaces}.
\end{example}

\begin{lem}\label{l:mon}
Let $\D$ be an $\infty$-category which admits $\K_m$-indexed limits and let $\F: \cS^m_m \lrar \D$ be a functor. Then $\F$ preserves $\K_m$-indexed limits if and only if the restriction $\F|_{\cS^{m-1}_m}$ is an $m$-commutative monoid object.
\end{lem}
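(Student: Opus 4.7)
The plan is to deduce both directions of the lemma uniformly from the previously established results on preservation of $\K_m$-indexed colimits, by passing through the span-swap self-duality $\cS^m_m \simeq (\cS^m_m)^{\op}$ that was already invoked in the preamble to this subsection. This self-duality holds because every morphism between $m$-finite spaces is $m$-truncated (directly from the long exact sequence of homotopy groups applied to a Kan fibration), so $\cS^m_m = \Span(\cS_m)$ and the span construction is symmetric in its two legs. Denote by $\sig: (\cS^m_m)^{\op} \x{\simeq}{\lrar} \cS^m_m$ the resulting equivalence; on morphisms it takes a span $[Y \llar Z \lrar X]$ (a morphism $X \lrar Y$ in $(\cS^m_m)^{\op}$) to the swapped span $[X \llar Z \lrar Y]$ (a morphism $X \lrar Y$ in $\cS^m_m$). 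In particular, it carries the opposite of $i_x: \ast \lrar X$ in $\cS_m \subseteq \cS^m_m$ to the morphism $\hat{i}_x: X \lrar \ast$ in $\cS^{m-1}_m \subseteq \cS^m_m$.

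Given $\F: \cS^m_m \lrar \D$, transport it through $\sig$ to obtain a functor $\wtl{\F} = (\F \circ \sig)^{\op}: \cS^m_m \lrar \D^{\op}$. Since $\D$ admits $\K_m$-indexed limits, $\D^{\op}$ admits $\K_m$-indexed colimits, and by general nonsense $\F$ preserves $\K_m$-indexed limits if and only if $\wtl{\F}$ preserves $\K_m$-indexed colimits. Applying Corollary~\ref{c:n-colimits} to $\wtl{\F}: \cS^m_m \lrar \D^{\op}$ reduces the latter in turn to the restriction $\wtl{\F}|_{\cS_m}: \cS_m \lrar \D^{\op}$ preserving $\K_m$-indexed colimits, and applying Lemma~\ref{l:colimits} reduces this further to the condition that for each $X \in \cS_m$ the morphisms $\wtl{\F}(i_x): \wtl{\F}(\ast) \lrar \wtl{\F}(X)$ exhibit $\wtl{\F}(X)$ as the colimit in $\D^{\op}$ of the constant $X$-indexed diagram with value $\wtl{\F}(\ast)$.

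Finally, I would unwind this last condition. A colimit in $\D^{\op}$ is a limit in $\D$, and by construction the map $\wtl{\F}(i_x)$ in $\D^{\op}$ corresponds under the passage $\D^{\op} \leftrightarrow \D$ to $\F(\hat{i}_x): \F(X) \lrar \F(\ast)$ in $\D$. The morphism $\hat{i}_x$ indeed lies in $\cS^{m-1}_m$ because the homotopy fibers of $i_x: \ast \lrar X$ are path spaces of the $m$-finite space $X$, which are $(m-1)$-finite, so $i_x$ is $(m-1)$-truncated. Since $\cS_m$, $\cS^{m-1}_m$ and $\cS^m_m$ share the same set of objects, the resulting condition reads: for every $X \in \cS^{m-1}_m$, the morphisms $\F(\hat{i}_x): \F(X) \lrar \F(\ast)$ exhibit $\F(X)$ as the limit in $\D$ of the constant $X$-indexed diagram with value $\F(\ast)$. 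This is exactly the $m$-commutative monoid condition on $\F|_{\cS^{m-1}_m}$.

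The one delicate point in this proof is the bookkeeping of the span-swap self-duality of $\cS^m_m$ — in particular verifying that it sends $i_x$ to $\hat{i}_x$ with the correct interpretation of opposites. Once this is nailed down, both directions of the claimed equivalence fall out from a single chain of \emph{iff}'s assembled from Corollary~\ref{c:n-colimits} and Lemma~\ref{l:colimits}, with no further computation needed.
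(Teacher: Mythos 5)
Your proposal is correct and is essentially the paper's own argument: the paper proves the lemma in one line by applying Corollary~\ref{c:n-colimits} and Lemma~\ref{l:colimits} to $(\cS^m_m)^{\op} \simeq \cS^m_m$ and $\D^{\op}$, which is exactly the chain of reductions you carry out. Your additional bookkeeping (the span-swap duality sending $i_x$ to $\hat{i}_x$, and $i_x$ being $(m-1)$-truncated so that $\hat{i}_x$ lies in $\cS^{m-1}_m$) just makes explicit what the paper leaves implicit.
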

\begin{proof}
Apply Corollary~\ref{c:n-colimits} and Lemma~\ref{l:colimits} to $(\cS^m_m)^{\op} \simeq \cS^m_m$ and $\D^{\op}$.
\end{proof}

\begin{prop}\label{p:monoid}
Fix an $m \geq -1$ and let $\D$ be an $\infty$-category which admits $\K_m$-indexed limits. Then restriction along $\cS^{m-1}_m \hrar \cS^m_m$ induces an equivalence of $\infty$-categories
$$ \Fun^{\K_m}(\cS^m_m,\D) \x{\simeq}{\lrar} \CMon_n(\D) $$
\end{prop}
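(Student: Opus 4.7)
The plan is to dualize the argument used in the proof of Corollary~\ref{c:step-2}, replacing Proposition~\ref{p:step-1} by the right Kan extension result of Proposition~\ref{p:rkan}. As a first step I would invoke Lemma~\ref{l:mon}: a functor $\F': \cS^m_m \lrar \D$ preserves $\K_m$-indexed limits if and only if its restriction to $\cS^{m-1}_m$ is an $m$-commutative monoid. Hence the restriction map of the statement automatically factors through $\CMon_m(\D)$, and all that remains is to prove that the induced functor $\Fun^{\K_m}(\cS^m_m,\D) \lrar \CMon_m(\D)$ is an equivalence of $\infty$-categories.

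With $r: \M^{\natural} \lrar \Del^1$ and $\Cone_m$ as constructed before Proposition~\ref{p:rkan}, I would write $\Fun^{\flat}_{\mathrm{mon}}(\M^{\natural}, \D^{\natural}) \subseteq \Fun^{\flat}(\M^{\natural}, \D^{\natural})$ and $\Fun^{\flat}_{\mathrm{mon}}(\Cone_m, \D^{\natural}) \subseteq \Fun^{\flat}(\Cone_m, \D^{\natural})$ for the full subcategories whose restriction along $\iota_1: \cS^{m-1}_m \hookrightarrow \M$ lies in $\CMon_m(\D)$. Mirroring diagram~\eqref{e:res} from the proof of Corollary~\ref{c:step-2}, I would consider the commutative triangle
\begin{equation*}
\xymatrix{
& \Fun^{\flat}_{\mathrm{mon}}(\M^{\natural},\D^{\natural}) \ar[dr]\ar[dl] & \\
\Fun^{\flat}_{\mathrm{mon}}(\Cone_m,\D^{\natural}) \ar[rr] && \CMon_m(\D)
}
\end{equation*}
of restriction maps. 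The left diagonal is a trivial Kan fibration since $\Cone_m \hookrightarrow \M^{\natural}$ is a trivial cofibration in the Cartesian model structure over $(\Del^1)^{\sharp}$. The right diagonal is a trivial Kan fibration by Proposition~\ref{p:rkan} combined with~\cite[Proposition 4.3.2.15]{higher-topos}: part~(1) provides the required right Kan extension precisely because its hypothesis is the defining condition of an $m$-commutative monoid, while part~(2) identifies right Kan extensions among extensions with those sending $r$-Cartesian edges to equivalences, i.e.\ with objects of $\Fun^{\flat}_{\mathrm{mon}}(\M^{\natural},\D^{\natural})$. Consequently the bottom arrow is also an equivalence.

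To finish, I would argue as in the last paragraph of the proof of Corollary~\ref{c:step-2}: since $\cS^m_m \hookrightarrow \Cone_m$ is a pushout of a trivial cofibration in the coCartesian model structure over $\Del^0$, restriction along $\iota_0$ gives a trivial Kan fibration $\Fun^{\flat}(\Cone_m, \D^{\natural}) \lrar \Fun(\cS^m_m, \D)$, and Lemma~\ref{l:mon} identifies the restriction to the ``mon'' subcategories as an equivalence $\Fun^{\flat}_{\mathrm{mon}}(\Cone_m, \D^{\natural}) \simeq \Fun^{\K_m}(\cS^m_m, \D)$. Tracing through the triangle then identifies the composite equivalence with restriction along $\iota: \cS^{m-1}_m \hookrightarrow \cS^m_m$, as required. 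The substantive content has already been absorbed into Proposition~\ref{p:rkan}, whose hypothesis is tailored exactly to the defining condition of an $m$-commutative monoid; the main obstacle in writing up the proof is thus purely the formal bookkeeping with marked simplicial sets, in particular verifying that the various restriction maps really do agree, up to homotopy, with restriction along $\iota$.
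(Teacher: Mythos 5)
Your proposal is correct and follows essentially the same route as the paper: the paper's own proof uses the same marked mapping cone $\Cone_m \hookrightarrow \M^{\natural}$, the same full subcategories of marked functors whose restriction to $\cS^{m-1}_m$ is an $m$-commutative monoid, the trivial-Kan-fibration triangle via Proposition~\ref{p:rkan} together with~\cite[Proposition 4.3.2.15]{higher-topos}, and then concludes with Lemma~\ref{l:mon} and the argument of Corollary~\ref{c:step-2}. No gaps.
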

\begin{proof}
Let $\Cone_m$
be the right marked mapping cone of the natural inclusion $\iota: \cS^{m-1}_{m} \hrar \cS^{m}_{m}$ (see the discussion before Lemma~\ref{l:export}) and let
$ \Cone_m \hrar \M^{\natural} \x{r}{\lrar} \Del^1 $
be a factorization of the projection $\Cone_m \lrar (\Del^1)^{\sharp}$ into a trivial cofibration follows by a fibration in the Cartesian model structure over $(\Del^1)^{\sharp}$. 
Let $\iota_0: \cS^{m}_{m} \hrar \M \times_{\Del^1} \Del^{\{0\}} \subseteq \M$ and $\iota_1: \cS^{m-1}_{m} \hrar \M \times_{\Del^1} \Del^{\{1\}} \subseteq \M$ be the corresponding inclusions, so that $\iota_0$ and $\iota_1$ exhibit $r:\M \lrar \Del^1$ as a correspondence from $\cS^{m}_{m}$ to $\cS^{m-1}_{m}$ which is the one associated to the functor $\iota:\cS^{m-1}_{m} \hrar \cS^{m}_{m}$. 

Let $\Fun^{\flat}_0(\M^{\natural},\D^{\natural}) \subseteq \Fun^{\flat}(\M^{\natural},\D^{\natural})$ and $\Fun^{\flat}_0(\Cone_m,\D^{\natural}) \subseteq \Fun^{\flat}(\Cone_m,\D^{\natural})$ denote the respective full subcategories spanned by those marked functors whose restriction to $\cS^{m-1}_{m}$ is an $m$-commutative monoid in $\D$. Since the inclusion of marked simplicial sets $\Cone_m \lrar \M^{\natural}$ is a trivial cofibration in the Cartesian model structure over $(\Del^1)^{\sharp}$ it follows that the restriction map $\Fun^{\flat}_0(\M^{\natural},\D^{\natural}) \lrar \Fun^{\flat}_0(\Cone_m,\D^{\natural})$ is a trivial Kan fibration, and by Proposition~\ref{p:rkan} and~\cite[Proposition 4.3.2.15]{higher-topos} the restriction map $\Fun^{\flat}_0(\M^{\natural},\D^{\natural}) \lrar \CMon_m(\D)$ is a trivial Kan fibration. We may hence deduce that the restriction map
$$ \Fun^{\flat}_0(\Cone_m,\D^{\natural}) \x{\simeq}{\lrar} \CMon_m(\D) $$
is a an equivalence. On the other hand, by Lemma~\ref{l:mon} the image of the restriction map $\Fun^{\flat}_0(\Cone_m,\D) \lrar \Fun(\cS^{m}_{m},\D)$ consists of exactly those functors $\cS^m_m \lrar \D$ which preserves $\K_m$-indexed limits. Arguing as in the proof of Corollary~\ref{c:step-2} we may now conclude that the restriction map
$$ \Fun^{\K_m}(\cS^m_m,\D) \x{\simeq}{\lrar} \CMon_n(\D) $$
is an equivalence of $\infty$-categories, as desired.
\end{proof}

\begin{cor}\label{c:univ-cmon}
Let $\D$ be an $\infty$-category which admits $\K_m$-indexed limits. Then $\CMon_m(\D)$ is $m$-semiadditive and the forgetful functor $\CMon_m(\D) \lrar \D$ exhibits $\CMon_m(\D)$ as universal among those $m$-semiadditive $\infty$-categories admitting a $\K_m$-limit preserving map to $\D$. In particular, $\D$ is $m$-semiadditive if and only if the forgetful functor $\CMon_m(\D) \lrar \D$ is an equivalence.
\end{cor}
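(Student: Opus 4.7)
The plan is to obtain the corollary by dualizing Corollary~\ref{c:adjoints} via the identification $\CMon_m(\D) \simeq \Fun^{\K_m}(\cS^m_m,\D)$ provided by Proposition~\ref{p:monoid}. Under that equivalence the forgetful functor $\CMon_m(\D) \lrar \D$ corresponds to evaluation at $\ast \in \cS^m_m$, so it suffices to establish the stated universal property for $\Fun^{\K_m}(\cS^m_m,\D)$ equipped with its evaluation-at-$\ast$ functor.

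To carry this out, I would first invoke the self-opposition $(\cS^m_m)^{\op} \simeq \cS^m_m$ which is used in the discussion preceding Proposition~\ref{p:monoid}: this follows from the fact that any map between $m$-truncated spaces is $m$-truncated (by the long exact sequence of homotopy groups), so $\cS_{m,m} = \cS_m$ and spans in $\cS^m_m$ may be flipped without constraint. This self-opposition yields a natural equivalence
$$ \Fun^{\K_m}(\cS^m_m,\D)^{\op} \simeq \Fun_{\K_m}(\cS^m_m,\D^{\op}), $$
compatible with evaluation at $\ast$ on both sides. Since $\D^{\op}$ admits $\K_m$-indexed colimits, Corollary~\ref{c:adjoints} identifies the right hand side with the value at $\D^{\op}$ of the right adjoint to $\Add_m \hrar \Cat_{\K_m}$: it is $m$-semiadditive, and evaluation at $\ast$ exhibits it as universal among $m$-semiadditive $\infty$-categories equipped with a $\K_m$-colimit-preserving functor to $\D^{\op}$. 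Taking opposites, and using that $\Add_m$ is closed under $\D \mapsto \D^{\op}$ (item~(8) of the examples), transports this to the assertion that $\CMon_m(\D)$ is $m$-semiadditive and universal among $m$-semiadditive $\infty$-categories admitting a $\K_m$-limit-preserving functor to $\D$, as desired.

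For the final ``in particular'' clause: if $\CMon_m(\D) \lrar \D$ is an equivalence then $\D$ inherits $m$-semiadditivity from $\CMon_m(\D)$. Conversely, if $\D$ is $m$-semiadditive then so is $\D^{\op}$, and Theorem~\ref{t:main} applied to $\D^{\op}$ gives an equivalence $\Fun_{\K_m}(\cS^m_m,\D^{\op}) \x{\simeq}{\lrar} \D^{\op}$ via evaluation at $\ast$; dualizing and reapplying Proposition~\ref{p:monoid} yields the equivalence $\CMon_m(\D) \x{\simeq}{\lrar} \D$ with the forgetful functor as witness. I do not foresee a genuine obstacle here, since the real content sits in Proposition~\ref{p:monoid}, Corollary~\ref{c:adjoints} and Theorem~\ref{t:main}; the one point that deserves care is to check that evaluation at $\ast$, the forgetful functor, and the unit/counit of the various adjunctions are coherently identified under the dualization $\D \leftrightsquigarrow \D^{\op}$.
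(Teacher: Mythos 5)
Your proposal is correct and is essentially the paper's own argument: the paper deduces this corollary from the discussion preceding Proposition~\ref{p:monoid}, namely applying Corollary~\ref{c:adjoints} to $\D^{\op}$ and using the self-opposition $(\cS^m_m)^{\op} \simeq \cS^m_m$ to identify $(\Fun_{\K_m}(\cS^m_m,\D^{\op}))^{\op} \simeq \Fun^{\K_m}(\cS^m_m,\D) \simeq \CMon_m(\D)$, exactly as you do, with the ``in particular'' clause following from Theorem~\ref{t:main} applied to $\D^{\op}$.
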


To get a feel for what these higher commutative monoids are, let us consider the example of the $\infty$-category $\cS$ of spaces. Let $\F: \cS^{m-1}_m \lrar \cS$ be an $m$-commutative monoid object and let us refer to $M = \F(\ast)$ as the \textbf{underlying space} of $\F$. We may then identify two types of morphisms in $\cS^{m-1}_m$. The first type are morphisms of the form
$$ \xymatrix{
& X\ar^{\Id}[dr]\ar_{f}[dl] & \\
Y && X\\
}$$
where $f$ is $(m-1)$-truncated, and which we denote $\hat{f}: Y \lrar X$ (see Definition~\ref{d:dual}). These morphisms help us to identify the spaces $\F(X)$: by definition, the collection of maps $\hat{i}_x:X \lrar \ast$ exhibit $\F(X)$ as the limit of the constant $X$-indexed diagram with value $\F(\ast)=M$. In particular, we may identify $\F(X)$ with the mapping space $\Map_{\cS}(X,M)$. Other morphisms of the form $\hat{f}: Y \lrar X$ don't really give more information: if $f: X \lrar Y$ is an $(m-1)$-truncated map then for every $x \in X$ we have $\hat{f} \circ \hat{i}_x = \hat{i}_{f(x)}$, and so the induced map 
$$ \hat{f}_*:\Map_{\cS}(Y,M) \simeq \F(Y) \lrar \F(X) \simeq \Map_{\cS}(X,M) $$ 
is forced to coincide with restriction along $f$. The second type of morphisms in $\cS^{m-1}_m$ are the spans of the form
$$ \xymatrix{
& X\ar^{g}[dr]\ar_{\Id}[dl] & \\
X && Y\\
}$$
where $g: X \lrar Y$ is any map of $m$-finite spaces. We can think of the associated map $g_*: \Map_{\cS}(X,M) \lrar \Map_{\cS}(Y,M)$ as encoding the \textbf{structure} of $M$. Let $X_y$ be homotopy fiber of $g$ over $y \in Y$, equipped with its natural map $i_{X_y}: X_y \lrar X$, and let $g_y: X_y \lrar \{y\}$ be the constant map. Then $\hat{i}_y \circ g = g_y \circ \hat{i}_{X_y}$ and so for each $\vphi \in \Map_{\cS}(X,M)$ the function $g_*(\vphi) \in \Map_{\cS}(Y,M)$ maps the point $y$ to the point $(g_y)_*(\vphi|_{X_y}) \in M$. We may hence think of the core algebraic structure of an $m$-commutative monoid as given by the maps $p_*: \Map(X,M) \lrar M$ associated to constant maps $p: X \lrar \ast$, while the other maps $g: X \lrar Y$ specify various forms of compatibility. Informally speaking, the structure of being an $m$-commutative monoid means that for every $m$-finite space $X$ we can take an $X$-family $\{\vphi(x)\}_{x \in X}$ of points in $M$ and ``integrate'' it to obtain a new point $\int_X \vphi := p_*(\vphi) \in M$. These operations are then required to satisfy various ``Fubini-type'' compatibility constraints when one is integrating over a space $X$ which is fibered over another space $Y$. We note that when $m=0$ the spaces involved are finite sets, and we obtain the usual notion of being able to sum a finite collection of points in a commutative monoid.

\begin{examples}\
\begin{enumerate}
\item
For every space $X$, the $\infty$-groupoid $(\cS_m \times_{\cS} \cS_{/X})^{\simeq}$ classifying $m$-finite spaces equipped with a map to $X$ is naturally an $m$-commutative monoid. This is the free $m$-commutative monoid generated from $X$.
\item
Any $\QQ$-vector space is an $m$-commutative monoid (in the category of $\QQ$-vector spaces). Indeed, if $X$ is an $m$-finite space then the limit $\lim_X V$ of the constant $X$-indexed diagram on $V$ is just the vector space of functions $f:\pi_0(X) \lrar V$. To such an $f$ we may associate the vector 
$$ \sum_{X_0 \in \pi_0(X)}\chi(X_0)f(X_0) \in V $$ 
where 
$\chi(X_0) = \frac{\prod_{i \geq 0} |\pi_{2i}(X_0)|}{\prod_{i \geq 0}|\pi_{2i+1}(X_0)|}$ is the ``homotopy cardinality'' of $X_0$. This yields a structure of an $m$-commutative monoid on $V$. 
\item
More generally, if $\D$ is an $m$-semiadditive $\infty$-category then any object in $\D$ carries a canonical $m$-commutative monoid structure and for each $X,Y \in \D$ the mapping space $\Map_\D(X,Y)$ is canonically an $m$-commutative monoid in spaces. For example, by the main result of~\cite{ambi}, for any two $K(n)$-local spectra $X,Y$ the mapping space from $X$ to $Y$ is an $m$-commutative monoid in spaces.
\item
If $\C$ is an $\infty$-category which admits $\K_m$-indexed colimits (resp. $\K_m$-indexed limits) then $\C$ carries the coCartesian (resp. Cartesian) $m$-commutative monoid structure in $\Cat_{\infty}$, and its maximal $\infty$-groupoid is an $m$-commutative monoid in spaces. 
\end{enumerate}
\end{examples}

Let us now discuss the role of $m$-commutative monoids in the setting of $m$-semiadditive \textbf{presentable} $\infty$-categories.
\begin{lem}\label{l:right}
Let $\D$ be a presentable $\infty$-category. Then $\CMon_m(\D)$ is presentable and the forgetful functor $\CMon_m(\D) \lrar \D$ is conservative, accessible and preserves all limits.
\end{lem}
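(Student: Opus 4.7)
The plan is to realize $\CMon_m(\D)$ as a full subcategory of $\Fun(\cS^{m-1}_m,\D)$ that is closed under small limits and under $\kap$-filtered colimits for all sufficiently large regular cardinals $\kap$. Since $\cS^{m-1}_m$ is essentially small (its equivalence classes of objects form the set $\K_m$) and $\D$ is presentable, the functor $\infty$-category $\Fun(\cS^{m-1}_m,\D)$ is itself presentable, with limits and colimits computed pointwise. Under this identification the forgetful functor $\CMon_m(\D) \lrar \D$ becomes evaluation at the object $\ast \in \cS^{m-1}_m$; evaluation on the ambient functor category preserves all limits and colimits, so once the two closure properties are in place, the statements about the forgetful functor will follow immediately.

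First I would check closure under small limits. Given a diagram $\{\F_\alp\}$ in $\CMon_m(\D)$ with pointwise limit $\F$, for each $X \in \cS^{m-1}_m$ the Fubini identity for limits combined with the fact that each $\F_\alp$ is an $m$-commutative monoid gives a chain of equivalences $\F(X) \simeq \lim_\alp \F_\alp(X) \simeq \lim_\alp \lim_X \F_\alp(\ast) \simeq \lim_X \F(\ast)$, which one verifies is induced by the maps $\F(\hat{i}_x)$. Next I would establish closure under $\kap$-filtered colimits for $\kap$ large enough that every space in $\K_m$ is $\kap$-small: in a presentable $\infty$-category $\kap$-filtered colimits commute with $X$-indexed limits for every such $X$ (see~\cite[\S 5.3.3]{higher-topos}), so the analogous computation $\colim_\alp \F_\alp(X) \simeq \colim_\alp \lim_X \F_\alp(\ast) \simeq \lim_X \colim_\alp \F_\alp(\ast)$ preserves the $m$-commutative monoid condition. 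With these two closure properties in hand, the standard criterion for presentability of an accessible subcategory of a presentable $\infty$-category closed under small limits (see~\cite[\S 5.5]{higher-topos}) would yield that $\CMon_m(\D)$ is presentable.

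The remaining assertions are then quick consequences. Since limits and $\kap$-filtered colimits in $\CMon_m(\D)$ are inherited from the pointwise structure on $\Fun(\cS^{m-1}_m,\D)$, evaluation at $\ast$ preserves both, which simultaneously gives limit preservation and accessibility of the forgetful functor. For conservativity, any morphism $\alp: \F \lrar \G$ in $\CMon_m(\D)$ inducing an equivalence $\alp_\ast: \F(\ast) \x{\simeq}{\lrar} \G(\ast)$ has each component $\alp_X$ equal, via the defining equivalences of $\F$ and $\G$, to the map $\lim_X \F(\ast) \lrar \lim_X \G(\ast)$ induced by $\alp_\ast$, and is therefore an equivalence. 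The hard part will be the filtered-colimit closure step, which hinges on choosing the right $\kap$ and is where the essential smallness of $\K_m$ plays its decisive role.
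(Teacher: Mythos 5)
Your proposal is correct and follows essentially the same route as the paper: realize $\CMon_m(\D)$ as the full subcategory of the presentable functor category $\Fun(\cS^{m-1}_m,\D)$ closed under limits and under $\kappa$-filtered colimits (for $\kappa$ with the spaces in $\K_m$ $\kappa$-small), invoke the criterion of~\cite[Corollary 5.5.7.3]{higher-topos}, identify the forgetful functor with evaluation at $\ast$ to get limit preservation and accessibility, and argue conservativity via the defining equivalences $M(X) \simeq \lim_X M(\ast)$. The only difference is that you spell out the closure verifications (Fubini and commutation of $\kappa$-filtered colimits with $\kappa$-small limits) that the paper leaves implicit.
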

\begin{proof}
Let $\CMon_m(\D) \subseteq \Fun(\cS^{m-1}_m,\D)$ be the natural inclusion. Then $\CMon_m(\D)$ is closed under limits in $\Fun(\cS^{m-1}_m,\D)$ and under $\kappa$-filtered colimits for any $\kappa$ such that the simplicial sets in $\K_m$ are $\kappa$-small. Since $\Fun(\cS^{m-1}_m,\D)$ is presentable it now follows from~\cite[Corollary 5.5.7.3]{higher-topos} that $\CMon_m(\D)$ is presentable and the inclusion $\CMon_m(\D) \hrar \Fun(\cS^{m-1}_m,\D)$ is accessible. This, in turn, implies that the composition $\CMon_m(\D) \hrar \Fun(\cS^{m-1}_m,\D) \x{\ev_\ast}{\lrar} \D$ is accessible and preserves limits. Finally, 
to show that $\CMon_n(\D) \lrar \D$ is conservative it is enough to note that if $f: M \lrar M'$ is a map in $\CMon_m(\D)$ such that $f_{\ast}: M(\ast) \lrar M'(\ast)$ is an equivalence in $\D$ then for any $X \in \cS^{m-1}_m$ the induced map $f_X: M(X) \simeq \lim_X M(\ast) \lrar \lim_X M'(\ast)$ is an equivalence and hence $f$ is an equivalence.
\end{proof}

When $\D$ is presentable, Lemma~\ref{l:right} and the adjoint functor theorem (\cite{higher-topos}) imply that the forgetful functor $\CMon_n(\D) \lrar \D$ admits a \textbf{left adjoint} $\F: \D \lrar \CMon_m(\D)$, which can be considered as the \textbf{free $m$-commutative monoid} functor. Given two presentable $\infty$-categories $\C,\D$ let us denote by $\Fun^{\rL}(\C,\D)$ the $\infty$-category of \textbf{left functors} from $\C$ to $\D$ (i.e. those functors which admit right adjoints) and by $\Fun^{\rR}(\C,\D)$ the $\infty$-category of \textbf{right functors} from $\C$ to $\D$.

\begin{cor}\label{c:pres-univ}
Let $\D$ be a presentable $\infty$-category and let $\E$ be a presentable $m$-semiadditive $\infty$-category. Then post-composition with the forgetful functor $\CMon(\D) \lrar \D$ induces an equivalence
$$ \Fun^{\mathrm{R}}(\E,\CMon(\D)) \x{\simeq}{\lrar} \Fun^{\mathrm{R}}(\E,\D) .$$
Dually pre-composition with $\F: \D \lrar \CMon_m(\D)$ induces an equivalence
$$ \Fun^{\mathrm{L}}(\CMon(\D),\E) \x{\simeq}{\lrar} \Fun^{\mathrm{L}}(\D,\E) .$$
In particular, the functor $\F$ exhibits $\CMon(\D)$ as the free presentable $m$-semiadditive $\infty$-category generated from $\D$.
\end{cor}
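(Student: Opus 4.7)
The strategy is to reduce both equivalences to the non-presentable universal property of Corollary~\ref{c:univ-cmon} inside $\Cat^{\K_m}$, and then separately verify that this universal property restricts correctly to the class of right (respectively left) functors. I will handle the right-functor version first and deduce the left-functor version by duality.

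For the first equivalence, I will observe that post-composition with the forgetful $U \colon \CMon_m(\D) \to \D$ defines the functor in question, which is well-defined on right functors because $U$ is accessible and preserves all limits by Lemma~\ref{l:right}. Since $\E$ is presentable and $m$-semiadditive, Corollary~\ref{c:univ-cmon} will provide the equivalence $\Fun^{\K_m}(\E, \CMon_m(\D)) \simeq \Fun^{\K_m}(\E, \D)$ on the larger $\infty$-categories of $\K_m$-limit preserving functors, and since right functors preserve all limits, in particular $\K_m$-indexed ones, both $\Fun^{\mathrm{R}}$ subcategories embed here. The task will then be to show that a quasi-inverse to post-composition with $U$ restricts to $\Fun^{\mathrm{R}}$: given $G \in \Fun^{\mathrm{R}}(\E, \D)$ together with a lift $\widetilde{G} \in \Fun^{\K_m}(\E, \CMon_m(\D))$ satisfying $U\widetilde{G} \simeq G$, I need to verify that $\widetilde{G}$ preserves all small limits and is accessible.

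The key tool for both checks will be conservativity of $U$ (again Lemma~\ref{l:right}). For any small diagram $p \colon I \to \E$, the natural comparison map $\widetilde{G}(\lim p) \to \lim \widetilde{G}p$ becomes an equivalence after applying $U$, since $U\widetilde{G}(\lim p) \simeq G(\lim p) \simeq \lim Gp \simeq \lim U\widetilde{G}p \simeq U(\lim \widetilde{G}p)$ using that both $G$ and $U$ preserve limits; by conservativity the original map is itself an equivalence, so $\widetilde{G}$ will preserve limits. For accessibility, I will choose a regular cardinal $\kappa$ large enough that both $U$ and $G$ preserve $\kappa$-filtered colimits (possible since both are accessible); the analogous chain of equivalences, with limits replaced by $\kappa$-filtered colimits, combined with conservativity of $U$, will then show that $\widetilde{G}$ itself preserves $\kappa$-filtered colimits. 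Hence $\widetilde{G}$ is accessible and the adjoint functor theorem produces a left adjoint, confirming $\widetilde{G} \in \Fun^{\mathrm{R}}$.

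The left-functor version will then follow from the standard duality $\Fun^{\mathrm{L}}(\C_1, \C_2) \simeq \Fun^{\mathrm{R}}(\C_2, \C_1)^{\op}$ between presentable $\infty$-categories, under which pre-composition with the left adjoint $\F$ corresponds to post-composition with its right adjoint $U$; the first equivalence thereby dualizes to $\Fun^{\mathrm{L}}(\CMon_m(\D), \E) \simeq \Fun^{\mathrm{L}}(\D, \E)$, and the closing assertion that $\F$ presents $\CMon_m(\D)$ as the free presentable $m$-semiadditive $\infty$-category on $\D$ is a mere rephrasing of this last equivalence. The only delicate step in the whole argument is the accessibility check for $\widetilde{G}$, but once one notes that $U$ is itself accessible by Lemma~\ref{l:right}, this reduces to routine cardinal chasing.
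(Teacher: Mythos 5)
Your proposal is correct and follows essentially the same route as the paper: reduce to the universal property of Corollary~\ref{c:univ-cmon}, characterize right functors via the adjoint functor theorem as accessible limit-preserving functors, and use Lemma~\ref{l:right} (the forgetful functor preserves limits and sufficiently filtered colimits and is conservative, hence detects both) before dualizing to get the left-functor statement. Your explicit conservativity argument for the lift $\widetilde{G}$ is just an unwinding of the paper's phrase that $\ev_\ast$ ``detects'' limits and filtered colimits.
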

\begin{proof}
Let us prove the first claim (the second then follows by the equivalence $\Fun^{\mathrm{R}}(-,-) \simeq  \Fun^{\mathrm{L}}(-,-)$ which associates to every right functor its left adjoint). By Corollary~\ref{c:univ-cmon} it will suffice to show that if $\F: \E \lrar \CMon_n(\D)$ is a functor that preserves $\K_m$-indexed limits then $\F$ belongs to $\Fun^{\mathrm{R}}(\E,\D)$ if and only if $\ev_{\ast} \circ \F: \E \lrar \D$ belongs to $\Fun^{\mathrm{R}}(\E,\CMon(\D))$. By the adjoint functor theorem right functors between presentable $\infty$-categories are exactly the limit preserving functors which are also accessible, i.e., preserve sufficiently filtered colimits. The result is now follows from Lemma~\ref{l:right} which asserts that $\ev_{\ast}$ preserves limits and sufficiently filtered colimits, and also detects them since it is conservative.
\end{proof}

Given an $\infty$-category $\C$, let $\P_{\K_m}(\C) \subseteq \Fun(\C^{\op},\cS)$ denote the full subcategory consisting of those presheaves which send $\K_m$-indexed colimits in $\C$ to limits of spaces. Then $\P_{\K_m}(\C)$ is presentable and is an accessible localization of $\Fun(\C^{\op},\cS)$ (choose an infinite cardinal $\kappa$ such that all $\K_m$-colimit diagrams in $\C$ are $\kappa$-small and use~\cite[Corollary 5.5.7.3]{higher-topos}). Let $\Pr^{\mathrm{L}}$ denote the $\infty$-category of presentable $\infty$-categories and left functors between them. Identifying $\Pr^{\mathrm{L}}$ as a full subcategory of cocomplete $\infty$-categories and colimit preserving functors and using~\cite[Corollary 5.3.6.10]{higher-topos} we may conclude that the functor
\begin{equation}\label{e:P}
\P_{\K_m}: \Cat_{\K_m} \lrar \mathrm{Pr}^{\mathrm{L}}
\end{equation}
is left adjoint to the forgetful functor $\Pr^{\mathrm{L}} \lrar \Cat_{\K_m}$. In particular, we may consider $\P_{\K_m}(\C)$ as the free presentable $\infty$-category generated from $\C$. We hence obtain two universal characterizations of the $\infty$-category $\CMon_m(\cS)$. On the one hand, by Corollary~\ref{c:pres-univ} we may identify $\CMon_m(\cS)$ as the free presentable $m$-semiadditive $\infty$-category generated from the presentable $\infty$-category $\cS$. On the other hand, since $\cS^m_m \simeq (\cS^m_m)^{\op}$ we may interpret Proposition~\ref{p:monoid} as identifying $\CMon_m(\cS) \simeq \P_{\K_m}(\cS^m_m)$ as the free presentable $\infty$-category generated from the $\K_m$-cocomplete $\infty$-category $\cS^m_m$. Furthermore, by~\cite[Proposition 4.8.1.14]{higher-algebra} and~\cite[Remark 4.8.1.8]{higher-algebra} the functor~\eqref{e:P} is symmetric monoidal (where $\Pr^{\mathrm{L}}$ is endowed with the symmetric monoidal structure inherited from that of cocomplete $\infty$-categories). 
We may then deduce the following:
\begin{cor}\label{c:idem}
The $\infty$-category $\CMon_m(\cS)$ is an idempotent commutative algebra object in $\Pr^{\mathrm{L}}$. In particular, the monoidal product $\CMon_m(\cS) \otimes \CMon_m(\cS) \lrar \CMon_m(\cS)$ is an equivalence.
\end{cor}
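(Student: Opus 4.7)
The plan is to deduce this as a direct transport of the idempotence of $\cS^m_m$ in $\Cat_{\K_m}$, established earlier in Section~\ref{s:modules}, along the symmetric monoidal functor $\P_{\K_m}: \Cat_{\K_m} \lrar \Pr^{\mathrm{L}}$. The key fact is that symmetric monoidal functors preserve idempotent commutative algebra objects, which is essentially formal: if $A$ is idempotent, meaning that the multiplication $\mu: A \otimes A \lrar A$ is an equivalence, and $F$ is symmetric monoidal, then the multiplication of $F(A)$ is the composite $F(A) \otimes F(A) \simeq F(A \otimes A) \x{F(\mu)}{\lrar} F(A)$, which is an equivalence because $\mu$ is and functors preserve equivalences.

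First I would recall that $\cS^m_m$ carries a canonical idempotent commutative algebra structure in $\Cat_{\K_m}$: the commutative algebra structure is that of Corollary~\ref{c:algebra}, and its idempotence is the one proved in Section~\ref{s:modules}. Next, I would invoke that $\P_{\K_m}: \Cat_{\K_m} \lrar \Pr^{\mathrm{L}}$ is symmetric monoidal, as recalled in the paragraph preceding the corollary (citing~\cite[Proposition 4.8.1.14]{higher-algebra} and~\cite[Remark 4.8.1.8]{higher-algebra}). Applying $\P_{\K_m}$ to $\cS^m_m$ therefore yields an idempotent commutative algebra object $\P_{\K_m}(\cS^m_m) \in \CAlg(\Pr^{\mathrm{L}})$.

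Finally, I would identify $\P_{\K_m}(\cS^m_m)$ with $\CMon_m(\cS)$ as objects of $\Pr^{\mathrm{L}}$. This identification was already observed in the discussion immediately before the corollary: combining Proposition~\ref{p:monoid} with the self-equivalence $(\cS^m_m)^{\op} \simeq \cS^m_m$ furnishes an equivalence $\CMon_m(\cS) \simeq \P_{\K_m}(\cS^m_m)$. Transporting the idempotent commutative algebra structure along this equivalence gives the first claim, and the second claim ``the multiplication map $\CMon_m(\cS) \otimes \CMon_m(\cS) \lrar \CMon_m(\cS)$ is an equivalence'' is simply the unwinding of what it means to be idempotent.

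No serious obstacle is anticipated. The only point that requires minor care is verifying that the equivalence $\CMon_m(\cS) \simeq \P_{\K_m}(\cS^m_m)$ is compatible with the commutative algebra structures inherited from $\cS^m_m$ via $\P_{\K_m}$; but this is automatic since the equivalence is induced by restriction along the subcategory inclusion $\cS^{m-1}_m \hrar \cS^m_m$, which sits inside the symmetric monoidal framework supplied by Corollary~\ref{c:algebra} and Proposition~\ref{p:monoid}.
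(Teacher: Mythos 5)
Your argument is correct and is essentially the paper's own: the corollary is deduced by applying the symmetric monoidal functor $\P_{\K_m}$ to the idempotent algebra $\cS^m_m \in \CAlg(\Cat_{\K_m})$ and identifying $\P_{\K_m}(\cS^m_m) \simeq \CMon_m(\cS)$ via Proposition~\ref{p:monoid} and the equivalence $(\cS^m_m)^{\op} \simeq \cS^m_m$. Nothing further is needed.
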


\begin{lem}
Let $\D$ be a presentable $\infty$-category. Then $\D$ carries an action of $\CMon_m(\cS)$ (with respect to the symmetric monoidal structure of $\Pr^{\mathrm{L}}$) if and only if $\D$ is $m$-semiadditive.
\end{lem}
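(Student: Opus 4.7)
The plan is to exploit the idempotency of $\CMon_m(\cS)$ established in Corollary~\ref{c:idem}. By the standard theory of modules over idempotent commutative algebra objects in a symmetric monoidal $\infty$-category (see~\cite[\S 4.8.2]{higher-algebra}, in particular the analogue of Lemmas~\ref{l:key-1} and~\ref{l:key-2}), the forgetful functor $\Mod_{\CMon_m(\cS)}(\Pr^{\mathrm{L}}) \lrar \Pr^{\mathrm{L}}$ is fully faithful, and a presentable $\infty$-category $\D$ lies in its essential image precisely when the unit map $u_\D: \D \simeq \cS \otimes \D \lrar \CMon_m(\cS) \otimes \D$ is an equivalence. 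It therefore suffices to prove that $u_\D$ is an equivalence if and only if $\D$ is $m$-semiadditive.

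The key intermediate step is to identify $\CMon_m(\cS) \otimes \D$ with $\CMon_m(\D)$ naturally in $\D$, in such a way that $u_\D$ corresponds to the free $m$-commutative monoid functor $\F_{\D}: \D \lrar \CMon_m(\D)$. For this one verifies that both objects represent the same functor on presentable $\infty$-categories. Indeed, for any presentable $m$-semiadditive $\E$, the tensor-hom adjunction in $\Pr^{\mathrm{L}}$ combined with Corollary~\ref{c:pres-univ} produces a chain of equivalences
\[ \Fun^{\mathrm{L}}(\CMon_m(\cS) \otimes \D, \E) \simeq \Fun^{\mathrm{L}}(\CMon_m(\cS), \Fun^{\mathrm{L}}(\D, \E)) \simeq \Fun^{\mathrm{L}}(\cS, \Fun^{\mathrm{L}}(\D, \E)) \simeq \Fun^{\mathrm{L}}(\D, \E), \]
matching the universal property characterizing $\CMon_m(\D)$ in Corollary~\ref{c:pres-univ}. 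For the middle equivalence to apply, one needs that $\Fun^{\mathrm{L}}(\D, \E)$ is itself presentable and $m$-semiadditive: presentability is standard, and $m$-semiadditivity follows from the observation that $\K_m$-indexed colimits in $\Fun^{\mathrm{L}}(\D, \E)$ are computed pointwise from $\E$, so that the $m$-semiadditivity of $\E$ forces them to coincide pointwise with the corresponding $\K_m$-indexed limits, which therefore also exist and remain inside $\Fun^{\mathrm{L}}(\D, \E)$, with the norm structure inherited pointwise. Tracing through the chain of equivalences, the resulting identification $\CMon_m(\cS) \otimes \D \simeq \CMon_m(\D)$ carries $u_\D$ to $\F_\D$.

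Granting this, the lemma follows from Corollary~\ref{c:univ-cmon}: $\D$ is $m$-semiadditive if and only if the forgetful functor $U_\D: \CMon_m(\D) \lrar \D$ is an equivalence, equivalently (since $U_\D$ is right adjoint to $\F_\D$) if and only if $\F_\D$ is an equivalence, equivalently if and only if $u_\D$ is an equivalence, equivalently if and only if $\D$ admits a $\CMon_m(\cS)$-module structure.

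The main obstacle is the identification $\CMon_m(\cS) \otimes \D \simeq \CMon_m(\D)$, and concretely the auxiliary claim that $\Fun^{\mathrm{L}}(\D, \E)$ inherits $m$-semiadditivity from $\E$. The delicate point is that $\Fun^{\mathrm{L}}(\D, \E)$ is not closed under arbitrary limits inside $\Fun(\D, \E)$, only under colimits; so the existence and pointwise computation of its $\K_m$-indexed limits relies essentially on the coincidence of $\K_m$-indexed limits and colimits in $\E$ afforded by $m$-semiadditivity.
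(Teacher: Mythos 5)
Your overall architecture (idempotency of $\CMon_m(\cS)$ from Corollary~\ref{c:idem}, reduction to whether the unit $u_\D:\D \lrar \CMon_m(\cS)\otimes\D$ is an equivalence, then Corollary~\ref{c:univ-cmon}) is sound, and the auxiliary claim that $\Fun^{\mathrm{L}}(\D,\E)$ is presentable and $m$-semiadditive when $\E$ is can be made to work. The genuine gap is exactly at the step you flag as the main obstacle, the identification $\CMon_m(\cS)\otimes\D \simeq \CMon_m(\D)$. Your chain of equivalences only shows that $\CMon_m(\cS)\otimes\D$ and $\CMon_m(\D)$ corepresent the same functor $\E \mapsto \Fun^{\mathrm{L}}(\D,\E)$ on the full subcategory of \emph{$m$-semiadditive} presentable $\infty$-categories (Corollary~\ref{c:pres-univ} applies only to such $\E$, which is why you needed the auxiliary claim in the first place). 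Corepresenting the same functor on a full subcategory pins down an object only if that object is already known to lie in the subcategory: $\CMon_m(\D)$ does by Corollary~\ref{c:univ-cmon}, but for $\CMon_m(\cS)\otimes\D$ you would need to know it is $m$-semiadditive, and since it is precisely the free $\CMon_m(\cS)$-module on $\D$, that is an instance of the ``module implies $m$-semiadditive'' direction of the very lemma being proved — the argument as written is circular. The danger is not hypothetical: $\cS$ and $\CMon_m(\cS)$ corepresent the same functor $\E \mapsto \E$ on $m$-semiadditive presentables, yet are not equivalent.

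The gap can be closed in two ways. One is to compute the tensor product directly instead of by restricted corepresentability: by the description of the monoidal structure on $\Pr^{\mathrm{L}}$ in \cite[\S 4.8.1]{higher-algebra} one has $\CMon_m(\cS)\otimes\D \simeq \Fun^{\mathrm{R}}(\CMon_m(\cS)^{\op},\D)$, and since $\CMon_m(\cS)\simeq\P_{\K_m}(\cS^m_m)$ with $(\cS^m_m)^{\op}\simeq\cS^m_m$, the universal property of $\P_{\K_m}$ identifies this with $\Fun^{\K_m}(\cS^m_m,\D)\simeq\CMon_m(\D)$ via Proposition~\ref{p:monoid}; with that identification in hand your concluding chain goes through. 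The other is the paper's own (shorter) route, which avoids computing $\CMon_m(\cS)\otimes\D$ altogether: an action of $\CMon_m(\cS)$ on $\D$ in $\Pr^{\mathrm{L}}$ is the same as a colimit-preserving monoidal functor $\CMon_m(\cS)\lrar\Fun^{\mathrm{L}}(\D,\D)$, and since the free functor~\eqref{e:P} is symmetric monoidal this is the same as a $\K_m$-colimit-preserving monoidal functor $\cS^m_m\lrar\Fun^{\mathrm{L}}(\D,\D)$, i.e.\ an $\cS^m_m$-module structure on $\D$ in $\Cat_{\K_m}$ (after noting that such an action automatically preserves all colimits in $\D$); Corollary~\ref{c:equiv} then gives the equivalence with $m$-semiadditivity in both directions at once. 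Note also that once the ``action implies $m$-semiadditive'' direction is established this way, your identification argument becomes legitimate, since $\CMon_m(\cS)\otimes\D$ is then known to be $m$-semiadditive.
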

\begin{proof}
By~\cite[Remark 4.8.1.17]{higher-algebra} the data of an action of $\CMon_m(\cS)$ on a presentable $\infty$-category $\D$ is equivalent to the data of a monoidal colimit preserving functor $\CMon_m(\cS) \lrar \Fun^{\mathrm{L}}(\D,\D)$, which since~\eqref{e:P} is monoidal, is equivalent to the data of a $\K_m$-colimit preserving monoidal functor $\cS^m_m \lrar \Fun^{\mathrm{L}}(\D,\D)$, i.e., to an action of $\cS^m_m$ on $\D$ which preserves $\K_m$-colimits in $\cS^m_m$ and all colimits in $\D$. We now observe that any action of $\cS^m_m$ on $\D$ which preserves $\K_m$-colimits in $\cS^m_m$ will automatically preserve all colimits which exist in $\D$, since the object $X \in \cS^m_m$ will necessarily acts as an $X$-indexed colimit of the identity functor. The desired result now follows from Corollary~\ref{c:equiv}. 
\end{proof}

Arguing as in the proof of Lemma~\ref{l:key-2} we may now conclude the following:
\begin{cor}\label{c:equiv-pres}
The forgetful functor $\Mod_{\CMon_m(\cS)}(\Pr^{\mathrm{L}}) \lrar \Pr^{\mathrm{L}}$ is fully-faithful and its essential image consists of the $m$-semiadditive presentable $\infty$-categories.
\end{cor}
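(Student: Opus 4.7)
The plan is to transcribe the proof of Lemma~\ref{l:key-2} into the presentable setting, using $\CMon_m(\cS) \in \Pr^{\mathrm{L}}$ in place of $\cS^m_m \in \Cat_{\K_m}$. Let $\U: \Mod_{\CMon_m(\cS)}(\Pr^{\mathrm{L}}) \to \Pr^{\mathrm{L}}$ denote the forgetful functor, and $\F: \Pr^{\mathrm{L}} \to \Mod_{\CMon_m(\cS)}(\Pr^{\mathrm{L}})$ its left adjoint, given by $\F(\D) = \CMon_m(\cS) \otimes \D$.

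The key preliminary step is the presentable analogue of Lemma~\ref{l:key-1}: the unit $u_\D: \D \to \CMon_m(\cS) \otimes \D$ of $\F \dashv \U$ is an equivalence if and only if $\D$ is $m$-semiadditive. Since $\CMon_m(\cS)$ is an idempotent commutative algebra in $\Pr^{\mathrm{L}}$ by Corollary~\ref{c:idem}, the endofunctor $\CMon_m(\cS) \otimes (-)$ is a localization functor via \cite[Proposition 4.8.2.4]{higher-algebra}, and its local objects are precisely those $\D$ for which $u_\D$ is an equivalence. By the general theory of idempotent algebras (\cite[Proposition 4.8.2.10]{higher-algebra}), these coincide with the $\D$ admitting a $\CMon_m(\cS)$-module structure, which by the preceding lemma are exactly the $m$-semiadditive presentable $\infty$-categories.

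Equipped with this, the argument of Lemma~\ref{l:key-2} runs verbatim. For any $\D \in \Mod_{\CMon_m(\cS)}(\Pr^{\mathrm{L}})$, the forgetful functor $\U$ is conservative, so to show that the counit $v_\D: \CMon_m(\cS) \otimes \U(\D) \to \D$ is an equivalence it suffices to check that $\U(v_\D)$ is; the triangle identity gives $\U(v_\D) \circ u_{\U(\D)} \simeq \Id$, and $u_{\U(\D)}$ is an equivalence by the preliminary step since $\U(\D)$ is $m$-semiadditive (by the preceding lemma). This proves $\U$ is fully-faithful. For the essential image, containment in the $m$-semiadditive presentable $\infty$-categories is the preceding lemma, while conversely any such $\D$ satisfies $\D \simeq \CMon_m(\cS) \otimes \D = \U(\F(\D))$ by the preliminary step, hence lies in the image.

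The only substantive work lies in the preliminary step---specifically, in invoking correctly the identification of local objects with modules over an idempotent algebra from \cite[Section 4.8.2]{higher-algebra}; everything after that is formal.
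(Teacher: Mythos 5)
Your argument is correct and takes essentially the same route as the paper: the paper proves this corollary precisely by transcribing the proof of Lemma~\ref{l:key-2} to $\Pr^{\mathrm{L}}$ with $\CMon_m(\cS)$ in place of $\cS^m_m$, using the lemma immediately preceding the corollary (a presentable $\infty$-category carries a $\CMon_m(\cS)$-action if and only if it is $m$-semiadditive) together with the idempotency of $\CMon_m(\cS)$ from Corollary~\ref{c:idem}. The only remark is that the result you cite from \cite[\S 4.8.2]{higher-algebra} for your preliminary step already asserts full faithfulness of the forgetful functor with essential image the local objects, so once it is invoked the subsequent counit/triangle-identity argument is redundant (though harmless).
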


\begin{rem}
The statements of Corollary~\ref{c:equiv-pres} and~\ref{c:idem} are strongly related. In fact, under mild conditions on a symmetric monoidal $\infty$-category $\C$, the property of $A \in \CAlg(\C)$ being idempotent is equivalent to the forgetful functor $\Mod_A(\C) \lrar \C$ being fully-faithful. Idempotent commutative algebra objects in $\Pr^{\mathrm{L}}$ feature in some recent investigations of T. Schlank (\cite{mode}), where they are called \textbf{modes}. Informally speaking, modes describe aspects of presentable $\infty$-categories which are both a property and a structures, such as being pointed (the mode of pointed spaces), being semiadditive (the mode of $\EE_\infty$-spaces) being stable (the mode of spectra), being an $(n,1)$-category (the mode of $n$-truncated spaces), and more. Corollary~\ref{c:idem} then adds a new infinite family of modes: the mode of $m$-commutative monoids in spaces for every $m$, which is associated to the property of being $m$-semiadditive.
\end{rem}

Let us now consider the case where we replace $\cS$ by the $\infty$-category $\Cat_\infty$ of $\infty$-categories. As above, we may informally consider an $m$-commutative monoid structure on an $\infty$-category $\M$ as giving us a rule for taking an $X$-indexed family of objects of $\M$ (where $X$ is an $m$-finite space) and producing a new object of $\M$. Two immediate examples come to mind: if $\M$ is an $\infty$-category admitting $\K_m$-indexed colimits then we may form the colimit of any $X$-indexed family of objects in $\M$. On the other hand, if $\M$ admits $\K_m$-indexed limits then we may form the limit of any such family. One might hence expect that if $\M$ admits $\K_m$-indexed colimits (resp. limits) then there should be a canonical $m$-commutative monoid structure on $\M$, which can be called the \textbf{coCartesian} (resp. \textbf{Cartesian}) $m$-commutative monoid structure. To show that these structures indeed exist we shall prove the following theorem:

\begin{thm}\label{t:car}\
\begin{enumerate}
\item
The forgetful functor $\CMon_m(\Cat_{\K_m}) \lrar \Cat_{\K_m}$ is an equivalence. In other words, every object in $\Cat_{\K_m}$ admits an essentially unique $m$-commutative monoid structure. 
\item
The forgetful functor $\CMon_m(\Cat^{\K_m}) \lrar \Cat^{\K_m}$ is an equivalence. In other words, every object in $\Cat^{\K_m}$ admits an essentially unique $m$-commutative monoid structure. 
\end{enumerate}
\end{thm}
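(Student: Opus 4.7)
The plan is to deduce both statements from Corollary~\ref{c:univ-cmon}, which asserts that the forgetful functor $\CMon_m(\D) \lrar \D$ is an equivalence precisely when $\D$ is $m$-semiadditive. It will hence suffice to show that the $\infty$-categories $\Cat_{\K_m}$ and $\Cat^{\K_m}$ are each $m$-semiadditive.

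For part (1), I would apply Corollary~\ref{c:amusing}: if $\Cat_{\K_m}$ admits $\K_m$-indexed colimits and carries an action of $\cS^m_m$ whose structure map preserves $\K_m$-indexed colimits separately in each variable, then it is $m$-semiadditive. The first condition holds because $\Cat_{\K_m}$ is presentable and so cocomplete. For the second, Corollary~\ref{c:algebra} provides a canonical commutative algebra structure on $\cS^m_m$ in $\Cat^{\otimes}_{\K_m}$, so that $\Cat_{\K_m}$ acquires a $\cS^m_m$-module structure by restricting the self-action given by $\otimes_{\K_m}$. Compatibility with $\K_m$-indexed colimits in each variable then follows directly from the universal property of $\otimes_{\K_m}$ recorded in~\cite[Corollary 4.8.1.4]{higher-algebra}: for any $\E \in \Cat_{\K_m}$, a left functor out of $\C \otimes_{\K_m} \D$ corresponds to a functor $\C \times \D \lrar \E$ preserving $\K_m$-indexed colimits in each variable, from which one reads off that both $(-) \otimes_{\K_m} \D$ and $\C \otimes_{\K_m} (-)$ preserve $\K_m$-indexed colimits.

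For part (2), I would appeal to duality. The operation $\C \mapsto \C^{\op}$ determines an equivalence of (large) $\infty$-categories $\Cat_{\K_m} \simeq \Cat^{\K_m}$: an $\infty$-category admits $\K_m$-indexed colimits if and only if its opposite admits $\K_m$-indexed limits, and a functor preserves the former if and only if the opposite functor preserves the latter. Since $m$-semiadditivity is an invariant of equivalences of $\infty$-categories, $\Cat^{\K_m}$ is $m$-semiadditive by part (1), and another application of Corollary~\ref{c:univ-cmon} gives the claim.

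The only point of real content is the compatibility of $\otimes_{\K_m}$ with $\K_m$-indexed colimits in each variable; this is the ``main'' step but is entirely formal from the universal property of the symmetric monoidal structure on $\Cat_{\K_m}$. The remaining manipulations, including the reduction to the $m$-semiadditivity of $\Cat_{\K_m}$ and the passage from colimits to limits via $(-)^{\op}$, amount to invoking already-established results.
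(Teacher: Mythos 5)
Your reduction is fine as far as it goes: deducing both parts from the $m$-semiadditivity of $\Cat_{\K_m}$ via Corollary~\ref{c:univ-cmon}, and transporting to $\Cat^{\K_m}$ along the equivalence $\C \mapsto \C^{\op}$, is exactly how the paper proceeds. The gap is in your main step, the claim that $\Cat_{\K_m}$ "acquires a $\cS^m_m$-module structure by restricting the self-action given by $\otimes_{\K_m}$". Corollary~\ref{c:algebra} makes $\cS^m_m$ a commutative algebra object \emph{in} $\Cat_{\K_m}$; this lets objects of $\Cat_{\K_m}$ be $\cS^m_m$-modules, but it does not make the ambient (large) $\infty$-category $\Cat_{\K_m}$ itself into a module over the monoidal $\infty$-category $(\cS^m_m,\times)$, which is what Corollary~\ref{c:amusing} needs (in the sense of Hypothesis~\ref{a:fix}(3): every $m$-finite space $X$, and every span, must act as an endofunctor of $\Cat_{\K_m}$, coherently, with $[X\times Y]\simeq[X]\circ[Y]$). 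To "restrict the self-action" you would need a monoidal functor from $(\cS^m_m,\times)$ to $(\Cat_{\K_m},\otimes_{\K_m})$ (equivalently, into endofunctors of $\Cat_{\K_m}$), and none has been produced; the only endofunctor the algebra structure hands you is $\cS^m_m\otimes_{\K_m}(-)$, a single localization functor, not an $\cS^m_m$-action. The bilinearity of $\otimes_{\K_m}$, which you single out as the point of content, is indeed formal -- but it is not where the difficulty lies.

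Worse, supplying such an action is essentially equivalent to the statement you are trying to prove: by (the large analogue of) Corollary~\ref{c:equiv}, an $\infty$-category with $\K_m$-indexed colimits carries a compatible $\cS^m_m$-action if and only if it is $m$-semiadditive, so positing the action for $\Cat_{\K_m}$ is circular. What is available for free is only the action of $\cS_m^{\op}$ by cotensoring, $[X]\M=\M^X$ (equivalently, of $\cS_m$ by tensoring); extending it over the backwards legs of spans is precisely the nontrivial point. This is why the paper proves Proposition~\ref{p:cat_Km} by induction on $-1\leq m'\leq m$: the inductive hypothesis of $(m'-1)$-semiadditivity yields, via Corollary~\ref{c:equiv} applied to $\Cat_{\K_m}^{\op}$, an action of $(\cS^{m'-1}_{m'})^{\op}$ extending the cotensor action; Lemmas~\ref{l:lem-2}, \ref{l:effect} and~\ref{l:lem-3} identify the span-induced transformations with $p_!q^*$ and show that $[\tr_X]$ exhibits $[X]$ as self-adjoint (using the self-duality of $X$ in $\cS^{m'}_{m'}$, Remark~\ref{r:self-dual}); and Proposition~\ref{p:non-deg} then closes the induction. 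As written, your part (1) rests on an unconstructed structure, and part (2), whose dualization step is itself correct, inherits the gap.
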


\begin{rem}
If $\M$ is an $\infty$-category which admits $\K_m$-indexed colimits, then we may consider it as belonging to either $\Cat_\infty$ or $\Cat_{\K_m}$. Since the faithful inclusion $\Cat_{\K_m} \hrar \Cat_\infty$ preserves limits we obtain a natural map
\begin{equation}\label{e:car}
\CMon_m(\Cat_{\K_m})\times_{\Cat_{\K_m}}\{\M\} \lrar \CMon_m(\Cat_\infty)
\times_{\Cat_\infty}\{\M\} 
\end{equation}
where the left hand side is contractible by Theorem~\ref{t:car}, and the right hand side is an $\infty$-groupoid which can be considered as the space of $m$-commutative monoid structures on $\M$. The point in $\CMon_m(\Cat_\infty) \times_{\Cat_\infty}\{\M\}$ determined by~\eqref{e:car} can be considered as identifying the coCartesian $m$-commutative monoid structure on $\M$. Similarly, if $\M$ admits $\K_m$-indexed limits then the image of the map
$$ \CMon_m(\Cat^{\K_m})\times_{\Cat^{\K_m}}\{\M\} \lrar \CMon_m(\Cat_\infty)\times_{\Cat_\infty}\{\M\} 
$$
identifies the Cartesian $m$-commutative monoid structure. 
\end{rem}

\begin{rem}
The category $\cS^m_m$ admits $\K_m$-indexed limits and colimits, but also carries an $m$-commutative monoid structure which is neither Cartesian nor coCartesian. To see this, observe that the operation $\C \mapsto \Span(\C) = \Span(\C,\C)$ which associates to any $\infty$-category $\C$ with finite limits its span category determines a limit preserving functor $\Span: \Cat^{\K_{\fin}} \lrar \Cat^{\K_{\fin}}$, where $\K_{\fin}$ denotes the collection of simplicial sets with finitely many non-degenerate simplices. We then have an induced functor
$$ \Span_*: \CMon_m(\Cat^{\K_{\fin}}) \lrar \CMon_m(\Cat^{\K_{\fin}}) .$$
Since the $\infty$-category $\cS_m$ has both $\K_m$-indexed limits and $\K_m$-indexed colimits it carries both a Cartesian $m$-commutative monoid structure and a coCartesian $m$-commutative monoid structure. Applying the functor $\Span_*$ we obtain two $m$-commutative monoid structures on $\Span(\cS_m) = \cS^m_m$. The coCartesian $m$-commutative monoid structure of $\cS_m$ induces an $m$-commutative monoid structure on $\cS^m_m$ which is both coCartesian and Cartesian. The Cartesian $m$-commutative monoid structure on $\cS_m$, however, induces a \textbf{different} $m$-commutative monoid structure on $\cS^m_m$, which is neither Cartesian nor coCartesian. The restriction of this structure to $\cS^{-1}_0$ determines a symmetric monoidal structure on $\cS^m_m$ which is the one we've been considering throughout this paper.
\end{rem}

As $\Cat^{\K_m} \simeq \Cat_{\K_m}$ by the functor which sends $\C$ to $\C^{\op}$, Theorem~\ref{t:car} will follows from Theorem~\ref{t:main} and Proposition~\ref{p:monoid} once we prove the following result:
\begin{prop}\label{p:cat_Km}
The $\infty$-category $\Cat_{\K_m}$ is $m$-semiadditive.
\end{prop}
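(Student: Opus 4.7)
The plan is to apply Corollary~\ref{c:amusing}: since $\Cat_{\K_m}$ is a presentable $\infty$-category (and in particular admits all small colimits), it will suffice to exhibit $\Cat_{\K_m}$ as tensored over the symmetric monoidal $\infty$-category $\cS^m_m$ in such a way that the action bifunctor $\Phi: \cS^m_m \times \Cat_{\K_m} \lrar \Cat_{\K_m}$ preserves $\K_m$-indexed colimits separately in each variable.

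The natural candidate for $\Phi$ is the formula $\Phi(X, \C) := X \boxtimes \C$, where $X \boxtimes \C$ denotes the colimit in $\Cat_{\K_m}$ of the constant $X$-indexed diagram with value $\C$. Functoriality in $X \in \cS_m$ is tautological from the universal property of colimits: a map $f : X \lrar Y$ in $\cS_m$ gives rise to $f_! : X \boxtimes \C \lrar Y \boxtimes \C$ in $\Cat_{\K_m}$. To promote this to a functor on the full span $\infty$-category $\cS^m_m = \Span(\cS_m)$, the standard recipe is to (i) show that each $f_!$ admits a right adjoint $f^\ast$ inside $\Cat_{\K_m}$ (i.e., preserving $\K_m$-colimits, not merely in $\Cat_\infty$), and (ii) verify the Beck-Chevalley condition that for every pullback square in $\cS_m$ the canonical mate between the associated $f_!$'s and $f^\ast$'s is an equivalence. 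Granting (i) and (ii), the general machinery of functors out of span $\infty$-categories (cf.\ \S\ref{s:spans}) assembles the data into a functor $\cS^m_m \lrar \Fun_{\K_m}(\Cat_{\K_m}, \Cat_{\K_m})$, and a Fubini-style identification $(X \times Y) \boxtimes \C \simeq X \boxtimes (Y \boxtimes \C)$ upgrades this into a monoidal action of $\cS^m_m$.

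Once the action is established, the preservation of $\K_m$-indexed colimits in each variable is essentially formal. In the $\C$-variable, $X \boxtimes (-)$ is itself built from a colimit construction and therefore commutes with $\K_m$-indexed colimits in its argument. In the $X$-variable, one combines the fact that $\K_m$-indexed colimits in $\cS^m_m$ coincide with those in $\cS_m$ (Proposition~\ref{p:adjoint}) with the same Fubini identity to obtain $(\colim_i X_i) \boxtimes \C \simeq \colim_i (X_i \boxtimes \C)$. Applying Corollary~\ref{c:amusing} then concludes that $\Cat_{\K_m}$ is $m$-semiadditive.

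The main obstacle I anticipate is the verification of point (i) together with the Beck-Chevalley compatibility of point (ii). The existence of $f^\ast$ in $\Cat_\infty$ is free from presentability, but showing that $f^\ast$ preserves $\K_m$-indexed colimits — so that the assembled functor lands in $\Fun_{\K_m}$ and not merely in $\Fun$ — together with the Beck-Chevalley condition for arbitrary pullback squares in $\cS_m$, requires an honest $\infty$-categorical analysis of how $\K_m$-colimits of $\infty$-categories behave under base change. Although these facts are standard in spirit, it is here that the bulk of the technical bookkeeping sits.
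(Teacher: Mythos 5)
There is a genuine gap, and it sits exactly at the step you defer to ``the general machinery of functors out of span $\infty$-categories (cf.\ \S\ref{s:spans})''. Section~\ref{s:spans} only \emph{defines} $\Span(\C,\C^{\dagger})$; neither it nor anything else in the paper provides an unfurling-type theorem that assembles fiberwise data ($f_!$, right adjoints $f^*$, Beck--Chevalley equivalences) into a coherent functor $\cS^m_m \lrar \Fun_{\K_m}(\Cat_{\K_m},\Cat_{\K_m})$, let alone into the monoidal/module coherence needed to say that $\Cat_{\K_m}$ is \emph{tensored} over $\cS^m_m$, which is what Corollary~\ref{c:amusing} actually requires (your ``Fubini-style identification'' is a second, equally unaddressed, layer of coherence). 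This is not routine bookkeeping: by Corollary~\ref{c:equiv}, a $\K_m$-colimit-compatible $\cS^m_m$-module structure on an object of $\Cat_{\K_m}$ exists if and only if that object is $m$-semiadditive, so constructing the full action by hand is essentially equivalent in difficulty to Proposition~\ref{p:cat_Km} itself. There is also a practical obstruction to your points (i) and (ii): you work with the abstract tensor $X \boxtimes \C = \colim_X \C$ computed in $\Cat_{\K_m}$, for which no explicit model is available before semiadditivity is known (the identification $\colim_X \C \simeq \Fun(X,\C)$ is a \emph{consequence} of the proposition, as the remark following it points out), so verifying that $f^*$ exists within $\Cat_{\K_m}$, preserves $\K_m$-indexed colimits, and satisfies Beck--Chevalley has no concrete foothold.

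The paper's proof is arranged precisely to avoid having to build a coherent $\cS^m_m$-action. It works dually, with the cotensor $[X](\M) = \M^X = \Fun(X,\M)$, where the span-induced transformations $T_{\sig} = p_!q^*$ are completely explicit and Beck--Chevalley is available from \cite[Proposition 4.3.3]{ambi}; crucially, Lemma~\ref{l:lem-3} needs only homotopy-level compatibility of $\sig \mapsto T_{\sig}$ with composition of spans (enough to verify the triangle identities for self-adjointness of $[X]$), not a coherent functor out of $\cS^m_m$. The coherent action that \emph{is} used is obtained by induction on $m'$: assuming $\Cat_{\K_m}$ is $(m'-1)$-semiadditive, Corollary~\ref{c:equiv} (i.e.\ Theorem~\ref{t:main}) applied to $\Cat_{\K_m}^{\op}$ supplies an $(\cS^{m'-1}_{m'})^{\op}$-action, Lemmas~\ref{l:lem-2} and~\ref{l:effect} identify the relevant transformations with Kan extensions, and then Lemma~\ref{l:lem-3} together with the dual of Proposition~\ref{p:non-deg} yields $m'$-semiadditivity. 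If you want to salvage your route, you would either have to import and verify an external unfurling/coherence theorem (Barwick, Gaitsgory--Rozenblyum) for the tensor-side data -- which still leaves the adjointability of the abstract $X \boxtimes (-)$ unproven -- or switch to the cotensor side and bootstrap inductively as the paper does.
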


The proof of Proposition~\ref{p:cat_Km} will be given below. Since $\Cat_{\K_m}$ has all limits it follows that $\Cat_{\K_m}^{\op}$ has all colimits and hence admits a canonical action of the $\infty$-category of spaces $\cS$ which preserves colimits in each variable separately. Dually, $\Cat_{\K_m}$ admits an action of $\cS^{\op}$ which preserves limits in each variable separately. Given a space $X \in \cS^{\op}$ this action $[X]: \Cat_{\K_m} \lrar \Cat_{\K_m}$ sends $\M$ to $\M^X = \lim_X\M$ and sends $f: X \lrar Y$ to the restriction functor $f^*\M^Y \lrar \M^X)$.
For our purposes we will only be interested in the action of the full subcategory $\cS_m^{\op} \subseteq \cS^{\op}$ on $\Cat_{\K_m}$. Given a morphism $\sig: X \lrar Y$ in $\cS^m_m$ of the form
\begin{equation}\label{e:span-2}
\xymatrix{
& Z \ar^{q}[dr]\ar_{p}[dl] & \\
X && Y \\
}
\end{equation}
where $X,Y,Z$ are $m$-finite spaces we will denote by $T_{\sig}: [Y] \Rightarrow [X]$ the natural transformation given by the composition
$$ T_{\sig}(\M): \M^Y \x{q^*}{\lrar} \M^Z \x{p_!}{\lrar} \M^X $$
where $q^*$ denotes the restriction functor along $q$ and $p_!$ denotes the left Kan extension functor, whose existence is insured by the fact that $\M$ has $\K_m$-indexed colimits. If $\sig$ is a span as in~\eqref{e:span-2} then we will denote by $\hat{\sig}: Y \lrar X$ the \textbf{dual span} $Y \x{q}{\llar} Z \x{p}{\lrar} X$.

\begin{lem}\label{l:lem-3}
Let $\tr_X: X \times X \lrar \ast$ be the span of Definition~\ref{d:diag}. 
Then the natural transformation
$$ T_{\tr_X}:\Id \lrar [X \times X] \simeq [X] \circ [X] $$ 
exhibits $[X]$ as a self-adjoint functor. Furthermore, under this self-adjunction the natural transformation $T_{\sig}: [X] \Rightarrow [Y]$ associated to a span $\sig: X \lrar Y$ is dual to the natural transformation $T_{\hat{\sig}}: [Y] \Rightarrow [X]$ associated with the dual span $\hat{\sig}$. 
\end{lem}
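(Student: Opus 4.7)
The plan is to exhibit the self-adjunction $[X] \dashv [X]$ explicitly, taking as its unit $\eta := T_{\tr_X}: \Id \Rightarrow [X] \circ [X]$ and as its counit $\eps := T_{\hat{\tr}_X}: [X] \circ [X] \simeq [X \times X] \Rightarrow [\ast] = \Id$, where $\hat{\tr}_X: \ast \llar X \x{\del}{\lrar} X \times X$ is the dual span and consequently $\eps_\M$ is the composite $\M^{X \times X} \x{\del^*}{\lrar} \M^X \x{q_!}{\lrar} \M$ (with $q: X \lrar \ast$). With these identifications in hand, the first claim reduces to verifying the two triangle identities $\eps [X] \circ [X] \eta = \Id_{[X]}$ and $[X] \eps \circ \eta [X] = \Id_{[X]}$.

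The key ingredient is the functoriality of the assignment $\sig \mapsto T_\sig$ with respect to composition of spans in $\cS^m_m$: for composable spans $\sig, \tau$ one has $T_{\tau \circ \sig} \simeq T_\sig \circ T_\tau$. This is precisely the Beck--Chevalley base-change identity for left Kan extension along maps of $m$-finite spaces in $\Cat_{\K_m}$, and follows from~\cite[Proposition 4.3.3]{ambi}. In addition, the assignment is monoidal: the identification $[X \times Y] \simeq [X] \circ [Y]$ intertwines Cartesian products of spans with horizontal composition of their associated natural transformations. With these two compatibilities, the triangle identities for the self-duality of $X$ in the symmetric monoidal $\infty$-category $\cS^m_m$ via the pair $(\tr_X, \hat{\tr}_X)$ recorded in Remark~\ref{r:self-dual} transfer directly to the triangle identities for $(T_{\tr_X}, T_{\hat{\tr}_X})$, establishing the first claim.

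For the second claim I use the same transfer of structure. Since $X$ and $Y$ are both self-dual in $\cS^m_m$, a direct manipulation with the evaluation and coevaluation maps $\tr_X, \hat{\tr}_X, \tr_Y, \hat{\tr}_Y$ shows that the mate of a morphism $\sig: X \lrar Y$ under these self-dualities is precisely the dual span $\hat{\sig}: Y \lrar X$ (this is the standard fact that in Burnside-type categories transposition of spans realizes the abstract duality). Applying the functorial and monoidal assignment $\sig \mapsto T_\sig$ then yields that $T_{\hat{\sig}}$ is the mate of $T_\sig$ under the self-adjunctions of $[X]$ and $[Y]$.

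The main obstacle is the rigorous higher-categorical packaging: upgrading the 1-categorical data $\sig \mapsto T_\sig$ into a symmetric monoidal functor $\Phi: (\cS^m_m)^{\op} \lrar \Fun(\Cat_{\K_m}, \Cat_{\K_m})$ (with composition as the target monoidal structure) requires coherence data beyond the basic Beck--Chevalley equivalences. This is handled using the universal property of the span $\infty$-category as developed in~\cite{barwick}, together with the monoidal refinement inherited from the Cartesian structure on $\cS_m$. Once $\Phi$ is constructed, the lemma is immediate from the fact that symmetric monoidal functors preserve dualizability data and mates of morphisms between dualizable objects.
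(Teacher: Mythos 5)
Your argument is essentially the one in the paper: the text's proof likewise combines the Beck--Chevalley compatibility of $\sig \mapsto T_{\sig}$ with composition of spans together with the self-duality of $X$ in $\cS^m_m$ from Remark~\ref{r:self-dual} and the fact that transposition of spans realizes the abstract duality (so that $\hat{\sig}$ is the dual of $\sig$); your explicit unit/counit pair $(T_{\tr_X},T_{\hat{\tr}_X})$ and the triangle-identity check simply spell this out. The only real divergence is your final ``packaging'' step, and there I would push back: no fully coherent symmetric monoidal functor $(\cS^m_m)^{\op} \lrar \Fun(\Cat_{\K_m},\Cat_{\K_m})$ is needed, because being a unit of an adjunction and being duality data are conditions detected at the level of the homotopy $2$-category --- a candidate unit $u:\Id \Rightarrow G\circ F$ is a unit as soon as the induced maps on mapping spaces are equivalences (cf.\ \cite[Proposition 5.2.2.8]{higher-topos}), and dualizability in a monoidal $\infty$-category is likewise a homotopy-level condition. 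Hence compatibility of $T_{(-)}$ with composition and with products \emph{up to homotopy}, which is exactly what the Beck--Chevalley/Fubini equivalences of~\cite[Proposition 4.3.3]{ambi} provide, already suffices, and this is all the paper uses. Be aware also that the appeal to ``the universal property of the span $\infty$-category as developed in~\cite{barwick}'' would not in any case deliver the coherent functor you describe: that reference constructs $\Span(\C,\C^{\dagger})$ and establishes its basic properties, but a universal property characterizing functors out of span $\infty$-categories in terms of base-change data is not proved there. Since your homotopy-level argument stands on its own, this extra step should simply be dropped rather than repaired.
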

\begin{proof}
The Beck-Chevalley condition for pullbacks and left Kan extensions (see~\cite[Proposition 4.3.3]{ambi}) implies in particular that the association $\sig \mapsto T_\sig$ respects composition of spans up to homotopy. Both claims now follow from the fact that $\tr:  X \times X \lrar \ast$ exhibit $X$ as self-dual in the monoidal $\infty$-category $\cS^m_m$ (see Remark~\ref{r:self-dual}) and that under this self duality the dual morphism of $\sig$ is $\hat{\sig}$.
\end{proof}

\begin{proof}[Proof of Proposition~\ref{p:cat_Km}]
Arguing by induction, let us assume that $\Cat_{\K_m}$ is $(m'-1)$-semiadditive for some $-1 \leq m' \leq m$ and show that it is in fact $m'$-semiadditive. By Corollary~\ref{c:equiv} (applied to $\Cat_{\K_m}^{\op}$) we may extend the $(\cS_{m'})^{\op}$-action on $\Cat_{\K_m}$ described above to an $(\cS^{m'-1}_{m'})^{\op}$-action which preserves $\K_{m'}$-indexed limits in each variable separately. Applying Lemma~\ref{l:lem-2} and Lemma~\ref{l:effect} to $\Cat_{\K_m}^{\op}$ we may deduce that for every morphism of the form $Y \x{q}{\llar} X$ in $\cS_{m',m'-1}^{\op} \subseteq \cS^{m'-1}_{m'}$ the induced transformation $[q](\M): [X](M) \simeq \M^X \lrar \M^Y \simeq [Y](M)$ is given by the formation of left Kan extensions. Applying now Lemma~\ref{l:lem-3} we may conclude that for every $X \in \cS^{m'-1}_{m'}$, the natural transformation $[\tr_X]: \Id \Rightarrow [X] \circ [X]$ exhibits $[X]$ as a self-adjoint functor. By (the dual version of) Proposition~\ref{p:non-deg} the $\infty$-category $\Cat_{\K_m}$ is $m'$-semiadditive, as desired.
\end{proof}

\begin{rem}
Proposition~\ref{p:cat_Km} implies in particular that if $X$ is an $m$-finite space and $\M \in \Cat_{\K_m}$ is an $\infty$-category admitting $\K_m$-indexed colimits then $\Fun(X,\M) \simeq \lim_X\M$ is also a model for the \textbf{colimit} of the constant $X$-indexed diagram with value $\M$. Using Lemma~\ref{l:lem-3} we can make this claim more precise: for any $\M \in \Cat_{\K_m}$ and $X \in \cS_m$, the collection of left Kan extension functors $(i_x)_!: \M \lrar \M^X$ exhibits $\M^X$ as the colimit of the constant $X$-indexed diagram with value $\M$.
\end{rem}

\subsection{Decorated spans}\label{s:decorated}
Theorem~\ref{t:main} identifies $\cS^m_m$ as the free $m$-semiadditive $\infty$-category generated by a single object. In this section we will show how to bootstrap Theorem~\ref{t:main} in order to obtain a description of the free $m$-semiadditive $\infty$-category generated by an arbitrary small $\infty$-category $\C$. 

Let $\pi:\cS_m(\C) \lrar \cS_m$ be a Cartesian fibration classifying the functor $X \mapsto \C^X$. We may informally describe objects in $\cS_m(\C)$ as pairs $(X,\L_X)$ where $X$ is an $m$-finite space and $\L_X: X \lrar C$ is a $\C$-valued local system on $X$ (i.e., a functor). A map $(X,\L_X) \lrar (Y,\L_Y)$ in $\cS_m(\C)$ can be described in these terms as a pair $(f,T)$ where $f: X \lrar Y$ is a map of spaces and $T: \L_X \Rightarrow f^*\L_Y$ is a map of local systems on $X$ (i.e., a natural transformation). In particular, a morphism $(f,T)$ corresponds to a $\pi$-Cartesian edge of $\cS_m(\C)$ if and only $T$ is an equivalence in $\C^X$. Now since $\cS_m$ admits pullbacks it follows that $\cS_m(\C)$ admits pullbacks of diagram of the form $\vphi: \Del^1 \coprod_{\Del^{\{1\}}} \Del^1 \lrar \cS_m(\C)$ such that $\vphi|\Del^1$ is $\pi$-Cartesian. Let $\cS_m^{\car}(\C) \subseteq \cS_m(\C)$ denote the subcategory containing all objects and whose mapping spaces are the subspaces spanned by $\pi$-Cartesian edges. Then $\cS_m^{\car}$ determines a weak coWaldhausen structure on $\C$ (see \S\ref{s:spans}) and we may consider the associated span $\infty$-category
$$ \cS^m_m(\C) := \Span(\cS_m(\C), \cS_m^{\car}(\C)) .$$
By Remark~\ref{r:map-span} we may identify the objects of $\cS^m_m(\C)$ with the objects of $\cS_m(\C)$ and the mapping space in $\cS^m_m(\C)$ from $(X,\L_X)$ to $(Y,\L_Y)$ with the classifying space of spans
\begin{equation}\label{e:span-17}
\xymatrix{
& (Z,\L_Z) \ar_{(p,T)}[dl]\ar^{(q,S)}[dr] &\\
(X,\L_X) && (Y,\L_Y)\\
}
\end{equation}
such that $(p,T)$ is $\pi$-Cartesian (i.e., such that $T$ is an equivalence in $\C^Z$).

The fiber of the Cartesian fibration $\cS_m(\C) \lrar \cS_m$ over $\ast \in \cS_m$ is equivalent to $\C^{\ast} \simeq \C$ and we may fix an equivalence $\C \x{\simeq}{\lrar} \cS_m(\C) \times_{\cS_m} \{\ast\}$. We will denote by $\iota: C \lrar \cS_m(\C)$ the composition of this equivalence with the inclusion of the fiber over $\ast \in \cS_m$. Let 
$$ U \subseteq \cS_m(\C)^{\Del^1} \times_{\cS_m(\C)^{\Del^{\{1\}}}} \C $$ 
be the full subcategory spanned by those arrows $(X,\L_X) \lrar \iota(C)$ which are $\pi$-Cartesian. We may informally describe objects of $U$ as tuples $(X,\L_X,C,T)$ where $T$ is a natural equivalence from $\L_X: X \lrar \C$ to the constant functor $\ovl{C}: X \lrar \C$ with value $C$. Since $\pi:\cS_m(\C) \lrar \cS_m$ is a Cartesian fibration and $\ast$ is terminal in $\cS_m$ it follows that the maps $U \x{\simeq}{\lrar} \cS_m^{\Del^1} \times_{\cS_m^{\Del^{\{1\}}}} \C \x{\simeq}{\lrar} \cS_m \times \C$ are trivial Kan fibrations (whose composition can informally be described as sending $(X,\L_X,C,T)$ to $(X,C)$) and hence there exists an essentially unique section $s: \cS_m \times \C \lrar U$. We will denote by
$$ \iota': \cS_m \times \C \lrar \cS_m(\C) $$
the composition of $s$ with the projection $U \lrar \cS_m(\C)$. We may informally describe $\iota'$ by the formula $\iota'(X,C) = (X,\ovl{C})$, where $\ovl{C}: X \lrar \C$ denotes the constant functor with value $C$.

Recall that we denote by $\C^{\simeq} \subseteq \C$ the maximal subgroupoid of $\C$. Since $\cS_m$ admits pullbacks it follows that $\cS_m \times \C$ admits pullbacks of diagram of the form $\vphi: \Del^1 \coprod_{\Del^{\{1\}}} \Del^1 \lrar \cS_m \times \C$ such that $\vphi|\Del^1$ belongs to $\cS_m \times \C^{\simeq}$. Since the functor $\iota':\cS_m \times \C \lrar \cS_m(\C)$ maps $\cS_m \times \C^{\simeq}$ to $\cS_m^{\car}(\C)$ we obtain an induced functor of span $\infty$-categories
$$\iota'': \cS^m_m \times \C \simeq \cS^m_m \times \Span(\C,\C^{\simeq}) \simeq \Span(\cS_m \times \C,\cS_m \times \C^{\simeq}) \lrar \Span(\cS_m(\C), \cS_m^{\car}(\C)) = \cS^m_m(\C) .$$
We may informally describe the functor $\iota'':\cS^m_m \times \C \lrar \cS^m_m(\C)$ as the functor which sends the object $(X,C)$ to the object $(X,\ovl{C})$ and a pair $(X \x{p}{\llar} Z \x{q}{\lrar} Y,\alp: C \lrar D)$ of a morphism in $\cS^m_m$ and a morphism in $\C$ to the span
$$ \xymatrix{
& (Z,\ovl{C}) \ar_{(p,\Id)}[dl]\ar^{(q,\ovl{\alp})}[dr] &\\
(X,\ovl{C}) && (Y,\ovl{D})\\
}
$$ 
Our goal in this section is to prove the following characterization of the above constructions in terms of suitable universal properties:
\begin{thm}\label{t:univ}\
\begin{enumerate}
\item
The functor $\iota:\C \lrar \cS_m(\C)$ exhibits $\cS_m(\C)$ as the free $\infty$-category with $\K_m$-indexed colimits generated from $\C$.
\item
The composed functor $\C \lrar \cS_m(\C) \lrar \cS^m_m(\C)$ exhibits $\cS^m_m(\C)$ as the free $m$-semiadditive $\infty$-category generated from $\C$.
\item
The functor $\cS^m_m \times \cS_m(\C) \lrar \cS^m_m(\C)$ exhibits $\cS^m_m(\C)$ as the tensor product $\cS^m_m \otimes_{\K_m} \cS_m(\C)$ in $\Cat_{\K_m}$.
\end{enumerate}
\end{thm}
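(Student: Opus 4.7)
The three parts are tightly linked, and my plan is to prove (1) first by a Kan-extension argument modeled on Proposition~\ref{p:step-1}, then to deduce (2) and (3) in parallel by combining (1) with Theorem~\ref{t:main} and the module-theoretic results of \S\ref{s:modules}.

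For (1), I would first establish that $\cS_m(\C)$ admits $\K_m$-indexed colimits and that for every $(X, \L_X)$ the maps $(i_x, \Id)\colon (\ast, \L_X(x)) \to (X, \L_X)$ exhibit $(X, \L_X)$ as the colimit of $\iota \circ \L_X\colon X \to \cS_m(\C)$. This unwinds directly from Remark~\ref{r:map-span}: a cocone with vertex $(Y, \L_Y)$ is precisely a map $f\colon X \to Y$ together with a natural transformation $\L_X \Rightarrow f^* \L_Y$, which is exactly a morphism $(X, \L_X) \to (Y, \L_Y)$ in $\cS_m(\C)$. Then for any $\D$ admitting $\K_m$-indexed colimits and any functor $G\colon \C \to \D$, I would build a left Kan extension $\bar G\colon \cS_m(\C) \to \D$ of $G$ along $\iota$ via \cite[Lemma 4.3.2.13]{higher-topos}: the comma $\infty$-category $\I_{(X,\L_X)} := \C \times_{\cS_m(\C)} \cS_m(\C)_{/(X, \L_X)}$ admits a cofinal subcategory equivalent to the underlying space of $X$, consisting of objects of the form $(\L_X(x), x, \Id)$, so that the required colimit reduces to the existing $\colim_{x \in X} G(\L_X(x))$. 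The equivalence of ``being a left Kan extension'' with ``preserving $\K_m$-indexed colimits'' then follows exactly as in Proposition~\ref{p:step-1}(2), and \cite[Proposition 4.3.2.15]{higher-topos} yields the universal property.

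For (3), I would construct $\Phi\colon \cS^m_m \times \cS_m(\C) \to \cS^m_m(\C)$ by extending $\iota''$ in the second variable: for each fixed $X \in \cS^m_m$, the universal property from (1) extends $\iota''(X, -)\colon \C \to \cS^m_m(\C)$ to a $\K_m$-cocontinuous functor $\cS_m(\C) \to \cS^m_m(\C)$, and coherence in $X$ yields $\Phi$, which informally sends $((X, \sigma), (Y, \L_Y))$ to $(X \times Y, \pi_Y^* \L_Y)$. Since $\Phi$ preserves $\K_m$-indexed colimits separately in each variable, it factors through $\tilde\Phi\colon \cS^m_m \otimes_{\K_m} \cS_m(\C) \to \cS^m_m(\C)$. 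Restriction of $\Phi$ along the unit $\ast \to \cS^m_m$ endows $\cS^m_m(\C)$ with an $\cS^m_m$-module structure in $\Cat_{\K_m}$, so by Corollary~\ref{c:amusing} it is $m$-semiadditive. Now for any $m$-semiadditive $\D$, combining (1) with Theorem~\ref{t:main} gives
\[
\Fun_{\K_m}(\cS^m_m \otimes_{\K_m} \cS_m(\C), \D) \simeq \Fun_{\K_m}(\cS_m(\C), \Fun_{\K_m}(\cS^m_m, \D)) \simeq \Fun_{\K_m}(\cS_m(\C), \D) \simeq \Fun(\C, \D),
\]
so $\cS^m_m \otimes_{\K_m} \cS_m(\C)$ is the free $m$-semiadditive $\infty$-category generated by $\C$. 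A Yoneda-style comparison of this universal property with the one carried by $\cS^m_m(\C)$ (both now known to be $m$-semiadditive) shows that $\tilde\Phi$ is an equivalence, simultaneously delivering (2) and (3).

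The main obstacle I anticipate is the coherent construction of the action $\Phi$: while its behavior on objects and $1$-morphisms is transparent, promoting it to an honest functor of $\infty$-categories requires controlling the module structures. The cleanest route is probably to exhibit $\cS^m_m(\C)$ as a generalized span $\infty$-category in the sense of \cite{barwick, span} — namely, associated to the Cartesian fibration $\pi\colon \cS_m(\C) \to \cS_m$ together with its wide subcategory $\cS_m^{\car}(\C)$ of $\pi$-Cartesian edges — and then transport the symmetric monoidal structure on span $\infty$-categories from \cite{span} through the monoidal functor $\pi$ to obtain the desired $\cS^m_m$-module structure on $\cS^m_m(\C)$.
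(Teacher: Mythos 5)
Your part (1) is essentially the paper's own argument: the characterization of $\K_m$-indexed colimits in $\cS_m(\C)$, the cofinality of the space $X$ inside the comma category $\C \times_{\cS_m(\C)} \cS_m(\C)_{/(X,\L_X)}$, and the appeal to \cite[Lemma 4.3.2.13, Proposition 4.3.2.15]{higher-topos}. One caveat: the claim that the cocone identification ``unwinds directly from Remark~\ref{r:map-span}'' is too quick --- what is actually needed is the criterion for ($\pi$-)colimits in a Cartesian fibration whose classifying functor sends the relevant cone to a limit diagram (the content of Lemma~\ref{l:cartesian} and Lemma~\ref{l:colim-C-1}), not merely the description of morphisms in $\cS_m(\C)$; this is fixable. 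The genuine gap is in your deduction of (2) and (3). After building $\tilde\Phi\colon \cS^m_m\otimes_{\K_m}\cS_m(\C)\lrar\cS^m_m(\C)$, proving $m$-semiadditivity of $\cS^m_m(\C)$ via the module structure, and identifying the source as the free $m$-semiadditive $\infty$-category on $\C$, you invoke ``a Yoneda-style comparison of this universal property with the one carried by $\cS^m_m(\C)$.'' But at that stage $\cS^m_m(\C)$ carries no known universal property: statement (2) is precisely the assertion that it does, and $m$-semiadditivity together with being generated by $\C$ under $\K_m$-colimits does not imply freeness (a localization of the free object could enjoy both properties while receiving an essentially surjective but non-fully-faithful map from it). The freeness of the tensor product only controls functors \emph{out of} $\cS^m_m\otimes_{\K_m}\cS_m(\C)$; it produces no functor $\cS^m_m(\C)\lrar\cS^m_m\otimes_{\K_m}\cS_m(\C)$, no way to check that a candidate composite is homotopic to the identity of $\cS^m_m(\C)$, and no access to mapping spaces in the abstractly defined tensor product with which to verify that $\tilde\Phi$ is fully faithful. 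As written the step is circular.

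What is missing is exactly the computation that occupies the paper at this point: for every $m$-semiadditive $\D$, restriction along $\iota''\colon\cS^m_m\times\C\lrar\cS^m_m(\C)$ induces an equivalence $\Fun_{\K_m}(\cS^m_m(\C),\D)\simeq\Fun_{\K_m/\C}(\cS^m_m\times\C,\D)$ (Proposition~\ref{p:step-C} and Corollary~\ref{c:univ}). This is proved by a left Kan extension along the correspondence associated to $\iota''$, and the crux is a cofinality argument in the comma categories $\I_{(Y,\L_Y)}$: using the explicit description of mapping spaces of decorated spans one shows that the relevant mapping-space functor is corepresented, so each comma category admits a cofinal subcategory equivalent to the underlying space of $Y$, after which the hypothesis that $\F$ preserves $\K_m$-colimits in the $\cS^m_m$ variable identifies left Kan extensions with the $\K_m$-colimit-preserving extensions. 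With this universal property of $\cS^m_m(\C)$ in hand, (2) follows by combining with Theorem~\ref{t:main} and (3) by combining with part (1); your $\tilde\Phi$ is then an equivalence as a consequence, not as an intermediate step. Otherwise you must prove full faithfulness of $\tilde\Phi$ directly, which amounts to the same mapping-space computation. (Your proposed construction of the $\cS^m_m$-action itself, via functoriality of the span construction applied to the Cartesian fibration with its coWaldhausen structure, is fine and matches how the paper obtains the action.)
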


The rest of this section is devoted to the proof of Theorem~\ref{t:univ}, which is covered by Corollaries~\ref{c:free-colimit},~\ref{c:univ} and~\ref{c:tensor}. We begin with the following general lemma about colimits in Cartesian fibrations.
\begin{lem}\label{l:cartesian}
Let $K$ be a Kan complex and let $\ovl{\vphi}: K^{\triangleright} \lrar \C$ be a diagram taking values in an $\infty$-category $\C$. Let $\pi:\D \lrar \C$ be a Cartesian fibration classified by a functor $\chi: \C^{\op} \lrar \Cat_{\infty}$ which sends $\ovl{\vphi}$ to a limit diagram in $\Cat_{\infty}$. Then a lift $\ovl{\psi}:K^{\triangleright} \lrar \D$ of $\ovl{\vphi}$ is a $\pi$-colimit diagram in $\D$ if and only if $\ovl{\psi}$ sends every morphism in $K^{\triangleright}$ to a $\pi$-Cartesian edge.
\end{lem}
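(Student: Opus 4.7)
The plan is to verify both directions of the iff separately, using the universal property of $\pi$-colimits together with the hypothesis that $\chi\circ\ovl{\vphi}^{\op}$ is a limit diagram in $\Cat_{\infty}$ to identify the relevant spaces of extensions in terms of the fiber $\D_X := \pi^{-1}(X)$, where $X := \ovl{\vphi}(\infty)$. Since $K$ is a Kan complex, the condition that $\ovl{\psi}$ sends every edge to a $\pi$-Cartesian edge reduces, by the two-out-of-three property of Cartesian edges, to the condition that each cone edge $e_k\colon k\to\infty$ of $K^{\triangleright}$ is lifted by $\ovl{\psi}$ to a $\pi$-Cartesian edge $\ovl{\psi}(e_k)\colon\psi(k)\to Y$, where $Y := \ovl{\psi}(\infty)$ and $\psi := \ovl{\psi}|_K$; any edge internal to $K$ then becomes Cartesian by cancellation against the two Cartesian cone edges adjacent to it.

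For the ``if'' direction I would invoke the standard criterion~\cite[\S 4.3.1]{higher-topos}: $\ovl{\psi}$ is a $\pi$-colimit iff the projection
\[
\D_{\ovl{\psi}/}\lrar \D_{\psi/}\times_{\C_{\vphi/}}\C_{\ovl{\vphi}/}
\]
is a trivial Kan fibration, with $\vphi := \ovl{\vphi}|_K$. The fiber of this map over a compatible pair consisting of a cone $\alpha\colon\psi\to Z$ in $\D$ with apex $Z\in\D_W$ and a cone extension $\beta\colon\ovl{\vphi}\to W$ in $\C$ restricting to $\pi\alpha$ is the space of morphisms $\gamma\colon Y\to Z$ in $\D$ over the new cone edge $g\colon X\to W$ of $\beta$ satisfying $\alpha_k = \gamma\circ\ovl{\psi}(e_k)$ for all $k\in K$. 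Using the Cartesian property of each $\ovl{\psi}(e_k)$, this constraint identifies the fiber with the space of morphisms in $\D_X$ from $Y$ to the object $g^{*}Z$ that, via the limit equivalence $\D_X\simeq\lim_{k\in K}\D_{\vphi(k)}$ in $\Cat_{\infty}$, corresponds to the compatible family $(\alpha_k)_{k\in K}$; hence the fiber is contractible, as required.

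For the ``only if'' direction I would use the ``if'' direction to produce, for each $Y\in\D_X$, a pointwise-Cartesian $\pi$-colimit lift $\ovl{\psi}_Y$ of $\ovl{\vphi}$ with $\ovl{\psi}_Y(\infty) = Y$ and $\psi_Y(k) := e_k^{*}Y$. Given an arbitrary $\pi$-colimit $\ovl{\psi}$ of some diagram $\psi$ with apex $Y := \ovl{\psi}(\infty)$, the cone-edge factorization $\psi(k)\to e_k^{*}Y\to Y$ of each $\ovl{\psi}(e_k)$ produces a morphism of $K$-indexed diagrams $\psi\to\psi_Y$ in $\D$ over $\vphi$. The universal property of $\ovl{\psi}_Y$ as a $\pi$-colimit, combined with uniqueness of $\pi$-colimit extensions, identifies the induced self-comparison of $Y$ with the identity, and the limit hypothesis then forces each component $\psi(k)\to e_k^{*}Y$ to be an equivalence, so $\ovl{\psi}$ is pointwise Cartesian.

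The main obstacle is the careful bookkeeping in the ``if'' direction, specifically identifying the fiber of the trivial fibration with a contractible space and pinpointing the step at which the limit hypothesis enters. I expect the cleanest route is to first base-change $\pi$ along $\ovl{\vphi}$, reducing to the case $\C = K^{\triangleright}$ with $\ovl{\vphi} = \Id$, where the hypothesis becomes the concrete statement $\D_\infty\simeq\lim_K\D_k$ in $\Cat_{\infty}$, and then apply the straightening-unstraightening equivalence to rephrase both the spaces of sections and the mapping-space identifications above directly in terms of limits in $\Cat_{\infty}$.
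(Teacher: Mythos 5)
Your reduction to the cone edges and your ``if'' direction are essentially sound: unwinding the defining trivial-fibration criterion for a $\pi$-colimit, computing its fibers, and using the limit hypothesis to identify $\Map_{\D_X}(Y,g^{*}Z)$ with $\lim_{k}\Map_{\D_{\vphi(k)}}(\psi(k),(g^{*}Z)_k)$ is the right mechanism. To be precise, the fiber you describe is the homotopy fiber of this restriction map over the point determined by $(\alpha_k)_{k\in K}$, and contractibility follows because the restriction map is an equivalence; making the coherent constraint ``$\alpha_k=\gamma\circ\ovl{\psi}(e_k)$'' precise is exactly the bookkeeping that the paper handles with marked anodyne inclusions and \cite[Proposition 3.3.3.1]{higher-topos}, so that is where the real work sits. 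Note also that the paper does not argue the two implications separately: it reduces ``$\pi$-colimit'' to initiality in the fiber $\D_{\psi/}\times_{\C_{\vphi/}}\{\ovl{\vphi}\}$, transports that initiality through a zig-zag of trivial fibrations, and then applies the limit hypothesis via \cite[Proposition 3.3.3.1]{higher-topos}, obtaining both directions at once in the form ``initial in a coslice if and only if the corresponding morphism is an equivalence''.

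The genuine gap is in your ``only if'' direction. You compare the given colimit cone $\ovl{\psi}$ on $\psi$ with the pointwise-Cartesian cone $\ovl{\psi}_Y$ on $\psi_Y(k)=e_k^{*}Y$, both with apex $Y$, and assert that the universal properties together with the limit hypothesis force the comparison $\theta:\psi\lrar\psi_Y$ to be an equivalence. This does not follow as stated: $\ovl{\psi}_Y$ is a $\pi$-colimit of $\psi_Y$, not of $\psi$, and a map of two different diagrams compatible with (co)limit cones on a common apex need not be an equivalence (this already fails for ordinary colimits). Applying the universal property of $\ovl{\psi}$ to the cone $\ovl{\psi}_Y\circ\theta\simeq\ovl{\psi}$ only returns $\Id_Y$ and gives no information about $\theta$, and you never say how the limit hypothesis actually enters. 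The fix stays inside your framework but needs one more object: since $K$ is a Kan complex, every edge of $K$ is an equivalence, so $\psi$ sends all edges of $K$ to equivalences, which are automatically $\pi$-Cartesian; hence $\psi$ itself determines an object $Y'$ of $\lim_{k}\D_{\vphi(k)}\simeq\D_X$ together with a pointwise-Cartesian extension $\ovl{\psi}'$ of the \emph{same} diagram $\psi$ over $\ovl{\vphi}$ with apex $Y'$. By your ``if'' direction $\ovl{\psi}'$ is a $\pi$-colimit of $\psi$, and uniqueness of $\pi$-colimit extensions of $\psi$ over $\ovl{\vphi}$ gives an equivalence $\ovl{\psi}\simeq\ovl{\psi}'$ under $\psi$ and over $\ovl{\vphi}$; since $\pi$-Cartesian edges are stable under composition with equivalences, $\ovl{\psi}$ sends the cone edges, and hence all edges, to $\pi$-Cartesian edges.
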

\begin{proof}
Let $\vphi = \ovl{\vphi}|_{K}$ and $\psi = \ovl{\psi}|_{K}$ and consider the induced map $\pi_*:\D_{\psi/} \lrar \C_{\vphi/}$. By definition, $\ovl{\psi}$ is a $\pi$-colimit diagram if and only if the object $\ovl{\psi} \in \D_{\psi/}$ is $\pi_\ast$-initial. By (the dual of)~\cite[Proposition 2.4.3.2]{higher-topos} the map $\pi_*$ is a Cartesian fibration, and hence by~\cite[Corollary 4.3.1.16]{higher-topos} we have that $\ovl{\psi}$ is $\pi_*$-initial if and only if it is initial when considered as an object of $\D_{\psi/} \times_{\C_{\vphi/}}\{\ovl{\vphi}\}$. Using the natural equivalence (see~\cite[\S 4.2.1]{higher-topos} for the two types of slice constructions)
\begin{equation}\label{l:alternative}
\D_{\psi/} \times_{\C_{\vphi/}}\{\ovl{\vphi}\} \simeq \D^{\psi/} \times_{\C^{\vphi/}}\{\ovl{\vphi}\} \simeq
\Fun(K^{\triangleright},\D) \times_{\Fun(K,\D) \times \Fun(K^{\triangleright},\C)} \{(\psi,\ovl{\vphi})\} 
\end{equation}
it will suffice to show that $\ovl{\psi}: K^{\triangleright} \lrar \D$ is initial when considered as an object of the RHS of~\eqref{l:alternative} if and only it sends all edges to $\pi$-Cartesian edges. Let $L = (K \times \Del^1)^{\triangleright}$ and let $L_1,L_2 \subseteq L$ be the subsimplicial sets given by
\begin{equation}
\xymatrix{
L_1 := (K \times \Del^{\{1\}})^{\triangleright} \coprod_{K \times \Del^{\{1\}}} K \times \Del^1 \ar@{^{(}->}[r]  
& (K \times \Del^1)^{\triangleright} &  
(K \times \Del^{\{0\}})^{\triangleright} := L_2 \ar@{_{(}->}[l] \\
}
\end{equation}
Let $\ovl{L}$ be the marked simplicial set whose underlying simplicial set is $L$ and the marked edges are those which are contained in $(K \times \Del^{\{1\}})^{\triangleright}$. Similarly, let $\ovl{L}_1$ and $\ovl{L}_2$ be the marked simplicial sets whose underlying simplicial sets are $L_1$ and $L_2$ respectively and whose markings are inherited from $L$. In particular, $\ovl{L}_2 = L_2^{\flat} \cong K^{\triangleright}$. We now claim that the inclusions $\ovl{L}_1 \hrar \ovl{L}$ and $\ovl{L}_2 \hrar \ovl{L}$ are marked anodyne. For $\ovl{L}_1$ this follows from the fact that $L_1 \hrar L$ is inner anodyne by~\cite[Lemma 2.1.2.3]{higher-topos} and all the marked edges of $L$ are contained in $L_1$. For $\ovl{L}_2$ we can write the inclusion $K \times \Del^{\{0\}} \hrar K \times \Del^1$ as a transfinite composition of pushouts along $\partial \Del^n \hrar \Del^n$ for $n \geq 0$, yielding a description of $\ovl{L}_2 \hrar \ovl{L}$ as a transfinite composition of pushouts along marked maps of the form $(\Lam^{n+1}_{n+1},(\Lam^{n+1}_{n+1})_1 \cap \Del^{\{n,n+1\}}) \hrar (\Del^{n+1},\Del^{\{n,n+1\}})$ which are marked anodyne by definition.
Let $\D^{\natural}$ be the marked simplicial set whose underlying simplicial set is $\D$ and the marked edges are the $\pi$-Cartesian edges. Then $\D^{\natural}$ is fibrant in the Cartesian model structure over $\C$ and so we obtain a zig-zag of trivial Kan fibrations
\begin{equation}\label{e:zig-zag}
\xymatrix{
\Fun^{\flat}(\ovl{L}_1,\D^{\natural}) \times_{\Fun(K \times \Del^{\{0\}},\D) \times \Fun(L_1,\C)} \{(\psi,\ovl{\vphi}'_1)\} \\
\Fun^{\flat}(\ovl{L},\D^{\natural}) \times_{\Fun(K \times \Del^{\{0\}},\D) \times \Fun(L,\C)} \{(\psi,\ovl{\vphi}')\} \ar^{\simeq}[d]\ar_{\simeq}[u] \\ 
\Fun^{\flat}(\ovl{L}_2,\D^{\natural}) \times_{\Fun(K \times \Del^{\{0\}},\D) \times \Fun(L_2,\C)} \{(\psi,\ovl{\vphi}'_2)\}\\
}
\end{equation}
where $\ovl{\vphi}': L \lrar \C$ is the composition of $\ovl{\vphi}$ with the projection $L \lrar K^{\triangleright}$ and $\ovl{\vphi}'_i = \ovl{\vphi}'|_{\ovl{L}_i}$. Let $\ovl{\rho}: \ovl{L}_1 \lrar \D^{\natural}$ be an object which corresponds to $\ovl{\psi}: L_2 \lrar \D$ under the zig-zag of equivalences~\eqref{e:zig-zag}. We now observe that if a map $\ovl{L} \lrar \D^{\natural}$ sends all edges in $\ovl{L}_2$ to $\pi$-Cartesian edges then it sends all edges in $\ovl{L}$ to $\pi$-Cartesian edges. It then follows that $\ovl{\psi}$ sends all edges to $\pi$-Cartesian edges if and only if $\ovl{\rho}$ sends all edges to $\pi$-Cartesian edges. To finish the proof it will hence suffice to show that $\ovl{\rho}$ is initial in  
$ \Fun^{\flat}(\ovl{L}_1,\D^{\natural}) \times_{\Fun(K \times \Del^{\{0\}},\D) \times \Fun(L_1,\C)} \{(\psi,\ovl{\vphi}'_1)\} $ if and only if it sends all edges to $\pi$-Cartesian edges. 

We now invoke our assumption that the functor $\chi: \C^{\op} \lrar \Cat_{\infty}$ maps $\ovl{\vphi}$ to a limit diagram in $\Cat_{\infty}$. By~\cite[Proposition 3.3.3.1]{higher-topos} and using the fact that $K$ is a Kan complex we may conclude that the projection
$$ \Fun^{\flat}(\ovl{L}_1,\D^{\natural}) \times_{\Fun(K \times \Del^{\{0\}},\D) \times \Fun(L_1,\C)} \{(\psi,\ovl{\vphi}'_1)\} \x{\simeq}{\lrar} $$
$$ \Fun(K \times \Del^1,\D) \times_{\Fun(K \times \Del^{\{0\}},\D) \times \Fun(K \times \Del^1,\C)} \{(\psi,\vphi')\} $$
is a weak equivalence, where $\vphi': K \times \Del^1 \lrar \C$ is the composition of $\vphi$ with the projection $K \times \Del^1 \lrar K$. We now observe that $\ovl{\rho}|_{K \times \Del^1}$ is initial in 
$$ \Fun(K \times \Del^1,\D) \times_{\Fun(K \times \Del^{\{0\}},\D)\times \Fun(K \times \Del^1,\C)} \{(\psi,\vphi')\} \simeq (\Fun(K,\D) \times_{\Fun(K,\C)}\{\vphi\})_{\psi/} ,$$
if and only if the morphism in $\Fun(K,\D) \times_{\Fun(K,\C)}\{\vphi\}$ determined by $\ovl{\rho}|_{K \times \Del^1}$ is an equivalence, and so the desired result follows. 
\end{proof}

For an object $(X,\L_X) \in \cS_m(\C)$ and a point $x \in X$, let us denote by $i_x: (\{x\},\L_X(x)) \lrar (X,\L_X)$ the corresponding morphism in $\cS_m(\C)$.
\begin{lem}\label{l:colim-C-1}\
\begin{enumerate}
\item
The $\infty$-category $\cS_m(\C)$ admits $\K_m$-indexed colimits. Furthermore, if $\ovl{\psi}: K^{\triangleright} \lrar \cS_m(\C)$ is a cone diagram with $K \in \K_m$ then $\ovl{\psi}$ is a colimit diagram if and only if $\pi \circ \ovl{\psi}: K^{\triangleright} \lrar \cS_m$ is a colimit diagram and $\ovl{\psi}$ sends every morphism in $K^{\triangleright}$ to a $\pi$-Cartesian edge.
\item
For any $\infty$-category $\D$ with $\K_m$-indexed colimits, an arbitrary functor $\F: \cS_m(\C) \lrar \D$ preserves $\K_m$-indexed colimits if and only if for every $(X,\L_X) \in \cS_m(\C)$ the collection of maps $\F(i_x):\F(\{x\},\L_X(x)) \lrar \F(X,\L_X)$ exhibits $\F(X,\L_X)$ as the colimit of the diagram $\{\F(\{x\},\L_X(x))\}_{x \in X}$.
\end{enumerate}
\end{lem}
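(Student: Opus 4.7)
For part (1), the plan is to apply Lemma~\ref{l:cartesian} to the Cartesian fibration $\pi: \cS_m(\C) \lrar \cS_m$, which is classified by the functor $X \mapsto \C^X$. The only hypothesis of Lemma~\ref{l:cartesian} requiring verification is that this functor sends $\K_m$-indexed colimit cones in $\cS_m$ to limit cones in $\Cat_\infty$; this is immediate, since by Lemma~\ref{l:les} these colimits are computed in the ambient $\infty$-category $\cS$, and the functor $\Fun(-,\C): \cS^{\op} \lrar \Cat_\infty$ sends colimits of spaces to limits of $\infty$-categories. Lemma~\ref{l:cartesian} would then identify the $\pi$-colimit cones lifting a given base colimit cone as precisely those sending every edge of $K^\triangleright$ to a $\pi$-Cartesian edge; such lifts always exist by the Cartesian lifting property of $\pi$. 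Combined with the general fact that for a Cartesian fibration a cone is a colimit diagram iff it is a $\pi$-colimit lifting a colimit in the base (see \cite[4.3.1.5, 4.3.1.15]{higher-topos}), this gives both the existence of $\K_m$-indexed colimits in $\cS_m(\C)$ and the characterization stated in (1).

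For the ``only if'' direction of (2), I would first observe that for every $(X,\L_X)\in\cS_m(\C)$ the collection $\{i_x: (\{x\}, \L_X(x)) \lrar (X,\L_X)\}_{x \in X}$ assembles into a $\K_m$-indexed colimit diagram in $\cS_m(\C)$: its projection under $\pi$ is the canonical cone exhibiting $X$ as the colimit of the constant $X$-indexed diagram with value $\ast$ (Remark~\ref{r:les}), and each $i_x$ is $\pi$-Cartesian by construction, since the associated natural transformation $\L_X(x) \Rightarrow i_x^*\L_X$ is an equivalence. Part (1) then applies, and $\F$ must send this colimit cone to a colimit cone in $\D$.

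For the ``if'' direction of (2), the plan is to mimic the pasting argument in the proof of Lemma~\ref{l:colimits}. Given $\psi: K \lrar \cS_m(\C)$ with $K \in \K_m$, let $\ovl{\psi}$ be its colimit cone with value $(Y,\L_Y)$. By part (1), $Y$ may be identified with the total space of the Kan fibration $p: Y \lrar K$ classifying $\pi\psi$, while the $\pi$-Cartesian cone edges of $\ovl{\psi}$ identify $\L_Y|_{p^{-1}(k)}$ with the local system carried by $\psi(k)$. Define $g: Y \lrar \D$ by $g(y) = \F(\{y\},\L_Y(y))$. Applying the hypothesis to $(Y,\L_Y)$ gives $\F(Y,\L_Y) \simeq \colim_Y g$, while applying it to each $\psi(k)$ shows $p_!g \simeq \F\circ\psi$ via the pointwise formula for left Kan extensions. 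The pasting lemma for left Kan extensions along $Y \x{p}{\lrar} K \lrar \ast$ then identifies $\colim_K \F\circ\psi$ with $\colim_Y g \simeq \F(Y,\L_Y)$. The main delicate step will be to verify compatibility of the cone maps coming from $\F\circ\ovl{\psi}$ with those produced by pasting, which is a diagram chase using the Cartesian-edge characterization of part (1) applied to the cone morphisms $\ovl{\psi}(k\to\infty)$.
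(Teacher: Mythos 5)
Your overall strategy is the same as the paper's: in (1) you verify that the classifying functor $X \mapsto \C^X$ sends $\K_m$-indexed colimit cones in $\cS_m$ to limit cones in $\Cat_\infty$ and then feed this into Lemma~\ref{l:cartesian} together with the principle that a cone over a colimit cone in the base is a colimit diagram precisely when it is a $\pi$-colimit diagram; in (2) the ``only if'' direction is read off from (1) exactly as in the paper, and your ``if'' direction is the same pasting-of-left-Kan-extensions argument modelled on Lemma~\ref{l:colimits}, with the cone-compatibility handled by working with natural transformations of diagrams.

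There is, however, one step in (1) whose justification as written does not work: the existence of the lift $\ovl{\psi}$ of the base colimit cone $\ovl{\vphi}$ with all cone edges $\pi$-Cartesian is \emph{not} a consequence of ``the Cartesian lifting property of $\pi$''. Cartesian lifts are available with prescribed \emph{target}, whereas here the sources $\psi(k)$ are prescribed and you must produce an object of $\C^{C}$ over the colimit $C$ of $\pi\circ\psi$ whose restrictions along the cone maps $\vphi(k)\lrar C$ recover $\psi$ coherently; this is precisely the descent statement encoded by the limit condition you verified, not a lifting property. The fix is the one the paper uses: since $\chi_\C\circ\ovl{\vphi}^{\op}$ is a limit diagram and $K$ is a Kan complex (so every section over $K$ is automatically Cartesian), \cite[Proposition 3.3.3.1]{higher-topos} identifies Cartesian sections over $K^{\triangleright}$ with those over $K$, yielding the desired extension $\ovl{\psi}$. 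Relatedly, the ``only if'' half of your characterization in (1) (that an arbitrary colimit cone has colimit base and Cartesian edges) does not follow directly from the relative-colimit criterion, which presupposes that the base cone is a colimit; it follows, as in the paper, by uniqueness of colimits, comparing an arbitrary colimit cone with the one just constructed. With these two repairs your argument coincides with the paper's proof.
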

\begin{proof}
Let us first prove (1). By definition, the Cartesian fibration $\pi: \cS_m(\C) \lrar \cS_m$ is classified by the functor $\chi_\C: \cS_m^{\op} \lrar \Cat_{\infty}$ given by $\chi_\C(X) = \C^X$. Since the inclusion $\cS_m \hrar \cS$ preserves $\K_m$-indexed colimits (Lemma~\ref{l:les}) and the inclusion $\cS \lrar \Cat_{\infty}$ preserves all colimits it follows that $\chi_\C$ sends $\K_m$-indexed colimit diagrams to limit diagrams in $\Cat_{\infty}$. Now let $K$ be an $m$-finite Kan complex, let $\psi: K \lrar \cS_m(\C)$ be a diagram and let $\vphi = \pi \circ \psi: K \lrar \cS_m$. Since $\cS_m$ admits $\K_m$-indexed colimits we may extend $\vphi$ to a colimit diagram $\ovl{\vphi}: K^{\triangleright} \lrar \cS_m$. Since $\chi_\C \circ \ovl{\vphi}^{\op}: (K^{\op})^{\triangleleft} \lrar \Cat_{\infty}$ is a limit diagram and $K$ is a Kan complex we may use~\cite[Proposition 3.3.3.1]{higher-topos} to deduce the existence of a dotted lift
$$ \xymatrix{
K \ar^{\psi}[r]\ar[d] & \cS_m(\C) \ar^{\pi}[d] \\
K^{\triangleright} \ar@{-->}[ur]^{\ovl{\psi}}\ar^{\ovl{\vphi}}[r] & \cS_m \\
}$$
such that $\ovl{\psi}$ sends all edges in $K^{\triangleright}$ to $\pi$-Cartesian edges. By Lemma~\ref{l:cartesian} we may conclude that $\ovl{\psi}$ is a $\pi$-colimit diagram, and since $\ovl{\vphi}$ is a colimit diagram it follows that $\ovl{\psi}$ is also a colimit diagram in $\cS_m(\C)$. Finally, by uniqueness of colimits this construction covers all colimit of $\K_n$-indexed diagram, and so the characterization of colimits given in (1) follows.

We shall now prove (2). The ``only if'' direction is clear since the collection of maps $i_x:(\{x\},\L_X(x)) \lrar (X,\L_X)$ exhibits $(X,\L_X)$ as the colimit in $\cS_n(\C)$ of the diagram $\{(\{x\},\L_X(x))\}_{x \in X}$ by the characterization of colimits diagram given in (1). Now suppose that for every $X \in \cS_n(\C)$ the collection $\F(i_x):\F(\{x\},\L_X(x)) \lrar \F(X,\L_X)$
exhibits $\F(X,\L_X)$ as the colimit of the diagram $\{(\{x\},\L_X(x))\}$. Let $Y \in \K_n$ be an $n$-finite space and let $\vphi: Y \lrar \cS_m(\C)$ be a $Y$-indexed diagram. For each $y \in Y$ let us write $\vphi(y) = (Z_y,\L_y)$ where $Z_y$ is an $m$-finite space and $\L_y: Z_y \lrar \C$ is a local system. By (1) we may identify the colimit of $\vphi$ in $\cS_m(\C)$ with the pair $(Z,h)$ where $Z$ is the total space of the left fibration $p:Z \lrar Y$ classified by $\vphi$ and $\L: Z \lrar \C$ is the essentially unique local system such that $\L|_{Z_y} = \L_y$. We now proceed as in the proof of Lemma~\ref{l:colimits}. Let $\psi := \F\circ \vphi: Y \lrar \D$ and let $\rho: Z \lrar \D$ be the diagram which sends $z \in Z$ to $\F(\{z\},\L(z))$. 
We then have a natural transformation $\sig:\rho \Rightarrow p^*\psi$ and our assumption on $\F$ implies that $\sig$ exhibits $\psi: Y \lrar \D$ as a left Kan extension of $\rho: Z \lrar \D$ along $p: Z \lrar Y$. Similarly, our assumption implies that the natural transformation from $\rho$ to the constant diagram on $\F(Z,\L)$ exhibits the latter as the colimit of $\rho$. By the pasting lemma for left Kan extensions we may now conclude that the natural transformation from $\psi$ to the constant diagram on $\F(Z,\L)$ exhibits the latter as the colimit of $\psi$, as desired.
%
%
%
%
\end{proof}

Now let $(X,\L_X) \in \cS_m(\C)$ be an object. The forgetful functor $\pi:\cS_m(\C)_{/(X,\L_X)} \lrar (\cS_m)_{/X}$ is a Cartesian fibration classifying the functor $(Y \x{f}{\lrar} X) \mapsto \C^Y_{/f^*\L_X}$. Since all the fibers of $\pi$ have terminal objects we may choose an essentially unique Cartesian section $s: (\cS_m)_{/X} \lrar \cS_m(\C)_{/(X,\L_X)}$ such that $s(f:Y \lrar X)$ is terminal in the fiber over $f$ for every $f:Y \lrar X$ in $(\cS_m)_{/X}$. 
We may then form the diagram of $\infty$-categories
$$ \xymatrix{
V \ar[r]\ar_-{s'}[d] & (\cS_m)_{/X} \ar^{s}[d] \\
\C \times_{\cS_m(\C)} \left(\cS_m(\C)_{/(X,\L_X)}\right) \ar[r]\ar[d] & \cS_m(\C)_{/(X,\L_X)} \ar[d] \\
\C \ar[r] & \cS_m(\C)  \\
}$$
where $V$ is chosen so that the top square is Cartesian (and the bottom square is Cartesian by construction as well). It then follows that the external rectangle is Cartesian. Using this we may then identify objects of $V$ with pairs $(C,f: Y \lrar X,\eta)$ where $\eta$ is an equivalence from $(\ast,C)$ to $(Y,f^*\L_X)$ in $\cS_m(\C)$. In particular, $Y \simeq \ast$ must be contractible and the data of $\eta$ is just an equivalence from $\L_X(f(Y)) = \L_X(f(\ast))$ to $C$ in $\C$. We may hence identify $V$ with the $\infty$-groupoid $X$, in which case the map 
$$ s': X \lrar \C \times_{\cS_m(\C)} \left(\cS_m(\C)_{/(X,\L_X)}\right) $$ 
can be informally described by the formula
$$ s'(x) = (\L_X(x),(\{x\},\L_X(x))) \in \C \times_{\cS_m(\C)} \left(\cS_m(\C)_{/(X,\L_X)}\right) $$

\begin{lem}\label{l:cofinal}
The map $s'$ is cofinal.
\end{lem}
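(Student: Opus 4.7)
The plan is to apply Joyal's cofinality criterion~\cite[Theorem 4.1.3.1]{higher-topos}: it suffices to show that for every object $b = (C, x, T)$ in the target $B := \C \times_{\cS_m(\C)} (\cS_m(\C)_{/(X,\L_X)})$, the comma $\infty$-category $X \times_B B_{b/}$ is weakly contractible. The first step is to unwind all of the descriptions. Since a morphism $(\ast, C) \lrar (X, \L_X)$ in $\cS_m(\C)$ is precisely the data of a point $x \in X$ together with a morphism $T: C \lrar \L_X(x)$ in $\C$, I obtain an informal description of $B$ whose objects are triples $(C, x, T)$ and whose morphisms $(C, x, T) \lrar (C', x', T')$ are triples $(\alpha: C \lrar C', \gamma: x \lrar x', h)$, with $h$ a filling witnessing $T' \circ \alpha \simeq \L_X(\gamma) \circ T$. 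Under this description, $s'$ sends $y \in X$ to $(\L_X(y), y, \mathrm{id})$.

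With this in hand, objects of $X \times_B B_{b/}$ unwind to quadruples $(y, \gamma: x \lrar y, \alpha: C \lrar \L_X(y), h: \alpha \simeq \L_X(\gamma) \circ T)$. I would then analyze the forgetful functor
$$\Phi: X \times_B B_{b/} \lrar X_{x/}, \qquad (y, \gamma, \alpha, h) \mapsto (y, \gamma).$$
The homotopy fiber of $\Phi$ over a point $(y, \gamma)$ is the space of pairs $(\alpha, h)$ with $\alpha \in \Map_\C(C, \L_X(y))$ and $h$ a path from $\alpha$ to the fixed point $\L_X(\gamma) \circ T$; this is the over-space $\Map_\C(C, \L_X(y))_{/\L_X(\gamma) \circ T}$ of a Kan complex at a point, which is weakly contractible. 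After checking that $\Phi$ is well-behaved enough for the pointwise-contractible-fibers argument to apply (e.g.\ that $\Phi$ is a left fibration), this promotes the pointwise statement to the conclusion that $\Phi$ is a weak equivalence.

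Finally, $X_{x/}$ admits $\mathrm{id}_x \in X_{x/}$ as an initial object and is therefore weakly contractible. Combined with the previous step this verifies Joyal's criterion and concludes the proof.

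The principal obstacle I anticipate is bookkeeping rather than conceptual: carefully aligning the simplicial-set-level descriptions of the various slice and fiber-product constructions with the informal descriptions above, in particular because the definition of $s'$ goes through a zig-zag of pullbacks involving the essentially unique Cartesian section $s$. One must make sure that the comma $\infty$-category really is equivalent to $X_{x/}$ through the projection $\Phi$, and that the unwinding of morphisms in $B$ (which secretly involves slice fibers of a Cartesian fibration) matches the intuitive triple $(\alpha,\gamma,h)$ above. Once these identifications are in place, the argument is essentially formal.
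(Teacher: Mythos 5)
Your proposal is correct in outline, but it takes a genuinely different route from the paper. The paper's own proof is a one-liner: $s'$ is by construction a base change of $s$, and $s$ is cofinal because it is the Cartesian section of the Cartesian fibration $\pi$ picking out the terminal object of each fiber (such a section is right adjoint to $\pi$, and right adjoints are cofinal); since $\C \times_{\cS_m(\C)} \bigl(\cS_m(\C)_{/(X,\L_X)}\bigr)$ is equivalent to $\{\ast\}\times_{\cS_m}\bigl(\cS_m(\C)_{/(X,\L_X)}\bigr)$, the base change amounts to restricting the base of $\pi$ along $\{\ast\}\times_{\cS_m}(\cS_m)_{/X}\hrar (\cS_m)_{/X}$, so $s'$ is again a fiberwise-terminal Cartesian section and the same argument applies. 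You instead verify Joyal's criterion by hand: your unwinding of $B=\C \times_{\cS_m(\C)} \bigl(\cS_m(\C)_{/(X,\L_X)}\bigr)$ (objects $(C,x,T)$, morphisms $(\alp,\gam,h)$, $s'(y)=(\L_X(y),y,\Id)$), of the comma $\infty$-categories $X\times_B B_{b/}$, and the contractibility of the $(\alp,h)$-spaces and of $X_{x/}$ are all accurate, so the blueprint works. The paper's argument buys brevity and avoids any mapping-space computation; yours is more self-contained and makes the comma categories explicit, at the cost of the bookkeeping you flag. One small correction to your middle step: $\Phi$ need not be a left fibration, and you do not need it to be. It suffices to observe that $X\times_B B_{b/}\lrar X$ and $X_{x/}\lrar X$ are left fibrations over the Kan complex $X$ (hence Kan fibrations with Kan total spaces) and that $\Phi$ is a map over $X$; your fiber computation shows exactly that $\Phi$ is an equivalence on fibers over each $y\in X$ (the fibers being $\Map_B(b,s'(y))$ and $\Map_X(x,y)$, compared by forgetting the contractible $(\alp,h)$-data), and a fiberwise equivalence between left fibrations over a common base is a weak homotopy equivalence of total spaces. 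With that adjustment, your verification of Joyal's criterion is complete.
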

\begin{proof}
The map $s'$ is a base change of the map $s$, which is cofinal since it is a terminal section of a Cartesian fibration.
\end{proof}

\begin{cor}\label{c:free-colimit}
The inclusion $\iota:\C \lrar \cS_m(\C)$ exhibits $\cS_m(\C)$ as the $\infty$-category obtained from $\C$ by freely adding $\K_m$-indexed colimits. In particular, if $\D$ is an $\infty$-category with $\K_m$-indexed colimits then restriction along $\iota$ induces an equivalence of $\infty$-categories
$$ \Fun_{\K_m}(\cS_m(\C),\D) \x{\simeq}{\lrar} \Fun(\C,\D) $$ 
\end{cor}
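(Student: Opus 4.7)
The plan is to invoke \cite[Proposition 4.3.2.15]{higher-topos}, which will reduce the desired equivalence $\Fun_{\K_m}(\cS_m(\C),\D) \x{\simeq}{\lrar} \Fun(\C,\D)$ to the conjunction of two assertions: (a) every functor $G:\C \lrar \D$ admits a left Kan extension along $\iota:\C \lrar \cS_m(\C)$, and (b) an arbitrary extension $\bar{G}:\cS_m(\C) \lrar \D$ of $G$ is a left Kan extension along $\iota$ if and only if $\bar{G}$ preserves $\K_m$-indexed colimits. Both assertions will be obtained by combining the pointwise formula for left Kan extensions with the cofinality result of Lemma~\ref{l:cofinal} and the characterization of $\K_m$-colimit preserving functors supplied by Lemma~\ref{l:colim-C-1}(2).

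To establish (a), I will appeal to \cite[Lemma 4.3.2.13]{higher-topos}: it suffices to produce, for every $(X,\L_X) \in \cS_m(\C)$, a colimit in $\D$ of the composed diagram
$$G_{(X,\L_X)}: \C \times_{\cS_m(\C)} \cS_m(\C)_{/(X,\L_X)} \lrar \C \x{G}{\lrar} \D.$$
The cofinal map $s': X \lrar \C \times_{\cS_m(\C)} \cS_m(\C)_{/(X,\L_X)}$ of Lemma~\ref{l:cofinal} identifies this colimit with the colimit of the $X$-indexed diagram $x \mapsto G(\L_X(x))$, which exists because $X$ is $m$-finite and $\D$ has $\K_m$-indexed colimits.

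For (b), the same cofinality of $s'$ shows that an extension $\bar{G}:\cS_m(\C) \lrar \D$ is a pointwise left Kan extension of $G$ at $(X,\L_X)$ precisely when the cone diagram $X^{\triangleright} \lrar \D$ obtained by precomposing $\bar{G}$ with $(s')^{\triangleright}$ -- which sends each $x \in X$ to $\bar{G}(\{x\},\L_X(x)) = G(\L_X(x))$, sends the cone point to $\bar{G}(X,\L_X)$, and whose structure maps are $\bar{G}(i_x)$ -- is a colimit diagram in $\D$. Demanding this at every $(X,\L_X)$ is exactly the criterion of Lemma~\ref{l:colim-C-1}(2) for $\bar{G}$ to preserve $\K_m$-indexed colimits. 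The only point that deserves care is verifying that the cone induced by $(s')^{\triangleright}$ genuinely identifies with the canonical cone $\{\bar G(i_x):\bar{G}(\{x\},\L_X(x)) \lrar \bar{G}(X,\L_X)\}_{x \in X}$; this is a diagrammatic matter but is the one step I would write out explicitly, after which (a) and (b) assemble into the full statement via \cite[Proposition 4.3.2.15]{higher-topos}.
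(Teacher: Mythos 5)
Your proposal is correct and follows essentially the same route as the paper's proof: cofinality of $s'$ (Lemma~\ref{l:cofinal}) together with \cite[Lemma 4.3.2.13]{higher-topos} gives existence and the pointwise criterion for left Kan extensions along $\iota$, Lemma~\ref{l:colim-C-1}(2) converts that criterion into preservation of $\K_m$-indexed colimits, and \cite[Proposition 4.3.2.15]{higher-topos} then yields the stated equivalence. The only (minor) ingredient worth making explicit is Lemma~\ref{l:colim-C-1}(1), which guarantees that $\cS_m(\C)$ actually admits $\K_m$-indexed colimits so that $\Fun_{\K_m}(\cS_m(\C),\D)$ is meaningful.
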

\begin{proof}
By Lemma~\ref{l:colim-C-1}(1) we know that $\cS_m(\C)$ has $\K_m$-indexed colimits. Now suppose that $\D$ is an $\infty$-category that admits $\K_m$-indexed colimits and let $\F: \C \lrar \D$ be a functor. Since $\D$ admits colimits indexed by $X$ for every $X \in \cS_m$ Lemma~\ref{l:cofinal} and~\cite[Lemma 4.3.2.13]{higher-topos} together imply that any functor $\F: \C \lrar \D$ admits a left Kan extension $\ovl{\F}: \cS_m(\C) \lrar \D$, and that an arbitrary functor $\ovl{\F}: \cS_m(\C) \lrar \D$ extending $\F$ is a left Kan extension of $\F$ if and only if for every $(X,\L_X) \in \cS_m(\C)$ the collection of maps $\{\ovl{\F}(i_x):\ovl{\F}(\{x\},\L_X(x)) = \F(\L_X(x)) \lrar \ovl{\F}(X,\L_X)\}$ exhibit $\ovl{\F}(X,\L_X)$ as the colimit of the diagram $\{\F(\L_X(x))\}_{x \in X}$. By Lemma~\ref{l:colim-C-1}(2) the latter condition is equivalent to the condition that $\ovl{\F}$ preserves $\K_m$-indexed colimits. The desired result now follows from the uniqueness of left Kan extensions (see \cite[Proposition 4.3.2.15]{higher-topos}).
\end{proof}

We now address the universal property of $\cS^m_m(\C)$ as described in the second claim of Theorem~\ref{t:univ}. We begin with the following Lemma:

\begin{lem}\label{l:colim-C-2}\
\begin{enumerate}
\item
The $\infty$-category $\cS^m_m(\C)$ admits $\K_m$-indexed colimits and the inclusion $\cS_m(\C) \lrar \cS^m_m(\C)$ preserves $\K_m$-indexed colimits. Furthermore, every $\K_m$-indexed diagram in $\cS^m_m(\C)$ comes from a $\K_m$-indexed diagram in $\cS_m(\C)$.
\item
For any $\infty$-category $\D$ with $\K_m$-indexed colimits, an arbitrary functor $\F: \cS^m_m(\C) \lrar \D$ preserves $\K_m$-indexed colimits if and only if for every $(X,\L_X) \in \cS^m_m(\C)$ the collection of maps $\{\F(i_x):\F(\{x\},\L_X(x)) \lrar \F(X,\L_X)\}$ exhibit $\F(X,\L_X)$ as the colimit of the diagram $\{\F(\{x\},\L_X(x))\}$.
\item
The functor $\iota'':\cS^m_m \times \C \lrar \cS^m_m(\C)$ preserves $\K_m$-indexed colimits in the $\cS^m_m$ variable.
\end{enumerate}
\end{lem}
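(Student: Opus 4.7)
My plan is to adapt the arguments of Lemma~\ref{l:yoneda}, Proposition~\ref{p:adjoint} and Corollary~\ref{c:n-colimits} to the relative setting over $\C$, tracking the additional local-system data throughout. A key tool will be the forgetful functor $\cS_m(\C) \lrar \cS_m$, which is compatible with the coWaldhausen structures (every $\pi$-Cartesian edge maps to some edge in $\cS_m$, and since $\cS_{m,m} = \cS_m$ the full coWaldhausen structure on $\cS_m$ used to define $\cS^m_m$ imposes no condition), and therefore induces a forgetful functor $\cS^m_m(\C) \lrar \cS^m_m$ that will let me transport facts about equivalences from the undecorated case.

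For part (1), I will first prove the analog of Lemma~\ref{l:yoneda}: every equivalence in $\cS^m_m(\C)$ is homotopic to the image of an equivalence in $\cS_m(\C)$. Given a span $(X,\L_X) \x{(p,T)}{\llar} (Z,\L_Z) \x{(q,S)}{\lrar} (Y,\L_Y)$ representing an equivalence, its image $X \x{p}{\llar} Z \x{q}{\lrar} Y$ is an equivalence in $\cS^m_m$, so by Lemma~\ref{l:yoneda} the map $p$ has contractible fibers and is hence an equivalence of spaces; a homotopy inverse $r:X \lrar Z$ together with the equivalence $T$ (which is an equivalence since $(p,T)$ is $\pi$-Cartesian) then replaces the span by one in the image of $\cS_m(\C)$. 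This yields the analogs of Corollary~\ref{c:wide} and Corollary~\ref{c:cool}, establishing the last clause of (1). Next, by Lemma~\ref{l:colim-C-1}(2) the remaining clauses of (1) reduce to showing that for every $(X,\L_X)$ the maps $\{i_x\}_{x \in X}$ exhibit $(X,\L_X)$ as the colimit in $\cS^m_m(\C)$ of the diagram $\{(\ast,\L_X(x))\}_{x \in X}$. Following Proposition~\ref{p:adjoint}, I will describe $\Map_{\cS^m_m(\C)}((X,\L_X),(Y,\L_Y))$ as the classifying space of pairs $((p,q): Z \lrar X \times Y,\, S: p^*\L_X \Rightarrow q^*\L_Y)$ with $Z$ an $m$-finite space. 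The straightening-unstraightening equivalence $\St_X: ((\cS_m)_{/X\times Y})^{\simeq} \x{\simeq}{\lrar} \Fun(X,(\cS_m)_{/Y})^{\simeq}$ of Proposition~\ref{p:adjoint} handles the $(p,q)$ component, while the data $S$ decomposes fiberwise: over $Z_x := Z \times_X \{x\}$ the restriction $p^*\L_X$ is constant with value $\L_X(x)$, so $S|_{Z_x}$ is precisely the natural-transformation datum of a morphism $(\ast,\L_X(x)) \lrar (Y,\L_Y)$ in $\cS_m(\C)$, and a natural transformation out of $Z$ is the same as a compatible family of such restrictions parametrized by $x \in X$.

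Part (2) then follows from (1) combined with Lemma~\ref{l:colim-C-1}(2), exactly as Corollary~\ref{c:n-colimits} follows from Corollary~\ref{c:cool} and Proposition~\ref{p:adjoint}. For (3), by (2) it suffices to show that for each fixed $C \in \C$ the functor $\iota''(-, C): \cS^m_m \lrar \cS^m_m(\C)$ preserves $\K_m$-indexed colimits. By Corollary~\ref{c:n-colimits} applied to $\cS^m_m$, this further reduces to checking that $\{\iota''(i_x, C): (\ast,C) \lrar (X,\ovl{C})\}_{x \in X}$ exhibits $(X,\ovl{C})$ as the colimit of the constant $X$-indexed diagram on $(\ast,C)$ in $\cS^m_m(\C)$; since $\iota''(i_x,C)$ coincides with $i_x$ by construction of $\iota''$, this is the special case $\L_X = \ovl{C}$ of the colimit characterization already established in (1). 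The main obstacle lies in the mapping-space computation of (1): carrying out the straightening argument of Proposition~\ref{p:adjoint} in the presence of the natural-transformation datum $S$ requires a parametrized version identifying the local system $x \mapsto \Map_{\cS^m_m(\C)}((\ast,\L_X(x)),(Y,\L_Y))$ on $X$ with fiberwise restriction of $S$ along the Kan fibration $p: Z \lrar X$, which is the only step that is not a direct transport of the undecorated argument.
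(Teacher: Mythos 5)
Your proposal is correct and follows essentially the same route as the paper: reduce via the decorated analogue of Lemma~\ref{l:yoneda} and Lemma~\ref{l:colim-C-1}(2) to showing that the point-inclusions $i_x$ exhibit $(X,\L_X)$ as a colimit in $\cS^m_m(\C)$, and verify this by the straightening-unstraightening decomposition over $X$ (the paper likewise handles the extra datum $S$ by exhibiting $Z$ as the colimit of its fibers $Z_x$, so your ``parametrized'' step is exactly its argument). Your treatment of (3) via Corollary~\ref{c:n-colimits} and the constant-local-system case of (1) is only a cosmetic repackaging of the paper's reduction through $\cS_m\times\{C\}$.
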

\begin{proof}
Let us begin with Claim (1). We first claim that every equivalence in $\cS^m_m(\C)$ is in the image of the map $\cS_m(\C) \lrar \cS^m_m(\C)$. Indeed, a morphism in $\cS^m_m(\C)$ is given by a span
\begin{equation}\label{e:span-18}
\xymatrix{
& (Z,\L_Z) \ar_{(p,T)}[dl]\ar^{(q,S)}[dr] &\\
(X,\L_X) && (Y,\L_Y)\\
}
\end{equation}
such that $T$ is an equivalence in $\C^Z$. If~\eqref{e:span-18} is an equivalence then its image in $\cS^m_m$ is an equivalence which means by Lemma~\ref{l:yoneda} that $p: Z \lrar X$ is an equivalence in $\cS_m$ and hence that $(p,T): (Z,\L_Z) \lrar (X,\L_X)$ is an equivalence in $\cS_m(\C)$. In this case the span~\eqref{e:span-18} is essentially equivalent to an honest map, i.e., is in the image of $\cS_m(\C) \lrar \cS^m_m(\C)$. Since the inclusion $\cS_m(\C) \lrar \cS^m_m(\C)$ is faithful it follows that any $\K_m$-indexed diagram in $\cS^m_m(\C)$ is the image of an essentially unique $\K_m$-indexed diagram in $\cS_m(\C)$. It will hence suffice to prove that the map $\cS_m(\C) \lrar \cS^m_m(\C)$ preserves $\K_m$-indexed colimits.

By Lemma~\ref{l:colim-C-1}(2) it will suffice to show that for every $(X,\L_X) \in \cS^m_m(\C)$, the collection of maps $(\{x\},\L_X(x)) \lrar (X,\L_X)$ exhibit $(X,\L_X)$ as the colimit of the $X$-indexed diagram $\{(\{x\},\L_X(x))\}_{x \in X}$ in $\cS^m_m(\C)$. In other words, we need to show that the data of a span of the form~\eqref{e:span-18} such that $T: \L_Z \x{\simeq}{\lrar} p^*\L_X$ is an equivalence in $\C^Z$ is equivalent to the data of an $X$-indexed family of spans
\begin{equation}\label{e:span-20}
\xymatrix{
& (Z_x,\L_Z|_{Z_x}) \ar_{(p|_{Z_x},T|_{Z_x})}[dl]\ar^{(q|_{Z_x},S|_{Z_x})}[dr] &\\
(\{x\},\L_X(x)) && (Y,\L_Y)\\
}
\end{equation}
where $Z_x$ denotes the homotopy fiber of $p:Z \lrar X$ over $x \in X$. But this is now a consequence of the straightening-unstraightening equivalence which implies that the collection of fiber functors $i_x^*: (\cS_m)_{/X} \lrar \cS$ identifies $(\cS_m)_{/X}$ with $\cS_m^X$, and furthermore for every $Z \lrar X$ the collection of maps $i_x^*Z \lrar Z$ exhibits $Z$ as the homotopy colimit of the $X$-indexed family $\{Z_x\}_{x \in X}$. Claim (2) is now a direct consequence of the above and Lemma~\ref{l:colim-C-1}(2).

Let us now prove Claim (3). We have a commutative diagram of $\infty$-categories
$$ \xymatrix{
\cS_m \times \C \ar[d]\ar^{\iota'}[r] & \cS_m(\C) \ar[d] \\
\cS^m_m \times \C \ar^{\iota''}[r] & \cS^m_m(\C) \\
}$$
where the vertical maps are faithful. Let $C \in \C$ be an object, let $K$ be an $m$-finite Kan complex and let $\vphi: K \lrar \cS^m_m \times \{C\}$ be a diagram. By Lemma~\ref{l:yoneda} the diagram $\vphi$ is the image of an essentially unique diagram $\vphi': K \lrar \cS_m \times \{C\}$, and the inclusion $\cS_m \times \{C\} \lrar \cS^m_m \times \{C\}$ preserves $\K_m$-indexed colimits. By Claim (2) above it will suffice to show that the top horizontal map preserves $\K_m$-indexed colimits, which is clear in light of the characterization of colimit cones in $\cS_m(\C)$ given in~\ref{l:colim-C-1}(1).
\end{proof}

Since $\cS_m(\C)$ admits $\K_m$-indexed colimits it carries a canonical action of $\cS_m$, given informally by the formula $X \otimes (Y,\L_Y) = \colim_{x \in X} (Y,\L_Y) = (X \times Y,p_Y^*\L_Y)$, where $p_Y: X \times Y \lrar Y$ is the projection on the second factor. By Lemma~\ref{l:colim-C-1}(1) we get that if $f: X \lrar X'$ is a map of $m$-finite Kan complexes then the induced edge $X \otimes (Y,\L_Y) \lrar X' \otimes (Y,\L_Y)$ is $\pi$-Cartesian in $\cS_m(\C)$. On the other hand, for a fixed space $X$ and a $\pi$-Cartesian map $(Y,\L_Y) \lrar (Z,\L_Z)$ the induced map $X \otimes (Y,\L_Y) \lrar X \otimes (Z,\L_Z)$ is again $\pi$-Cartesian. It then follows that the action of $\cS_m$ on $\cS_m(\C)$ induces an action of $\Span(\cS_m)$ on $\Span(\cS_m(\C),\cS_m^{\car}(\C)) = \cS^m_m(\C)$. Furthermore, by Lemma~\ref{l:colim-C-2} and Lemma~\ref{l:yoneda} this action preserves $\K_m$-indexed colimits in each variable separately. By Corollary~\ref{c:amusing} we now get that $\cS^m_m(\C)$ is \textbf{$m$-semiadditive}.

Let us now consider the left marked mapping cone 
$$ \Cone_m(\C) =  \left[\cS^m_{m} \times \C \times(\Del^1)^{\sharp}\right]\coprod_{\cS^m_{m} \times \C \times\Del^{\{1\}}}\cS^m_m(\C) $$ 
of the inclusion $\iota'': \cS^m_{m} \times \C  \hrar \cS^{m}_{m}(\C)$. Let
$ \Cone_m \hrar \M^{\natural} \x{r}{\lrar} \Del^1 $
be a factorization of the projection $\Cone_m \lrar (\Del^1)^{\sharp}$ into a trivial cofibration follows by a fibration in the coCartesian model structure over $(\Del^1)^{\sharp}$. In particular, $r: \M \lrar \Del^1$ is a coCartesian fibration and the marked edges of $\M^{\natural}$ are exactly the $r$-coCartesian edges. Let $\iota_0: \cS^m_{m} \times \C \hrar \M \times_{\Del^1} \Del^{\{0\}} \subseteq \M$ and $\iota_1: \cS^{m}_{m}(\C) \hrar \M \times_{\Del^1} \Del^{\{1\}} \subseteq \M$ be the corresponding inclusions. Then $\iota_0$ and $\iota_1$ exhibit $r:\M \lrar \Del^1$ as a correspondence from $\cS^m_{m} \times \C $ to $\cS^{m}_{m}(\C)$ which is the one associated to the functor $\iota'':\cS^m_{m} \times \C  \hrar \cS^{m}_{m}(\C)$. 

\begin{prop}\label{p:step-C}
Let $\D$ be an $m$-semiadditive $\infty$-category and let $\F: \cS^m_m \times \C \lrar \D$ be a functor which preserves $\K_m$-indexed colimits in the $\cS^m_m$ variable. Then the following holds:
\begin{enumerate}
\item
$\F$ admits a left Kan extension 
$$ \xymatrix{
\cS^m_m \times \C \ar_{\iota_0}[d]\ar^{\F}[r] & \D \\
\M \ar@{-->}[ur]_{\ovl{\F}} & \\
}$$
\item
An arbitrary functor $\ovl{\F}: \M \lrar \D$ extending $\F$ is a left Kan extension if and only if $\ovl{\F}$ maps $r$-coCartesian edges in $\M$ to equivalences in $\D$ and $\ovl{\F} \circ \iota_1: \cS^m_m(\C) \lrar \D$ preserves $\K_m$-indexed colimits.
\end{enumerate}
\end{prop}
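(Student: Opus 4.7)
The plan is to mimic the structure of Propositions~\ref{p:step-1} and~\ref{p:rkan}, reducing the existence and characterization of the left Kan extension to a cofinality calculation inside a suitable comma category. For each $w \in \M$, set $\I_w := (\cS^m_m \times \C) \times_{\M} \M_{/w}$. When $w = \iota_0(Y, C)$ lies in the source fiber, $\I_w$ has a terminal object $(Y, C, \mathrm{id})$, so the relevant colimit is just $\F(Y, C)$ and any extension trivially satisfies the Kan extension condition there. The nontrivial case is $w = \iota_1(X, \L_X)$: via Remark~\ref{r:map-span}, objects of $\I_w$ correspond to tuples $(Y, C, \sig)$ where $\sig$ is a span
$$
\xymatrix{
& (Z, \L_Z) \ar_{(p,T)}[dl]\ar^{(q,S)}[dr] & \\
(Y, \ovl{C}) && (X, \L_X) \\
}
$$
in $\cS_m(\C)$ whose left leg $(p, T)$ is $\pi$-Cartesian, so that $\L_Z \simeq p^*\ovl{C}$ is constant on $C$.

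The core step is to show that the functor $j_X: X \lrar \I_{\iota_1(X, \L_X)}$ sending $x \in X$ to the triple $(\ast, \L_X(x), i_x)$, where $i_x: (\ast, \L_X(x)) \lrar (X, \L_X)$ is the canonical morphism of $\cS_m(\C) \subseteq \cS^m_m(\C)$, is cofinal. Following the pattern of Proposition~\ref{p:step-1}, I would identify the mapping space $\Map_{\I_{\iota_1(X, \L_X)}}(j_X(x), (Y, C, \sig))$ by combining Remark~\ref{r:map-span} with the straightening--unstraightening equivalence for $\pi: \cS_m(\C) \lrar \cS_m$; the answer is a space of lifts of $x$ through $q: Z \lrar X$ together with compatible morphisms $\L_X(x) \lrar C$. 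Corepresentability of the resulting functor $X \lrar \cS$ by an explicit initial object would then imply weak contractibility of the relevant slice $\infty$-groupoid, establishing cofinality. Given this, the colimit of $\F|_{\I_{\iota_1(X, \L_X)}}$ reduces to the colimit of the $X$-indexed diagram $x \mapsto \F(\ast, \L_X(x))$, which exists in $\D$ since $\D$ is $m$-semiadditive. Combined with the source-fiber case, this proves (1) via \cite[Lemma 4.3.2.13]{higher-topos}.

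For (2), an arbitrary extension $\ovl{\F}: \M \lrar \D$ of $\F$ corresponds to a pair $(\ovl{\F}_1, \tau)$, where $\ovl{\F}_1 := \ovl{\F} \circ \iota_1: \cS^m_m(\C) \lrar \D$ and $\tau: \F \Rightarrow \ovl{\F}_1 \circ \iota''$ is the natural transformation obtained by evaluating $\ovl{\F}$ on $r$-coCartesian edges; the condition that $\ovl{\F}$ sends $r$-coCartesian edges to equivalences is precisely that $\tau$ is a natural equivalence, whose consistency with the Kan extension value is secured by the hypothesis that $\F$ preserves $\K_m$-indexed colimits in the first variable together with Lemma~\ref{l:colim-C-2}(3). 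Under this identification, cofinality of $j_X$ reduces the Kan extension condition at $\iota_1(X, \L_X)$ to the assertion that the cone $\{\F(\ast, \L_X(x)) \lrar \ovl{\F}_1(X, \L_X)\}_{x \in X}$ is a colimit cone in $\D$, which by Lemma~\ref{l:colim-C-2}(2) applied to every $(X, \L_X)$ is in turn equivalent to $\ovl{\F}_1$ preserving $\K_m$-indexed colimits.

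The main obstacle is the cofinality calculation for $j_X$. While the overall shape of the argument mirrors Proposition~\ref{p:step-1}, the span structure of $\cS^m_m(\C)$ layered over the total space $\cS_m(\C)$ of the Cartesian fibration $\pi$ produces a comma category whose mapping spaces require careful unraveling, and the explicit identification of the corepresenting object rests on the pullback-of-spans composition law combined with the $\pi$-Cartesianness of the left leg of $\sig$.
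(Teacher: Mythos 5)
The core step of your proposal---cofinality of the functor $j_X\colon X \lrar \I_{\iota_1(X,\L_X)}$, $x \mapsto (\ast,\L_X(x),i_x)$---is false, and the argument cannot be repaired by a sharper computation of mapping spaces. Concretely, take $\L_X = \ovl{C}$ constant and let $P \in \I_{\iota_1(X,\L_X)}$ be the object given by $(X,C)$ together with the identity span $(X,\ovl{C}) \llar (X,\ovl{C}) \lrar (X,\L_X)$. A morphism $P \lrar j_X(x)$ in the comma category would in particular produce a homotopy between the identity span on $X$ and a span whose right leg is constant at $x$, forcing $\Id_X$ to be null-homotopic; so for non-contractible $X$ the comma category $X \times_{\I_{\iota_1(X,\L_X)}} (\I_{\iota_1(X,\L_X)})_{P/}$ is empty, hence not weakly contractible, and $j_X$ is not cofinal. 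A conceptual sanity check points to the same problem: cofinality is a statement independent of $\F$, so your argument for (1) would show that the left Kan extension of an \emph{arbitrary} $\F$ is computed by $\colim_{x\in X}\F(\ast,\L_X(x))$; but in the degenerate case $\C = \ast$ the correspondence $\M$ is the one associated to an equivalence, the left Kan extension of $\F$ along $\iota_0$ is essentially $\F$ itself with value $\F(X)$ at $X$, and this agrees with $\colim_{x\in X}\F(\ast)$ only when $\F$ preserves that colimit. The hypothesis that $\F$ preserves $\K_m$-indexed colimits in the $\cS^m_m$ variable must therefore enter the identification of the Kan extension, and it cannot do so through a cofinality statement alone.

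The paper's proof splits your single step into two of a different nature. First, it proves cofinality of $\rho\colon \J_{(Y,\L_Y)} \lrar \I_{(Y,\L_Y)}$, where $\J_{(Y,\L_Y)}$ is the comma $\infty$-category of the \emph{unspanned} inclusion $\cS_m\times\C \lrar \cS_m(\C)$; there the mapping-space functor is corepresented by the object $(q,S)\colon(Z,\ovl{C}) \lrar (Y,\L_Y)$, i.e.\ by the apex of the given span, not by a point. Second, it reduces from $\J_{(Y,\L_Y)}$ to the subgroupoid $\J'_{(Y,\L_Y)}$ of objects with source $\ast$ \emph{not} by cofinality (this inclusion is not cofinal, for the same reason as above) but by showing that $\F_{(Y,\L_Y)}|_{\J_{(Y,\L_Y)}}$ is a left Kan extension of its restriction to $\J'_{(Y,\L_Y)}$, which is exactly where the colimit-preservation hypothesis on $\F$ (via Proposition~\ref{p:adjoint}, Lemma~\ref{l:colimits} and the right-fibration property of $\J_{(Y,\L_Y)} \lrar \cS_m\times\C$) is used; Lemma~\ref{l:cofinal} then identifies the colimit over $\J'_{(Y,\L_Y)}$ with a $Y$-indexed colimit. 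Your discussion of (2) is in the right spirit and matches the paper's endgame via Lemma~\ref{l:colim-C-2}(2), but it rests on the same invalid cofinality claim for the pointwise reduction, so as written both parts have a genuine gap.
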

\begin{proof}
For $(Y,\L_Y) \in \cS^m_m(\C)$ let us denote by
$$ \I_{(Y,\L_Y)} := (\cS^m_m \times \C) \times_{\cS^m_m \times \C} \cS^m_m(\C)_{/(Y,\L_Y)} $$
the associated comma $\infty$-category.
To prove (1), it will suffice by~\cite[Lemma 4.3.2.13]{higher-topos} to show that the composed map 
$$ \F_{(Y,\L_Y)}:\I_{(Y,\L_Y)} \lrar \cS^m_m \times \C \lrar \D $$ 
can be extended to a colimit diagram in $\D$ for every $(Y,\L_Y)\in \cS^m_m(\C)$. Now an object of $\I_{(Y,\L_Y)}$ corresponds to an object $(X,C) \in \cS^m_m \times \C$ and a morphism $(X,\ovl{C}) \lrar (Y,\L_Y)$ in $\cS^m_m(\C)$, i.e., a span
\begin{equation}\label{e:object-3}
\xymatrix{
& (Z,\L_Z) \ar^{(q,S)}[dr]\ar_{(p,T)}[dl] & \\
(X,\ovl{C}) && (Y,\L_Y) \\
}
\end{equation}
where $T: \L_Z \lrar p^*\ovl{C}$ is an equivalence in $\C^Z$. 
Let $\J_{(Y,\L_Y)} := (\cS_m \times C) \times_{\cS_m(\C)} (\cS_m(\C))_{/(Y,\L_Y)}$ be the comma $\infty$-category over $(Y,\L_Y)$ associated to the inclusion $\iota':\cS_m \times \C \lrar \cS_m(\C)$. Then the faithful maps $\cS_m \times \C \hrar \cS^m_m \times \C$ and $\cS_m(\C) \lrar \cS_m^m(\C)$ induce a fully-faithful inclusion $\rho:\J_{(Y,\L_Y)} \hrar \I_{(Y,\L_Y)}$ whose essential image consists of those objects as in~\eqref{e:object-3} for which $p: Z \lrar X$ is an equivalence. 
We now claim that $\rho$ is cofinal.

Consider an object $P \in \I_{(Y,\L_Y)}$ of the form~\eqref{e:object-3}. We need to show that the comma $\infty$-category $(\J_{(Y,\L_Y)})_{P/} := \J_{(Y,\L_Y)} \times_{\I_{(Y,\L_Y)}} (\I_{(Y,\L_Y)})_{P/}$ is weakly contractible. Given an object $(q',S'):(X',\ovl{C'}) \lrar (Y,\L_Y)$ of $\J_{(Y,\L_Y)}$ the mapping space from $P$ to $\rho(X',\ovl{C'},q',S')$ in $\I_{(Y,\L_Y)}$ is given by the homotopy fiber of the map
\begin{equation}\label{e:fiber-5}
\Map_{\cS^m_m \times \C}((X, C),(X', C')) \lrar \Map_{\cS^m_m(\C)}((X,\ovl{C}),(Y,\L_Y))
\end{equation}
over the map determined by $P$. In light of Remark~\ref{r:map-span} we may identify the homotopy fiber of~\eqref{e:fiber-5} with the homotopy fiber of the map 
\begin{equation}\label{e:fiber-6}
((\cS_m)_{/X})^{\simeq} \times_{\cS_m} ((\cS_m)_{/X'})^{\simeq} \times \Map_{\C}(C,C') \lrar (\cS_m^{\car}(\C)_{/(X,\ovl{C})})^{\simeq} \times_{\cS_m(\C)} (\cS_m(C)_{/(Y,\L_Y)})^{\simeq}
\end{equation}
over the object corresponding to $P$. Now since the map $((\cS_m)_{/X})^{\simeq} \lrar (\cS_m^{\car}(\C)_{/(X,\ovl{C})})^{\simeq}$ is an equivalence we may identify the homotopy fiber of~\eqref{e:fiber-6} with the homotopy fiber of the map
\begin{equation}\label{e:fiber-7}
\Map_{\cS_m}(Z,X') \times \Map_{\C}(C,C') \lrar \Map_{\cS_m(\C)}((Z,\ovl{C}),(Y,\L_Y)) 
\end{equation}
over the point $(q,S) \in \Map_{\cS_m(\C)}((Z,\ovl{C}),(Y,\L_Y))$ determined by $P$. Unwinding the definitions we recover that the map~\eqref{e:fiber-7} sends a pair $(q'': Z \lrar X',\alp: C \lrar C')$ to the composition 
$$\xymatrix{
(Z,\ovl{C}) \ar[r]^-{(q'',\ovl{\alp})} & (X',\ovl{C'}) \ar[r]^{(q',S')} & (Y,\L_Y) .\\}  
$$ 
We may then conclude that the functor $\J_{(Y,\L_Y)} \lrar \cS$ defined by $(X',\ovl{C'},q',S') \mapsto \Map_{\I_{(Y,\L_Y)}}(P,\rho(X',\ovl{C'},q',S'))$ is corepresented in $\J_{(Y,\L_Y)}$ by the object $(q,S):(Z,\ovl{C}) \lrar (Y,\L_Y)$. It then follows that $(\J_{(Y,\L_Y)})_{P/}$ has an initial object and is hence weakly contractible. This means that $\rho:\J_{(Y,\L_Y)} \hrar \I_{(Y,\L_Y)}$ is cofinal, as desired.

It will now suffice to show that each of the diagrams 
$$ \F_{(Y,\L_Y)}|_{\J_{(Y,\L_Y)}}:\J_{(Y,\L_Y)} \lrar \D $$
can be extended to a colimit diagram. Let $\J'_{(Y,\L_Y)} := \J_{(Y,\L_Y)} \times_{\cS_{n}} \{\ast\} \subseteq \J_{Y}$ be the full subcategory spanned by objects of the form $(q,S):(\ast,\ovl{C}) \lrar (Y,\L_Y)$. 
Since we assumed that $\F: \cS^m_m \times \C \lrar \D$ preserves $\K_m$-indexed colimits in the first coordinate it follows from Proposition~\ref{p:adjoint} that the restriction $\F|_{\cS_{m} \times \C}: \cS_{m} \times \C \lrar \D$ preserves $\K_{m}$-indexed colimits in the first coordinate and by combining Lemma~\ref{l:colimits} with Lemma~\ref{l:cofinal} we may conclude that the functor $\F|_{\cS_{m} \times \C}$ is a left Kan extension of its restriction to $\{\ast\} \times \C \in \cS_{m}$. Now since the projection $\J_Y \lrar \cS_{m} \times \C$ is a right fibration it induces an equivalence $(\J_{(Y,\L_Y)})_{/(X',C',q',S')} \lrar (\cS_{m} \times \C)_{/(X',C')}$ for every $(X',C',q',S') \in \J_{(Y,\L_Y)}$. We may then conclude that $\F_{(Y,\L_Y)}|_{\J_{(Y,\L_Y)}}$ 
is a left Kan extension of $\F_{(Y,\L_Y)}|_{\J_{(Y,\L_Y)}'}$. Since $\D$ admits $\K_m$-indexed colimits and $\J'_{(Y,\L_Y)}$ contains an $m$-finite Kan complex as a cofinal subcategory by Lemma~\ref{l:cofinal} the diagram $\F_{(Y,\L_Y)}|_{\J'_{(Y,\L_Y)}}$ admits a colimit. It then follows that the diagram $\F_{(Y,\L_Y)}|_{\J_{(Y,\L_Y)}} \lrar \D$ admits a colimit, as desired. 

To prove (2), we begin by noting that by the above considerations, an arbitrary extension $\ovl{\F}: \M \lrar \D$ is a left Kan extension if and only if for every $(Y,\L_Y)$ the diagram
$$ (\J'_{(Y,\L_Y)})^{\triangleright} \lrar \D $$
determined by $\ovl{\F}$ is a colimit diagram. By Lemma~\ref{l:cofinal} the functor $Y \lrar \J'_{(Y,\L_Y)}$ sending $y \in Y$ to the object $(\{y\},\L_Y(y)) \lrar (Y,\L_Y)$ is cofinal and so $\ovl{\F}$ is a left Kan extension of $\F$ if and only if for every $(Y,\L_Y)$ the diagram
\begin{equation}\label{e:colim-diag}
\ovl{\F}_{(Y,\L_Y)}: Y^{\triangleright} \lrar \D 
\end{equation}
determined by $\ovl{\F}$ is a colimit diagram. Now by Lemma~\ref{l:colimits} and Lemma~\ref{c:n-colimits} we know that for each $Y \in \cS^m_m$ the collection of maps $\iota_y:\{y\} \lrar Y$ exhibits $Y$ as the colimit in $\cS^m_m$ of the constant $Y$-diagram with value $\ast$. Since $\F: \cS^m_m \times \C \lrar \D$ preserves $\K_m$-indexed colimits in the first variable it follows that each $(Y,C) \in \cS^m_m \times \C$ the collection of maps $\F(\iota_y,\Id_C):\F(\{y\},C) \lrar \F(Y,C)$ exhibit $\F(Y,C)$ as the colimit of the diagram $\{\F(\{y\},C)\}_{y \in Y}$. This means that $\ovl{\F}_{(Y,\ovl{C})}$ is a colimit diagram if and only if $\ovl{\F}$ maps every $r$-coCartesian edge in $\M$ of the form $(Y,C) \lrar (Y,\ovl{C})$ (covering the map $0 \lrar 1$ of $\Del^1$) to an equivalence in $\D$. Since all the other $r$-coCartesian edges of $\M$ are equivalences we may conclude that $\ovl{\F}$ maps $r$-coCartesian edges to equivalences if and only if the diagrams $\ovl{\F}_{(Y,\ovl{C})}$ are colimit diagrams for every $Y \in \cS^m_m$ and $C \in \C$. On the other hand, when these two equivalent conditions hold for $Y = \ast$ and all $C \in \C$ then the condition that $\ovl{\F}_{(Y,\L_Y)}$ is a colimit diagram for every $(Y,\L_Y)$ is equivalent by Lemma~\ref{l:colim-C-2}(2) to the condition that $\ovl{\F} \circ \iota_1: \cS^m_m(\C) \lrar \D$ preserves $\K_m$-indexed colimits. We may hence conclude that $\ovl{\F}$ is a left Kan extension of $\ovl{\F}$ if and only if it maps all $r$-coCartesian edges of $\M$ to equivalences in $\D$ and $\ovl{\F} \circ \iota_1: \cS^m_m(\C) \lrar \D$ preserves $\K_m$-indexed colimits.
\end{proof}

Given an $\infty$-category $\D$ admitting $\K_m$-indexed colimits, let us denote by 
$$\Fun_{\K_m/\C}(\cS^m_m \times \C,\D) \subseteq \Fun(\cS^m_m \times \C,\D)$$ 
the full subcategory spanned by those functors which preserves $\K_m$-indexed colimits in the $\cS^m_m$ variable. 
\begin{cor}\label{c:univ}
Let $\D$ be an $m$-semiadditive $\infty$-category. Then restriction along $\iota'': \cS^m_m \times \C \hrar \cS^m_m(\C)$ induces an equivalence of $\infty$-categories:
$$ \Fun_{\K_m}(\cS^m_m(\C),\D) \x{\simeq}{\lrar} \Fun_{\K_m/\C}(\cS^m_m \times \C,\D) .$$
\end{cor}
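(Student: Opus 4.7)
The proof will follow the pattern of Corollary~\ref{c:step-2} almost verbatim, with Proposition~\ref{p:step-C} playing the role of Proposition~\ref{p:rkan} and the \emph{left} marked mapping cone $\Cone_m(\C)$ together with the coCartesian correspondence $r : \M^{\natural} \to \Del^1$ set up immediately before Proposition~\ref{p:step-C} replacing the right mapping cone used there. First I would equip $\D$ with the marked structure $\D^{\natural}$ whose marked edges are the equivalences, and introduce the full subcategory $\Fun^{\flat}_{*}(\M^{\natural},\D^{\natural}) \subseteq \Fun^{\flat}(\M^{\natural},\D^{\natural})$ spanned by those marked functors whose restriction to $\cS^m_m \times \C$ preserves $\K_m$-indexed colimits in the $\cS^m_m$ variable and whose restriction to $\cS^m_m(\C)$ preserves $\K_m$-indexed colimits; similarly for $\Fun^{\flat}_{*}(\Cone_m(\C),\D^{\natural})$.

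Next I would form the commutative diagram
\[
\xymatrix{
& \Fun^{\flat}_{*}(\M^{\natural},\D^{\natural}) \ar[dl]\ar^{\iota_0^*}[dr] & \\
\Fun^{\flat}_{*}(\Cone_m(\C),\D^{\natural}) \ar[rr] && \Fun_{\K_m/\C}(\cS^m_m \times \C,\D)
}
\]
and argue that both diagonal maps are trivial Kan fibrations. The left diagonal is a trivial Kan fibration because the inclusion $\Cone_m(\C) \hookrightarrow \M^{\natural}$ is a trivial cofibration in the coCartesian model structure over $(\Del^1)^{\sharp}$ by construction of $\M^{\natural}$. For the right diagonal I would apply Proposition~\ref{p:step-C} together with~\cite[Proposition 4.3.2.15]{higher-topos}: the two conditions characterizing a left Kan extension in Proposition~\ref{p:step-C}(2) — namely mapping $r$-coCartesian edges to equivalences and preserving $\K_m$-colimits on $\cS^m_m(\C)$ — are precisely the conditions cutting out $\Fun^{\flat}_{*}(\M^{\natural},\D^{\natural})$.

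Finally I would exploit the fact that the inclusion $\cS^m_m(\C) \hookrightarrow \Cone_m(\C)$ is a pushout along the inclusion $\cS^m_m \times \C \times \Del^{\{1\}} \hookrightarrow \cS^m_m \times \C \times (\Del^1)^{\sharp}$, which is a trivial cofibration in the \emph{Cartesian} model structure over $\Del^0$ (dual to the coCartesian statement used in Corollary~\ref{c:step-2}, and appropriate here because $\cS^m_m(\C)$ now sits at $\Del^{\{1\}}$ rather than $\Del^{\{0\}}$). Consequently the restriction $\iota_1^* : \Fun^{\flat}(\Cone_m(\C),\D^{\natural}) \to \Fun(\cS^m_m(\C),\D)$ is a trivial Kan fibration, and the composition $\Fun^{\flat}(\Cone_m(\C),\D^{\natural}) \to \Fun(\cS^m_m(\C),\D) \to \Fun(\cS^m_m \times \C,\D)$ (the latter by restriction along $\iota''$) is homotopic to $\iota_0^*$. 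Using Lemma~\ref{l:colim-C-2}(3) to check that the subcategory $\Fun^{\flat}_{*}(\Cone_m(\C),\D^{\natural})$ corresponds under $\iota_1^*$ exactly to $\Fun_{\K_m}(\cS^m_m(\C),\D)$ and chasing the diagram, one obtains the desired equivalence $\Fun_{\K_m}(\cS^m_m(\C),\D) \simeq \Fun_{\K_m/\C}(\cS^m_m \times \C,\D)$.

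I expect the main technical point to be the bookkeeping required to verify that the various full subcategories defined by $\K_m$-colimit preservation really match up correctly under these restriction functors — in particular, confirming that the condition on $\cS^m_m(\C)$ alone suffices to force the condition on $\cS^m_m \times \C$ after restriction — but this is immediate from the fact (Lemma~\ref{l:colim-C-2}(3)) that $\iota''$ itself preserves $\K_m$-colimits in the $\cS^m_m$ variable, so no surprises should arise.
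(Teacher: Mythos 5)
Your proposal is correct and follows essentially the same route as the paper's own proof: the same marked mapping cone $\Cone_m(\C) \hookrightarrow \M^{\natural}$ over $(\Del^1)^{\sharp}$, Proposition~\ref{p:step-C} combined with \cite[Proposition 4.3.2.15]{higher-topos} for one leg, the pushout along $\cS^m_m \times \C \times \Del^{\{1\}} \hookrightarrow \cS^m_m \times \C \times (\Del^1)^{\sharp}$ for the other, and Lemma~\ref{l:colim-C-2}(3) to see that the colimit-preservation condition on $\cS^m_m \times \C$ is automatic. The only difference is cosmetic: your identification of the pushed-out inclusion as $\cS^m_m(\C) \hookrightarrow \Cone_m(\C)$ is the correct reading of what the paper states (with a small typo) at that step.
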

\begin{proof}
Let $r: \M^{\natural} \lrar \Del^1$ be as above and consider the marked simplicial set $\D^{\natural} = (\D,M)$ where $M$ is the collection of edges which are equivalences in $\D$. Let $\Fun^{\flat}_{\K_m}(\M^{\natural},\D^{\natural}) \subseteq \Fun^{\flat}(\M^{\natural},\D^{\natural})$ and $\Fun^{\flat}_{\K_m}(\Cone_m(\C),\D^{\natural}) \subseteq \Fun^{\flat}(\Cone_m(\C),\D^{\natural})$ be the respective full subcategories spanned by those makred functors whose restriction to $\cS^{m}_{m} \times \C$ preserves $\K_m$-indexed colimits in the left variable and whose restriction to $\cS^m_m(\C)$ preserves $\K_m$-indexed colimits. Since the map $\Cone_m(\C) \lrar \M^{\natural}$ is marked anodyne it follows that the restriction map $\Fun^{\flat}_{\K_m}(\M^{\natural},\D^{\natural}) \lrar \Fun^{\flat}_{\K_m}(\Cone_m(\C),\D^{\natural})$ is an equivalence and by Proposition~\ref{p:step-C} the restriction map $\Fun^{\flat}_{\K_m}(\M^{\natural},\D^{\natural}) \lrar \Fun_{\K_m/\C}(\cS^m_m \times \C,\D)$ is an equivalence. We may hence deduce that the restriction map 
$$ \iota_0^*:\Fun^{\flat}_{\K_m}(\Cone_m(\C),\D^{\natural}) \lrar \Fun_{\K_m/\C}(\cS^m_m \times \C,\D) $$ 
is an equivalence.

Now since the inclusion $\cS^{m}_{m} \times \C \hrar \Cone_m(\C)$ is a pushout along the inclusion $\cS^{m}_{m} \times \times \C \times \Del^{\{1\}} \hrar \cS^{m}_{m} \times \C \times (\Del^1)^{\sharp}$ (which is itself a trivial cofibration in the \textbf{Cartesian} model structure over $\Del^0$) it follows that the map $i_1^*:\Fun^{\flat}(\Cone_m(\C),\D^{\natural}) \lrar \Fun(\cS^{m}_{m}(\C),\D)$ is a trivial Kan fibation and that the composed functor 
$$ \xymatrix{
\Fun^{\flat}(\Cone_m(\C),\D^{\natural})  \ar[r]^-{i_1^*}_-{\simeq} &  \Fun(\cS^{m}_{m}(\C),\D) \ar[r]^-{(\iota'')^*} & \Fun(\cS^{m}_{m} \times \C,\D) \\
}$$
is homotopic to $i_0^*:\Fun^{\flat}(\Cone_m(\C),\D^{\natural}) \x{\simeq}{\lrar} \Fun(\cS^{m}_{m} \times \C,\D)$. We may consequently conclude that $i_1^*$ induces an equivalence between $\Fun^{\flat}_{\K_m}(\Cone_m(\C),\D^{\natural})$ and the full subcategory of $\Fun_{\K_m}(\cS^m_m(\C),\D)$ spanned by those functors whose restriction to $\cS^m_m \times \C$ which preserves $\K_m$-indexed colimits in the left variable. By Lemma~\ref{l:colim-C-2}(3) the latter condition is automatic and hence the restriction map $\iota^*_1:\Fun^{\flat}_{\K_m}(\Cone_m(\C),\D^{\natural}) \lrar \Fun^{\flat}_{\K_m}(\cS^m_m(\C),\D^{\natural})$ is an equivalence. We may then conclude that 
$$ (\iota'')^*: \Fun_{\K_m}(\cS^m_m(\C),\D) \lrar \Fun_{\K_m/\C}(\cS^{m}_m \times \C,\D)$$ 
is an equivalence of $\infty$-categories, as desired.
\end{proof}

\begin{cor}\label{c:univ-2}
Let $\D$ be an $m$-semiadditive $\infty$-category. Then restriction along the inclusion $\{\ast\} \times \C \hrar \cS^m_m(\C)$ induces an equivalence of $\infty$-categories:
$$ \Fun_{\K_m}(\cS^m_m(\C),\D) \x{\simeq}{\lrar} \Fun(\C,\D) .$$
\end{cor}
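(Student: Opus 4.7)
The plan is to chain Corollary~\ref{c:univ} with the universal property of $\cS^m_m$ established in Theorem~\ref{t:main}. First, Corollary~\ref{c:univ} already provides an equivalence
$$ (\iota'')^*: \Fun_{\K_m}(\cS^m_m(\C),\D) \x{\simeq}{\lrar} \Fun_{\K_m/\C}(\cS^m_m \times \C,\D), $$
so it remains to show that restriction along $\{\ast\} \times \C \hrar \cS^m_m \times \C$ induces an equivalence from the right hand side to $\Fun(\C,\D)$, and that the composition agrees with restriction along $\{\ast\} \times \C \hrar \cS^m_m(\C)$.

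For the first step I would use a currying argument. A functor $\F: \cS^m_m \times \C \lrar \D$ preserves $\K_m$-indexed colimits in the $\cS^m_m$ variable if and only if its adjoint $\wtl{\F}: \C \lrar \Fun(\cS^m_m,\D)$ factors through the full subcategory $\Fun_{\K_m}(\cS^m_m,\D) \subseteq \Fun(\cS^m_m,\D)$. This yields an equivalence
$$ \Fun_{\K_m/\C}(\cS^m_m \times \C,\D) \simeq \Fun(\C,\Fun_{\K_m}(\cS^m_m,\D)). $$
Now by Theorem~\ref{t:main}, since $\D$ is $m$-semiadditive, evaluation at $\ast \in \cS^m_m$ induces an equivalence $\ev_\ast: \Fun_{\K_m}(\cS^m_m,\D) \x{\simeq}{\lrar} \D$. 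Post-composing with this equivalence we obtain
$$ \Fun_{\K_m/\C}(\cS^m_m \times \C,\D) \x{\simeq}{\lrar} \Fun(\C,\D), $$
which under the currying identification is visibly restriction along $\{\ast\} \times \C \hrar \cS^m_m \times \C$.

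Finally, the composition
$$ \Fun_{\K_m}(\cS^m_m(\C),\D) \x{(\iota'')^*}{\lrar} \Fun_{\K_m/\C}(\cS^m_m \times \C,\D) \x{\simeq}{\lrar} \Fun(\C,\D) $$
is, by construction, restriction along $\{\ast\} \times \C \hrar \cS^m_m \times \C \x{\iota''}{\hrar} \cS^m_m(\C)$. Since $\iota''$ sends $(\ast,C)$ to $(\ast,\ovl{C}) = \iota(C)$, this composed inclusion is homotopic to $\iota: \C \lrar \cS^m_m(\C)$, giving the desired equivalence. The only mildly subtle point is verifying that the currying equivalence really restricts to an equivalence on the relevant subcategories of colimit-preserving functors, but this is immediate from the fact that colimits in a functor category are computed pointwise — I expect no serious obstacle beyond bookkeeping.
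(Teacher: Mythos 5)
Your proposal is correct and is exactly the paper's argument: the proof of Corollary~\ref{c:univ-2} is literally ``combine Corollary~\ref{c:univ} and Theorem~\ref{t:main},'' and your currying step spelling out how $\Fun_{\K_m/\C}(\cS^m_m \times \C,\D) \simeq \Fun(\C,\Fun_{\K_m}(\cS^m_m,\D)) \simeq \Fun(\C,\D)$ is the intended (and valid) way to carry out that combination.
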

\begin{proof}
Combine Corollary~\ref{c:univ} and Theorem~\ref{t:main}.
\end{proof}

\begin{cor}\label{c:tensor}
The functor $\cS^m_m \times \cS_m(\C) \lrar \cS^m_m(\C)$ exhibits $\cS^m_m(\C)$ as the tensor product $\cS^m_m \otimes_{\K_m} \cS_m(\C)$ in $\Cat_{\K_m}$.
\end{cor}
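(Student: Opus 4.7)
The plan is to compare universal properties in $\Add_m$ and then transfer the result back to $\Cat_{\K_m}$. First I would observe that both $\cS^m_m(\C)$ and $\cS^m_m \otimes_{\K_m} \cS_m(\C)$ are $m$-semiadditive: the former by the discussion following Lemma~\ref{l:colim-C-2}, the latter by Corollary~\ref{c:amusing}, since it carries a canonical action of $\cS^m_m$ preserving $\K_m$-indexed colimits in each variable. Also, the map $\mu: \cS^m_m \times \cS_m(\C) \lrar \cS^m_m(\C)$ preserves $\K_m$-indexed colimits in each variable separately (in the $\cS^m_m$ variable by the action construction preceding Corollary~\ref{c:univ}; in the $\cS_m(\C)$ variable by Lemma~\ref{l:colim-C-2}(1) combined with the fact that the action by a fixed $X \in \cS^m_m$ is obtained from a fibrewise left Kan extension). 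Hence by the universal property of $\otimes_{\K_m}$, $\mu$ factors through a $\K_m$-colimit preserving functor $\widetilde{\mu}: \cS^m_m \otimes_{\K_m} \cS_m(\C) \lrar \cS^m_m(\C)$.

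Next I would compute $\Fun_{\K_m}(-,\D)$ on both sides for an arbitrary $\D \in \Add_m$. By Corollary~\ref{c:univ-2}, restriction along the natural map $\C \lrar \cS^m_m(\C)$ gives
$$\Fun_{\K_m}(\cS^m_m(\C),\D) \x{\simeq}{\lrar} \Fun(\C,\D).$$
On the other hand, using successively the universal property of the tensor product, Theorem~\ref{t:main} (in the $\cS^m_m$-variable, since $\D$ is $m$-semiadditive), and Corollary~\ref{c:free-colimit} applied to $\cS_m(\C)$, one obtains
$$\Fun_{\K_m}(\cS^m_m \otimes_{\K_m} \cS_m(\C),\D) \simeq \Fun_{\K_m}(\cS_m(\C),\Fun_{\K_m}(\cS^m_m,\D)) \simeq \Fun_{\K_m}(\cS_m(\C),\D) \simeq \Fun(\C,\D),$$
where each of these equivalences is implemented by restriction along the composition $\C \hrar \{\ast\} \times \C \hrar \cS^m_m \times \cS_m(\C)$ (using $\C \hrar \cS_m(\C)$ in the last step).

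Now the key compatibility to verify is that under these two identifications, the map $\widetilde{\mu}^*:\Fun_{\K_m}(\cS^m_m(\C),\D) \lrar \Fun_{\K_m}(\cS^m_m \otimes_{\K_m} \cS_m(\C),\D)$ becomes the identity on $\Fun(\C,\D)$. This reduces to the evident fact that the composite $\C \hrar \cS^m_m \times \cS_m(\C) \x{\mu}{\lrar} \cS^m_m(\C)$ agrees (up to canonical equivalence) with the natural inclusion $\C \hrar \cS^m_m(\C)$, which is true by construction of $\mu$. By the Yoneda lemma in $\Add_m$, $\widetilde{\mu}$ is therefore an equivalence in $\Add_m$, and since $\Add_m \hrar \Cat_{\K_m}$ is a full subcategory (Corollary~\ref{c:equiv}), it is an equivalence in $\Cat_{\K_m}$.

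The main obstacle is the compatibility bookkeeping: ensuring that every one of the three intermediate equivalences computing $\Fun_{\K_m}(\cS^m_m \otimes_{\K_m} \cS_m(\C),\D)$ is genuinely realized by restriction along the appropriate natural inclusion, so that the comparison with the single-step restriction along $\iota:\C \hrar \cS^m_m(\C)$ produces the identity. Each step is the naturality statement of a previously established universal property, but chasing these through the chain requires care.
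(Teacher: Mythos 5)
Your argument is correct and is essentially the paper's own (one-line) proof spelled out: the paper combines Corollary~\ref{c:univ} with Corollary~\ref{c:free-colimit}, i.e., it observes that both $\cS^m_m(\C)$ and $\cS^m_m \otimes_{\K_m} \cS_m(\C)$ lie in $\Add_m$ and corepresent the same functor (restriction to $\C$) compatibly with the comparison map. Your additional steps --- currying through Theorem~\ref{t:main} (which is exactly what turns Corollary~\ref{c:univ} into Corollary~\ref{c:univ-2}) and the Yoneda argument inside the full subcategory $\Add_m$ --- are precisely the bookkeeping left implicit there.
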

\begin{proof}
Combine Corollary~\ref{c:univ} and Corollary~\ref{c:free-colimit}.
\end{proof}

\begin{rem}
For $m \leq n$ one may also consider the subcategory $\cS^m_n(\C) \subseteq \cS^n_n(\C)$ containing all objects and whose mapping spaces are spanned by those morphisms as in~\eqref{e:span-17} for which $p$ is $m$-truncated. A similar argument then shows that $\cS^m_n(\C)$ is the free $m$-semiadditive $\infty$-category with $\K_n$-indexed colimits generated from $\C$.
\end{rem}

\subsection{Higher semiadditivity and topological field theories}\label{s:tft}

In this section we will discuss a relation between the results of this paper and 1-dimensional topological field theories, and more specifically, with the notion of \textbf{finite path integrals} as described in~\cite[\S 3]{tft}. 
We first discuss the universal constructions of \S\ref{s:decorated} in the presence of a symmetric monoidal structure. Recall that by~\cite[Proposition 4.8.1.10]{higher-topos} the free-forgetful adjunction $\Cat_{\infty} \dashv \Cat_{\K_m}$ induces an adjunction $\CAlg(\Cat_{\infty}) \dashv \CAlg(\Cat_{\K_m})$ on commutative algebra objects which is compatible with the free-forgetful adjunction. In particular, if $\D^{\otimes} \in \CAlg(\Cat_{\infty})$ is a symmetric monoidal $\infty$-category then the $\infty$-category $\cS_m(\D)$ (which, by Corollary~\ref{c:free-colimit}, is the image of $\D$ in $\Cat_{\K_m}$ under the free functor $\Cat_{\infty} \lrar \Cat_{\K_m}$) carries a canonical symmetric monoidal structure, under which it can be identified with the image of $\D^{\otimes}$ under the left adjoint $\CAlg(\Cat_{\infty}) \lrar \CAlg(\Cat_{\K_m})$. Since the tensor product on $\cS_m(\C)$ preserves $\K_m$-colimits in each variable separately the characterization of colimits given in Lemma~\ref{l:colim-C-1} yields an explicit formula for the monoidal product as $(X,\L_X) \otimes (Y,\L_Y) = (X \times Y,\L_X \otimes \L_Y)$, where $\L_X \otimes \L_Y: X \times Y \lrar \D$ is the local system $(\L_X \otimes \L_Y)(x,y) = \L_X(x) \otimes \L_Y(y)$. We also note that by the above the unit map $\D \lrar \cS_m(\D)$ is symmetric monoidal, and if $\D$ already has $\K_m$-indexed colimits and its monoidal structure commutes with $\K_m$-indexed in each variable separately then the counit map $\cS_m(\D) \lrar \D$ is symmetric monoidal as well. 

Corollary~\ref{c:tensor} tells us that we have a similar phenomenon with $\cS^m_m(\D)$: indeed, by Proposition~\ref{p:monoidal} the $\infty$-category $\cS^m_m(\D)$ inherits a canonical commutative algebra structure in $\Add_m \simeq \Mod_{\cS^m_m}(\Cat_{\K_m})$ under which it can be identified with the image of $\cS_m(\D)^{\otimes} \in \CAlg(\Cat_{\K_m})$ under the left functor $\Cat_{\K_m} \lrar \Add_m$. Combined with the above considerations we may further identify $\cS^m_m(\C)^{\otimes} \in \CAlg(\Add_m)$ with the image of $\D$ under the left functor $\CAlg(\Cat_{\infty}) \lrar \CAlg(\Add_m)$. In explicit terms, $\cS^m_m(\D)$ carries a symmetric monoidal structure which preserves $\K_m$-indexed colimits in each variable separately and the unit map $\D \lrar \cS^m_m(\D)$ extends to a symmetric monoidal functor. Furthermore, if $\D$ is already $m$-semiadditive and its symmetric monoidal structure commutes with $\K_m$-indexed colimits in each variable separately then the counit map $\cS^m_m(\D) \lrar \D$ is symmetric monoidal as well.

The following lemma appears to be well-known, but we could not find a reference. Note that, while the lemma is phrased for $\cS^m_m(\D)$, it has nothing to do with the finiteness or truncation of the spaces in $\cS^m_m$. In particular, the analogous claim holds if one replaces $\cS^m_m(\D)$ by the analogous $\infty$-category of decorated spans between arbitrary spaces.
\begin{lem}\label{l:dual}
Let $\D$ be a symmetric monoidal $\infty$-category. Let $(X,\L_X) \in \cS^m_m(\D)$ be such that $\L_X(x)$ is dualizable in $\D$ for every $x \in X$. Then $(X,\L_X)$ is dualizable in $\cS^m_m(\D)$.
\end{lem}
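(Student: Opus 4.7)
The plan is to exhibit $(X, \L_X^\vee) \in \cS^m_m(\D)$ as a dual to $(X, \L_X)$, where $\L_X^\vee \colon X \to \D$ is the pointwise dual local system $\L_X^\vee(x) := \L_X(x)^\vee$; this is well-defined by the hypothesis that each $\L_X(x)$ is dualizable in $\D$. Recalling the tensor product formula $(Y, \L_Y) \otimes (Z, \L_Z) = (Y \times Z, \L_Y \boxtimes \L_Z)$, I must construct an evaluation $\ev \colon (X, \L_X^\vee) \otimes (X, \L_X) \to (\ast, 1_\D)$ and a coevaluation $\coev \colon (\ast, 1_\D) \to (X, \L_X) \otimes (X, \L_X^\vee)$, and then verify the triangle identities.

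For $\ev$, I take the decorated span with apex $(X, E)$, where $E \colon X \to \D$ is the local system $x \mapsto \L_X(x)^\vee \otimes \L_X(x)$. The left leg is $(\del, \alpha) \colon (X, E) \to (X \times X, \L_X^\vee \boxtimes \L_X)$ with $\del$ the diagonal and $\alpha$ the canonical equivalence $E \simeq \del^*(\L_X^\vee \boxtimes \L_X)$, ensuring the left leg is $\pi$-Cartesian as required by Remark~\ref{r:map-span}. The right leg is $(q, \beta) \colon (X, E) \to (\ast, 1_\D)$ with $q$ the terminal map and $\beta$ the natural transformation whose component at $x$ is the evaluation map $\L_X(x)^\vee \otimes \L_X(x) \to 1_\D$ in $\D$. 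Define $\coev$ analogously, with underlying span $\ast \xleftarrow{q} X \xrightarrow{\del} X \times X$ and with the decoration replaced by the pointwise coevaluations $1_\D \to \L_X(x) \otimes \L_X(x)^\vee$.

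For the first triangle identity, I compute the composite
\[
(X, \L_X) \xrightarrow{\coev \otimes \mathrm{id}} (X, \L_X) \otimes (X, \L_X^\vee) \otimes (X, \L_X) \xrightarrow{\mathrm{id} \otimes \ev} (X, \L_X).
\]
At the level of the underlying spans of spaces, $\coev \otimes \mathrm{id}$ is the span $X \xleftarrow{\mathrm{pr}_2} X \times X \xrightarrow{\del \times \mathrm{id}} X \times X \times X$ and $\mathrm{id} \otimes \ev$ is the span $X \times X \times X \xleftarrow{\mathrm{id} \times \del} X \times X \xrightarrow{\mathrm{pr}_1} X$. The pullback of $\del \times \mathrm{id}$ and $\mathrm{id} \times \del$ over $X \times X \times X$ is canonically the triple diagonal $X \hookrightarrow X \times X \times X$, so the composite is the identity span $X \xleftarrow{=} X \xrightarrow{=} X$; this is precisely the triangle identity expressing the self-duality of $X$ in $\cS^m_m$ from Remark~\ref{r:self-dual}, where $\coev$ and $\ev$ have underlying spans $\hat{\tr}_X$ and $\tr_X$. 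At the level of decorations, the composition law for decorated spans shows that the induced natural transformation on the apex of the composite span is, pointwise at $t \in X$, the map
\[
\L_X(t) \xrightarrow{\coev_t \otimes \mathrm{id}} \L_X(t) \otimes \L_X(t)^\vee \otimes \L_X(t) \xrightarrow{\mathrm{id} \otimes \ev_t} \L_X(t),
\]
which is $\mathrm{id}_{\L_X(t)}$ by the triangle identity for the dualizable object $\L_X(t) \in \D$. Hence the composite is homotopic to $\mathrm{id}_{(X, \L_X)}$, and the second triangle identity follows by a symmetric calculation.

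The main obstacle is the bookkeeping required to track the decorations through the composition of spans in $\cS^m_m(\D)$: one must carefully pull back each intermediate local system along the projections from the span-level pullback and correctly identify the composite of the resulting natural transformations. Once this bookkeeping is correctly set up, the proof factors cleanly as the combination of the self-duality of $X$ in $\cS^m_m$ from Remark~\ref{r:self-dual} with the pointwise triangle identity for $\L_X(t)$ in $\D$.
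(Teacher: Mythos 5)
There is a genuine gap at the very first step: you declare the dual local system by the pointwise formula $\L_X^\vee(x) := \L_X(x)^\vee$ and assert it is ``well-defined by the hypothesis that each $\L_X(x)$ is dualizable,'' and likewise you take the evaluation/coevaluation decorations to be the natural transformations ``whose component at $x$ is'' the (co)evaluation map in $\D$. In the $\infty$-categorical setting a local system on $X$ is a functor $X \lrar \D$, and $X$ is an $m$-finite \emph{space}, not a discrete set: its loops and higher homotopies act on the values $\L_X(x)$, so specifying the dual objects point by point does not produce a functor, and specifying maps point by point does not produce a natural transformation of local systems. One must show that the duality data (dual object, evaluation, coevaluation, and all the higher coherences witnessing the triangle identities) can be chosen coherently in the parameter $x \in X$. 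This coherence is exactly the content of the lemma that needs an argument, and it is what the paper's proof supplies: since each $\L_X(x)$ is dualizable, $\L_X$ factors through $(\D^{\dl})^{\simeq}$, which by the $1$-dimensional cobordism hypothesis is equivalent to $\Fun^{\otimes}(\Bord_1^{\otimes},\D^{\otimes})$; lifting $\L_X$ to a family of field theories and evaluating at the negatively framed point and at the evaluation/coevaluation bordisms produces the dual local system $\hat{\L}_X$ together with genuine natural transformations $(\L_X \otimes \hat{\L}_X)\circ\del \Rightarrow \ovl{1_\D}$ and $\ovl{1_\D} \Rightarrow (\hat{\L}_X \otimes \L_X)\circ\del$ of local systems on $X$. (One could instead argue that the space of duality data on a dualizable object is contractible, so the pointwise choices extend essentially uniquely, but that statement itself requires proof and is essentially equivalent to the cobordism-hypothesis input; it cannot simply be asserted.)

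Once the coherent dual system and the two decorated spans are in hand, the rest of your argument --- the underlying spans composing to the identity span via the self-duality of $X$ in $\cS^m_m$ (Remark~\ref{r:self-dual}), and the decoration on the apex of the composite being the pointwise composite $\L_X(t) \rar \L_X(t)\otimes\L_X(t)^\vee\otimes\L_X(t) \rar \L_X(t)$, which is the identity by the triangle identity in $\D$ --- matches what the paper leaves as a straightforward verification. So the structure of your proof is right, but the step you dismiss as ``well-defined'' is precisely where the real work lies, and as written the proof is incomplete there.
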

\begin{proof}
Let $\D^{\dl} \subseteq \D$ be the full subcategory spanned by dualizable objects and let $(\D^{\dl})^{\simeq} \subseteq \D^{\dl}$ be the maximal subgroupoid of $\D^{\dl}$. Let $\Bord^{\otimes}_1$ be the 1-dimensional framed cobordism $\infty$-category. By the 1-dimensional cobordism hypothesis (\cite{tft-lurie},\cite{bord1}), evaluation at the positively 1-framed point $\ast_+ \in \Bord_1$ induces an equivalence 
\begin{equation}\label{e:bord}
\Fun^{\otimes}(\Bord^{\otimes}_1,\D^{\otimes}) \x{\simeq}{\lrar} (\D^{\dl})^{\simeq} .
\end{equation}
Now let $(X,\L_X)$ be an object of $\cS^m_m(\D)$ such that $\L_X(x)$ is dualizable for every $x \in X$. Then the local system $\L_X: X \lrar \D$ factors through a local system $\L_X': X \lrar (\D^{\dl})^{\simeq}$. By the equivalence~\eqref{e:bord} we may lift $\L_X$ to a local system $\L_X':X \lrar \Fun^{\otimes}(\Bord^{\otimes}_1,\D^{\otimes})$ valued in topological field theories. Evaluation at the negatively 1-framed point $\ast_- \in \Bord_1$ now yields a local system $\hat{\L}_X:X \lrar \D$. Furthermore, for every $x \in X$, evaluation at the evaluation bordism $\ev: \ast_+ \coprod \ast_- \lrar \emptyset$ induces a map $\L_X'(x,\ev):\L_X(x) \otimes \hat{\L}_X(x) \lrar 1_{\D}$ exhibiting $\hat{\L}_X(x)$ as dual to $\L_X(x)$. Allowing $x$ to vary we obtain a natural transformation $\L_X'(\ev): (\L_X \otimes \hat{\L}_X) \circ \del \Rightarrow \ovl{1_{\D}}$ of $\D$-valued local systems on $X$, where $\del: X \lrar X \times X$ is the diagonal map. Similarly, we may evaluate at the coevaluation cobordism $\coev: \emptyset \lrar \ast_- \coprod \ast_+$ and obtain a natural transformation $\L_X'(\coev): \ovl{1_{\D}} \Rightarrow (\hat{\L}_X \otimes \L_X) \circ \del$, which, for each $x$, determines a compatible coevaluation map $1 \lrar \hat{\L}_X(x) \otimes \L_X(x)$.

Now let $q: X \lrar \ast$ denote the constant map and consider the morphisms $\ev_{(X,\L_X)}: (X,\L_X) \otimes (X,\hat{\L}_X) = (X \times X,\L_X \otimes \hat{\L}_X) \lrar \ast$ and $\ast \lrar (X \times X,\hat{\L}_X \otimes \L_X)$ in $\cS^m_m(\C)$ given by the spans
\begin{equation}\label{e:span-22}
\xymatrix{
& (X,(\L_X \otimes \hat{\L}_X) \circ \del) \ar_{(\del,\Id)}[dl]\ar^{(q,\L_X'(\ev))}[dr] &\\
(X \times X,\L_X \otimes \hat{\L}_X) && (\ast,1_{\D})\\
}
\end{equation}
and
\begin{equation}\label{e:span-21}
\xymatrix{
& (X,\ovl{1_{\D}}) \ar_{(q,\Id)}[dl]\ar^{(\del,\L_X'(\coev))}[dr] &\\
(\ast,1_{\D}) && (X \times X,\hat{\L}_X \otimes \L_X)\\
}
\end{equation}
It is now straightforward to check that the morphisms~\eqref{e:span-22} and~\eqref{e:span-21} satisfy the evaluation-coevaluation identities and hence exhibit $(X,\L_X)$ and $(X,\hat{\L}_X)$ as dual to each other.
\end{proof}

Let us now explain the relation of the above construction with the notion of finite path integrals as described in~\cite{tft}. Given a local system $\L_X: X \lrar \D$ of dualizable objects in $\D$ (e.g., a family of invertible objects), one obtains, as described in Lemma~\ref{l:dual}, a dualizable object $(X,\L_X)$ of the decorated span $\infty$-category $\cS^m_m(\D)$. By the cobordism hypothesis this object determines a $1$-dimensional topological field theory $Z: \Bord_1 \lrar \cS^m_m(\D)$ which sends the point to $(X,\L_X)$. The term \textbf{quantization} is sometimes used to describe a procedure in which the topological field theory $Z$ can be ``integrated'' into a topological field theory taking values in $\D$ (see, e.g., \cite{urs}). This can often be achieved, at various levels of rigor, but performing some kind of a \textbf{path integral}. 

Such a path integral is described informally in~\cite{tft} in the setting of \textbf{finite groupoids} (i.e. $m=1$) and where the target $\infty$-category $\D$ is the category of vector spaces over the complex numbers. More generally, the authors of~\cite{tft} work with an $n$-categorical version of the span construction and consider $n$-dimensional topological field theories. However, as the paper~\cite{tft} is expository in nature, it discusses these ideas somewhat informally, leaving many assertions without a formal proof or a precise formulation. In a recent paper~\cite{trova}, F. Trova suggests to use the formalism of Nakayama categories in order to give a formal definition of quantization in the setting of finite groupoids and $1$-dimensional field theories, when the target is an ordinary category satisfying suitable conditions. We shall now explain how the results of the present paper can be used to give a formal definition of quantization when the target is an $m$-semiadditive $\infty$-category and finite groupoids are generalized to $m$-finite spaces.

Let $Z:\Bord_1 \lrar \cS^m_m(\D)$ be the topological field theory determined by a local system $\L_X: X \lrar \D$ of dualizable objects in $\D$. Suppose that $\D$ is $m$-semiadditive and that the monoidal structure on $\D$ preserves $\K_m$-indexed colimits in each variable separately. Then we may consider the \textbf{counit map} $\nu_\D: \cS^m_m(\D) \lrar \D$ associated to the free-forgetful adjunction $\CAlg(\Cat_{\infty}) \dashv \CAlg(\Cat_{\K_m})$. This counit map is a symmetric monoidal functor, and we may consequently post-compose the topological field theory $Z$ with $\nu_D$ to obtain a topological field theory $\ovl{Z}:\Bord_1 \lrar \D$.
%
The association $Z \mapsto \ovl{Z}$ can be considered as a \textbf{quantization procedure}, and by comparing values on the point it must be compatible with the approach of~\cite{tft}. We note that one may write the counit map $\nu_D:\cS^m_m(\D) \lrar \D$ explicitly by using the formation of colimits and the formal summation of $\K_m$-families of maps in $\D$ via its canonical enrichment in $m$-commutative moniods established in \S\ref{s:monoids}. The resulting formulas can then be considered as explicit forms of path integrals. We may also summarize this process with the following corollary:

\begin{cor}
Let $\D$ be an $m$-semiadditive $\infty$-category equipped with a symmetric monoidal structure which preserves $\K_m$-indexed colimits in each variable separately. Then the collection of dualizable objects in $\D$ is closed under $\K_m$-indexed colimits. Furthermore, if $X$ is $m$-finite space and $\L_X: X \lrar \D$ is a local system of dualizable objects in $\D$, then the $1$-dimensional topological field theory $\Bord_1 \lrar \D$ determined by the dualizable object $\colim\L$ is the \textbf{quantization} of the topological field theory $\Bord_1 \lrar \cS^m_m(\D)$ determined by the dualizable object $(X,\L_X) \in \cS^m_m(\D)$.
\end{cor}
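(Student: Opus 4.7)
The plan is to deduce the corollary from the symmetric monoidal counit map $\nu_\D: \cS^m_m(\D) \lrar \D$ together with Lemma~\ref{l:dual} and the cobordism hypothesis. The main inputs have already been set up in the paragraphs preceding the statement: under the hypotheses on $\D$, the symmetric monoidal structure on $\D$ corresponds to a commutative algebra object of $\Add_m$, and $\nu_\D$ is the counit at $\D$ of the free-forgetful adjunction $\CAlg(\Cat_\infty) \rightleftarrows \CAlg(\Add_m)$. In particular, $\nu_\D$ is a symmetric monoidal functor of $\K_m$-cocomplete $\infty$-categories.

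For closure under $\K_m$-indexed colimits, let $X \in \K_m$ and let $\L: X \lrar \D$ be a local system of dualizable objects. First I would observe that by Lemma~\ref{l:dual} the object $(X,\L) \in \cS^m_m(\D)$ is dualizable. Next, since symmetric monoidal functors always send dualizable objects to dualizable objects (the image of an evaluation/coevaluation pair remains an evaluation/coevaluation pair), $\nu_\D(X,\L)$ is dualizable in $\D$. It then remains to identify $\nu_\D(X,\L)$ with $\colim_X \L$. For this I would use the fact, extracted from Lemma~\ref{l:colim-C-1}(2) and Lemma~\ref{l:colim-C-2}(2), that $(X,\L)$ is the colimit in $\cS^m_m(\D)$ of the $X$-indexed family $\{(\ast,\L(x))\}_{x \in X}$, combined with the facts that $\nu_\D$ preserves $\K_m$-indexed colimits (being a morphism in $\Cat_{\K_m}$) and that $\nu_\D(\ast,C) = C$ (since $\nu_\D$ is the counit of an adjunction whose unit is the inclusion $C \mapsto (\ast,C)$).

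For the second (quantization) statement, I would apply the 1-dimensional framed cobordism hypothesis as in the proof of Lemma~\ref{l:dual}. The dualizable object $(X,\L_X) \in \cS^m_m(\D)$ corresponds via~\eqref{e:bord} (with target $\cS^m_m(\D)$ in place of $\D$) to a symmetric monoidal functor $Z: \Bord_1^{\otimes} \lrar \cS^m_m(\D)^{\otimes}$, while the dualizable object $\colim_X \L_X \in \D$ corresponds to a symmetric monoidal functor $\Bord_1^{\otimes} \lrar \D^{\otimes}$. Since $\nu_\D$ is symmetric monoidal, the composite $\nu_\D \circ Z$ is again a symmetric monoidal functor $\Bord_1^{\otimes} \lrar \D^{\otimes}$, and by the first part it sends the positively framed point $\ast_+$ to $\nu_\D(X,\L_X) \simeq \colim_X \L_X$. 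By the uniqueness statement in the cobordism hypothesis (i.e., the equivalence~\eqref{e:bord} applied to $\D$), $\nu_\D \circ Z$ is canonically identified with $\ovl{Z}$, which is exactly the quantization procedure described in the paragraph before the corollary.

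The only subtle point, and the one I would take most care with, is the identification $\nu_\D(X,\L_X) \simeq \colim_X \L_X$; once this is established, both halves of the corollary follow formally. Concretely, one checks that the left functor $\CAlg(\Cat_\infty) \lrar \CAlg(\Add_m)$ sends $\D$ to $\cS^m_m(\D)$ equipped with the unit map $\iota: \D \hrar \cS^m_m(\D)$ given by $C \mapsto (\ast, C)$, so that $\nu_\D$ is the essentially unique $\K_m$-colimit-preserving symmetric monoidal functor $\cS^m_m(\D) \lrar \D$ whose precomposition with $\iota$ is the identity on $\D$; this forces $\nu_\D(X,\L_X) = \nu_\D(\colim_{x \in X}(\ast,\L_X(x))) \simeq \colim_{x \in X}\L_X(x)$.
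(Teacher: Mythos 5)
Your proposal is correct and follows essentially the same route as the paper, which presents this corollary as a summary of the preceding discussion: dualizability of $(X,\L_X)$ via Lemma~\ref{l:dual}, the symmetric monoidal counit $\nu_\D$ preserving both dualizable objects and $\K_m$-indexed colimits (so $\nu_\D(X,\L_X)\simeq\colim_X\L_X$), and the identification of the quantization $\ovl{Z}=\nu_\D\circ Z$ via the cobordism hypothesis. The care you take in identifying $\nu_\D(X,\L_X)$ with $\colim_X\L_X$ is exactly the point the paper leaves implicit, and your argument for it is the intended one.
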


\end{document}